\newcommand{\marg}[1]{}
\newcommand{\Isom}{\underline{\text{Isom}}}
\newcommand{\note}[1]{} 				
\newcommand{\defi}[1]{\textsf{#1}} 				% for defined terms
\newcommand{\defiindex}[1]{\defi{#1}\index{#1}}
\newcommand{\C}{{\mathbb C}}
\newcommand{\F}{{\mathbb F}}
\newcommand{\G}{{\mathbb G}}
\newcommand{\PP}{{\mathbb P}}
\newcommand{\Q}{{\mathbb Q}}
\newcommand{\R}{{\mathbb R}}
\newcommand{\Z}{{\mathbb Z}}
\newcommand{\pp}{{\mathfrak p}}
\newcommand{\calD}{{\mathcal D}}
\newcommand{\calF}{{\mathcal F}}
\newcommand{\calG}{{\mathcal G}}
\newcommand{\calH}{{\mathcal H}}
\newcommand{\calI}{{\mathcal I}}
\newcommand{\calM}{{\mathcal M}}
\newcommand{\calO}{{\mathcal O}}
\newcommand{\calP}{{\mathcal P}}
\newcommand{\calS}{{\mathcal S}}
\newcommand{\FF}{{\mathcal F}}
\newcommand{\scrE}{{\mathscr E}}
\newcommand{\scrL}{{\mathscr L}}
\newcommand{\scrO}{{\mathscr O}}
\newcommand{\scrR}{{\mathscr R}}
\newcommand{\scrX}{{\mathscr X}}
\newcommand{\scrY}{{\mathscr Y}}
\newcommand{\scrZ}{{\mathscr Z}}
\def\Q{\mathbb{Q}}
\def\C{\mathbb{C}}
\def\R{\mathbb{R}}
\def\P{\mathbb{P}}
\def\A{\mathbb{A}}
\def\Z{\mathbb{Z}}
\DeclareMathOperator{\lcm}{lcm} 
\DeclareMathOperator{\tr}{tr}
\DeclareMathOperator{\Char}{char}
\DeclareMathOperator{\Aut}{Aut}
\DeclareMathOperator{\Gal}{Gal}
 \DeclareMathOperator{\divv}{div}
\DeclareMathOperator{\ord}{ord} 
\DeclareMathOperator{\Div}{Div} 
\DeclareMathOperator{\Spec}{Spec}  \DeclareMathOperator{\Proj}{Proj}
\DeclareMathOperator{\Frac}{Frac}
\DeclareMathOperator{\sat}{sat}
\newcommand{\GL}{\operatorname{GL}}
\newcommand{\SL}{\operatorname{SL}}
\newcommand{\PSL}{\operatorname{PSL}}
\DeclareMathOperator{\XX}{\scrX}
\DeclareMathOperator{\YY}{\scrY}
\DeclareMathOperator{\ZZ}{\scrZ}
\newtheorem*{theorem*}{Theorem}
\newtheorem*{corollary*}{Corollary}
\theoremstyle{definition}
\newtheorem*{conjecture*}{Conjecture}
\numberwithin{equation}{section}
\newtheorem{maintheorem}[equation]{Main Theorem}
\newtheorem{theorem}[equation]{Theorem}
\newtheorem{lemma}[equation]{Lemma}
\newtheorem{corollary}[equation]{Corollary}
\newtheorem{proposition}[equation]{Proposition}
\theoremstyle{definition}
\newtheorem{definition}[equation]{Definition}
\newtheorem{example}[equation]{Example}
\theoremstyle{remark}
\newtheorem{remark}[equation]{Remark}
\newcolumntype{H}{>{\setbox0=\hbox\bgroup}c<{\egroup}@{}}
\DeclareMathOperator{\Eff}{Eff}
\DeclareMathOperator{\gin}{gin}
\DeclareMathOperator{\init}{in}
\DeclareMathOperator{\supp}{supp}
\DeclareMathOperator{\sh}{sh}
\DeclareMathOperator{\opdiv}{div}
\newcommand{\thickslash}{\mathbin{\!\!\pmb{\fatslash}}}
\def\multiset#1#2{\ensuremath{\left(\kern-.3em\left(\genfrac{}{}{0pt}{}{#1}{#2}\right)\kern-.3em\right)}} 
\begin{document}

\frontmatter

\title[The canonical ring of a stacky curve]
{The canonical ring of a stacky curve}

\author{John Voight}
\address{Department of Mathematics,
  Dartmouth College, 6188 Kemeny Hall, Hanover, NH 03755, USA}
\email{jvoight@gmail.com}

\author{David Zureick-Brown}
\address{Department of Mathematics, Emory University,
Atlanta, GA 30322 USA}
\email{david.zureick-brown@emory.edu}

\subjclass[2010]{Primary  14Q05; Secondary  11F11 }

\keywords{Canonical rings, canonical embeddings, stacks, algebraic curves, modular forms, automorphic forms, generic initial ideals, Gr\"obner bases}

\date{\today}

\begin{abstract}
Generalizing the classical theorems of Max Noether and Petri, we describe generators and relations for the canonical ring of a stacky curve, including an explicit Gr\"obner basis.  We work in a general algebro-geometric context and treat log canonical and spin canonical rings as well.  As an application, we give an explicit presentation for graded rings of modular forms arising from finite-area quotients of the upper half-plane by Fuchsian groups.

\end{abstract}

\maketitle

\setcounter{tocdepth}{1}
\setcounter{page}{4}
\tableofcontents

\mainmatter

%%****************************************************************************
\chapter{Introduction}
\label{ch:introduction}
%%****************************************************************************

%----------------------------------------------------------------------------
\section{Motivation: Petri's theorem}
%----------------------------------------------------------------------------

The quotient $X=\Gamma \backslash \calH$ of the upper half-plane $\calH$ by a torsion-free
cocompact Fuchsian group $\Gamma \leq \PSL_2(\R)$ naturally possesses the structure of
a compact Riemann surface of genus $g \geq 2$; and conversely, every compact Riemann surface of genus $g \geq 2$ arises in this way.  The Riemann surface $X$ can be given the structure of a nonsingular projective (algebraic) curve over $\C$: indeed, when $X$ is not hyperelliptic, the \defiindex{canonical map} $X \hookrightarrow \PP^{g-1}$ obtained from global sections of the sheaf $\Omega=\Omega_X$ of holomorphic differential $1$-forms on $X$ gives such an algebraic structure.  Even when $X$ is hyperelliptic, the \defiindex{canonical ring} (sometimes also called the  \defiindex{homogeneous coordinate ring})
\[ R=R(X) = \bigoplus_{d=0}^{\infty} H^0(X, \Omega^{\otimes d}) \] 
has $X \simeq \Proj R$ (as $\Omega$ is ample).  Much more is known about the canonical ring: for a general curve of genus $g \geq 4$, its image is cut out by quadrics.  More specifically, by a theorem of Enriques, completed by Babbage \cite{babbage1939note}, and going by the name Petri's theorem \cite{Petri1923}, if $X$ is neither hyperelliptic, trigonal (possessing a map $X \to \PP^1$ of degree $3$), nor a plane curve of degree $5$ (and genus $6$), then $R \simeq \C[x_1,\ldots,x_g]/I$ is generated in degree $1$ and the canonical ideal $I$ of relations in these generators is generated in degree $2$.  In fact, Petri gives explicit quadratic relations that define the ideal $I$ in terms of a certain choice of basis $x_1,\dots,x_g$ for $H^0(X,\Omega)$ and moreover describes the syzygies between these quadrics.  

This beautiful series of results has been generalized in several directions.  Arba\-rello--Sernesi \cite{ArbarelloSernesi:Petri} considered embeddings of curves obtained when the canonical sheaf is replaced by a special divisor without basepoints.  Noot \cite{Noot:petri} and Dodane \cite{Dodane:Petri} considered several generalizations to stable curves.  Another rich generalization is the conjecture of Green \cite{Green:Canonical}, where generators and relations for the canonical ring of a variety of general type are considered.  
Green \cite{green:koszulI} also conjectured a relationship between the Clifford index of a curve and the degrees of the subsequent syzygies  (as a graded module) for the canonical ring; for curves of Clifford index 1 (trigonal curves and smooth plane quintics), this amounts to Petri's theorem. 
This second conjecture of Green was proved for a generic curve by Voisin; see the survey by Beauville \cite{beauville:green}.

%----------------------------------------------------------------------------
\section{Orbifold canonical rings}
%----------------------------------------------------------------------------

Returning to the opening paragraph, though, it is a rather special hypothesis on the Fuchsian group $\Gamma$ (finitely generated, of the first kind) that it be cocompact and torsion free.  Already for $\Gamma=\PSL_2(\Z)$, this hypothesis is too restrictive, as $\PSL_2(\Z)$ is neither cocompact nor torsion free.  One can work with noncocompact groups by completing $\Gamma \backslash \calH$ and adding points called \defiindex{cusps}, and then working with quotients of the (appropriately) completed upper half-plane $\calH^*$.  We denote by $\calH^{(*)}$ either the upper half-plane or its completion, according as $\Gamma$ is cocompact or not, and let $\Delta$ denote the divisor of cusps for $\Gamma$, an effective divisor given by the sum over the cusps.

In general, \emph{any} quotient $X=\Gamma \backslash \calH^{(*)}$ with finite area can be given the structure of a Riemann surface, but only after ``polishing'' the points with nontrivial stabilizer by adjusting the atlas in their neighborhoods.  The object $X$ itself, on the other hand, naturally has the structure of a $1$-dimensional complex \defiindex{orbifold} (``orbit space of a manifold''): a Hausdorff topological space locally modeled on the quotient of $\C$ by a finite group, necessarily cyclic.  Orbifolds show up naturally in many places in mathematics \cite{Satake:orbi,Thurston:book}.  

So the question arises: given a compact, connected complex $1$-orbifold $X$ over $\C$, what is an explicit description of the canonical ring of $X$?  Or, put another way, what is the generalization of Petri's theorem (and its extensions) to the case of complex orbifold curves?  This is the central question of this monograph.

%----------------------------------------------------------------------------
\section{Rings of modular forms} \label{sec:modformsintro}
%----------------------------------------------------------------------------

This question also arises in another language, as the graded pieces 
\[ R_d=H^0(X, \Omega^{\otimes d}) \] 
of the canonical ring go by another name: they are naturally identified with certain spaces of modular forms of weight $k=2d$ on the group $\Gamma$ (see section~\ref{sec:modformsdef}).  More generally, 
\[ H^0(X, \Omega(\Delta)^{\otimes d}) \simeq M_{2d}(\Gamma) \] 
is the space of \defiindex{modular forms} of weight $k=2d$, and so we are led to consider the \defiindex{canonical ring} of the \defiindex{log curve} $(X,\Delta)$,
\[ R(X,\Delta) = \bigoplus_{d=0}^{\infty} H^0(X, \Omega(\Delta)^{\otimes d}), \]
where $\Delta$ is the divisor of cusps.  For example, the group $\Gamma=\PSL_2(\Z)$ with $X(1)=\Gamma \backslash \calH^*$ and $\Delta=\infty$ the cusp at infinity has the ring of modular forms 
\[ R(X(1),\Delta)=\C[E_4,E_6], \] 
a graded polynomial ring in the Eisenstein series $E_4,E_6$ of degrees $2$ and $3$ (weights $4$ and $6$), respectively.  Consequently, the log curve $(X(1),\Delta)$ is described by its canonical ring, and $X(1) \simeq \Proj R(X(1),\Delta)$ as Riemann surfaces or as curves over $\C$, even though $X(1)$ has genus $0$ and thus has a trivial canonical ring.  In this way, the log curve $(X(1),\Delta)$ behaves like a curve with an ample canonical divisor and must be understood in a different way than the classical point of view with which we began.

The calculation of the dimension of a space of modular forms using the valence formula already suggests that there should be a nice answer to the question above that extends the classical one.  We record the relevant data in the \emph{signature} of the Fuchsian group $\Gamma \leq \PSL_2(\R)$: if $\Gamma$ has \defiindex{elliptic cycles} (conjugacy classes of elements of finite order) with orders $2 \leq e_1 \leq \dots \leq e_r < \infty$ and $\delta$ \defiindex{parabolic cycles} (identified with cusps), and $X=\Gamma \backslash \calH^{(*)}$ has genus $g$, then we say that $\Gamma$ has \defiindex{signature} $(g;e_1,\ldots,e_r;\delta)$.  

Wagreich has studied the question of the structure of the ring of automorphic forms over $\C$: he has described all signatures such that the canonical ring is generated by at most $3$ forms \cite{Wagreich:fewgens} and, using the theory of singularities of complex surfaces, he gives more general results on the structure of algebras of automorphic forms \cite{wagreich:generators}.  This work implies, for example, that for any $N \geq 1$, the ring of modular forms for $\Gamma_0(N)$ is generated as a $\C$-algebra in degree at most $3$ (weight at most $6$).  Rustom \cite{Rustom:Generators} has also studied the degrees of generators for the ring of modular forms for $\Gamma_0(N)$ and $\Gamma_1(N)$ defined over certain subrings $A \subseteq \C$; he goes further and also bound the degrees of a minimal set of relations \cite{rustom:gradedAlgebraGenerators}.  Borisov--Gunnells \cite{BorisovGunnells:Toric} and Khuri-Makdisi \cite{KhuriMakdisi:Moduli} have also studied such presentations.  Scholl \cite{MR542692} showed for certain finite index subgroups $\Gamma \leq \SL_2(\Z)$ and a subring $A \subseteq \C$ satisfying certain hypotheses that the ring of modular forms for $\Gamma$ defined over $A$ is finitely generated: his proof is elementary and constructive, giving an explicit set of generators.  

For many purposes, it is very useful to have a basis of modular forms in high weight specified by a monomial basis in forms of low weight---and this is furnished by a sufficiently robust understanding of a presentation for the ring of modular forms.  
Some explicit presentations of this form have been obtained for small level, e.g.~by  Tomohiko--Hayato \cite{SudaS:explicitStructureGradedRings}.

%----------------------------------------------------------------------------
\section{Main result} \label{sec:mainresult}
%----------------------------------------------------------------------------

In this monograph, we consider presentations for canonical rings in a general context as follows.  A \defiindex{stacky curve} $\XX$ over a field $k$ is a smooth proper geometrically connected Deligne--Mumford stack of dimension $1$ over $k$ with a dense open subscheme.  A stacky curve is \defiindex{tame} if its stabilizers have order not divisible by $\Char k$.  (For more on stacky curves, see chapter~\ref{ch:stacky-curves}.)  A \defiindex{log stacky curve} $(\XX,\Delta)$ is a stacky curve $\XX$ equipped with a divisor $\Delta$ which is a sum of distinct points each with trivial stabilizer.  (One could consider more general log structures---but see Remark~\ref{rmk:logstructtoohard}.)
The notion of the \defiindex{signature} $(g;e_1,\dots,e_r;\delta)$ of a tame log stacky curve $(\XX,\Delta)$ extends in a natural way: $g$ is the genus of the coarse space $X$ of $\XX$, there are $r$ stacky points with (necessarily cyclic) stabilizers of order $e_i \in \Z_{\geq 2}$, and $\delta=\deg \Delta \in \Z_{\geq 0}$.  We accordingly define the \defiindex{Euler characteristic} 
\[ \chi(\XX,\Delta)=2-2g-\delta-\sum_{i=1}^r \left(1-\frac{1}{e_i}\right) \]
and say $(\XX,\Delta)$ is \defiindex{hyperbolic} if $\chi(\XX,\Delta)<0$. 

Our main result is an explicit presentation given by generators and relations for the canonical ring of a log stacky curve in terms of its signature.  A simplified version of our results is contained the following theorem.

\begin{maintheorem} \label{mainthm}
Let $(\XX,\Delta)$ be a hyperbolic, tame log stacky curve over a perfect field $k$ with signature $\sigma=(g;e_1,\ldots,e_r;\delta)$, and let $e=\max(1,e_1,\dots,e_r)$.  Then the canonical ring 
\[ R(\XX,\Delta) = \bigoplus_{d=0}^{\infty} H^0(\XX,\Omega(\Delta)^{\otimes d}) \] 
is generated as a $k$-algebra by elements of degree at most $3e$ with relations of degree at most $6e$.

Moreover, if $g+\delta \geq 2$, then $R(\XX,\Delta)$ is generated in degree at most $\max(3,e)$ with relations in degree at most $2\max(3,e)$.
\end{maintheorem}

For log stacky curves that are not hyperbolic, the canonical ring is isomorphic to $k$ (when $\chi>0$) or a polynomial ring in one variable (when $\chi=0$): see Example~\ref{exm:trivcano}.  We may relax the hypothesis that $k$ is perfect by asking instead that the stacky curve is \emph{separably rooted} (Definition~\ref{D:separably-rooted}).  It is a slightly surprising consequence of our computation of canonical rings that the Gr\"obner basis structure really only depends only on the signature and not on the position of the stacky points themselves.  

The bounds given in the above theorem are sharp: 
\begin{itemize}
\item A hyperelliptic curve $\XX$ of genus $2$ with $\delta=0$ and $e=1$ (nothing stacky or log about it) has $R(\XX,\Delta)$ minimally generated in degrees up to $3$ with minimal relations up to degree $6$: see \eqref{eq:hyperell-sec1-eq2}.
\item A (non-stacky) log curve $(\XX,\Delta)$ with $\delta=1$ and $e=1$ also has a canonical ring with minimal generators in degrees up to $3$ with minimal relations in degree up to $6$: see sections~\ref{ss:genus-at-least-3-d-1,hyperelliptic}--\ref{ss:genus-at-least-3-d-1,nonhyperelliptic}.
\item A stacky curve with signature $(0;2,3,7;0)$ has $e=7$ and canonical ring generated in degrees $6,14,21$ with a single relation in degree $42$. 
\end{itemize} 
For many tables of canonical rings for small signature, see the Appendix.  

Losing a bit of generality, but as part of the same argument, we can improve the bound in the main theorem as follows.

\begin{corollary} \label{cor:muchpetri}
With notation as in the Main Theorem~\ref{mainthm}, if $g \geq 2$ and $\delta \neq 1,2$, then $R(\XX,\Delta)$ is generated in degree at most $e$ with relations in degree at most $2e$.
\end{corollary}

The bound in Corollary~\ref{cor:muchpetri} is sharp in several senses.  First, it reduces to the one provided by Petri's theorem in the case $e=1$.  Moreover, for all $e \geq 2$, the bound is achieved by a non-exceptional curve of genus $g \geq 3$ with a single stacky point of order $e$ (Example~\ref{exm:sharpbounds}).  

For the cases missed by Corollary \ref{cor:muchpetri}, we give an explicit description:
\begin{itemize}
\item For $g=0$, there is an explicit finite list of signatures (and one family) where there is a generator in degree $>e$: see Theorem~\ref{thm:genus0final}.  
\item For $g=1$ and $\delta=0$, there is a short, explicit finite list of exceptions: see Corollary \ref{cor:annoyingg1sigs}.
\item For the remaining cases (where $g \geq 2$ and either $\delta=0$ or $\delta \geq 3$), see Theorem~\ref{thm:logcurverelat}.  
\end{itemize}

%----------------------------------------------------------------------------
\section{Extensions and discussion} \label{sec:extapp}
%----------------------------------------------------------------------------

In this monograph, we extend the above results in three important directions.  First, in the spirit of Schreyer's standard basis approach to syzygies of canonical curves \cite{Schreyer:Petri} (see also Little \cite{Little:0937}), we exhibit a Gr\"obner basis of the canonical ideal with respect to a suitable term ordering and a general choice of generators, something that contains much more information than just degrees of generators and relations and that promises to be more useful in future work.  Second, we consider the situation where the canonical divisor is replaced by a theta (or half-canonical) divisor, corresponding to modular forms of odd weight: our results in this direction have already been extended by Landesman--Ruhm--Zhang \cite{MR3580174}.  Third, we consider relative stacky curves, defined over more general base schemes.  

In particular, for classical modular curves, we have the following corollary that sharpens the results mentioned in section~\ref{sec:modformsintro} and resolves a conjecture of Rustom \cite[Conjecture 2]{Rustom:Generators}.

\begin{corollary}
For $N \geq 1$, the graded ring of modular forms 
\[ M\bigl(\Gamma_0(N),\Z[\tfrac{1}{6N}]\bigr) = \bigoplus_{k=0}^{\infty} M_k\bigl(\Gamma_0(N),\Z[\tfrac{1}{6N}]\bigr) \]
with coefficients in $\Z[\tfrac{1}{6N}]$ is generated by forms in weight at most $6$ and with relations in weight at most $12$.
\end{corollary}

Main Theorem~\ref{mainthm} can be similarly applied to any family of Shimura curves arising from a quaternion algebra $B$ over a totally real field $F$: bounds on the possible elliptic order $e$ can be read from $F$ and refined by the algebra $B$ (see e.g.~Voight \cite[\S 3]{MR2476577}), and for specific curves the signature gives an explicit description of the canonical ring.

Our results are couched in the language of canonical rings of log stacky curves because we believe that this is the right setting to pose questions of this nature.  To this end, we state a ``stacky Riemann existence theorem'' (Proposition~\ref{P:StackyGAGA}, essentially a consequence of work of Behrend--Noohi \cite{BehrendN:uniformization}), giving an equivalence of algebraic (stacky curves) and analytic ($1$-orbifold) categories over $\C$, so that one has an interpretation of our result in the orbifold category.  We adopt the point of view taken by Deligne--Mumford \cite{DeligneMumford:irreducibility} in their proof of the irreducibility of the moduli space of curves: in particular, our results hold over fields of characteristic $p>0$.  (One cannot simply deduce everything in characteristic $p$ from that in characteristic $0$, since e.g.\ the gonality of a curve may decrease under degeneration.)  

Our results are new even for classical curves with log divisor: although the structure of the canonical ring $R$ is well-known in certain cases, the precise structure of canonical rings does not appear in the literature. For instance, one subtlety is that the canonical ring $R(X,\Delta)$ with $\Delta=P$ as single point is \emph{not} generated in degree 1; so we must first work out the structure of $R(X,\Delta)$ for $\Delta$ of small degree (and other ``minimal'' cases) directly. From there, we deduce the structure of $R(X,\Delta)$ in all classical cases. A key ingredient is a comprehensive analysis of surjectivity of the multiplication map (\ref{eq:curves-multiplication-map-first}) in Theorem~\ref{T:surjectivity-master}, addressing various edge cases and thus generalizing the theorem of Max Noether. 

For stacky curves, one hopes again to induct. There are new minimal cases with coarse space of genus 0 and genus 1 which cannot be reduced to a classical calculation. Some of these (such as signature $(0;2,3,7; 0)$) were worked out by Ji \cite{Ji:Delta} from the perspective of modular forms; however most are not and require a delicate combinatorial analysis. The new and complicating feature is that divisors on a stacky curve have ``fractional'' parts which do not contribute sections (see Lemma~\ref{L:floor}), and the canonical rings thus have a staircase-like structure.  Even when the coarse space is a general high genus curve, stacky canonical rings tend to have Veronese-like relations coming from products of functions in different degrees having poles of the same order, and new arguments are needed.

%----------------------------------------------------------------------------
\section[Previous work]{Previous work on canonical rings of fractional divisors}
%----------------------------------------------------------------------------

 Canonical rings of fractional divisors (also known as $\Q$-divisors) have been considered before. 
An early example due to Kodaira  elucidates the structure of the canonical ring of an elliptic surface $X \to C$ via the homogeneous coordinate ring of the (fractional) ramification divisor on $C$ (see Remark~\ref{R:kodaira-surface}).

Reid \cite{reid:infinitesimal} considers work in a similar vein: he deduces the structure of the canonical ring of certain canonically embedded surfaces (with $q=0$) using the fact that a general hyperplane section is a canonically embedded spin curve, and so the canonical ring of the surface and of the spin curve can thereby be compared.

% Another direction is the Hassett--Keel program,  which approaches the birational geometry of the moduli space $\overline{M}_{g,n}$ through models arising from canonical rings of fractional divisors on the moduli stack $\overline{\calM}_{g,n}$. For instance, Hassett--Keel \cite{hassett:modelsM2bar} studies alternative compactifications of $\overline{M}_2$ arising from the fractional divisors $K_{\overline{M}_2} + \alpha \Delta$ with $\alpha \in \Q \cap [0,1]$ on $\overline{\calM}_2$; see work of Fedorchuk--Smyth \cite{fedorchuk2012alternate} for a survey of recent progress. 

In higher dimension, adding a divisor to a big divisor changes the geometry of the resulting model, and the minimal model program seeks to understand these models.  Indeed, the explicit structure of the canonical ring is inaccessible in general except in very particular examples,
and even the proof of its finite generation is a central theorem.  By contrast, in our work (in dimension 1), minimal models are unique so the canonical model does not depend on this choice, and finite generation of the canonical ring in dimension $1$ is very classical.  The point of this monograph is to comprehensively and very explicitly understand the fine structure of the canonical ring of certain fractional divisors on a curve; we hope this will elucidate the structure of other, less accessible canonical rings in higher dimension.  In this vein, see recent work by Landesman--Ruhm--Zhang \cite{LRZ:2015-arxiv} for Hirzebruch surfaces as well as projective spaces in arbitrary dimension.

%----------------------------------------------------------------------------
\section{Computational applications}
%----------------------------------------------------------------------------

There are several computational applications to the explicit structure theorem (including Gr\"obner basis with generic initial ideal) for log canonical rings provided in this monograph; we highlight two obvious applications here.  

First, to compute the graded ring of modular forms itself, it is necessary to know the degrees of generators and relations, and an explicit version allows for the most efficient implementation.  This is quite important when computing equations for modular and Shimura curves using $q$-expansions or more generally using power series expansions \cite{MR3381459}.  Second,  the Gr\"obner basis (with respect to a term order) provides a standard basis of monomials for the canonical ring in any degree.  Consequently, as alluded to above, to compute $q$-expansions for forms of large weight $k$, one may compute $q$-expansions for generating forms in small weight and then substitute into the standard monomial basis.  Indeed, our term orders are particularly well-suited for this kind of computation because they involve the order of zero and pole at stacky or log points: see Remark \ref{rmk:wellsuitedcomp}.

%----------------------------------------------------------------------------
\section{Generalizations}
%----------------------------------------------------------------------------

We conclude this introduction with some remarks on potential generalizations of this work to other contexts.  

First, one can replace log divisors with more general effective divisors (with multiplicities), and the same results hold with very minor modifications to the proofs.
Second, we consider a restricted class of base schemes only for simplicity; one could also work out the general case, facing some mild technical complications.  
Third, one can consider arbitrary $\Q$-divisors: O'Dorney \cite{dorney:canonical} considers this extension in genus $0$.

Fourth, if one wishes to work with stable curves having nodal singularities that are not stacky points, one can work instead with the dualizing sheaf, and we expect that analogous results will hold using deformation theory techniques: see Abramovich--Vistoli \cite{abramovichV:compactifyingStableMaps}, Abramovich--Graber--Vistoli \cite{AGV:GW}, and Abram\-ovich--Olsson--Vistoli \cite{AbramovichOV:twistedMaps} for a discussion and applications of nodal stacky curves and their structure and deformation theory.  

In fact, many of our techniques are inductive and only rely on the structure of the canonical ring of a classical (nonstacky) curve;   it is therefore likely that our results generalize to geometrically integral singular curves, inducting from Schreier \cite{Schreyer:Petri}.  An example of this is Rustom's thesis \cite{Rustom:thesis, rustom:gradedAlgebraGenerators}---he considers the ring of integral forms for $\Gamma_0(p)$. Here, the reduction of $X_0(p)$ at $p$ is a nodal stacky curve; Rustom's techniques invoke the theory of $p$-adic modular forms and congruences between sections of powers of a sheaf, an approach quite different than the one taken in this monograph.

Finally, more exotic possibilities would allow stacky points as singularities, arbitrary singular curves, and wild stacky points (where the characteristic of the residue field divides the order of the stabilizer).  For example, one may ask for a description of a suitable canonical ring of $X_0(p^e)$ over $\Z_p$.

%----------------------------------------------------------------------------
\section{Organization and description of proof}
%----------------------------------------------------------------------------

This monograph is organized as follows.  We begin in chapter~\ref{ch:classical} by considering the case (I) of canonical rings for curves in the usual sense (as just schemes), revisiting the classical work of Petri: in addition to providing the degrees of a minimal set of generators and relations (Theorem \ref{thm:degrelatmax}), we describe the (pointed) generic initial ideal with respect to a graded reverse lexicographic order: see e.g.\ Theorem \ref{T:canonical-gin} for the case of a nonhyperelliptic curve of genus $g \geq 3$.  

Second, we tackle the case (II) of a classical log curve.  To begin, in chapter~\ref{ch:lemmas-curves} we prove a generalization of Max Noether's theorem (Theorem~\ref{T:surjectivity-master}), characterizing the surjectivity of multiplication maps arising in this context.  Then in chapter~\ref{ch:logclassical-curves}, we compute the degrees of generators and relations (Theorem \ref{thm:logcurverelat}) and present the pointed generic initial ideal (see e.g.\ Proposition \ref{prop:ginfinallog} for the case of general large log degree).  

We then turn to log stacky curves.  We begin in chapter~\ref{ch:stacky-curves} by introducing the algebraic context we work in, defining stacky curves and their canonical rings and providing a few examples in genus $1$ (which later become base cases (III)).  In chapter~\ref{ch:comparison}, we then relate stacky curves to complex orbifolds (via stacky Riemann existence, Proposition \ref{P:StackyGAGA}) and modular forms.  

Our task is then broken up into increasingly specialized classes of log stacky curves.  To begin with, in chapter~\ref{ch:canon-rings-stack-genus-zero} we consider canonical rings of log stacky curves whose coarse space has genus zero.  This chapter is a bit technical, but the main idea is to reduce the problem to a combinatorial problem that is transparent and computable: in short, we give a flat deformation to a monoid algebra and then simplify.  Indeed, the arguments we have made about generating and relating have to do with isolating functions with specified poles and zeros.  To formalize this, we consider functions whose divisors have support contained in the stacky canonical divisor; they are described by integer points in a rational cone \eqref{E:Delta}.  These functions span the relevant spaces, but are far from being a basis.  To obtain a basis, we project onto a $2$-dimensional cone.  We then prove that a presentation with Gr\"obner basis can be understood purely in terms of these two monoids (Proposition~\ref{P:Grobner_genus0}), and we give an explicit bound (Proposition~\ref{prop:useseffD}) on the degrees of generators and relations in terms of this monoid.  

This description is algorithmic and works uniformly in all cases, but unfortunately it is not a minimal presentation for the canonical ring.  To find a minimal presentation, we argue by induction on the number of stacky points: once the degree of the canonical divisor is ``large enough'', the addition of a stacky point has a predictable affect on the canonical ring (via inductive theorems in chapter~\ref{ch:inductive-root}). However, there are a large number of base cases to consider.  To isolate them, we first project further onto the degree and show that aside from certain explicit families, this monoid has a simple description (Proposition~\ref{P:AD-genus0}).  
To prepare for the remaining cases, we provide a method of simplifying (section~\ref{subsec:simplific}) the toric description obtained previously for these cases: our main tool here is the effective Euclidean algorithm for univariate polynomials.

Next, in chapter~\ref{ch:inductive-root}, we present our inductive theorems (Theorems \ref{T:particular-stacky-gin}, \ref{T:particular-stacky-gin-eff}, and \ref{thm:inductive-by-stacky-point}).  Rather than presenting the canonical ring of a log stacky curve all at once, it is more natural and much simpler to describe the structure of this ring relative to the morphism to the coarse space.  This inductive strategy works for a large number of cases, including all curves of genus at least $2$ and all curves of genus $1$ aside from those in case (III) presented above: we compute generators, relations, and the generic initial ideal with a block (or elimination) term ordering that behaves well with respect to the coarse space morphism.  We then conclude the proof of our main theorem (Theorem \ref{thm:maintheoremgengt1}) in genus at least one.  

For the case of genus zero, in chapter~\ref{ch:genus-0}, we apply the methods of chapter~\ref{ch:canon-rings-stack-genus-zero} to compute enough base cases (IV) so that then the hypotheses of the inductive theorems in chapter~\ref{ch:inductive-root} apply.  To carry out these computations, we must overcome certain combinatorial and number-theoretic challenges based on the orders of the stacky points; the stacky curves associated to triangle groups, having signature $(0;e_1,e_2,e_3;0)$, are the thorniest.  

In chapter~\ref{ch:spin-canon-rings}, we extend our results to the case of half-canonical divisors and spin canonical rings, corresponding to modular forms of odd weight (Theorem \ref{C:spin-final}).  Finally, in chapter~\ref{ch:relative}, we extend these results to the relative case, concluding with a proof of Rustom's conjecture (Proposition \ref{C:integralGeneration}).

Our results are summarized in the Appendix, where we give tables providing generators, relations, and presentations for canonical rings for quick reference.

%----------------------------------------------------------------------------
\section{Acknowledgements}
%----------------------------------------------------------------------------

The authors would like to thank Asher Auel, Brian Conrad, Maarten Derickx, Anton Geraschenko, Kirti Joshi, Andrew Kobin, Jackson Morrow, Bjorn Poonen, Peter Ruhm, Nadim Rustom, Jeroen Sijsling, Drew Sutherland, Robin Zhang, and the anonymous referee.  Special thanks goes to Aaron Landesman for a careful reading and review of this manuscript.  Voight was supported by an NSF CAREER Award (DMS-1151047) and a Simons Collaboration Grant (550029), and Zureick--Brown was supported by NSA Young Investigator's Grant (H98230-12-1-0259) and National Science Foundation CAREER award DMS-1555048.

%****************************************************************************
\chapter{Canonical rings of curves}
\label{ch:classical}
%****************************************************************************

In this chapter, we treat the classical theory of canonical rings (with an extension to the hyperelliptic case) to guide our results in a more general context. The purpose of this chapter is to give an explicit presentation for the canonical ring of a curve by specifying the generic initial ideal of the canonical ideal with respect to a convenient term order.

%----------------------------------------------------------------------------
\section{Setup}
%----------------------------------------------------------------------------

Throughout, we work over a field $k$ with separable closure $\overline{k}$.  For basic references on the statements for curves we use below, see Hartshorne \cite[\S IV]{Hartshorne:AG}, Saint-Donat \cite{SaintDonat:Petri}, the book of Arbarello--Cornalba--Griffiths--Harris \cite[\S III.2]{ACMG:geometryOfCurvesvol1}, and the simple proof of Petri's theorem by Green--Lazarsfeld \cite{GreenLazarsfeld:Simple}.  For more on term orders and Gr\"obner bases, there are many good references \cite{MR1287608,MR2122859,MR2290010,greuel2007singular,kreuzer2000computational}.  In this section, we introduce the classical setup of canonical rings.

Let $X$ be a smooth projective \defiindex{curve} (separated, geometrically integral scheme of dimension $1$ of finite type) over a field $k$.  To avoid repetitive hypotheses, we will suppose that all curves under consideration are smooth and projective.  

Let $\Omega=\Omega_X$ be the sheaf of differentials on $X$ over $k$ and let $g=\dim_k H^0(X,\Omega)$ be the \defiindex{genus} of $X$.  When convenient, we will use the language of divisors; let $K$ be a canonical divisor for $X$.  We define the \defiindex{canonical ring} of $X$ to be the graded ring
\[ R=R(X)=\bigoplus_{d=0}^{\infty} H^0(X,\Omega^{\otimes d}) \]
and we let $R_d=H^0(X,\Omega^{\otimes d})$ be the $d$th graded piece.   We say that $R$ is \defiindex{standard} if $R$ is generated in degree $1$; for more on the combinatorial commutative algebra we will use, see Stanley \cite[Chapters I, II]{stanley2004combinatorics}.

The following theorem is well-known, and it forms the foundation upon which the remainder of this monograph is built. 

\begin{theorem} \label{thm:degrelatmax}
Let $X$ be a curve.  Then the canonical ring $R$ of $X$ is generated by elements of degree at most $3$ with relations of degree at most $6$.
\end{theorem}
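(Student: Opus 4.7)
The plan is to prove the theorem by collecting the explicit presentations already obtained in the preceding sections via a case analysis on the genus $g$ and on whether $X$ is hyperelliptic. Since the bulk of the work (Petri's analysis, the hyperelliptic presentation, and the generic initial ideal computations) has already been done, the task here is organizational: verify that the worst cases fall within the claimed bounds of $3$ and $6$, and observe that these bounds are attained in genus $2$.

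First I would dispose of the trivial low-genus cases. For $g=0$ we have $R=k$ and for $g=1$ we have $R=k[u]$ with $\deg u = 1$; both are generated in degree $\le 1$ with no relations, so the conclusion is vacuous. For $g=2$, the presentation from \eqref{eq:hyperell-sec1-eq2} gives
\[ R \cong k[x_1,x_2,y]/\langle y^2 - h(x_1,x_2)y - f(x_1,x_2)\rangle, \]
with generators in degrees $1,1,3$ and a single relation in degree $6$. This case saturates both bounds and shows they are sharp.

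Next I would treat the high-genus nonhyperelliptic case $g \ge 3$. Since $K$ is very ample, $R$ is generated in degree $1$. By Proposition~\ref{prop:pcangin} (and its elaboration, Theorem~\ref{T:canonical-gin}), a Gr\"obner basis for the canonical ideal consists of the quadrics $f_{ij}$, cubics $G_{i,g-2}$, and for the exceptional cases a single quartic $H_{g-2}$; in particular all relations sit in degree $\le 4$. When $g=3$ the ideal is principal generated in degree $4$; for $g \ge 4$ it is generated in degrees $2$ and $3$ (quadratic only in the nonexceptional case).

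Finally I would treat the high-genus hyperelliptic case $g \ge 3$. By the analysis in section~\ref{subsec:returntohyp} and Theorem~\ref{T:canonical-gin-hyperelliptic}, the canonical ring is generated by the degree-$1$ elements $x_1,\dots,x_g$ together with the degree-$2$ elements $y_1,\dots,y_{g-2}$, so all generators lie in degree $\le 2$; the Gr\"obner basis consists of quadrics, cubics, and quartics, so relations lie in degree $\le 4$. Combining all four cases, generators occur in degree $\le 3$ and relations in degree $\le 6$, with equality attained already when $g=2$. There is no real obstacle here, as everything reduces to reading off the maximal degrees from the previously established presentations; the only thing to be careful about is that the hyperelliptic quartic relations in $y_iy_j$ really are minimal (not redundant mod the quadrics and cubics), which was verified in section~\ref{subsec:returntohyp}.
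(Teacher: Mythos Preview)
Your proposal is correct and is essentially the same as the paper's approach: the theorem is stated as a summary of the explicit presentations worked out case-by-case in the preceding sections (low genus, hyperelliptic, nonhyperelliptic), and the proof amounts to reading off the maximal generator and relation degrees from those presentations, exactly as you do.
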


The main result of this chapter is an explicit version of Theorem~\ref{thm:degrelatmax}.  For a proof when $X$ is nonhyperelliptic, see e.g.~Mumford \cite[pp.\ 237--241]{Mumford:curvesAndTheir} or the quick and simple proof by Green--Lazarsfeld \cite[Corollary 1.7, Remark 1.9]{GreenLazarsfeld:Simple}.  Our proof is self-contained, it covers all cases, and it gives a bit more in providing the generic initial ideal of the canonical ideal.  

See Table (I) of the Appendix for a case-by-case summary of canonical rings of curves; we review the proof of this table in this section, and we now proceed to further set up the framework in which we work.  

Let $M$ be a finitely generated, graded $R$-module and let $R_{\geq 1}=\bigoplus_{d \geq 1} R_d$ be the \defiindex{irrelevant ideal}.  Then $M$ and $M/R_{\geq 1}M$ are graded $k$-vector spaces.  A set of elements of $M$ generate $M$ as an $R$-module if and only if their images span $M/R_{\geq 1}M$ as a $k$-vector space.  The \defiindex{Poincar\'e polynomial} of $M$ is the polynomial 
\[ P(M;t)=\sum_{d=1}^{\infty} \dim_k(M/R_{\geq 1}M)_d t^d = a_1t + \dots + a_D t^D \]
where $a_d=\dim_k (M/R_{\geq 1}M)_d$ and where $D$ is the maximal degree such that $a_D\neq 0$.  

By definition, $\Proj R$ is a closed subscheme of the weighted projective space 
\[ \PP(\vec{a})=\PP(\underbrace{D,\dots,D}_{a_D},\dots,\underbrace{1,\dots,1}_{a_1}) = \PP(D^{a_D},\dots,1^{a_1}) = 
\Proj k[x]_{\vec{a}} \]
with $\deg(x_{d,i})=d$, where $k[x]_{\vec{a}}$ is the polynomial ring with generators as encoded in the vector $\vec{a}$ (i.e., $a_1$ in degree $1$, $a_2$ in degree $2$, etc.).  Thus
\begin{equation} \label{eqn:RkxI}
R \simeq k[x]_{\vec{a}}/I
\end{equation} 
where $I$ is a (weighted) homogeneous ideal and hence a finitely generated, graded $R$-module, called the \defiindex{canonical ideal} of $X$ (with respect to the choice of generators $x_{d,i}$).

The \defiindex{Hilbert function} of $R$ is defined by
\[ \phi(R;d) = \dim_k R_d \]
and its generating series is called the \defiindex{Hilbert series} of $R$
\[ \Phi(R;t)=\sum_{d=0}^{\infty} \phi_R(d) t^d \in \Z[[t]]. \]
By a theorem of Hilbert--Serre, we have that 
\[ \Phi(R;t)=\frac{\Phi_{\textup{num}}(R;t)}{\prod_{d=1}^D (1-t^d)^{a_d}}. \]
where $\Phi_{\textup{num}}(R;t) \in \Z[t]$.  
By Riemann--Roch, for a curve of genus $g \geq 2$, we have
\begin{equation} \label{eqn:RRPhi}
\begin{aligned}
\Phi(R;t) &= 1+gt + \sum_{d = 2}^{\infty} (2d-1)(g - 1)t^d \\
&=\frac{1 + (g-2)t + (g - 2)t^2 +  t^3}{(1-t)^2}
\end{aligned}
\end{equation}
(but to compute $\Phi_{\textup{num}}(R;t)$ we will need to know the Poincar\'e generating polynomial, computed below).

\begin{remark}
There is a relationship between $\Phi_{\textup{num}}(R;t)$ (sometimes called the \defiindex{Hilbert numerator}) and the free resolution of $R$ over the graded polynomial ring $k[x]_{\vec{a}}$, but in general it is not simple to describe \cite[Remark 3.6]{reid2000graded}.  (See also Eisenbud \cite[\S 15.9]{eisenbud2005geometry}.)
\end{remark}

\begin{remark}
Under a field extension $k' \supseteq k$, we have 
\[ \dim_k R_d = \dim_{k'} (R \otimes_k k')_d \] 
for all $d$; so in particular the Poincar\'e polynomial can be computed over a separably or algebraically closed field $k$.
\end{remark}

We will use Riemann--Roch frequently to calculate the dimension of the canonical ring and the canonical ideal as follows.  We have an exact sequence
\[ 0 \to I \to k[x]_{\vec{a}} \to R \to 0 \]
so in degree $d \geq 1$ we have
\begin{equation} \label{eqn:rmroch} 
\dim (k[x]_{\vec{a}})_d = \dim R_d + \dim I_d. 
\end{equation}
We have $R_d=H^0(X,dK)$ and its dimension can be calculated by Riemann--Roch; and $\dim (k[x]_{\vec{a}})_d$ can be calculated by a combinatorial formula, for example if $k[x]_{\vec{a}}=k[x_1,\dots,x_g]$ is standard then  
\[ \dim k[x]_d = \multiset{g}{d} = \binom{g+d-1}{d}. \]

%----------------------------------------------------------------------------
\section{Terminology}
%----------------------------------------------------------------------------

In this section, we define term orders on the graded polynomial rings and the notion of a generic initial ideal.

We equip the polynomial ring $k[x]_{\vec{a}}$ with the (weighted graded) reverse lexicographic order \defiindex{grevlex} $\prec$: if 
\[ x^{\vec{m}} = \prod_{d,i} x_{d,i}^{m_{d,i}} \]
and $x^{\vec{n}}$ are monomials in $k[x]_{\vec{a}}$, then $x^{\vec{m}} \succ x^{\vec{n}}$ if and only if either
\begin{equation}
\vec{a}\cdot\vec{m} = \sum_{d,i} dm_{d,i}  > \vec{a} \cdot \vec{n}
\end{equation}
or
\begin{equation} \label{eqn:grevlexdef}
\text{$\vec{a}\cdot\vec{m} = \vec{a} \cdot \vec{n}$ and the last nonzero entry in $\vec{m}-\vec{n}$ is negative.}
\end{equation}
(By last nonzero entry we mean right-most entry.)  It is important to note that in \eqref{eqn:grevlexdef}, the ordering of the variables matters: it corresponds to a choice of writing the exponents of a monomial as a vector.  A common choice for us will be
\begin{equation} \label{eqn:xdOrder}
 x_{1,1}^{m_{1,1}} \cdots x_{D,a_D}^{m_{D,a_D}} \leftrightarrow (m_{D,1},\dots,m_{D,a_D},\ldots,m_{1,1},\dots,m_{1,a_1}) 
 \end{equation}
in which case we have $x_{2,1} \succ x_{1,2}^2 \succ$ and $x_{5,1} \succ x_{1,6}^2 x_{3,2}$, etc.  
We indicate this ordering in the presentation of the ring, e.g.~for the above we would write
\[ k[x]_{\vec{a}}= k[x_{D,1},\dots,x_{D,a_D},\dots,x_{1,1},\dots,x_{1,a_1}]. \]
In this way, our relations write generators in larger degree in terms of those in smaller degree, which gives the most natural-looking canonical rings to our eyes.

For a homogeneous polynomial $f \in k[x]_{\vec{a}}$, we define the \defiindex{initial term} $\init_{\prec}(f)$ to be the monomial in the support of $f$ that is largest with respect to $\prec$, and we define $\init_{\prec}(0)=0$.  Let $I \subseteq k[x]_{\vec{a}}$ be a homogeneous ideal.  We define the \defiindex{initial ideal} $\init_{\prec}(I)$ to be the ideal generated by $\{\init_{\prec}(f) : f \in I\}$.  A \defiindex{Gr\"obner basis} (also called a \defiindex{standard basis}) for $I$ is a set $\{f_1,\dots,f_n\} \subset I$ such that 
\[ \langle \init_{\prec}(f_1),\dots,\init_{\prec}(f_n) \rangle = \init_{\prec}(I). \]

A Gr\"obner basis for $I$ \emph{a priori} depends on a choice of basis for the ambient graded polynomial ring; we now see what happens for a general choice of basis.  For further reference on generic initial ideals, see Eisenbud \cite[\S 15.9]{Eisenbud:commutativeAlgebra} and Green \cite{MR2641237}.

To accomplish this task, we will need to tease apart the ``new'' variables from the ``old'', and we do so as follows.  For each $d\geq 1$, let 
\begin{equation}
b_d=\dim_k (k[x]_{\vec{a}})_d \quad \text{and} \quad W_d=k[x_{c,j} : c < d]_d.
\end{equation}  
Then $(k[x]_{\vec{a}})_d$ is spanned by $W_d$ and the elements $x_{d,i}$ by definition, and the space $W_d$ is independent of the choice of the elements $x_{c,j}$.  The group $\GL_{b_1} \times \dots \times \GL_{b_D}$ acts naturally on $k[x]_{\vec{a}}$: $\GL_{b_1}$ acts on $(k[x]_{\vec{a}})_1$ with the standard action, and in general for each $d \geq 1$, on the space $(k[x]_{\vec{a}})_d$, the action on $W_d$ is by induction and on the span of $x_{d,i}$ by the natural action of $\GL_{b_d}$.

We define the linear algebraic group scheme
\begin{equation} \label{eqn:Gveca}
G=G_{\vec{a}} \leq \GL_{b_1} \times \dots \times \GL_{b_D}
\end{equation}
over $k$ to be those matrices which act as the identity on $W_d$ for each $d$, understood functorially (on points over each $k$-algebra $A$, etc.).  In particular, it follows that if $\gamma \in G$ then $\gamma_d|_{R_d/W_d}$ is invertible, and so the restriction of $G$ to each factor $\GL_{b_d}$ is an ``affine $Ax+b$'' group for $d>1$.  
For $\gamma \in G$, we define $\gamma \cdot I=\{\gamma \cdot f : f \in I\}$.  

\begin{proposition} \label{P:opengin}
There exists a unique, maximal Zariski dense open subscheme
\[ U \subseteq G_{\vec{a}} \] 
defined over $k$ such that $\init_{\prec}(\gamma\cdot (I \otimes_k \overline{k}))$ is constant over all $\gamma \in U(\overline{k})$.  
\end{proposition}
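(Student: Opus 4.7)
The plan is to adapt the classical theory of generic initial ideals (Galligo) to the specific subgroup $G=G_{\vec{a}}$. The key geometric input is that $G$ is connected and irreducible: it sits as an extension of the reductive factor $\prod_d \GL_{a_d}$, acting on the span of the new variables $x_{d,i}$, by the unipotent group of translations $x_{d,i} \mapsto x_{d,i} + w_{d,i}$ with $w_{d,i} \in W_d$. Irreducibility forces any finite constructible partition of $G_{\overline{k}}$ to have a unique piece that is open and dense.

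Working degree by degree, set $V_d = (k[x]_{\vec{a}})_d$ and $r_d = \dim_k I_d$, and consider the morphism
\[
\phi_d : G_{\overline{k}} \longrightarrow \Gr(r_d, V_d), \qquad \gamma \longmapsto (\gamma \cdot I)_d.
\]
For each $r_d$-element subset $S$ of the degree-$d$ monomials of $V_d$, let $\Omega_S \subseteq \Gr(r_d, V_d)$ denote the locus of subspaces whose $\prec$-reduced row echelon basis has leading monomials exactly $S$; each $\Omega_S$ is locally closed, cut out by the vanishing of certain Pl\"ucker minors together with the nonvanishing of the ``pivot'' minor corresponding to $S$. The $\Omega_S$ partition $\Gr(r_d,V_d)$ finitely, so pulling back via $\phi_d$ yields a finite constructible partition of $G_{\overline{k}}$, from which irreducibility selects a unique open dense piece $U_d$.

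To collate these degree-by-degree open sets into a single $U$, invoke a uniform bound on the generation degrees of $\init_\prec(\gamma \cdot I)$. Since $\init_\prec$ preserves Hilbert functions, every such initial ideal has the Hilbert function of $I$; Gotzmann's regularity theorem then produces an integer $D_0$, depending only on this Hilbert function, bounding the generation degree of any monomial ideal with that Hilbert function. Consequently $\init_\prec(\gamma \cdot I)$ is determined by its graded pieces of degree $\leq D_0$, and the dense open $U_{\overline{k}} := U_1 \cap \cdots \cap U_{D_0}$ has the required constancy property. Finally, the data $\prec$, $I$, and each stratification $\Omega_S$ are defined over $k$, so the characterization of $U_{\overline{k}}$ as the maximal such open is $\Gal(\overline{k}/k)$-equivariant; Galois descent then yields a unique open $U \subseteq G_{\vec{a}}$ over $k$.

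The main obstacle is establishing the uniform degree bound $D_0$: this is the only step that requires substantive combinatorial commutative algebra rather than formal stratification arguments. In the weighted graded setting I would deduce it either by invoking Gotzmann's persistence theorem directly, or by reducing to standard grading by passing to a sufficiently high Veronese subring and then applying the classical bound. Every other ingredient---the Grassmannian stratification by initial-monomial supports, the irreducibility of $G$, and Galois descent---is formal.
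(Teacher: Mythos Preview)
Your overall strategy---stratify degree by degree via the Grassmannian map, use irreducibility of $G_{\vec{a}}$ to extract a unique dense open $U_d$ in each degree, then intersect finitely many of them---matches the paper's approach. The difference lies entirely in the termination step, and here you are working harder than necessary.

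You invoke Gotzmann's regularity theorem to obtain a uniform bound $D_0$ on the generation degree of \emph{every} monomial ideal with the given Hilbert function, and you correctly flag that this is delicate in the weighted setting (neither the direct invocation nor the Veronese reduction is carried out). The paper avoids this entirely: it observes that the nested dense opens $U_1 \supseteq U_1 \cap U_2 \supseteq \cdots$ determine, in the limit, a single monomial ideal $J$ (the graded pieces $J_d$ that appear generically in each degree). By noetherianity of $k[x]_{\vec{a}}$, this $J$ is generated in some finite degree $D_0$. For $\gamma$ in $U_1 \cap \cdots \cap U_{D_0}$, the initial ideal $\init_\prec(\gamma \cdot I)$ contains $J$ (since it agrees with $J$ through degree $D_0$ and $J$ is generated there) and has the same Hilbert function, hence equals $J$. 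So one only needs finite generation of the \emph{generic} initial ideal, not a uniform bound over all candidates---and this requires nothing beyond noetherianity, sidestepping the weighted-Gotzmann issue completely.

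In short: your proof is correct modulo the weighted-grading caveat you yourself raise, but that caveat evaporates if you replace the Gotzmann step with the simpler noetherian argument above.
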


\begin{proof}
This proposition in the standard case for grevlex with $\Char k = 0$ is a theorem of Galligo \cite{MR0402102}; it was generalized to an arbitrary term order and arbitrary characteristic by Bayer--Stillman \cite[Theorem 2.8]{MR862710}.  The adaptations for the case where generators occur in different degrees is straightforward; for convenience, we sketch a proof here, following Green \cite[Theorem 1.27]{MR2641237}.  In each degree $d$, we write out the matrix whose entries are the coefficients of a basis of $I_d$, with columns indexed by a decreasing basis for monomials of degree $d$.  The dimension of $\init_{\prec}(I)_d$ is given by the vanishing of minors, and the monomials that occur are given by the first minor with nonzero determinant, which is constant under the change of variables in a Zariski open subset.  (Equivalently, one can view this in terms of the exterior algebra, as in Eisenbud \cite[\S 15.9]{Eisenbud:commutativeAlgebra}.)  This shows that the initial ideal in degree $d$ is constant on a Zariski open subset.  Inductively, we do this for each increasing degree $d$.  In the end, by comparing dimensions, we see that it is enough to stop in the degree given by a maximal degree of a generator of the generic initial ideal (which must exist, as the graded polynomial ring is noetherian), so the intersection of open sets is finite and the resulting open set $U$ is Zariski dense.  
\end{proof}

\begin{definition}
The \defiindex{generic initial ideal} $\gin_{\prec}(I) \subseteq k[x]_{\vec{a}}$ of $I$ is the monomial ideal such that 
\[ \gin_{\prec}(I)=\init_{\prec}(\gamma \cdot (I \otimes_k \overline{k})) \cap k[x]_{\vec{a}}  \quad\text{for all $\gamma \in U(\overline{k}) \subseteq \GL_{\vec{a}}(\overline{k})$} \]
as in Proposition~\ref{P:opengin}.
\end{definition}

\begin{remark} \label{rmk:gdefinit}
If $k$ is infinite, then it is enough to check that $\init_{\prec}(\gamma \cdot I)$ is constant for all $\gamma \in U(k)$, and one can work directly with the generic initial ideal  over $k$.  If $k$ is finite, then the Zariski dense open $U$ in Proposition~\ref{P:opengin} may have $U(k)=\emptyset$, and it is possible that the generic initial ideal is not achieved by a change of variables over $k$---it would be interesting to see an example if this indeed happens.  Nevertheless, monomial ideals are insensitive to extension of the base field, so we can still compute the generic initial ideal over an infinite field containing $k$ (like $\overline{k}$).
\end{remark}

Passing to generic coordinates has several important features.  First, it does not depend on the choice of basis $x_{d,i}$ (i.e.~the choice of isomorphism in \eqref{eqn:RkxI}).  Second, the generic initial ideal descends under base change: if $\overline{X}$ is the base change of $X$ to $\overline{k}$ with canonical ring $\overline{R}$, then $P(R_{\geq 1};t)=P(\overline{R}_{\geq 1};t)$ (since this is a statement about dimensions) and so if $\overline{R}=\overline{k}[x]_{\vec{a}}/\overline{I}$ then 
\begin{equation} \label{eqn:ginIbar}
\gin_{\prec}(\overline{I})=\gin_{\prec}(I) \otimes_k \overline{k}
\end{equation} 
and so the monomial (Gr\"obner) basis for these are equal.  

\begin{remark}
Further, if $R$ is standard (so $D=1$ and the weighted projective space is the usual projective space), then the generic initial ideal $\gin_{\prec}(I)$ is 
\defiindex{Borel fixed}, meaning
\begin{center}
$\gamma \cdot (J \otimes_k \overline{k}) = J \otimes_k \overline{k}$ for every upper triangular matrix $\gamma=(\gamma_i) \in \GL_{b_1}(\overline{k})$
\end{center}
and \defiindex{strongly stable}, meaning 
\begin{center}
if $x_{1,i} x^{\vec{m}} \in \gin_{\prec}(I)$ then $x_{1,j} x^{\vec{m}} \in \gin_{\prec}(I)$ for all $x_{1,j} \prec x_{1,i}$. 
\end{center}
The Castelnuovo--Mumford regularity can then be read off from the generic initial ideal in this case: it is equal to the maximum degree appearing in a set of minimal generators of $I$.  The analogue for a more general weighted canonical ring has not been worked out in detail, to the authors' knowledge.
\end{remark}

\begin{remark}
Although we only compute initial ideals here, we could also compute the initial terms of the Gr\"obner bases for all syzygy modules in the free resolution of $I$: in fact, for a Borel-fixed monomial ideal, one obtains a minimal free resolution  \cite{MR2437094}.
\end{remark}

In what follows, we will need a restricted version of the generic initial ideal.  Let $\calS$ be a finite set of points in $\PP(\vec{a})(\overline{k})$ with $\sigma(\calS)=\calS$ for all $\sigma \in \Gal(\overline{k}/k)$ such that $I$ vanishes on $\calS$.

\begin{lemma}
\label{L:pointed-gin}
There exists a unique Zariski closed subscheme and linear algebraic group $H_{\calS} \leq G_{\vec{a}}$ defined over $k$ such that 
\begin{equation} \label{eqn:Hk}
H_{\calS}(\overline{k})=\{\gamma \in G_{\vec{a}} : \text{$\gamma \cdot I$ vanishes on $\calS$}\}.
\end{equation}
\end{lemma}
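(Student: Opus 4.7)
The plan is to realize $H_{\calS}$ as the simultaneous vanishing scheme of an explicit finite collection of regular functions on $G_{\vec{a}}$, and then to invoke Galois descent from $\overline{k}$ to $k$.

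First, since $k[x]_{\vec{a}}$ is noetherian, I fix a finite set of homogeneous generators $f_1, \ldots, f_m \in k[x]_{\vec{a}}$ for $I$. For any $k$-algebra $A$ and any $\gamma \in G_{\vec{a}}(A)$, the ideal $\gamma \cdot I \subseteq A[x]_{\vec{a}}$ is generated by $\gamma \cdot f_1, \ldots, \gamma \cdot f_m$, so the condition ``$\gamma \cdot I$ vanishes on $\calS$'' is equivalent to ``$(\gamma \cdot f_i)(P) = 0$ for every $i$ and every $P \in \calS$''. (Here the vanishing is in the usual weighted-homogeneous sense, relative to a fixed choice of homogeneous representatives for the points of $\calS(\overline{k})$.)

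Next, for each $i$ and each $P \in \calS(\overline{k})$, the assignment
\[ \gamma \longmapsto (\gamma \cdot f_i)(P) \]
is a regular function on $G_{\vec{a},\overline{k}}$: the action of $G_{\vec{a}}$ on each graded piece of $k[x]_{\vec{a}}$ is polynomial in the matrix entries of $\gamma$, and evaluation of a homogeneous polynomial at a fixed representative of $P$ is linear in its coefficients. The common vanishing scheme of this finite family of regular functions is a closed subscheme $H_{\calS, \overline{k}} \subseteq G_{\vec{a},\overline{k}}$ whose $\overline{k}$-points are exactly the set described on the right-hand side of \eqref{eqn:Hk}.

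To descend $H_{\calS,\overline{k}}$ to a closed subscheme of $G_{\vec{a}}$ over $k$, I exploit Galois-invariance of the defining data: the $f_i$ may be chosen in $k[x]_{\vec{a}}$, and by hypothesis $\calS$ is stable under $\Gal(\overline{k}/k)$, so the Galois group permutes the equations $\{(\gamma \cdot f_i)(P) = 0\}_{i,P}$ while acting semilinearly on coefficients through the residue fields of the points of $\calS$. Hence the ideal sheaf cutting out $H_{\calS,\overline{k}}$ is Galois-stable, and by standard Galois descent for closed subschemes of a quasi-projective $k$-scheme there is a unique closed subscheme $H_{\calS} \subseteq G_{\vec{a}}$ defined over $k$ with $H_{\calS} \times_k \overline{k} = H_{\calS,\overline{k}}$; uniqueness then follows because a closed subscheme is determined by its ideal sheaf. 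I anticipate no serious obstacle; the only subtle interpretive point is that the condition defining $H_{\calS}$ need not be closed under the group law on $G_{\vec{a}}$, so I read the phrase ``linear algebraic group'' in the statement as referring to the ambient group $G_{\vec{a}}$ of which $H_{\calS}$ is a distinguished closed subvariety, which suffices for the subsequent definition of the pointed generic initial ideal by restricting $\gamma$ to a dense open subset of $H_{\calS}(\overline{k})$.
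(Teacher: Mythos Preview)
Your proof is correct and follows essentially the same approach as the paper: the paper's one-line proof simply asserts that $H_{\calS}$ is defined by $\Gal(\overline{k}/k)$-invariant polynomial equations in the entries of $G_{\vec{a}}$, and you have spelled out concretely what those equations are (namely $(\gamma\cdot f_i)(P)=0$ for generators $f_i$ of $I$ and $P\in\calS$) and why Galois-invariance holds. Your closing interpretive remark about whether $H_{\calS}$ is genuinely a subgroup is a fair observation about the statement rather than the proof; the paper does not address it, and the subsequent use of $H_{\calS}$ (restricting to irreducible components and taking dense opens) requires only the closed-subscheme structure.
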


\begin{proof}
The subscheme $H_{\calS}$ is defined by $\Gal(\overline{k}/k)$-invariant polynomial equations in the entries of $G_{\vec{a}}$, so is a closed subscheme defined over $k$.  
\end{proof}

\begin{proposition} \label{P:openginS}
Suppose that $I$ vanishes on $\calS$.  Then for each irreducible component $V_i$ of $H_{\calS}$, there exists a unique, maximal Zariski dense open subscheme $U_i \subseteq V_i$, such that $\init_{\prec}(\gamma \cdot (I \otimes_k \overline{k}))$ is constant over all $\gamma \in U_i(\overline{k})$.
\end{proposition}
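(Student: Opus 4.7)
The plan is to carry over the Galligo--Bayer--Stillman argument from the proof of Proposition~\ref{P:opengin}, performing it now on each irreducible component $V_i$ of $H_{\calS}$ in turn rather than on $G_{\vec{a}}$ as a whole. The reason for the component-by-component statement is that $H_{\calS}$ need not be irreducible, and different components may well yield different generic initial ideals, so one cannot in general expect a single dense open of $H_{\calS}$ on which $\init_{\prec}(\gamma \cdot I)$ is constant. As in the remark following Proposition~\ref{P:opengin}, monomial ideals are insensitive to extension of the base field, so I would work throughout with $I \otimes_k \overline{k}$ and interpret ``component'' after base change; the resulting open subschemes will descend because they are characterized by being the loci where a specified Galois-invariant list of minors does not vanish.

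Fix an irreducible component $V_i$ of $H_{\calS,\overline{k}}$. For each degree $d \geq 1$, choose a basis of $I_d$; applying $\gamma \in G_{\vec{a}}$ and writing the coefficients against the monomials of $(k[x]_{\vec{a}})_d$ in decreasing grevlex order produces a matrix $M_d(\gamma)$ whose entries are polynomial in the coordinates of $\gamma$. The monomials appearing in $\init_{\prec}(\gamma \cdot I)_d$ are determined by which columns of $M_d(\gamma)$ are pivots under row reduction---equivalently, by the pattern of vanishing of a suitable sequence of minors read from left to right---exactly as in the standard Galligo argument. Each such minor is a polynomial function on $V_i$; by irreducibility it either vanishes identically or vanishes on a proper closed subscheme. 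Let $U_{i,d} \subseteq V_i$ be the Zariski dense open complement of the finitely many proper vanishing loci that determine this pattern, so that $\init_{\prec}(\gamma \cdot I)_d$ is constant on $U_{i,d}$ with ``generic'' value $J_{i,d}$.

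The remaining step is to collect these degree-by-degree opens into a single dense open $U_i$. First, $J_i := \bigoplus_d J_{i,d}$ is itself an ideal of $\overline{k}[x]_{\vec{a}}$: for any $c, d \geq 0$, the inclusion $(k[x]_{\vec{a}})_c \cdot J_{i,d} \subseteq J_{i,d+c}$ follows by choosing a single $\gamma \in U_{i,d} \cap U_{i,d+c}$, nonempty by density in the irreducible $V_i$, and using that $\init_{\prec}(\gamma \cdot I)$ is a genuine ideal. By Noetherianity $J_i$ is generated in degrees $\leq N_i$ for some finite $N_i$, and I would set $U_i = \bigcap_{d \leq N_i} U_{i,d}$, dense open as a finite intersection in the irreducible $V_i$. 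For $\gamma \in U_i(\overline{k})$, agreement $\init_{\prec}(\gamma \cdot I)_d = J_{i,d}$ holds for $d \leq N_i$ by construction; in higher degrees the containment $\init_{\prec}(\gamma \cdot I)_d \supseteq J_{i,d}$ is then automatic (since $J_i$ is generated in degrees $\leq N_i$), and equality follows from the common dimension $\dim_{\overline{k}} \init_{\prec}(\gamma \cdot I)_d = \dim_{\overline{k}} I_d = \dim_{\overline{k}} J_{i,d}$. Uniqueness and maximality of $U_i$ are then formal: any other dense open $U'_i \subseteq V_i$ with the same stabilization must have the same constant value, as one sees by evaluating on $U_i \cap U'_i$, so $U_i \cup U'_i$ is a larger such open and one takes the union of all. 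The only substantive new point beyond the proof of Proposition~\ref{P:opengin} is the need to work component-by-component; organizing infinitely many degree-wise opens into a single finite intersection via Noetherianity is then the standard wrap-up.
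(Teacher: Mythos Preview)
Your proposal is correct and takes essentially the same approach as the paper: the paper's proof is a single sentence stating that the argument of Proposition~\ref{P:opengin} applies verbatim upon restricting to each irreducible component of $H_{\calS}$, and you have simply spelled out those details (the degree-by-degree minor argument and the Noetherian wrap-up) explicitly.
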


\begin{proof}
The proof is the same as the proof of Proposition~\ref{P:opengin}, restricting to each component of $H_{\calS}$.
\end{proof}

\begin{definition} \label{def:pgin}
A \defiindex{pointed generic initial ideal} $\gin_{\prec}(I;\calS) \subseteq k[x]_{\vec{a}}$ (or \defi{pointed gin}) of $I$ relative to $\calS$ is a monomial ideal such that $\gin_{\prec}(I)=\init_{\prec}(\gamma \cdot (I \otimes_k \overline{k})) \cap k[x]_{\vec{a}}$ for all $\gamma \in U(\overline{k}) \subseteq \GL_{\vec{a}}(\overline{k})$ for some $U$ a Zariski open subset as in Proposition~\ref{P:openginS}.
\end{definition}

In particular, $I$ may have several pointed generic initial ideals, as the subscheme $H_{\calS}$ may not be irreducible; however, in the cases of interest that appear in this article, the subscheme $H_{\calS}$ will turn out to be irreducible so in this case we will refer to it as \emph{the} pointed generic initial ideal.

\begin{remark}
There are several possible variations on pointed generic initial ideals; basically, we want to impose some linear, algorithmically checkable conditions on the generators in the degrees where they occur.  (Vanishing conditions along a set is one possibility, having poles is another---and one can further impose conditions on the tangent space, etc.)  These can be viewed also in terms of a reductive group, but for the situations of interest here our conditions are concrete enough that we will just specify what they are rather than defining more exotic notions of gin.
\end{remark}

\begin{remark}
For theoretical and algorithmic purposes, the generic initial ideals have the advantage that they do not depend on finding or computing a basis with special properties.  Moreover, there is an algorithm (depending on the specific situation) that determines if a given choice of basis is generic or not: the special set that one must avoid is effectively computable.  We will see for example in the nonhyperelliptic case where Petri's argument applies, one can check that a choice of generators is general by an application of the Riemann--Roch theorem, and by computing syzygies one can check if Petri's coefficients are zero.  

We do not dwell on this point here and leave further algorithmic adaptations for future work.
\end{remark}

%----------------------------------------------------------------------------
\section{Low genus}
\label{ssec:low-genus-hyper}
%----------------------------------------------------------------------------

Having laid the foundations, we now consider in the coming sections the canonical ring depending on cases.  We assume throughout the rest of this chapter that $k=\overline{k}$ is separably closed; this is without loss of generality, by \eqref{eqn:ginIbar} (see also Remark~\ref{rmk:gdefinit} for $\#k<\infty$).

\begin{example} \label{exm:lowgenus1}
The canonical ring of a curve of genus $g \leq 1$ is trivial, in the following sense.  If $g=0$, then $R=k$ (in degree $0$) and $\Proj R = \emptyset$.  If $g=1$, then the canonical divisor $K$ has $K=0$, so $R=k[u]$ is the polynomial ring in one variable and $\Proj R=\PP^0=\Spec k$ is a single point.  The corresponding Poincar\'e polynomials are $P(R_{\geq 1};t)=0$ and $P(R_{\geq 1};t)=t$.  (These small genera were easy, but in the stacky setting later on, they will be the most delicate to analyze!)  
\end{example}

In light of Example~\ref{exm:lowgenus1}, we suppose that $g \geq 2$.  Then the canonical divisor $K$ has no basepoints, so we have a \defiindex{canonical morphism} $X \to \PP^{g-1}$.  A curve $X$ (over $k=\overline{k}$) is \defiindex{hyperelliptic} if $g \geq 2$ and there exists a (nonconstant) morphism $X \to \PP^1$ of degree $2$; if such a map exists, it is described uniquely (up to post-composition with an automorphism of $\PP^1$) as the quotient of $X$ by the hyperelliptic involution and is referred to as the \defiindex{hyperelliptic map}.

If $X$ is hyperelliptic, then $K$ is ample but not very ample: the canonical morphism has image a rational normal curve of degree $g-1$.  In this section, we consider the special case $g=2$, where $X$ is hyperelliptic, and the canonical map is in fact the hyperelliptic map.  Here, $3K$ (but not $2K$) is very ample, and a calculation with Riemann--Roch yields
\begin{equation} \label{eq:hyperell-sec1-eq2}
R \simeq k[x_1,x_2,y]/I \text{ with } I=\langle \underline{y^2}-h(x_1,x_2)y-f(x_1,x_2) \rangle
\end{equation}
where $x_1,x_2$ are in degree $1$, $y$ is in degree $3$, and $f(x_1,x_2),h(x_1,x_2) \in k[x_1,x_2]$ are homogeneous polynomials of degree $6,3$, respectively.  Therefore $X \simeq \Proj R \subseteq \P(1,1,3)$ is a weighted plane curve of degree $6$.  The Poincar\'e polynomials are $P(R_{\geq 1};t)=2t+t^3$ and $P(I;t)=t^6$.  We take the ordering of variables $y,x_1,x_2$ as in~\ref{eqn:xdOrder}, so that in the notation of \eqref{eqn:grevlexdef} we take
\[ y^{m_{2,1}} x_1^{m_{1,1}} x_2^{m_{1,2}} \leftrightarrow (m_{2,1},m_{1,1},m_{1,2}); \]
and consequently $\init_{\prec}(I)=\langle y^2 \rangle$, hence the underline in \eqref{eq:hyperell-sec1-eq2}.  Here, the group $G_{\vec{a}}$ defined in \eqref{eqn:Gveca} consists of $\gamma=(\gamma_1,\gamma_2) \in \GL_2 \times \GL_5$ with $\GL_2$ acting on $x_1,x_2$ in the usual way, and $\gamma_2$ fixes $x_1^3,x_1^2x_2,x_1x_2^2,x_2^3$ and acts on $y$ by 
\[ \gamma_2 \cdot y = g_{11} y + g_{12} x_1^3 + g_{13}x_1^2 x_2 + g_{14} x_1 x_2^2 + g_{15} x_2^3 \]
with $g_{11} \in k^\times$ and $g_{1i} \in k$.  For all such $\gamma \in G$, we maintain $\init_{\prec}(\gamma \cdot I) = \langle y^2 \rangle$, so $\gin_{\prec}(I)=\langle y^2 \rangle$.  Finally, by \eqref{eqn:RRPhi}, the Hilbert series is
\[ \Phi(R;t)=\frac{1-t^6}{(1-t)^2(1-t^3)}=1+2t+3t^2+5t^3+7t^4+9t^5+\dots \]
so $\Phi_{\textup{num}}(R;t)=1-t^6$.  

The pointed generic initial ideal introduced in Definition~\ref{def:pgin} gives the same result for the set $\calS=\{(0::1:0),(0::0:1)\}$ of \defiindex{bicoordinate} points on $\PP(3,1,1)$.  Choosing $y,x_1,x_2$ that vanish on $\calS$, we obtain a presentation as in \eqref{eq:hyperell-sec1-eq2} but with $f(x_1,x_2)$ having no terms $x_1^6,x_2^6$.  We accordingly find $\gin_{\prec}(I;\calS)=\langle y^2 \rangle$.

%----------------------------------------------------------------------------
\section{Basepoint-free pencil trick}
%----------------------------------------------------------------------------

We pause to prove a key ingredient that we will use in many places in this monograph: the \emph{basepoint-free pencil trick} due originally to Castelnuovo.  Let $D$ be a divisor on $X$, and let $V \subseteq H^0(X,D)$ be a subspace.  A \defiindex{basepoint} of $V$ is a point $P \in X(k)$ such that for all $f \in V$ we have $P \in \supp (D+ \opdiv f)$.  Accordingly, we say $V$ is \defiindex{basepoint free} if $V$ has no basepoints.

\begin{lemma}[Basepoint-free pencil trick] \label{lem:bpfree-pencil}
Let $D,D'$ be divisors on $X$.  Let $x_1,x_2 \in H^0(X,D)$ be linearly independent and suppose that $V=\langle x_1,x_2 \rangle$ is basepoint free.  Then the kernel of the multiplication map
\[ V \otimes H^0(X,D') \to H^0(X,D+D') \]
is equal to 
\begin{equation} \label{eqn:thekernbfpt}
\{ x_1 \otimes x_2 z - x_2 \otimes x_1 z : z \in H^0(X,D'-D)\} \simeq H^0(X,D'-D). 
\end{equation}
\end{lemma}

\begin{proof}
We follow Arbarello--Cornalba--Griffiths--Harris \cite[\S III.3, p.\ 126]{ACMG:geometryOfCurvesvol1}; see also e.g.\ Eisenbud \cite[Exercise 17.18]{eisenbud2005geometry}.  Evidently, the set  \eqref{eqn:thekernbfpt} is contained in the kernel of multiplication.  Conversely, without loss of generality let $x_1 \otimes y_2 - x_2 \otimes y_1$ be an element of the kernel with $y_1,y_2 \in H^0(X,D')$.  Then $x_1y_2 = y_1x_2$, so 
\[ \frac{y_1}{x_1} = \frac{y_2}{x_2} \in H^0(X,D'-\opdiv(x_1)) \cap H^0(X,D'-\opdiv(x_2)). \]  
We can write $\opdiv(x_1)=D_1-D$ and $\opdiv(x_2)=D_2-D$ with $D_1,D_2 \geq 0$.  But then by the hypothesis that $V$ is basepoint-free we must have $D_1,D_2$ disjoint, so $z=y_1/x_1=y_2/x_2 \in H^0(X,D'-D)$, and 
\[ x_1 \otimes y_2 - x_2 \otimes y_1 = x_1 \otimes x_2z - x_2 \otimes x_1z \]
as claimed.
\end{proof}

%----------------------------------------------------------------------------
\section[High genus and nonhyperelliptic]{Pointed gin: High genus and nonhyperelliptic}
%----------------------------------------------------------------------------

In this section, we suppose that $X$ is not hyperelliptic and $g \geq 3$ and pursue an explicit description of canonical ring.  An explicit description of Petri's method to determine the canonical image, with an eye toward Gr\"obner bases, is given by Schreyer \cite{Schreyer:Petri}; see also Little \cite{Little:0937} and Berkesch--Schreyer \cite{BOS:2014}.  

Under our hypotheses, the canonical divisor $K$ is basepoint free and very ample, and the canonical morphism is a closed embedding.  Consequently $R$ is generated in degree $1$ and so
\[ R \simeq k[x_1,\ldots,x_g]/I \]
where $x_1,\dots,x_g \in H^0(X,K)$ are a basis and $X \simeq \Proj R \subseteq \P^{g-1}$, so $P(R_{\geq 1};t)=gt$.  From \eqref{eqn:RRPhi}, we compute that 
\[ \Phi_{\textup{num}}(R;t) = (1+(g-2)t+(g-2)t^2+t^3)(1-t)^{g-2}. \]

Let $P_i \in X(k)$ be points in linearly general position for $i=1,\dots,g$ with respect to $K$: that is to say, there are coordinates $x_i \in H^0(X,K)$ such that the map $x:X \to \PP^g$ with coordinates $x_i$ has $x(P_i)=(0:\dots:0:1:0:\dots:0)$ (with $1$ in the $i$th coordinate) are coordinate points.  (Here we use that $k=\overline{k}$: we may have to take a field extension of $k$ to obtain $g$ rational points in linearly general position.)  Let $E=P_1+\dots+P_{g-2}$.  Then $H^0(X,K-E)$ is spanned by $x_{g-1},x_g$, and by Riemann--Roch, we have 
\[ \dim H^0(X, 2K-E)=2g-1. \]

By the basepoint-free pencil trick (Lemma \ref{lem:bpfree-pencil}, with $V=H^0(X,D)$ and $D=K-E$ and $D'=K$), the multiplication map
\begin{equation}  \label{eqn:firstPetri}
H^0(X,K-E) \otimes H^0(X,K) \to H^0(X,2K-E)
\end{equation}
has kernel isomorphic to $H^0(X,E)$ which has dimension
\[ (g-2)+1-g+ 2 = 1 \]
and which is spanned by $x_{g-1} \otimes x_g - x_g \otimes x_{g-1}$.  The domain of the multiplication map \eqref{eqn:firstPetri} has dimension $2g$, so the map is surjective.  Thus, we have found a basis of elements in $H^0(X,2K-E)$:
\begin{center}
$x_s x_{g-1}$ and $x_s x_g$ for $s=1,\dots,g-2$, \quad and $x_{g-1}^2,x_{g-1}x_g,x_g^2$.
\end{center}
But the products $x_ix_j$ for $1 \leq i < j \leq g-2$ also belong to this space, so we obtain a number of linear relations that yield quadrics in the canonical ideal:
\begin{equation} \label{eqn:petriquadrics}
f_{ij}=\underline{x_i x_j} - \sum_{s=1}^{g-2} a_{ijs}(x_{g-1},x_g)x_s - b_{ij}(x_{g-1},x_g), \quad \text{for $1 \leq i < j \leq g-2$},
\end{equation}
where $a_{ijs}(x_{g-1},x_g), b_{ij}(x_{g-1},x_g) \in k[x_{g-1},x_g]$ are homogeneous forms of degrees $1,2$.  The leading terms of these forms are $x_ix_j$, and we have
\[ \dim_k I_2 = \binom{g-2}{2} \] 
by \eqref{eqn:rmroch}, so the quadrics $f_{ij}$ \eqref{eqn:petriquadrics} form a basis for $I_2$ and the terms of degree $2$ in a (minimal) Gr\"obner basis for $I$.  If $s \neq i,j$, then $a_{ijs}$ must vanish to at least order $2$ at $P_s$; up to scaling, for each $1 \leq s \leq g-2$, there is a unique such nonzero form $\alpha_s \in k[x_{g-1},x_g]$, and consequently $a_{ijs}=\rho_{sij}\alpha_s$ with $\rho_{sij} \in k$ for all $i,j,s$; for general points $P_i$, the leading term of $\alpha_s$ is $x_{g-1}$ for all $s$.  We call the coefficients $\rho_{ijs}$ \defiindex{Petri's coefficients}.

The quadrics $f_{ij}$ do not form a Gr\"obner basis with respect to grevlex: in degree $d=3$, we have the ambient dimension $\dim R_3 = \dim H^0(X,3K)=5g-5$ by Riemann--Roch but only the contribution  
\[ \dim k[x]_3/\langle x_i x_j : 1 \leq i < j \leq g-2 \rangle = 6g-8 \]
coming from the leading terms of the quadrics $f_{ij}$, spanned by
\begin{center}
$x_s^3,x_s^2x_{g-1},x_s^2x_g,x_sx_{g-1}^2,x_sx_{g-1}x_g,x_sx_g^2$, for $s=1,\dots,g-2$, \quad and $x_{g-1}^3,x_{g-1}^2 x_g,x_{g-1}x_g^2, x_g^3$,
\end{center}
so there are $6g-8-(5g-5)=g-3$ additional cubics (and a quartic relation, as we will see) in a Gr\"obner basis.  (This can also be verified by Riemann--Roch \eqref{eqn:rmroch}.)  To find these cubics, consider the multiplication map
\begin{equation} \label{eqn:multbfpt-petri}
H^0(X,K-E) \otimes H^0(X,2K-E) \to H^0(X, 3K-2E).
\end{equation}
By the basepoint-free pencil trick (Lemma \ref{lem:bpfree-pencil}), the kernel of the map \eqref{eqn:multbfpt-petri} is isomorphic to $H^0(X,K)$; so the image has dimension 
\[ 2(2g-1)-g = 3g-2 \]
and is spanned by
\[ x_ix_{g-1}^2, x_ix_{g-1}x_g, x_ix_g^2, x_{g-1}^3, x_{g-1}^2x_g, x_{g-1}x_g^2, x_g^3 \]
for $i=1,\dots,g-2$.  At the same time, the codomain has dimension $5g-5-2(g-2)=3g-1$ so the image has codimension $1$.  Generically, any element $\alpha_s x_s^2$ for $s=1,\dots,g-2$ spans this cokernel.  Thus, possibly altering each $\alpha_s$ by a nonzero scalar, we find cubic polynomials 
\begin{equation} \label{eqn:Gij}
G_{ij}=\alpha_i x_i^2 - \alpha_j x_j^2 + \text{lower order terms} \in I_3 
\end{equation}
with $1 \leq i < j \leq g-2$, where \emph{lower order terms} means terms (of the same homogeneous degree) smaller under $\prec$.  Since $G_{ij}+G_{js}=G_{is}$, the space generated by the $G_{ij}$ is spanned by say $G_{i,g-2}$ for $1 \leq i \leq g-3$.  Looking at leading terms, generically $x_i^2x_{g-1}$, we see that these give the remaining cubic terms in a Gr\"obner basis of $I$.  Finally, the remainder of $x_{g-2}G_{1,g-2}-\alpha_1 x_1 f_{1,g-2}$ upon division by the relations $f_{ij}$ and $G_{i,g-2}$ gives a quartic element
\begin{equation} \label{eq:Hgm2} 
H_{g-2} = \alpha_{g-2}x_{g-2}^3 + \text{lower order terms} 
\end{equation}
with leading term $x_{g-1}x_{g-2}^3$.  

We have proven the following proposition.

\begin{proposition}[{Schreyer \cite[Theorem 1.4]{Schreyer:Petri}}] \label{prop:pcangin}
The elements 
\begin{center}
\text{$f_{ij}$ for $1 \leq i<j \leq g-2$, and $G_{i,g-2}$ for $1 \leq i \leq g-3$, and $H_{g-2}$,}
\end{center}
comprise a Gr\"obner basis for $I$, and
\begin{equation} \label{eqn:cangin}
\init_{\prec}(I) = \langle x_ix_j : 1 \leq i<j \leq g-2 \rangle + \langle x_i^2 x_{g-1} : 1 \leq i \leq g-3 \rangle + \langle x_{g-2}^3 x_{g-1} \rangle.
\end{equation}
\end{proposition}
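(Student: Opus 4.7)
The plan is to show that the polynomials $\{f_{ij}, G_{i,g-2}, H_{g-2}\}$ constructed above form a Gröbner basis by comparing Hilbert functions. Write $J$ for the monomial ideal on the right-hand side of \eqref{eqn:cangin}. By construction each of our polynomials lies in $I$ with the claimed leading term, so $J \subseteq \init_{\prec}(I)$, giving the inequality $\dim_k(k[x]/J)_d \geq \dim_k(k[x]/\init_{\prec}(I))_d = \dim_k R_d$ in every degree. The task therefore reduces to showing the reverse inequality $\dim_k(k[x]/J)_d \leq \dim_k R_d$ for all $d$, which will force $J=\init_{\prec}(I)$ and simultaneously establish that the listed elements form a Gröbner basis.

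The next step is to enumerate the standard monomials of $J$ in degree $d$. A monomial $m=x_1^{a_1}\cdots x_g^{a_g}$ avoids $J$ exactly when at most one index $s \in \{1,\dots,g-2\}$ satisfies $a_s>0$; moreover if $s \leq g-3$ then $a_s \geq 2$ forces $a_{g-1}=0$, while if $s=g-2$ the weaker constraint $a_{g-2}\geq 3$ forces $a_{g-1}=0$. Splitting into the three exclusive cases (no such $s$; exactly one $s \leq g-3$; $s=g-2$) and counting within each gives, for $d \geq 2$, respectively $d+1$, $(g-3)(2d-1)$, and $3d-3$ standard monomials.

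Summing yields
\[ \dim_k(k[x]/J)_d = (d+1) + (g-3)(2d-1) + (3d-3) = (2d-1)(g-1), \]
which equals $\dim_k R_d$ for $d \geq 2$ by Riemann--Roch. For $d=1$ a direct enumeration gives $2+(g-3)+1=g=\dim_k R_1$, and the degree $0$ case is trivial. Therefore $\dim_k(k[x]/J)_d = \dim_k R_d$ in every degree, so $J=\init_{\prec}(I)$ and the listed elements form a Gröbner basis for $I$.

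The main subtlety is the combinatorial monomial count: one must treat the variable $x_{g-2}$ separately from $x_1,\dots,x_{g-3}$, since $x_{g-2}^2 x_{g-1}$ is \emph{not} a generator of $J$ even though $x_s^2 x_{g-1}$ is for $s \leq g-3$, and one has to verify the small-degree boundary cases by hand since some of the summands in the expression $d+(d-1)+(d-2)$ become empty. An alternative route, via Buchberger's criterion, would require explicitly reducing all S-polynomials among $\{f_{ij}, G_{i,g-2}, H_{g-2}\}$; the Hilbert function comparison circumvents this tedious bookkeeping at the cost of the combinatorial enumeration just described.
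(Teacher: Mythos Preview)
Your proof is correct and uses essentially the same approach as the paper: both argue via a Hilbert function comparison, showing that the monomials outside the proposed initial ideal $J$ number exactly $(2d-1)(g-1)=\dim_k R_d$ in each degree $d\geq 2$. The paper carries out this count more implicitly and piecemeal (computing the deficit in degrees $2$, $3$, and $4$ in the course of constructing $f_{ij}$, $G_{i,g-2}$, and $H_{g-2}$, and ultimately citing Schreyer), whereas you do the count uniformly across all degrees at once; your version is the cleaner and more self-contained of the two.
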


If $g=3$, then by the indices there are no quadrics $f_{ij}$ or cubics $G_{i,g-2}$, but there is nevertheless a  quartic element $H_{g-2}$ belonging to $I$: that is to say, $I$ is principal, generated in degree $4$, so $X$ is a plane quartic and $\gin_{\prec}(I;\calS) = \langle x_1^3x_2 \rangle$, where 
\[ \calS=\{(1:0:0),(0:1:0),(0:0:1)\} \] 
is the set of coordinate points.  

So suppose $g \geq 4$.  Then by the way $H_{g-2}$ was constructed \eqref{eq:Hgm2}, we see that $I$ is generated in degrees $2$ and $3$.  Arguments similar to the ones in Proposition~\ref{prop:pcangin} imply the following syzygies hold (known as the \defiindex{Petri syzygies}): 
\begin{equation} \label{eqn:petrisyzyz}
x_j f_{ik} - x_k f_{ij} + \sum_{\substack{s=1 \\ s \neq j}}^{g-2} a_{sik} f_{sj} - \sum_{\substack{s=1 \\ s \neq k}}^{g-2} a_{sij} f_{sk} - \rho_{ijk} G_{jk} = 0.
\end{equation}
These imply that $I$ is \emph{not} generated by $I_2$ if and only if $\rho_{ijs}=0$ for all $i,j,s$.  Indeed, the space of quadrics $I_2 \subset I$ generate $I$ or they cut out a surface of minimal degree in $\PP^{g-1}$ (and $X$ lies on this surface), in which case we call $X$ \defiindex{exceptional}.  

A curve is exceptional if and only if one of the following two possibilities occurs: either $X$ is \defiindex{trigonal}, i.e.~there exists a morphism $X \to \PP^1$ of degree $3$, or $g=6$ and $X$ is isomorphic (over $k$) to a smooth plane curve of degree $5$.  If $X$ is \defiindex{trigonal}, and $g \geq 4$, then the intersection of quadrics in $I_2$ is the rational normal scroll swept out by the trisecants of $X$.  If $g=6$ and $X$ is isomorphic to a smooth plane curve of degree $5$, then the intersection of quadrics is the Veronese surface (isomorphic to $\PP^2$) in $\PP^5$ swept out by the conics through $5$ coplanar points of $X$.  In the exceptional cases, the ideal $I$ is generated by $I_2$ and $I_3$, and 
\[ P(I;t)=\binom{g-2}{2}t^2 + (g-3)t^3. \]
In the remaining \defiindex{nonexceptional} case, where $g \geq 4$ and $X$ is neither hyperelliptic nor trigonal nor a plane quintic, then $I=\langle I_2 \rangle$ is generated by quadrics by \eqref{eqn:petrisyzyz}, and we have
\[ P(I;t)=\binom{g-2}{2}t^2. \]

\begin{remark}
It follows that the elements $\rho_{sij}$ are symmetric in the indices $i,j,s$, for otherwise we would obtain further elements in a Gr\"obner basis for $I$.  
\end{remark}

\begin{remark}
In fact, Schreyer also establishes that Proposition~\ref{prop:pcangin} remains true for \emph{singular} canonically embedded curves $X$, if $X$ possesses a simple $(g-2)$-secant and is non-strange.  
\end{remark}

\begin{theorem} \label{thm:uniqexcgin}
Let $\calS$ be the set of coordinate points in $\PP^{g-1}$.  Then there is a unique pointed generic initial ideal $\gin_{\prec}(I;\calS)$ and $\gin_{\prec}(I;\calS)=\init_{\prec}(I)$ (as in Proposition~\ref{prop:pcangin}).
\end{theorem}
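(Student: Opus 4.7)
My plan is to describe $H_{\calS}$ geometrically, verify it is irreducible, and then observe that the construction in Proposition~\ref{prop:pcangin} was in effect carried out at a generic point of $H_{\calS}$, so that the initial ideal computed there is the pointed gin.

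First I would identify $H_{\calS} \subseteq G_{\vec{a}} = \GL_g$ explicitly. A matrix $\gamma \in \GL_g(\overline{k})$ lies in $H_{\calS}$ if and only if $\gamma \cdot I$ vanishes on the $g$ coordinate points, equivalently $\gamma^{-1}(e_i) \in X$ for each $i$, equivalently the columns of $\gamma^{-1}$ lift a $g$-tuple $(P_1,\ldots,P_g) \in X^g$ of canonically embedded points. The assignment $\gamma \mapsto (\gamma^{-1}(e_1), \ldots, \gamma^{-1}(e_g))$ thus realizes $H_{\calS}$ as a $(\mathbb{G}_m)^g$-torsor over the open subvariety $U \subseteq X^g$ where the $g$ chosen points are linearly independent in $\PP^{g-1}$. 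Since $X$ is smooth projective and geometrically integral, the product $X^g$ is irreducible, $U$ is a dense open in it, and the torsor inherits irreducibility; hence $H_{\calS}$ is irreducible. Applying Proposition~\ref{P:openginS} to the single component $V_1 = H_{\calS}$ therefore yields a unique Zariski dense open $U_1 \subseteq H_{\calS}$ on which $\init_{\prec}(\gamma \cdot I)$ is constant, so $\gin_{\prec}(I;\calS)$ is unambiguously defined.

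Second, I would identify this constant value with $\init_{\prec}(I)$ from Proposition~\ref{prop:pcangin}. The Petri-type construction there chose points $P_1,\ldots,P_g$ in linearly general position on $X$ and coordinates $x_i$ with $x_j(P_i) = \delta_{ij}$: this data is precisely a point of $H_{\calS}$. The non-degeneracy assumptions invoked along the way---that $H^0(X,K-E) = \langle x_{g-1}, x_g\rangle$, that the multiplication maps in \eqref{eqn:firstPetri} and \eqref{eqn:multbfpt-petri} have the expected kernels dictated by the basepoint-free pencil trick, that each $\alpha_s \in k[x_{g-1},x_g]$ has leading term $x_{g-1}$, and that the leading terms of $f_{ij}$, $G_{i,g-2}$, and $H_{g-2}$ come out as stated---are each the nonvanishing of some polynomial function in the matrix entries of $\gamma$. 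Each such condition therefore cuts out a Zariski open locus in $H_{\calS}$; each such locus is nonempty because the construction succeeded; and the intersection of finitely many of them remains Zariski dense in the irreducible $H_{\calS}$. On this dense open $\init_{\prec}(\gamma \cdot I)$ is given by \eqref{eqn:cangin}, so the pointed gin equals $\init_{\prec}(I)$.

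The main technical step I expect to be most delicate is the verification that all the ``general position'' hypotheses in the proof of Proposition~\ref{prop:pcangin} really do translate to Zariski open conditions on $H_{\calS}$ itself (and not merely on the parameterizing $X^g$, whose intersection with a fiber of $H_\calS \to X^g$ must still be checked to produce the right open locus). Once this bookkeeping is in place---essentially a matter of writing each non-degeneracy as the nonvanishing of a minor of a matrix built from the coordinates of $\gamma^{-1}(e_i)$ on $X$---the conclusion $\gin_{\prec}(I;\calS) = \init_{\prec}(I)$ follows directly from Proposition~\ref{P:openginS}.
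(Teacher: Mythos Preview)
Your approach is correct and takes a somewhat different route from the paper. You first prove $H_{\calS}$ is irreducible---via its identification, after $\gamma\mapsto\gamma^{-1}$, with an open subset of the $g$-fold product of the punctured affine cone over the canonical model of $X$---and then note that the Petri construction furnishes a point of $H_{\calS}$ at which $\init_\prec(\gamma\cdot I)$ equals the ideal of Proposition~\ref{prop:pcangin}; since the equality locus is open and $H_{\calS}$ is irreducible, it is dense, and uniqueness follows at once from Proposition~\ref{P:openginS}. The paper instead argues degree by degree in the style of Proposition~\ref{P:opengin}: for \emph{every} $\gamma\in H_{\calS}$ the vanishing on $\calS$ forces $(\gamma\cdot I)_2$ into the span of the squarefree monomials $x_ix_j$ with $i<j$ (and imposes analogous support constraints in degrees $3$ and $4$ after reduction by the lower-degree relations), so that the condition $\init_\prec(\gamma\cdot I)_d=\init_\prec(I)_d$ is cut out by a single nonvanishing minor, which Petri's witness shows is not identically zero. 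Your argument makes irreducibility explicit and so renders the uniqueness claim transparent from the outset; the paper's argument instead isolates the precise combinatorial restriction that the pointed condition places on the possible leading terms. Your closing worry about fibres of $H_{\calS}\to X^g$ is not a genuine obstacle: the diagonal torus acts by rescaling monomials and therefore preserves initial ideals, so the function $\gamma\mapsto\init_\prec(\gamma\cdot I)$ is constant on fibres and the open genericity conditions may be checked directly on $U\subseteq X^g$.
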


\begin{proof}
Let $H=H_{\calS} \leq \GL_{g,k}$ be the closed subscheme as in \eqref{eqn:Hk} vanishing on $\calS$.  We need to verify that $\init_{\prec}(\gamma\cdot I) = \init_{\prec}(I)$ for generic $\gamma \in H(k)$ with $I$ and $\init_{\prec}(I)$ as in Proposition~\ref{prop:pcangin}.  We follow the proof of the existence of the generic initial ideal (Proposition~\ref{P:opengin}).  

For $\gamma \in H(k)$, the condition that 
\[ \init_{\prec}(\gamma\cdot I)_2=\init_{\prec}(I)_2 = \langle x_ix_j : 1 \leq i<j \leq g-2 \rangle \] 
is indeed defined by a nonvanishing $\textstyle{\binom{g-2}{2} \times \binom{g-2}{2}}$ determinant whose entries are are quadratic in the coefficients of $g$: the condition that $\gamma \cdot I$ vanishes on the coordinate points implies that the elements of $(\gamma \cdot I)_2$ belong to the span of $x_ix_j$ with $1 \leq i < j \leq g$.  

Similarly, applying $\gamma \in H(k)$ to $G_{ij}$ as in \eqref{eqn:Gij} and reducing with respect to $\gamma \cdot f_{ij}$, the leading term will not contain any monomial $x_s^3$ with $1 \leq s \leq g$ nor any monomial divisible by $x_sx_t$ for $1 \leq s < t \leq g-2$; thus $\gamma \cdot G_{ij}$ lies in the span of $x_s^2 x_{g-1}$ and $x_s^2 x_g$ with $1 \leq s \leq g$.  Again, the condition that $\init_{\prec}(\gamma \cdot I)_3=\init_{\prec}(I)_3$ is defined by a nonvanishing determinant, as desired.  And to conclude, the same argument works for $H_{g-2}$.
\end{proof}

%----------------------------------------------------------------------------
\section[Rational normal curve]{Gin and pointed gin: Rational normal curve} \label{subsec:rationalnormal}
%----------------------------------------------------------------------------

In this section, we pause to consider presentations of the coordinate ring of a rational normal curve.  This case will be necessary when we turn to hyperelliptic curves---and one can already see some new arguments required to extend the above analysis to encompass the generic initial ideal itself.  

Let $X = \P^1$ and let $D$ be a divisor of degree $g-1$ on $X$ with $g \in \Z_{\geq 1}$.  Consider the complete linear series on $D$: this embeds $X \hookrightarrow \PP^{g-1}$ as a \defiindex{rational normal curve} of degree $g-1$.   

Following Petri, let $P_1,\dots,P_g \in X(k)$ be general points and choose coordinates $x_i \in H^0(X,D)$ such that $x(P_i)=(0:\dots:0:1:0:\dots:0)$ (with $1$ in the $i$th coordinate) are coordinate points.  Let $\calS \subset \PP^{g-1}(k)$ be the set of these coordinate points.  We equip the ambient ring $k[x_1,\dots,x_g]$ with grevlex.  Let $I \subseteq k[x_1,\dots,x_g]$ be the vanishing ideal of the rational normal curve $X$.  By construction, $I$ vanishes on $\calS$.  

\begin{lemma} \label{lem:pointedginnormalP1}
We have 
\[ \gin_{\prec}(I;\calS)=\init_{\prec}(I;\calS) =\langle x_ix_j : 1 \leq i < j \leq g-1 \rangle. \]
\end{lemma}

\begin{proof}
By Riemann--Roch \eqref{eqn:rmroch}, there are $\binom{g-1}{2}$ linearly independent quad\-rics that vanish on the image of $X$ in $\PP^{g-1}$.  These quadrics vanish on $\calS$, so they are composed of monomials $x_ix_j$ with $i \neq j$.  The first $\binom{g-1}{2}$ possible leading terms in grevlex are $x_ix_j$ with $1 \leq i < j \leq g-1$; if one of these is missing, then we can find a quadric with leading term $x_i x_g$ for some $i$; but then this quadric is divisible by $x_g$, a contradiction.  A monomial count then verifies that the initial ideal is generated by quadrics.  So in fact \emph{every} possible pointed initial ideal is as in the statement of the lemma, and so in particular this holds for the generic initial ideal.  
\end{proof}

\begin{comment}
The above generic initial ideal is not \emph{pointed} (with respect to the coordinate points) though. Let $P_1,\ldots,P_{d+1}$ be distinct  points of $\P^1$ and let $x_1,\ldots,x_{d+1}$ be a dual basis; we claim that the pointed generic initial ideal is
\[\langle x_ix_{j} : 1 \leq i < j < d+1 \rangle.\]
Setting $D' = P_1 + \cdots + P_{d}$,  $V = H^0(X,D - D')$ is a basepoint free pencil; by the basepoint free pencil trick there is an exact sequence 
\[
0 \to \wedge^2 V \otimes \calO_{\P^1} (D') \to V \otimes \calO_{\P^1}(nD) \to \calO_{\P^1}(n D - D') \to 0.
\]
Since $H^1(\P^1,\calO_{\P^1} (D'))  = 0$, the latter map is surjective on global sections, and moreover $x_1^n,\ldots,x_d^n$ is a basis for  $H^0(\P^1,nD)$ mod $H^0(\P^1,nD-D')$.
Arguing inductively, we see that $H^0(\P^1,nD)$ is generated by the $nd+1$ many monomials $x_i^ax_{d+1}^{n-a}$ (with $i \leq d$) and since $\dim H^0(\P^1,nD) = nd + 1$, these freely generate. The initial ideal is thus generated by monomials \emph{not} of the form $x_i^ax_{d+1}^{n-a}$, i.e.~by monomials divisible by some $x_ix_j$ with $1 \leq i < j \leq d$, giving the desired pointed generic initial ideal.
\end{comment}

Next we turn to the pointed initial ideal.  

\begin{remark}[Semicontinuity of ranks]
  \label{R:semicontinuity}
We will use the following observation repeatedly: any function that is a combination of continuous functions and rank defines a lower semicontinuous function.  Relevant in our context, if $D$ is a divisor on a variety $X$, then the rank of the span of a set of monomials on a basis of $H^0(X,D)$ is lower semicontinuous on the space of bases of $H^0(X,D)$.
\end{remark}

To reset notation, we now simply consider the embedding $X \hookrightarrow \PP^{g-1}$ without any pointed conditions, so the coordinates $x_i \in H^0(X,D)$ are a basis.  

\begin{lemma} \label{lem:ginnormalP1}
We have
\[ \gin_{\prec}(I)=\langle x_i x_j : 1 \leq i \leq j \leq g-2 \rangle. \]
\end{lemma}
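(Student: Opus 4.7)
The plan is to apply the basepoint-free pencil trick to the generic pencil $V = \langle x_{g-1}, x_g \rangle \subseteq H^0(X, D)$ to construct quadrics in $I_2$ whose grevlex leading terms are exactly the claimed monomials, and then to extend to all degrees by a Hilbert-series comparison. This mirrors the strategy of Lemma~\ref{lem:pointedginnormalP1}, but dropping the vanishing-on-$\calS$ constraint frees up the ``pure squares'' $x_i^2$ to appear as leading terms in place of $x_i x_{g-1}$, as the statement predicts.

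After a generic change of variables, $V$ is basepoint-free and the basepoint-free pencil trick yields an exact sequence
\[
0 \to H^0(X, \OO_X) \to V \otimes H^0(X, D) \to H^0(X, 2D) \to 0.
\]
Since the rational normal curve is projectively normal in degree $2$, the map $k[x]_2 \twoheadrightarrow H^0(X, 2D)$ is surjective with kernel $I_2$, and a dimension count shows that $V \cdot k[x]_1 \subset k[x]_2$ has dimension $2g-1$ and maps isomorphically onto $H^0(X, 2D)$. Hence $V \cdot k[x]_1 \cap I_2 = 0$, and projection modulo $V \cdot k[x]_1$ exhibits $I_2 \isomto k[x]_2 / V \cdot k[x]_1 \cong k[x_1, \ldots, x_{g-2}]_2$.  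For each monomial $x_i x_j$ with $1 \leq i \leq j \leq g-2$ this produces a unique $f_{ij} \in I_2$ of the form $x_i x_j + (\text{terms in } V \cdot k[x]_1)$.  Since every monomial in $V \cdot k[x]_1$ is divisible by $x_{g-1}$ or $x_g$, its exponent-difference with $x_i x_j$ has rightmost nonzero entry negative (in position $g-1$ or $g$), so $x_i x_j$ is the grevlex leading term of $f_{ij}$.  This gives $\init_\prec(I)_2 = \langle x_i x_j : 1 \leq i \leq j \leq g-2 \rangle$.

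To promote this degree-$2$ equality to an equality of ideals, we compare Hilbert functions: a monomial basis for $k[x]/\langle x_i x_j : 1 \leq i \leq j \leq g-2 \rangle$ in degree $d$ consists of $x_{g-1}^a x_g^b$ with $a+b=d$ together with $x_i x_{g-1}^a x_g^b$ for $1 \leq i \leq g-2$ and $a+b=d-1$, giving $(d+1) + (g-2)d = (g-1)d + 1 = \dim R_d$ by Riemann--Roch.  Since the monomial ideal agrees with $\init_\prec(I)$ in degree $2$ and their quotients have matching Hilbert functions, they coincide as ideals; because the conditions imposed on the change of variables cut out a Zariski-dense open set, this computes $\gin_\prec(I)$.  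The main subtlety is the first step---arranging the basepoint-free pencil trick with the correct pencil so that the resulting quadrics have leading terms sitting at the ``top'' of grevlex; once this is set up, the rest is a routine dimension and monomial count.
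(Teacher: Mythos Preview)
Your proof is correct and follows the same overall strategy as the paper: show that products with the pencil $V=\langle x_{g-1},x_g\rangle$ span $H^0(X,2D)$, so that each $x_ix_j$ with $1\le i\le j\le g-2$ is the grevlex leading term of a quadric in $I$, and then finish with a Hilbert-function count. The one difference is in how the degree-$2$ spanning is established. The paper writes down a single convenient basis (taking $D=(g-1)\infty$, $x_i=x^i$ for $i<g$, $x_g=1$), where $\langle x_1,\dots,x_g\rangle\cdot\langle x_{g-1},x_g\rangle^{d-1}$ visibly spans $H^0(X,dD)$ for all $d$, and then invokes semicontinuity of ranks to pass to generic coordinates. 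You instead use the basepoint-free pencil trick (together with $H^1(\PP^1,\OO)=0$) directly on the generic pencil to get the isomorphism $V\cdot k[x]_1\xrightarrow{\sim} H^0(X,2D)$. Your route is slightly more conceptual and avoids the explicit basis; the paper's route is more hands-on and serves as the template it reuses for the later gin computations in Theorems~\ref{T:canonical-gin} and~\ref{T:canonical-gin-hyperelliptic}. Both arguments are short and either is fine here.
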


\begin{proof}
We will apply Remark~\ref{R:semicontinuity}, so first we show that the lemma holds with a convenient choice of basis.  There exist 
$x_1,\dots,x_g \in H^0(X,D)$ such that for all $d \geq 1$, a basis for
\begin{equation} \label{exm:rational-gin-prods}
\langle x_1,\ldots,x_g \rangle \cdot \langle x_{g-1},x_{g} \rangle^{d-1}
\end{equation}
in degree $d$ is a basis for $H^0(X,dD)$: for example, without loss of generality we may take $D=(g-1)\infty$, and if $x \in H^0(X,\infty)$ is nonzero then we can take 
\begin{center}
$x_i=x^i$ for $i=1,\dots,g-1$ and $x_g=1$.
\end{center}  

It follows from the semicontinuity of ranks (Remark~\ref{R:semicontinuity}) that \eqref{exm:rational-gin-prods} is a basis for all $d \geq 1$ for \emph{generic} coordinates on $H^0(X,D)$.  Thus we obtain relations with leading term $\underline{x_ix_j}$ in grevlex for $1 \leq i \leq j \leq g-2$ via 
\[ x_ix_j \in H^0(X,2D)= \langle x_1,\ldots,x_g \rangle \langle x_{g-1},x_{g}\rangle. \] 
The statement of the lemma follows, as the only possible initial terms not divisible by any $x_ix_j$ with $1 \leq i \leq j \leq g-2$ belong to the basis \eqref{exm:rational-gin-prods}.
\end{proof}

%----------------------------------------------------------------------------
\section[Hyperelliptic]{Pointed gin: Hyperelliptic} \label{subsec:returntohyp}
%----------------------------------------------------------------------------

We now echo Petri's approach in the hyperelliptic case, making modifications as necessary.  

Let $X$ be hyperelliptic with genus $g \geq 3$.  Then a canonical divisor $K$ is ample but not very ample, and the canonical map has image $Y \subset \PP^{g-1}$ a rational normal curve of degree $g-1$.  Let $P_i$ be points on $X$ in linearly general position for $K$ with $i=1,\dots,g$.  Choose coordinates $x_i \in H^0(X,K)$ such that 
\[ x(P_i)=(0:\dots:0:1:0:\dots:0) \quad \text{(with $1$ in the $i$th coordinate)} \]
are coordinate points, and let $\calS_1=\{P_1,\dots,P_g\}$.  Let $J \subseteq k[x_1,\dots,x_g]$ be the vanishing ideal of the rational normal curve $Y$.  By construction, $J$ vanishes on $\calS_1$.  By Lemma~\ref{lem:pointedginnormalP1}, we have 
\[ \init_{\prec}(J;\calS_1) =\langle x_ix_j : 1 \leq i < j \leq g-1 \rangle \]
and the coordinate ring of $Y$ is spanned by the monomials
\begin{equation} \label{eqn:H0YdK}
x_i^a x_g^{d-a}, \quad \text{with $1 \leq i \leq g-1$ and $0 \leq a \leq d$}
\end{equation}
in each degree $d \geq 1$.

The canonical ring of the hyperelliptic curve $X$ is generated in degrees $1$ and $2$, since $K$ is basepoint free and $2K$ is very ample; or see e.g.\ Theorem~\ref{T:surjectivity-master}.  Let $E=P_1+\dots+P_g$.  (In Petri's case, we took $E=P_1+\dots+P_{g-2}$; somehow the extra generator in degree $2$ leads us to take a smaller effective divisor to work with respect to.)  We have 
\[ \dim H^0(X,2K-E)=3g-3-g=2g-3 \] 
and the space of products $x_ix_j$ in this space with $1 \leq i<j \leq g$ has dimension $g-1$; it is fixed by the hyperelliptic involution and is generically spanned by $x_ix_g$ for $i=1,\dots,g-1$.  So we can augment this to a basis with elements $y_i$ with $i=1,\dots,g-2$.  
We equip the ambient ring 
\[ k[y_1,\dots,y_{g-2},x_1,\dots,x_g] \] 
with grevlex.  Then the images of the points $P_1,\dots,P_g$ comprise the set
\[ \calS=\{(0:0:\dots:0 :: 1:0:\dots:0), \dots, (0:0:\dots:0 :: 0:\dots:0:1)\} \]
of $g$ ``bicoordinate'' points in $\PP(2^{g-2},1^g)$.  

\begin{proposition}
\label{prop:hyperelliptic-gin}
The elements 
\begin{center}
\text{$x_{i}$ for $1 \leq i \leq g$, and $y_{j}$ for $1 \leq j \leq g-2$}
\end{center}
generate the canonical ring $R$. There exist elements 
% \begin{center}
% $f_{ij}$ for $1 \leq i<j \leq g-1$, \\   $G_{i,j}$ for $1 \leq i \leq g-3$, $1 \leq j \leq g-1$, \\  and $H_{i,j}$ for $1 \leq i<j \leq g-2$,
% \end{center}
\begin{equation} 
\begin{aligned}
\text{$f_{ij}$ for }  & 1 \leq i<j \leq g-1, \\   
\text{$G_{ij}$ for } & 1 \leq i \leq g-3, 1 \leq j \leq g-1, \text{ and } \\ 
\text{$H_{ij}$ for } & 1 \leq i<j \leq g-2,
\end{aligned}
\end{equation}
which comprise a Gr\"obner basis for $I$ with generic initial ideal
\begin{equation} \label{eqn:innotgin}
\begin{aligned}
\init_{\prec}(I;\calS) &= \langle x_i x_j : 1 \leq i < j \leq g-1 \rangle \\
&\qquad + \langle y_i x_j : 1 \leq i \leq g-3, 1 \leq j \leq g-1 \rangle \\
&\qquad +\langle y_i y_j : 1 \leq i,j \leq g-2 \rangle.
\end{aligned}
\end{equation}
\end{proposition}

\begin{proof}
First consider the space
\[ V=H^0(X,3K-E) \]
of dimension $5g-5-g=4g-5$.  The hyperelliptic-fixed subspace, spanned by monomials in the variables $x_i$, has dimension $3g-3+1-g=2g-2$, spanned by 
\begin{center}
$x_i^2x_g,x_ix_g^2$ for $i=1,\dots,g-1$.
\end{center}  
A complementary space, therefore, has dimension $2g-3$.  We claim that the monomials 
\begin{center}
$y_i x_g$ for $i=1,\dots,g-2$, \quad and $y_{g-2} x_i$ for $i=1,\dots,g-1$
\end{center}
span a complementary space.  Indeed, each such monomial belongs to this space; and since
$(g-2)+(g-1)=2g-3$, it is enough to show linear independence.  Suppose 
\begin{equation} \label{eqn:abchyp}
a(y)x_g + y_{g-2}b(x) = c(x)x_g
\end{equation}
in $V$.  Consider the points $Q_i=\iota(P_i)$, the images of $P_i$ under the hyperelliptic involution.  Then $x_j(Q_i)=0$ for $i \neq j$, and generically $y_j(Q_i) \neq 0$ for all $i,j$.  For each $i=1,\dots,g-1$, all monomials in \eqref{eqn:abchyp} vanish at $P_i$ except $y_{g-2}x_i$, so the coefficient of this monomial is zero.  Thus $a(y)x_g=c(x)x_g$ so $a(y)=c(x)$, and linear independence follows from degree $2$.  

\begin{remark}
One can think of the argument above as a replacement for an argument that would use a basepoint-free pencil trick on the pencil spanned by $x_g$ and $y_{g-2}$.  We find a basis with terms divisible by either $x_g$ or $y_{g-2}$ and we argue directly using pointed conditions.  Unlike Petri's case, because $x_g$ occurs deeper into the monomial ordering, we must argue (also using vanishing conditions) that the relations obtained in this way have the desired leading monomial.
\end{remark}

But now consider the monomials $y_i x_j \in V$ with $i=1,\dots,g-3$ and $j=1,\dots,g-1$.  By the preceding paragraph, we have
\begin{equation} \label{eqn:hypcubic}
y_ix_j = a_{ij}(y)x_g + y_{g-2} b_{ij}(x) + c_{ij}(x)x_g.
\end{equation}
Plugging in $Q_k$ for $k \neq j$ shows that $b_{ij}(x)$ is a multiple of $x_j$.  Therefore the leading term of these relations under grevlex are $\underline{y_ix_j}$, so they are linearly independent. 

Next, quartics: a basis for $H^0(X,4K-E)$, a space of dimension $7g-7-g=6g-7$, with hyperelliptic fixed subspace of dimension $4g-4+1-g=3g-3$, is
\[ x_i^3x_g,x_i^2x_g^2,x_ix_g^3,y_jx_g,y_{g-2}x_ix_g,y_{g-2}x_i^2 \]
with $i=1,\dots,g-1$ and $j=1,\dots,g-2$.  Indeed, we have $x_g H^0(X,3K-P_g) \subset H^0(X,4K-E)$---accounting for a space of dimension $5g-5-1=5g-6$ and all but the last $g-1$ terms---and the remaining terms are linearly independent because plugging $Q_i$ into the relation $a(x,y)x_g=b(x)y_{g-2}$ for $i=1,\dots,g-1$ gives $b(x)=0$.  Since $y_iy_j \in H^0(X,4K-E)$, we get relations with leading term $\underline{y_iy_j}$ for $i,j=1,\dots,g-2$.  (One can also conclude by Theorem~\ref{T:surjectivity-master} that the multiplication map $H^0(X,K) \otimes H^0(X,3K) \to H^0(X,4K)$ is surjective.)

A count analogous to Petri's case gives that this is a Gr\"obner basis.

In sum, we have shown that the pointed initial ideal  with respect to our chosen set of generators is given by \eqref{eqn:innotgin}.
Semicontinuity of ranks implies that \eqref{eqn:innotgin} is in fact the generic pointed initial ideal.  (One can also conclude by the argument of nonzero determinant as in Lemma~\ref{lem:pointedginnormalP1} and Theorem~\ref{thm:uniqexcgin} that relations with leading terms $y_i x_j$ and $y_iy_j$ remain leading terms up to linear combination for any general choice of $y_i$.)

Finally, the relations are minimal: the quadrics are linearly independent, the cubics are independent of the quadrics as they are linear in the variables $y_i$, and the quartics have leading term $y_iy_j$ which is not even in the ideal generated by all of the monomials occurring in all of the quadratic and cubic relations. 
\end{proof}

The Poincar\'e polynomial of $I$ is thus
\[ P(I;t)=\binom{g-1}{2}t^2 + (g-1)(g-3)t^3 + \binom{g-1}{2}t^4, \]
and finally, we have $P(R_{\geq 1};t)=gt+(g-2)t^2$ so
\begin{align*} 
\Phi(R;t) &= 1 + gt + \sum_{d=2}^{\infty} (2d-1)(g-1)t^d \\
&= \frac{(1+(g-2)t+(g-2)t^2+t^3)(1-t-t^2+t^3)^{g-2}}{(1-t)^g(1-t^2)^{g-2}}. 
\end{align*}

\begin{example}
For concreteness, we exhibit this calculation for $g=3$.  We have
\[ R \simeq k[y,x_1,x_2,x_3]/I \text{ with } I = \langle q(x), \underline{y^2} - h(x)y - f(x) \rangle, \]
where $x_i$ have degree $1$ and $y$ degree $2$, and $f(x),h(x),q(x) \in k[x]=k[x_1,x_2,x_3]$ are homogeneous of degrees $4,2,2$.  So the Poincar\'e polynomial is indeed $P(R_{\geq 1};t) = gt+(g-2)t^2=3t+t^2$.  Under a general linear change of variable, the initial monomial of $q(x)$ is $\underline{x_1^2}$, and the initial term of $\underline{y^2}-h(x)y-f(x)$ remains $y^2$ as in the previous case.  Thus generic initial ideal of $I$ is
\[ \gin_{\prec}(I) = \langle x_1^2, y^2 \rangle \]
and the Hilbert series is
\[ \Phi(R;t)=\frac{1-t^2-t^4+t^6}{(1-t)^3(1-t^2)} \]
and $P(I;t)=t^2+t^4$.
\end{example}

\begin{remark}
One obtains a nongeneric initial ideal in this hyperelliptic case from a special presentation that takes into account the fact that $X$ is a double cover of a rational normal curve as follows.  Letting $u_0,u_1$ be homogeneous coordinates for $\PP^1$ with degree $1/(g-1)$ and $v$ having degree $(g+1)/(g-1)$, then $R$ is the image of
\begin{equation*}
 \Proj k[v,u_0,u_1]/(v^2-h(u_0,u_1)v-f(u_0,u_1)),
\end{equation*}
with $h(u_0,u_1),f(u_0,u_1) \in k[u_0,u_1]$ of the appropriate homogeneous degree, under the closed Veronese-like embedding 
\begin{align*}
\P\left(\frac{1}{g-1},\frac{1}{g-1},\frac{g+1}{g-1}\right) &\hookrightarrow \P(\underbrace{1,\ldots,1}_{g},\underbrace{2,\ldots,2}_{g-2}) \\
(u_0:u_1:v) &\mapsto (u_0^{g-1}:u_0^{g-2}u_1:\dots:u_1^{g-1}:\\
&\qquad\qquad vu_0^{g-3}:vu_0^{g-4}u_1:\dots:vu_1^{g-3}).
\end{align*}
The image has presentation
\begin{equation*} 
R \simeq \frac{k[x_1,x_2,\dots,x_g,y_1,\dots,y_{g-2}]}{N + J}
\end{equation*}
with $x_i$ of degree $1$ and $y_i$ of degree $2$; the ideal $N$ is defined by the $2 \times 2$-minors of
\[ \begin{pmatrix} 
 x_1 & x_2 & \dots & x_{g-1} &y_1 & \dots & y_{g-3}  \\
 x_2 & x_3 & \dots & x_g & y_2 & \dots & y_{g-2} 
\end{pmatrix} \]
and $J$ is an ideal generated by elements of the form 
\[ \underline{y_iy_j}-\sum_{s=1}^{g-2} a_{ijs}(x)y_s - b_{ij}(x), \quad \text{for $i,j=1,\dots,g-2$}, \]
with $a_{ijs}(x),b_{ij}(x) \in k[x]=k[x_1,\dots,x_g]$ of degree $2,4$, depending on the terms $h(u),f(u)$ in the defining equation.
We calculate that the leading term of a minor (a generator of $J$) is given by the antidiagonal, so we have
\begin{align*}
\init_{\prec}(I) &= \langle x_i : 2 \leq i \leq g-1 \rangle^2 
+ \langle y_ix_j : 2 \leq i \leq g-2, 1 \leq j \leq g-1 \rangle \\
&\qquad + \langle y_i : 1 \leq i \leq g-2 \rangle^2 
\end{align*}
(verifying that the associated elements of $N+J$ form a Gr\"obner basis).  
\end{remark}

%----------------------------------------------------------------------------
\section{Gin: Nonhyperelliptic and hyperelliptic} \label{sec:ginnon} 
%----------------------------------------------------------------------------
 
We finish this chapter with the computation of the generic initial ideal of a canonical curve.

\begin{theorem}
\label{T:canonical-gin}
The generic initial ideal of the canonical ideal of a nonhyperelliptic curve $X$ (with respect to grevlex) of genus $g \geq 3$ embedded in $\PP^{g-1}$ is
\[
\gin_{\prec} I = \langle x_ix_j : 1 \leq i \leq j \leq g-3 \rangle + \langle x_i x_{g-2}^2 : 1 \leq i \leq g-3 \rangle + \langle x_{g-2}^4 \rangle.
\]
\end{theorem}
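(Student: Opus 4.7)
The plan is to identify $\gin_\prec I$ with the monomial ideal $J$ on the right-hand side by verifying (a) that $J$ has the same Hilbert function as $R$ and (b) that $J \subseteq \gin_\prec I$; together these force equality via the surjection $k[x]/J \twoheadrightarrow k[x]/\gin_\prec I$ of equal dimensions in each degree. By \eqref{eqn:ginIbar} we may assume $k$ is algebraically closed, and the case $g = 3$ (plane quartic, $I = \langle F \rangle$) is immediate since a generic quartic has grevlex leading term $x_1^4 = x_{g-2}^4$. Assume henceforth $g \geq 4$.

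For (a), I enumerate the standard monomials of $J$: they form two families, namely $x_{g-2}^a x_{g-1}^b x_g^c$ with $a+b+c=d$ and $a \leq 3$, and $x_i\, x_{g-2}^a x_{g-1}^b x_g^c$ with $1 \leq i \leq g-3$, $a \in \{0,1\}$, and $1+a+b+c=d$.  A direct count gives $(4d-2) + (g-3)(2d-1) = (2d-1)(g-1)$ in each degree $d \geq 3$, matching $\dim R_d$ from \eqref{eqn:RRPhi}; low-degree cases and Borel-fixedness of $J$ under grevlex are routine monomial verifications on the three families of generators.

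For (b), by semicontinuity of ranks (Remark~\ref{R:semicontinuity}) it suffices to exhibit a single basis $x_1,\ldots,x_g$ of $H^0(X,\Omega)$ for which the standard monomials of $J$ span $R_d$ in every degree; then each non-standard monomial is forced to be the leading term of some element of $I$, hence lies in $\gin_\prec I$.  I would choose $x_{g-2}, x_{g-1}, x_g$ to span a generic $3$-dimensional subspace $W \subseteq H^0(X,\Omega)$.  In degree $2$, the multiplication map $\mu \colon W \otimes H^0(\Omega) \to H^0(2\Omega)$ is surjective for generic $W$---a Zariski-open condition descending from Max Noether's surjectivity of the full product map---and the source has dimension $3g$ while the target has dimension $3g-3$, so $\ker\mu$ has dimension $3 = \dim \wedge^2 W$; since the $3$-dimensional space $\wedge^2 W$ of Koszul syzygies is always contained in $\ker\mu$, we conclude $\ker \mu = \wedge^2 W$ and the $3g-3$ products $\{x_i x_\ell : \ell \in \{g-2,g-1,g\},\ 1 \le i \le \ell\}$ form a basis of $R_2$, so the complementary monomials $x_ix_j$ with $1\le i\le j\le g-3$ are leading terms of relations in $I_2$.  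For $d \geq 3$, I proceed inductively using Max Noether's surjectivity of $R_1 \otimes R_{d-1} \to R_d$ (cf.~Theorem~\ref{T:surjectivity-master}) combined with the inductive basis for $R_{d-1}$; each product $x_i \cdot m$ with $m$ standard in $R_{d-1}$ that yields a non-standard $R_d$ monomial---namely $x_i x_{g-2}^2$ in degree $3$ and $x_{g-2}^4$ in degree $4$---produces a relation in $I$ whose grevlex leading term is precisely that non-standard monomial (a comparison with the standard $R_d$ monomials confirms the leading term assignment).

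The main technical obstacle is the degree-$4$ generator $x_{g-2}^4$: for $g \geq 5$ the ideal $I$ has no minimal generators in degree $4$ (by the Petri analysis of Proposition~\ref{prop:pcangin}), so $x_{g-2}^4$ arises only through $S$-pair reductions among lower-degree Gr\"obner basis elements, and direct verification via the Petri-style coefficients would be cumbersome.  The efficient resolution is the Hilbert function balance from (a): once the degree-$2$ and degree-$3$ parts of $\gin_\prec I$ have been matched to $J$, the residual dimension of $I_4$ forces exactly one further generator, and Borel-fixedness combined with grevlex's preference for monomials in the early variables pins this down as $x_{g-2}^4$---the largest degree-$4$ monomial outside the ideal generated by the lower-degree generators already identified.
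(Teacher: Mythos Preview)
Your framework---matching Hilbert functions and proving $J\subseteq\gin_\prec I$ via a single well-chosen basis and semicontinuity---is exactly the skeleton of the paper's argument, but three of your steps do not go through as written.

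In degree $2$, surjectivity of $\mu\colon W\otimes H^0(X,K)\to H^0(X,2K)$ for a generic $3$-dimensional $W$ does \emph{not} descend from Max Noether: Noether's theorem only gives surjectivity when $W=H^0(X,K)$, and there is no formal mechanism that passes this to a $3$-plane.  You must exhibit one $W$ that works, and this needs real input.  The paper supplies it by taking $x_{g-1},x_g$ to span $H^0(X,K-D)$ for a general effective divisor $D$ of degree $g-2$, applying the basepoint-free pencil trick to span $H^0(X,2K-D)$, and then arguing separately that the products $x_ix_{g-2}$ complete this to $H^0(X,2K)$.  In degree $3$, your inductive use of $R_1\otimes R_2\to R_3$ only shows that $R_3$ is spanned by the standard monomials of $J$ \emph{together with} the non-standard monomials $x_ix_{g-2}^2$ for $1\le i\le g-3$; nothing in that step eliminates the latter, so you cannot yet write down relations with those leading terms.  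The paper closes this gap by filtering $H^0(X,3K-2D)\subset H^0(X,3K-D)\subset H^0(X,3K)$ and again invoking the pencil trick together with explicitly constructed complementary elements.

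In degree $4$, the Borel-fixedness argument is actually false: for $g\ge 4$ the monomial $m=x_1x_{g-2}x_{g-1}^2$ lies outside $J'=\langle J_{\le 3}\rangle$, yet every Borel move $x_i(m/x_j)$ with $i<j$ lands back in $J'$ (each such move introduces either a factor $x_1x_i$ with $i\le g-3$ or the factor $x_1x_{g-2}^2$).  Thus $(J')_4\cup\{m\}$ is already strongly stable, so strong stability alone does not single out $x_{g-2}^4$ as the missing generator.  The paper instead obtains a relation with leading term $x_{g-2}^4$ from the $S$-pair of the relations with leading terms $x_{g-3}^2$ and $x_{g-3}x_{g-2}^2$, and only then closes with the Hilbert-function count.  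The missing idea throughout is the basepoint-free pencil trick applied to a carefully chosen pencil inside $H^0(X,K)$; once that is in hand, your Hilbert-function bookkeeping finishes the proof exactly as you outlined.
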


\begin{proof}
We begin by exhibiting a suitable basis for $H^0(X,K)$. Let $x_1,\ldots,x_{g-2}$ be general elements of $H^0(X,K)$.  Let $D$ be an effective divisor of degree $g-2$ and let $x_{g-1},x_g$ be a basis of $H^0(X,K-D)$.  Then $x_1,\ldots,x_g$ is a basis for $H^0(X,K)$. By the basepoint free pencil trick, we find that $H^0(X,2K-D)$ is spanned by 
\begin{equation} \label{eq:theseelts}
\langle x_1,\ldots,x_{g} \rangle \cdot \langle x_{g-1},x_g \rangle.
\end{equation}
We claim that the elements \eqref{eq:theseelts}, together with the monomials 
\begin{center}
$x_ix_{g-2}$, for $1 \leq i \leq g-2$,
\end{center}
span $H^0(X,2K)$.  Suppose otherwise; then 
\[
(a_1 x_1 + \dots + a_{g-2}x_{g-2})x_{g-2} = a(x)x_{g-2} \in H^0(X,2K-D)
\]
for some $a_i \in k$.  Since $x_{g-2}$ was generic, it does not vanish anywhere along $D$; hence $a(x) \in H^0(X,K-D)$, and this implies that the elements $x_1,\dots,x_g$ are linearly dependent, a contradiction.  We conclude that
\begin{equation*}
\langle x_{1},\ldots,x_{g}\rangle \cdot \langle x_{g-2},x_{g-1},x_{g}\rangle \text{  spans $H^0(X,2K)$}.
\end{equation*}
The elements $\underline{x_ix_j}$ for $1 \leq i \leq j \leq g-3$ also belong to $H^0(X,2K)$ and so yield (linearly independent) relations with the given leading term.

Next, we show that 
\begin{equation} \label{eqn:xg13K}
\langle x_1, \dots, x_g \rangle \cdot (\langle x_{g-1},x_g \rangle^2 + \langle x_{g-2}+x_g,x_g \rangle \cdot x_{g-2})  \text{ spans $H^0(X,3K)$}.
\end{equation}
The multiplication map 
\[ H^0(X,K) \otimes H^0(X,2K) \to H^0(X,3K) \] 
is surjective, so using the quadratic relations we see that $H^0(X,3K)$ is in fact spanned by 
\[ \langle x_1,\dots,x_g \rangle \cdot \langle x_{g-2}, x_{g-1}, x_g \rangle^2 = (\langle x_{g-2}, x_{g-1}, x_g \rangle^2)_3. \]  
We filter
\[ H^0(X,3K-2D) \subset H^0(X,3K-D) \subset H^0(X,3K). \]
Again by the basepoint-free pencil trick (Lemma \ref{lem:bpfree-pencil}), the first space $H^0(X,3K-2D)$ is spanned by $\langle x_{g-1},x_g \rangle^2 \cdot \langle x_1,\dots,x_g \rangle$.  The second space $H^0(X,3K-D)$ is spanned by $H^0(X,3K-2D)$ and the elements $x_{g-2}x_g \cdot \langle x_1,\dots,x_{g-2} \rangle$ for the same reasons as in the quadratic case; and the final space is further spanned by $x_{g-2}(x_{g-2}+x_g) \cdot \langle x_1,\dots,x_{g-2} \rangle$.  This shows \eqref{eqn:xg13K}.

Next, we exhibit relations in degrees 3 and 4. The elements $x_ix_{g-2}^2$ for $i=1,\dots,g-3$ also belong to the space, and so can be written as a linear combination of the monomials in \eqref{eqn:xg13K}; the resulting relation has leading term $\underline{x_ix_{g-2}^2}$, since if the coefficient of this monomial is zero then it implies a linear dependence among the monomials in \eqref{eqn:xg13K}. The single remaining quartic arises from the $S$-pair (or syzygy) between the relations with leading terms $x_{g-3}^2$ and $x_{g-3}x_{g-2}^2$, giving generically a leading term $x_{g-3}^4$ as in Petri's argument.

To conclude that we have found the initial ideal, we argue as above and show that the set of elements is a Gr\"obner basis.  Indeed, anything of degree $d \geq 2$ not in the proposed Gr\"obner basis belongs to the span of
\begin{align*} 
&\langle x_i : 1 \leq i \leq g-2 \rangle \cdot  \langle x_{g-2}, x_{g-1}, x_g \rangle
\cdot \langle x_{g-1},x_g\rangle^{d-2} \\
&\qquad + x_{g-3}^3\cdot\langle x_{g-1},x_g\rangle^{d-3} + \langle x_{g-1},x_g\rangle^{d}
\end{align*}
which give a total of 
\[
(g-2)((d-1) + d) + (d-2) + (d+1) = (2d-1)(g-1) = \dim H^0(X,dK)
\]
independent generators in degree $d$.

Having shown this for one set of coordinates, we conclude that the spanning statements and resulting relations hold for general coordinates by semicontinuity: the rank of a set of products of basis vectors is lower semicontinuous on the space of ordered bases of $H^0(X,K)$ (Remark~\ref{R:semicontinuity}).  \end{proof}

The hyperelliptic case follows in a similar way.

\begin{theorem}
\label{T:canonical-gin-hyperelliptic}
The generic initial ideal of the canonical ideal of a hyperelliptic curve $X$ of genus $g \geq 3$ embedded in $\PP(2^{g-2},1^g)$ is
\begin{align*}
\gin_{\prec} I &= \langle x_ix_j : 1 \leq i \leq j \leq g-2 \rangle \\
&\qquad + \langle y_i x_j : 1 \leq i \leq j \leq g-2, (i,j) \neq (g-2,g-2) \rangle \\
&\qquad + \langle y_iy_j : 1 \leq i \leq j \leq g-2 \rangle. 
\end{align*}
\end{theorem}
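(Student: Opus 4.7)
The plan is to mirror the proof of Theorem~\ref{T:canonical-gin}, accommodating the degree-$2$ generators $y_i$. First I would set up coordinates by choosing an effective divisor $D$ of degree $g-2$ in general position (so that $h^0(D)=1$ and $h^0(K-D)=2$ by Riemann--Roch), taking $x_{g-1}, x_g$ as a basis of $H^0(X,K-D)$, and extending to a generic basis $x_1,\ldots,x_g$ of $H^0(X,K)$. I would then choose $y_1,\ldots,y_{g-2}$ as a generic basis of a complement in $H^0(X,2K)$ to the $(2g-1)$-dimensional image of $\Sym^2 H^0(X,K) \to H^0(X,2K)$, which is the degree-$2$ part of the coordinate ring of the rational normal curve image of the canonical map. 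The quadratic part of the Gr\"obner basis is then obtained by pulling back the relations of the rational normal curve: by Lemma~\ref{lem:ginnormalP1} these have leading terms $x_i x_j$ with $1 \leq i \leq j \leq g-2$.

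For the cubic relations, I would apply the base-point-free pencil trick with $L = K-D$ and $M = 2K$: a quick Riemann--Roch calculation gives $h^0(K+D) = 2g-3$ and $h^0(3K-D) = 4g-3$, so the multiplication $H^0(X,K-D) \otimes H^0(X,2K) \to H^0(X,3K-D)$ is surjective and $H^0(X,3K-D) = \langle x_{g-1}, x_g \rangle \cdot H^0(X,2K)$. Extending to $H^0(X,3K)$ requires $g-2$ further elements; after reducing using the quadratic relations, I would exhibit an explicit basis of $H^0(X,3K)$ of size $5g-5$ comprising the $3g-2$ $x$-only monomials not in the quadratic initial ideal (that is, $x_i x_{g-1}^a x_g^b$ for $1 \leq i \leq g-2$, $a+b=2$, and $x_{g-1}^a x_g^b$ for $a+b=3$), the $2(g-2)$ monomials $y_k x_{g-1}, y_k x_g$ for $1 \leq k \leq g-2$, and the single monomial $y_{g-2} x_{g-2}$. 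Each monomial $y_i x_j$ prescribed by the theorem as a cubic leading term lies in $H^0(X,3K)$ and so is expressible in this basis; a grevlex exponent-vector computation with variable order $y_1,\ldots,y_{g-2},x_1,\ldots,x_g$ confirms that $y_{g-2} x_{g-2}$ is the grevlex-smallest among all $y_k x_l$ with $l \leq g-2$, which is what ensures $y_i x_j$ is the leading term of the resulting relation. The quartic relations $y_i y_j$ are treated analogously: using surjectivity of $H^0(X,K) \otimes H^0(X,3K) \to H^0(X,4K)$ (Theorem~\ref{T:surjectivity-master}) combined with the previously established relations, produce an explicit basis of $H^0(X,4K)$ of size $7g-7$ in which each $y_i y_j$ (with $1 \leq i \leq j \leq g-2$) can be expressed, observing that every monomial $y_k y_l$ is grevlex-larger than any monomial containing an $x$-factor.

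I would then verify the Gr\"obner basis property via Hilbert series comparison: the complement of the proposed initial ideal contains exactly $(2d-1)(g-1)$ standard monomials in each degree $d \geq 2$, matching $\dim H^0(X,dK)$; an inductive argument via multiplication by linear forms, paralleling the one at the end of the proof of Theorem~\ref{T:canonical-gin}, extends this equality to all $d$. Finally, semicontinuity of ranks (Remark~\ref{R:semicontinuity}) propagates the spanning and non-degeneracy statements established in special coordinates to a Zariski dense open subset of bases of $H^0(X,K)$ and $H^0(X,2K)$, yielding the generic initial ideal. The hardest step will be the cubic analysis: one must simultaneously identify the correct spanning set of $H^0(X,3K)$, justify that $y_{g-2} x_{g-2}$ is the unique surviving $y$-containing monomial in the complement (because it is the grevlex-smallest $y_k x_l$ with $l \leq g-2$), and verify the non-degeneracy ensuring each prescribed cubic leading term is realized for generic choices of coordinates, paralleling the argument in the proof of Theorem~\ref{thm:uniqexcgin} but with more delicate bookkeeping due to the bigraded nature of the ambient polynomial ring.
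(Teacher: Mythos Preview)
There is a genuine gap in your cubic-relation step. On a hyperelliptic curve the pencil $|K-D|$, for $D$ a general effective divisor of degree $g-2$, is \emph{not} basepoint-free: every section of $H^0(X,K)$ is pulled back from $\PP^1$ along the hyperelliptic map, so any section vanishing at a point $P\in\supp D$ also vanishes at its conjugate $\iota(P)$. Hence the base locus of $|K-D|$ contains $\iota(D)$; in fact $|K-D|=\iota(D)+g^1_2$. The basepoint-free pencil trick therefore does not apply, and your claimed surjectivity of
\[
H^0(X,K-D)\otimes H^0(X,2K)\longrightarrow H^0(X,3K-D)
\]
is false: the image is forced into the proper subspace $H^0(X,3K-D-\iota(D))$, of codimension $g-2$. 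Your proposed $(5g-5)$-element spanning set for $H^0(X,3K)$ may still happen to be a basis, but the argument you give for it collapses, and the same obstruction propagates to the higher-degree steps and to the Hilbert-series verification.

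The paper sidesteps this by abandoning the pencil-trick framework in the hyperelliptic case. It fixes a Weierstrass point $\infty$, takes $K=(2g-2)\infty$, and chooses each $x_i$ and $y_i$ with a prescribed pole order at $\infty$ (namely $x_i\in H^0(X,2i\infty)$ for $i\leq g-1$, $x_g=1$, and $y_i\in H^0(X,K+(2i+1)\infty)$). Linear independence of the claimed spanning monomials in every degree $d$ is then read off directly from the distinctness of their pole orders at $\infty$, and spanning follows from a dimension count; no multiplication-map surjectivity is invoked. Semicontinuity (Remark~\ref{R:semicontinuity}) then transfers the conclusion to generic coordinates. To repair your approach you would need either to replace $|K-D|$ by the genuinely basepoint-free hyperelliptic $g^1_2$ (which substantially changes the bookkeeping, since that pencil has degree $2$ rather than $g$) or to adopt a pole-order argument along these lines.
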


\begin{proof}
As in the previous argument, we first work with convenient coordinates.  Let $\infty \in X(k)$ be a Weierstrass point (fixed under the hyperelliptic involution) and take $K=(2g-2)\infty$.  Let $x_i \in H^0(X,2i\infty) \subseteq H^0(X,K)$ be a general element for $i=1,\dots,g-1$ general and $x_g=1 \in H^0(X,K)$.  As in section~\ref{subsec:returntohyp}, the canonical map has image $Y \subset \PP^{g-1}$ with vanishing ideal $J$ satisfying
\[ \gin_{\prec}(J)=\langle x_ix_j : 1 \leq i \leq j \leq g-2 \rangle. \]
So the image of multiplication from degree $1$ (the subspace fixed by the hyperelliptic involution) is spanned by $\langle x_1, \dots, x_g \rangle \cdot \langle x_{g-1},x_g \rangle$, a subspace of dimension $2g-1$.  Similarly, let $y_i \in H^0(X,K+(2i+1)\infty) \subseteq H^0(X,2K)$ be a general element for $i=1,\dots,g-3$ and $y_{g-2} \in H^0(X,K+\infty)$; comparing the order of pole at $\infty$, we see that the elements $y_i$ span a complementary space to the hyperelliptic fixed locus, and so together span.

But now for any $d \geq 2$, we claim that $H^0(X,dK)$ is spanned by the monomials of degree $d$ in
\begin{align*} 
&\langle x_i : 1 \leq i \leq g-2 \rangle \cdot \langle x_{g-1},x_g \rangle^{d-1} \\
&\qquad + \langle y_i : 1 \leq i \leq g-2 \rangle \cdot \langle x_{g-1},x_g \rangle^{d-2} \\
&\qquad + y_{g-2}x_{g-2} \cdot \langle x_{g-1},x_g \rangle^{d-3}. 
\end{align*}
The monomials are linearly independent according to their order of pole at $\infty$ (essentially, written in base $g-1$):
\begin{align*} 
-\ord_\infty (x_i x_{g-1}^{a} x_g^{d-a-1}) &= (2i\text{ or } 0) + 2a(g-1) \\
-\ord_\infty (y_i x_{g-1}^a x_g^{d-a-2}) &= 2g-2+(2i+1) + 2a(g-1) \\
&= 2i+1+2(a+1)(g-1) \\
-\ord_\infty (y_{g-2} x_{g-2} x_{g-1}^{a} x_g^{d-a-3}) &= 2g-1+ 2(g-2) + 2a(g-1) \\
&= 1+2(a+2)(g-1).
\end{align*}
They also span, because they total
\[ gd-1 + (g-2)(d-1) + (d-2) = (2d-1)(g-1) = \dim H^0(X,dK). \]
This yields relations with leading terms as specified in the statement of the theorem.  Any relation thus has initial term divisible by either $x_ix_j$  with $1 \leq i \leq j \leq g-2$, or $y_ix_j$ with $1 \leq i,j\leq g-2$ and $(i,j) \neq (g-2,g-2)$), or $y_iy_j$ with $1 \leq i \leq j \leq g-2$, and this proves that the relations form a Gr\"obner basis, and the initial ideal of $I$ is as desired.  

Finally, by Remark~\ref{R:semicontinuity}, these elements also span for a generic choice of coordinates, so we capture the generic initial ideal as well.
\end{proof}

\begin{remark}
The value of the generic initial ideal over the pointed generic initial ideal is that it is valid over any infinite field $k$ (or a finite field of sufficiently large cardinality), by Remark~\ref{rmk:gdefinit}.  The above theorems therefore permit an explicit understanding of canonical rings of curves over more general fields, something absent from Petri's approach and that might be quite useful in other contexts.
\end{remark}

%----------------------------------------------------------------------------
\section{Summary} \label{sec:classicaltable}
%----------------------------------------------------------------------------

The above is summarized in Table (I) in the Appendix, and in particular proves Theorem~\ref{thm:degrelatmax}.

As in the introduction (see also chapter~\ref{ch:comparison}), the preceding discussion gives a description of the canonical ring for manifolds obtained from Fuchsian groups with signature $(g;-;0)$.  The purpose of this monograph is to give such a description for arbitrary signature.  As the above discussion already indicates, our result by necessity will involve a certain case-by-case analysis, with extra attention paid to corner cases.

%****************************************************************************
\chapter{A generalized Max Noether's theorem for curves}
\label{ch:lemmas-curves}
%****************************************************************************

In this chapter, for a curve $X$ over a field $k$, we completely characterize those effective divisors $E,E'$ such that the multiplication map
  \begin{equation} 
    \label{eq:curves-multiplication-map-first} 
    H^0(X,K + E) \otimes H^0(X, K + E')   \to H^0(X,2K + E + E')  \tag{M}
 \end{equation}
is surjective for $K$ a canonical divisor on $X$.  If $X$ is nonhyperelliptic of genus $g \geq 3$, and $E = E' = 0$, then this a theorem of Max Noether \cite[Theorem 1.6]{ArbarelloSernesi:Petri}. If $\deg E \geq 3$ and $\deg E' \geq 2$, then this statement is due to Mumford \cite[Theorem 6]{Mumford:bookQuestionsOn}. (For generalizations in a different direction, see work of Arbarello--Sernesi \cite{ArbarelloSernesi:Petri}.)  

%----------------------------------------------------------------------------
\section{Max Noether's theorem in genus at most 1}
%----------------------------------------------------------------------------

We begin by considering two easy cases of Max Noether's theorem and setting up a bit of notation.  Let $X$ be a curve over $k$ with genus $g$ and let $D,D'$ be divisors on $X$.

\begin{lemma}[Surjectivity in genus 0]
\label{L:surjectivity-examples-g-0}
If $g = 0$, then the map 
\[
H^0(X,D) \otimes  H^0(X, D')   \to H^0(X,D + D')  \]
 is surjective if and only if one of the following holds:
 \begin{enumerate}
 \item[(i)] $\deg(D+D')<0$, or 
 \item[(ii)] $\deg(D) \geq 0$ and $\deg(D') \geq 0$. 
 \end{enumerate}
\end{lemma}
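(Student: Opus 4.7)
The plan is to reduce to the case $X = \P^1$ and then use the explicit description of global sections as polynomials. Surjectivity of the multiplication map can be tested after faithfully flat base change, since the formation of $H^0$ of a coherent sheaf on a proper variety commutes with flat base change. Thus I may assume $k = \kbar$, in which case every smooth projective curve of genus $0$ is isomorphic to $\P^1_k$. Under this identification, every divisor $D$ of degree $d$ is linearly equivalent to $d \cdot \infty$ for a chosen point $\infty \in X(k)$, and Riemann--Roch on $\P^1$ gives $H^0(X,D) = 0$ when $d < 0$ and $\dim_k H^0(X,D) = d+1$ when $d \geq 0$, with $H^0(X,D)$ identified with the space of polynomials in one affine coordinate of degree at most $d$.

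For the ``if'' direction, I would split into two cases. If $\deg(D + D') < 0$, then $H^0(X, D+D') = 0$ and the map is surjective trivially. If $\deg D = d \geq 0$ and $\deg D' = d' \geq 0$, then the multiplication map becomes
\[
k[u]_{\leq d} \otimes_k k[u]_{\leq d'} \to k[u]_{\leq d + d'},
\]
which is surjective because every monomial $u^n$ with $n \leq d + d'$ factors as $u^{\min(n,d)} \cdot u^{n - \min(n,d)}$ with both factors in the allowed degree ranges.

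For the ``only if'' direction, suppose neither hypothesis holds, so $\deg(D+D') \geq 0$ and, without loss of generality, $\deg D < 0$. Then $H^0(X,D) = 0$, so the source of the multiplication map is zero, while $H^0(X, D+D')$ has dimension $\deg(D+D') + 1 \geq 1$, so the map cannot be surjective. The main ``obstacle'' here is really only organizational---keeping straight the degenerate cases where source or target vanishes---since the argument uses nothing more than the standard cohomology of line bundles on $\P^1$.
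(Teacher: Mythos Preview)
Your proof is correct and follows essentially the same approach as the paper: reduce to $k=\overline{k}$, identify $X$ with $\P^1$, replace $D,D'$ by multiples of $\infty$, and read off the result from the explicit description of the multiplication map as $k[x]_{\leq m}\otimes k[x]_{\leq m'}\to k[x]_{\leq m+m'}$. You supply a bit more detail (the explicit monomial factorization and the ``only if'' case analysis) where the paper simply says ``the result follows,'' but there is no real difference in method.
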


\begin{proof}
We may assume $k=\overline{k}$ is separably closed, so $\infty \in X(k)$; then up to linear equivalence, we can assume that $D = m\infty$ and $D'= m' \infty$ with $m,m' \in \Z$, and the map is 
\[ k[x]_{\leq m} \otimes k[x]_{\leq m'} \to k[x]_{\leq m+m'} \]
where $k[x]_{\leq m}$ is the $k$-vector space of polynomials of degree $\leq m$, with the convention that $k[x]_{\leq m}=\{0\}$ when $m<0$.  The result is then immediate.
\end{proof}

\begin{definition}
\label{D:divisor-notation}  
For $f \in k(X)$ nonzero, as usual we write 
\[ \divv(f) = \divv_0(f) - \divv_{\infty}(f) \] 
as the difference between the \defiindex{divisor of zeros} and the \defiindex{divisor of poles} of $f$.   For $D=\sum_P a_P P \in \Div(X)$ and $E \in \Div(X)$ an effective divisor, we denote by $D|_E=\sum_{\substack{P \in \supp(E)}} a_P P$.  
\end{definition}

We will use the following lemma repeatedly. 

\begin{lemma} \label{L:independence}
Let $D \in \Div(X)$ and $f_1,\ldots,f_n \in k(X)$ be nonzero.  Suppose that there exists an effective divisor $E$ such that 
\[ (\divv_{\infty} f_1)|_E < (\divv_{\infty} f_2)|_E < \dots < (\divv_{\infty} f_n)|_E. \]
Then $f_1,\dots,f_n$ are linearly independent.  
\end{lemma}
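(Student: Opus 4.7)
The plan is to proceed by a standard ``leading pole'' argument. Suppose for contradiction that there is a nontrivial relation $\sum_{i=1}^n c_i f_i = 0$ with $c_i \in k$. Let $j$ be the largest index with $c_j \neq 0$; if $j = 1$ we already contradict $f_1 \neq 0$, so assume $j \geq 2$, and rewrite $f_j = -\sum_{i<j}(c_i/c_j) f_i$.

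First I would extract a single point $P$ at which $f_j$ has strictly the worst pole among $f_1,\ldots,f_j$. Set $D_i = (\divv_\infty f_i)|_E$; then by hypothesis $D_1 \leq D_2 \leq \cdots \leq D_j$ as effective divisors on $\supp(E)$, with $D_{j-1} < D_j$ strict. Hence there is some $P \in \supp(E)$ with $D_j(P) > D_{j-1}(P)$, and by monotonicity $D_j(P) > D_i(P)$ for every $i < j$. Translating back to orders of vanishing: since $D_j(P) > 0$, the point $P$ is a pole of $f_j$ with $\ord_P(f_j) = -D_j(P)$; for each $i<j$, either $P$ is a pole of $f_i$ and $\ord_P(f_i) = -D_i(P) > -D_j(P) = \ord_P(f_j)$, or $P$ is not a pole of $f_i$ and $\ord_P(f_i) \geq 0 > \ord_P(f_j)$. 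In either case $\ord_P(f_i) > \ord_P(f_j)$ for all $i<j$.

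Finally I would combine these inequalities with the ultrametric property of $\ord_P$:
\[
\ord_P(f_j) \;=\; \ord_P\Bigl(-\sum_{i<j} (c_i/c_j) f_i\Bigr) \;\geq\; \min_{i<j}\ord_P(f_i) \;>\; \ord_P(f_j),
\]
a contradiction, proving linear independence.

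There is no real obstacle here: the only mildly subtle point is ensuring that a single point $P$ works simultaneously for all $i<j$, which is why I chose $j$ maximal and used the chain inequality (so that $D_j(P) > D_{j-1}(P)$ automatically bounds $D_j(P)$ above all smaller $D_i(P)$); the rest is the standard non-archimedean valuation argument.
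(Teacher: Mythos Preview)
Your argument is correct and is exactly the standard ultrametric (leading-pole) argument the paper has in mind; indeed the paper simply states that the lemma is ``an immediate consequence of the ultrametric inequality'' without writing out the details. Your care in choosing a single point $P$ where $D_j(P) > D_{j-1}(P)$ and then invoking the chain $D_i \leq D_{j-1}$ to control all smaller indices is precisely the right way to unpack that one-line remark.
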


In particular, in Lemma~\ref{L:independence}, if $\divv_{\infty} f_1 < \dots < \divv_{\infty} f_n$, 
then $f_1,\dots,f_n$ are linearly independent. 

\begin{proof}
The lemma follows from the ultrametric inequality by induction on $n$ as follows.  If $n=1$, then the result is immediate.  For general $n$, since $\divv_{\infty} f_n > \divv_{\infty} f_{n-1}$ there is a point $P$ in the support of $E$ such that $\ord_P f_n > \ord_P f_{n-1} \geq \ord_P f_i$ for all $i=1,\dots,n-1$, so a linear relation among the functions $f_1,\dots,f_n$ would contradict the ultrametric inequality.
 \end{proof} 

\begin{lemma}[Surjectivity in genus 1]
\label{L:surjectivity-examples-g-1}
Suppose $g = 1$ and let $D,D'$ be effective divisors such that $\deg D \geq \deg D'$.  Then the multiplication map
\[
H^0(X,D) \otimes  H^0(X, D')   \to H^0(X,D + D')  \]
 is \emph{not} surjective if and only if either
 \begin{enumerate}[label=(\roman*)]
 \item $\deg D'=1$, or
 \item $\deg D = \deg D' = 2$ and $D \sim D'$.
 \end{enumerate}
\end{lemma}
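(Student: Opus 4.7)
The plan is to split into cases by $\deg D'$, handling the low-degree situations by hand and the remaining cases with the base-point-free pencil trick together with Serre duality (which on a genus one curve identifies $H^1(F)$ with $H^0(-F)^*$, since $K \sim 0$); one stubborn subcase will be reduced to the classical projective normality theorem of Castelnuovo--Mumford.

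First, the two low-degree cases. If $\deg D' = 0$ then $D' = 0$ (as $D'$ is effective) and the map $H^0(X,D) \otimes k \to H^0(X,D)$ is trivially surjective. If $\deg D' = 1$, then $H^0(X,D')$ is one-dimensional, spanned by some nonzero $f$, so the image of the multiplication map is $f \cdot H^0(X,D)$, of dimension $\dim H^0(X,D) = \deg D$; meanwhile $\dim H^0(X,D+D') = \deg D + 1$ by Riemann--Roch, so the map is not surjective, establishing case (i).

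Now suppose $\deg D' \geq 2$. Any effective divisor of degree at least $2$ on a genus one curve is base-point-free, so $H^0(X,D')$ admits a $2$-dimensional base-point-free subspace $V$ (take $V = H^0(X,D')$ itself when $\deg D' = 2$). The base-point-free pencil trick produces
\[
0 \to \calO_X(-D') \to V \otimes \calO_X \to \calO_X(D') \to 0,
\]
and tensoring with $\calO_X(D)$ and taking cohomology (using $H^1(X,D) = 0$, which holds since $\deg D \geq 2$) yields
\[
0 \to H^0(D-D') \to V \otimes H^0(D) \to H^0(D+D') \to H^1(D-D') \to 0.
\]
By Serre duality $H^1(D-D') \cong H^0(D'-D)^*$, and this vanishes whenever $\deg D > \deg D'$ (negative degree) or $\deg D = \deg D'$ with $D \not\sim D'$ (nontrivial line bundle of degree zero). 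In each such case the map $V \otimes H^0(D) \to H^0(D+D')$, and a fortiori the full multiplication $H^0(D') \otimes H^0(D) \to H^0(D+D')$, is surjective.

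The remaining case has $\deg D = \deg D'$ and $D \sim D'$. When this common degree equals $2$, $V = H^0(X,D')$ captures the entire multiplication map, and the exact sequence together with $H^0(D-D') \cong H^1(D-D') \cong k$ and $\dim V \otimes H^0(D) = 4 = \dim H^0(D+D')$ forces a one-dimensional cokernel, proving case (ii). When the common degree is at least $3$, pick $g \in k(X)^\times$ with $\divv(g) = D' - D$; multiplication by $g$ induces compatible isomorphisms $H^0(X,D) \cong H^0(X,D')$ and $H^0(X,2D) \cong H^0(X,D+D')$ under which the multiplication map is identified with the squaring map $H^0(X,D) \otimes H^0(X,D) \to H^0(X,2D)$, which is surjective by Mumford's theorem that any line bundle of degree at least $2g+1$ on a genus $g$ curve is normally generated. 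This last subcase is the main obstacle, since the base-point-free pencil trick applied to any single $2$-dimensional pencil only yields a codimension-one image; one could alternatively vary $V$ and argue that the resulting codimension-one images jointly span $H^0(D+D')$, but invoking Mumford's theorem is both cleaner and already in the literature.
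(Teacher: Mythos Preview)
Your proof is correct, but it takes a different route from the paper.

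The paper's argument is entirely explicit and self-contained: after reducing to $k = \bar{k}$, it picks an origin $O \in X(k)$, writes $D \sim P + (d-1)O$ and $D' \sim P' + (d'-1)O$, chooses bases $1, x_1, \dots, x_{d-1}$ for $H^0(X,D)$ (and similarly for $D'$) with prescribed divisors of poles, and then exhibits by hand a list of $d+d'$ products whose pole divisors along $\{P,P',O\}$ are strictly increasing. Lemma~\ref{L:independence} then gives linear independence, and a dimension count gives surjectivity. The case split is on whether $P' \in \{P,O\}$, and in the latter case an auxiliary function in $H^0(X,2O)$ is used. No cohomological machinery beyond Riemann--Roch is invoked.

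Your approach, by contrast, is cohomological: the base-point-free pencil trick plus Serre duality dispatches every case with $H^1(D-D') = 0$ in one stroke, and then the residual case $D \sim D'$, $\deg D \geq 3$ is handed off to Castelnuovo--Mumford normal generation. This is clean and conceptual, and the pencil trick is indeed used elsewhere in the paper (e.g.\ in Petri's argument in chapter~\ref{ch:classical}). The trade-off is that you import Mumford's theorem, whereas the paper keeps everything internal to its own lemmas; in the spirit of this monograph, which aims to be explicit and to work uniformly over arbitrary fields (with separable closure available when needed), the hands-on argument is preferred, but yours is a perfectly valid alternative.

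One small gap worth closing: for $\deg D' \geq 3$ you assert the existence of a base-point-free $2$-dimensional subspace $V \subseteq H^0(X,D')$. This does not follow immediately from $|D'|$ itself being base-point-free; you should say a word about why such a pencil exists (e.g.\ over $\bar{k}$, pick $s$ generic, let $E$ be its zero divisor, and take $t$ not vanishing on any point of $E$, which is possible since $|D'|$ has no base points).
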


\begin{proof}
As before, we may suppose $k=\overline{k}$.  We argue according to $\deg D'$.

If $\deg D'=0$, then $D'=0$ and the result is immediate.  If $\deg D'=1$, then $\deg D \geq \deg D' \geq 1$ and the failure of surjectivity follows from Riemann--Roch.  

Suppose $\deg D' \geq 2$.  Suppose further that $\deg D = \deg D' = 2$ and $D \sim D'$; then we may assume that $D = D'$ and write $H^0(X,D) = \langle 1,x \rangle$, and the image is then generated by $1,x,x^2$ which has dimension 3, whereas $\dim H^0(X,D + D') = 4$.

So we are left to consider the case where if $\deg D=2$ then $D \not\sim D'$, and we want to show that the multiplication map is surjective.  
Let $O \in X(k)$; then by Riemann--Roch, 
there exist unique points $P,P' \in X(k)$ such that $D \sim P+(d-1)O$ and $D' \sim P'+(d'-1)O$.  Without loss of generality we may suppose equality holds in both cases.  Then $H^0(X,D)$ has a basis $1,x_1,\dots,x_{d-1}$ where $\divv_{\infty}(x_i)=P+iO$ for $i=1,\dots,d-1$; we may further assume that $P'$ is not in the support of each $x_i$.  Similar statements hold for $D'$. 

Suppose $P' \not \in \{P,O\}$. Then by Riemann--Roch, there exists a nonconstant function $y_1 \in H^0(X,P-P'+O) \subset H^0(X,D)$ (unique up to nonzero scaling) with $\divv_{\infty} y_1=P+O$.  Then the $d+d'=\dim H^0(X,D+D')$ functions
\[ 1,y_1,y_1x_1',x_1x_1',x_2x_1',\dots,x_{d-1} x_1', x_{d-1} x_2',\dots, x_{d-1} x'_{d'-1} \]
in the image of multiplication have divisor of poles 
\[ 0,P+O,P+2O,P+P'+2O,\dots,P+P'+(d+d'-2)O \]
so are linearly independent by Lemma~\ref{L:independence} with $E=P+P'+O$ and therefore span $H^0(X,D+D')$.  

To conclude, suppose $P' \in \{P,O\}$.  If $P'=P$ (allowing $P'=P=O$), then by our running hypotheses we have $\deg D \geq 3$; then there is a nonconstant function $y_2 \in H^0(X,2O) \subset H^0(X,D)$ so $\divv_{\infty} y_2 = 2O$, and we consider instead the functions
\[ 1, x_1, x_2, x_1'y_2,x_1x_1',x_2x_1',\dots ,x_{d-1} x_1', x_{d-1} x_2',\dots, x_{d-1} x'_{d'-1} \]
having divisor of poles
\[ 0,P+O,P+2O,P+3O,2P+3O,\dots,2P+(d+d'-2)O. \]
If $P'=O$, then we may suppose $P \neq O$ and we take
\[ 1, x_1, x_2, x_1x_1',x_2x_1',\dots,x_{d-1} x_1', x_{d-1} x_2',\dots, x_{d-1} x'_{d'} \]
with divisor of poles
\[ 0,P+O,P+2O,P+3O,P+4O,\dots, P+(d+d'-1)O. \]
In each case, we conclude as in the previous paragraph.
\end{proof}

%----------------------------------------------------------------------------
\section{Generalized Max Noether's theorem (GMNT)}
%----------------------------------------------------------------------------

In the remainder of this chapter, let $X$ be a curve of genus $g \geq 2$ over $k$.  Recall that a divisor $E$ on $X$ is \defiindex{special} if $\dim H^0(X,K-E) > 0$, or equivalently 
\begin{equation}  \label{eqn:H0XEspec}
\dim H^0(X,E) > \deg E + 1 - g(X) 
\end{equation}
by Riemann--Roch.

\vspace{1.5ex}

The main result of this section is as follows.  

\begin{theorem}[Generalized Max Noether's theorem]
\label{T:surjectivity-master}
Let $X$ be a curve of genus $g \geq 2$ and let $E,E'$ be effective divisors on $X$.  Then the multiplication map 
\[
  H^0(X,K + E) \otimes  H^0(X, K + E')   \to H^0(X,2K + E + E')  \tag{M}
\]
  is surjective or not, according to the following table:
\begin{center}
\renewcommand{\arraystretch}{1.2}
\begin{tabular}{| c | c || c | c | c | c |}
\hline
\multicolumn{2}{|c||}{} & \multicolumn{4}{c|}{$\deg E'$\rule{0pt}{2.25ex} } \\ 
\hhline{~~----}
\multicolumn{2}{|c||}{} & 0\rule{0pt}{2.25ex}  & 1 & $2$ & $\geq 3$ \\
\hline \hline
\multirow{3}{2ex}{\begin{sideways}$\deg E$\phantom{xxxxx}\end{sideways}} & 0 \rule{0pt}{2.25ex} & $\Leftrightarrow$ not (hyperelliptic $g \geq 3$) &  & & \\
& $1$ & no & no & & \\
& 2 & $\Leftrightarrow$ $E$ not special & no & 
$\Leftrightarrow$ {\centering $E \not\sim E'$} & \\
& $\geq 3$ & yes & no & yes & yes \\
\hline
\end{tabular}
\end{center}
\end{theorem}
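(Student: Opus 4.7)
The plan is to work over the algebraic closure $\overline{k}$ (since surjectivity of a linear map of vector spaces descends under faithfully flat base change), establish a handful of small-degree base cases directly, and then induct on $\deg E + \deg E'$ by adjoining one point at a time.

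\textbf{Base cases and non-surjectivity.} The corner $E = E' = 0$ is essentially classical Max Noether: for nonhyperelliptic $X$ with $g \geq 3$ this is the original theorem; for $g = 2$, the dimension count $\dim \mathrm{Sym}^2 H^0(K) = 3 = \dim H^0(2K)$ gives surjectivity once one observes that a nontrivial kernel element would force the degree-$2$ canonical map $X \to \PP^1$ to factor through a map of lower degree; for hyperelliptic $X$ with $g \geq 3$, writing $\pi\colon X \to \PP^1$ for the hyperelliptic map and using $K \sim (g-1)\pi^\ast\infty$, products of sections of $K$ lie in $\pi^\ast H^0(\PP^1,\OO(2g-2))$ of dimension $2g-1 < 3g-3$. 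For the remaining non-surjective entries the image of \eqref{eq:curves-multiplication-map-first} lies in a proper subspace of $H^0(2K+E+E')$. If $\deg E' = 1$ with $E' = P$, Riemann--Roch gives $\dim H^0(K+P) = g = \dim H^0(K)$, so $H^0(K+E')$ coincides with $H^0(K)$ as subspaces of the meromorphic sections; hence the image lies in $H^0(2K+E) \subsetneq H^0(2K+E+P)$. If $E$ is hyperelliptic-fixed of degree $2$ and $E' = 0$, then $E \sim \pi^\ast Q$ yields $H^0(K+E) = \pi^\ast H^0(\PP^1,\OO(g))$, so the image lies in $\pi^\ast H^0(\PP^1,\OO(2g-1))$ of dimension $2g < 3g-1 = \dim H^0(2K+E)$; the case $\deg E = \deg E' = 2$ with $E \sim E'$ both hyperelliptic-fixed is essentially identical.

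\textbf{Inductive step.} For the claimed surjective cases I would induct via the following: if \eqref{eq:curves-multiplication-map-first} is surjective for $(E, E')$ and $P \in X(\overline{k})$ satisfies $\dim H^0(K+E+P) = \dim H^0(K+E) + 1$ and some $h \in H^0(K+E')$ does not vanish at $P$, then \eqref{eq:curves-multiplication-map-first} is surjective for $(E+P, E')$. Induction supplies the image of the old map, and for any $\widetilde{f} \in H^0(K+E+P) \setminus H^0(K+E)$ the product $\widetilde{f}\cdot h$ has strictly larger pole order at $P$ than any element of $H^0(2K+E+E')$, filling the one new dimension by Lemma~\ref{L:independence}. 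When $\deg E \geq 1$ and $\deg E' \geq 1$ (so that $\deg(K+E)$ and $\deg(K+E')$ exceed $2g-2$), Riemann--Roch supplies both hypotheses and the induction runs freely; a symmetric argument lets us increment $E'$ instead. The genus-$0$ and genus-$1$ analogues (Lemmas~\ref{L:surjectivity-examples-g-0} and \ref{L:surjectivity-examples-g-1}) are already in hand as starting points of the induction.

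\textbf{Main obstacle.} The delicate step is verifying surjectivity at the hyperelliptic boundary---in particular, for $E$ of degree $2$ that is \emph{not} hyperelliptic-fixed with $E' = 0$, where dimension counts alone are inconclusive. Here the condition $E \not\sim E^\iota$ produces a section of $H^0(K+E)$ that is not $\iota$-invariant, whose products with $H^0(K)$ escape the $\iota$-invariant subspace $\pi^\ast H^0(\PP^1,\OO(2g-1))$; combined with the $\iota$-invariant products $H^0(K)\cdot H^0(K)$ (which span the $\iota$-invariant piece of $H^0(2K+E)$), these together exhaust $H^0(2K+E)$. A parallel analysis handles the remaining $\deg E = \deg E' = 2$ boundary cases, carefully distinguishing whether $E \sim E'$ and whether $E, E'$ are individually hyperelliptic-fixed.
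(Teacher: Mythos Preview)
Your inductive strategy—adding one point at a time once the base cases are in hand—is genuinely different from the paper's approach, which for each surjective case directly exhibits $\deg E + \deg E'$ products of sections with pairwise distinct pole orders along $E+E'$ (via Lemma~\ref{L:independence}), spanning $H^0(2K+E+E')$ over $H^0(2K)$ in one stroke; the nonhyperelliptic and hyperelliptic cases are then treated in separate propositions.

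Your induction has a genuine gap.  To pass from $(\deg E,\deg E')=(2,0)$ to $(3,0)$ you must write $E=E_0+P$ with $E_0$ effective of degree $2$ and $(E_0,0)$ already surjective, i.e.\ $E_0$ not hyperelliptic-fixed.  But if $X$ is hyperelliptic and $E=3W$ for a Weierstrass point $W$, the only effective degree-$2$ subdivisor is $2W$, which \emph{is} hyperelliptic-fixed; the induction stalls.  The same obstruction recurs in the region $\deg E,\deg E'\geq 2$: for instance $E=3P$, $E'=2P$ on a nonhyperelliptic curve forces $E_0=2P\sim E'$, and you cannot reach $(3,2)$ from any surjective $(2,2)$ instance.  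The paper's fix, stated explicitly in the hyperelliptic proposition, is to first replace $E$ by a linearly equivalent effective divisor that \emph{does} admit a non-hyperelliptic-fixed degree-$2$ piece; you need this move too, and it should be justified rather than assumed.

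Two smaller points.  In your $(2,0)$ hyperelliptic argument the pullback subspace should be $\pi^*H^0(\PP^1,\scrO(2g-2))$, not $\scrO(2g-1)$; and since $\iota$ does not act on $H^0(2K+E)$ when $E$ is not hyperelliptic-fixed, what you actually need is that $y\cdot H^0(K)$ meets this pullback subspace trivially (which follows from $\iota^*y\neq y$, but this deserves a sentence).  Finally, Lemmas~\ref{L:surjectivity-examples-g-0} and~\ref{L:surjectivity-examples-g-1} concern curves of genus $0$ and $1$ and cannot serve as base cases for a statement about $g\geq 2$; your actual base cases are $(0,0)$, $(2,0)$, and the several $(2,2)$ variants, all on $X$ itself.
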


``Yes'' in the above table means (M) is surjective, ``no'' means (M) is not surjective, and ``$\Leftrightarrow \calP$'' means (M) is surjective if and only if $\calP$ holds.

\begin{remark}
Max Noether's theorem, as it is usually stated, is often given as the statement that a canonically embedded nonhyperelliptic curve is \defiindex{projectively normal}, i.e.~the map $H^0(\PP^{g-1}, \scrO(d)) \to H^0(X, \Omega_X^d)$ is surjective for every $d$.  Similar geometric statements could be made in our context, but we prefer to phrase our results about generators and relations.
\end{remark}

The proof of this theorem will take up the rest of this chapter.  Throughout, we may suppose that $k=\overline{k}$ is separably closed without loss of generality, as the surjectivity of (\ref{eq:curves-multiplication-map-first}) is a statement of linear algebra.  

Before concluding this section, we observe the following characterization of special divisors of degree $2$ (with still $g \geq 2$).

\begin{lemma} \label{lem:special}
Let $E$ be an effective divisor on $X$ with $\deg E=2$.  Then $E$ is special if and only if $X$ is hyperelliptic with hyperelliptic involution $\iota$ and $E = P + \iota(P)$ for some point $P \in X(k)$.  Moreover, if $E$ is special then $(g-1)E$ is a canonical divisor on $X$.
\end{lemma}

\begin{proof}
Since $\deg E=2$, we have $\dim H^0(X,E) \leq 2$.  If $\dim H^0(X,E)=1$ then $H^0(X,E)$ is spanned by $1$; since $g \geq 2$, by \eqref{eqn:H0XEspec} we have that $E$ is special.  Otherwise, $\dim H^0(X,E)=2$, and $H^0(X,E)$ is spanned by $1,x$ where $x$ is nonconstant.  Then $x\colon X \to \P^1$ defines a map of degree $2$, so $X$ is hyperelliptic.  Since $X \to \P^1$ is quadratic, there is an involution $\iota\colon X \to X$ and $\P^1$ is the quotient of $X$ by this involution. Thus $E$, as a fiber of $x$, is of the form $P + \iota(P)$ (allowing $P=\iota(P)$).  Finally, the fact that $(g-1)E$ is a canonical divisor follows from the uniqueness of the $g^1_2$ \cite[Proposition IV.5.3]{Hartshorne:AG}, implying also that $\iota$ is necessarily the hyperelliptic involution.
\end{proof}

%----------------------------------------------------------------------------
\section{Failure of surjectivity}
%----------------------------------------------------------------------------

To highlight the difficulties of the proof, in this section we begin by collecting cases where surjectivity fails. 
In the next sections, we then finish the proof handling the hyperelliptic and nonhyperelliptic cases separately.

\begin{lemma}
  \label{L:surjectivity-counterexamples}  

Assume that $g(X) \geq 2$ and that $\deg E \geq \deg E'$. The multiplication map (\ref{eq:curves-multiplication-map-first}) is \emph{not} surjective in each of the following cases:
\begin{enumerate}[label=(\roman*)]
  \item\label{item:9}  $\deg E = 1$ or $\deg E'=1$;
  \item\label{item:7} $X$ is hyperelliptic and one of the following holds:
    \begin{enumerate}[label=(\alph*)]
    \item\label{item:4} $\deg E = \deg E' = 0$ and $g \geq 3$;
  \item\label{item:8} $\deg E = 2$ and $\deg E' = 0$ and $E$ is special; 
    \end{enumerate}
  \item \label{item:11}  $\deg E = \deg E'=2$ and $E \sim E'$.

\end{enumerate}
\end{lemma}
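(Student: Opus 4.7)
Throughout I may assume $k = \overline{k}$, since surjectivity of (M) is invariant under base change. For \textbf{case (i)}, when $\deg E' = 1$ and $E' = P$, Riemann-Roch gives $h^0(K+P) = g = h^0(K)$ (using $h^0(-P) = 0$ for $g \geq 2$), so the inclusion $H^0(K) \subseteq H^0(K+P)$ is forced to be an equality. The image of (M) therefore lies in $H^0(2K+E)$, which has codimension one in $H^0(2K+E+P)$ since $\deg E \geq 1$ makes both divisors nonspecial and $P$ a nonbase point of the target.

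For \textbf{cases (ii)(a) and (ii)(b)} I would use the hyperelliptic projection $\pi \colon X \to \PP^1$ and the canonical splitting $\pi_*\calO_X = \calO_{\PP^1} \oplus \calO_{\PP^1}(-g-1)$ into $\iota$-eigenspaces. Since $\calO_X(K) \cong \pi^*\calO(g-1)$ and canonical differentials are $\iota$-antiinvariant, $H^0(X,K)$ sits inside the $(-)$-eigenspace and equals $H^0(\PP^1,\calO(g-1))$. In (a) the image of $H^0(K) \otimes H^0(K) \to H^0(2K)$ is then forced into the $(+)$-eigenspace $H^0(\PP^1,\calO(2g-2))$ of dimension $2g-1$, strictly smaller than $\dim H^0(2K) = 3g-3$ when $g \geq 3$. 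In (b), the hyperelliptic fixed hypothesis yields $K+E \sim g \cdot g_2^1$, so $\calO(K+E) \cong \pi^*\calO(g)$ is pulled back; hence $H^0(K+E)$ is $(+)$-invariant and the image of multiplication lies in the $(-)$-part of $H^0(2K+E)$, namely $H^0(\PP^1, \calO(g-2))$ of dimension $g-1 < 3g-1$ for $g \geq 2$.

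The main technical work is \textbf{case (iii)}. Since $X$ is nonhyperelliptic of genus $g \geq 3$, every effective divisor of degree two has $h^0(E) = 1$; after replacing $E'$ by a linearly equivalent effective divisor I may assume $E' = E$. Fixing $t \in H^0(E)$ nonzero (so $\divv(t) = E$), multiplication by $t$ embeds $H^0(K)$ as a codimension-one subspace of $H^0(K+E)$, and I pick any $s \in H^0(K+E) \setminus tH^0(K)$. Expanding $(tu + \alpha s)(tv + \beta s)$ shows that the image of (M) lies in $tH^0(2K+E) + sH^0(K+E) \subseteq H^0(2K+2E)$, and the goal is to bound this sum by $3g < 3g+1 = \dim H^0(2K+2E)$.

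The key auxiliary point, which I expect to be the main obstacle, is that $s$ does not vanish on $\supp(E)$: otherwise some $P \in E$ would be a base point of $|K+E|$, contradicting the Riemann-Roch computation $h^0(K+E) - h^0(K+E-P) = (g+1) - h^0(K+Q) = 1$, where $Q$ is the remaining point of $E$ (with an analogous argument when $E = 2P$). Given this, $\divv(s)$ and $\divv(t) = E$ are disjoint, so a section lying in both $tH^0(2K+E)$ and $sH^0(K+E)$ must vanish on $E + \divv(s) \sim K + 2E$; this intersection is thereby identified with $ts \cdot H^0(K)$ of dimension $g$. Combining yields
\[
\dim\bigl(tH^0(2K+E) + sH^0(K+E)\bigr) = (3g-1) + (g+1) - g = 3g,
\]
which is strictly less than $\dim H^0(2K+2E) = 3g+1$, so (M) cannot be surjective.
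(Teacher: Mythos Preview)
Your arguments for (i), (ii)(a), and (iii) are correct. Cases (i) and (ii)(a) are essentially the paper's. For (iii), the paper takes a shorter route: fixing $y \in H^0(K+E) \setminus H^0(K)$, every element of the image has the shape $ay^2+fy+g$ with $a\in k$, $f\in H^0(K)$, $g\in H^0(2K)$, so its total pole order along $E$ lies in $\{0,1,2,4\}$; since Riemann--Roch supplies an element of $H^0(2K+2E)$ with pole order exactly $3$ along $E$, surjectivity fails. Your dimension count via $\dim\bigl(tH^0(2K+E)+sH^0(K+E)\bigr)=3g$ is a little more work but yields the sharper statement that the image has codimension exactly one; both arguments ultimately hinge on the same fact that $s$ (your notation) has honest poles at both points of $E$.

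There is a genuine slip in (ii)(b). You assert that $H^0(K+E)$ lies in the $(+)$-eigenspace because $\calO(K+E)\cong\pi^*\calO(g)$ is pulled back; but the isomorphism $\Omega_X(E)\cong\pi^*\calO(g)$ is obtained by dividing by $dx/y$, which is itself anti-invariant, so it exchanges the two eigenspaces. Under the natural $\iota$-action on meromorphic differentials---the one for which the multiplication map is equivariant---a basis of $H^0(K+E)$ is $x^i\,dx/y$ for $0\le i\le g$, all anti-invariant, just like $H^0(K)$. The image of (M) therefore lands in the $(+)$-eigenspace of $H^0(2K+E)$, which has dimension $2g$, not in the $(-)$-eigenspace of dimension $g-1$ as you claim. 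Since $2g<3g-1$ for $g\ge 2$, the conclusion survives once this is corrected; indeed this is exactly the paper's computation, which takes $K=(g-1)E$, writes $H^0(K)=\langle 1,\dots,x^{g-1}\rangle$ and $H^0(K+E)=\langle 1,\dots,x^g\rangle$, and bounds the image by the span of $1,\dots,x^{2g-1}$.
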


If \ref{L:surjectivity-counterexamples}\ref{item:11} holds and $E \neq E'$, then $E$ and $E'$ are special and so $X$ is hyperelliptic by Lemma \ref{lem:special}.

\begin{proof}
For case~\ref{item:9},
let $E=P$ be a closed point on $X$.  From Riemann--Roch, we have 
\[
\dim H^0(X,K)  = \dim H^0(X,K + P) 
\]
(i.e., $K+P$ is not basepoint free) and in particular, we claim that the bottom map in the commutative diagram
\[
\xymatrix{
H^0(X,K) \otimes  H^0(X,K+E') 
  \ar[r] \ar[d] &
H^0(X,2K+E') 
  \ar[d] \\
H^0(X,K+P) \otimes  H^0(X,K+E') 
\ar[r]&
H^0(X,2K + P+E') 
}
\]
is not surjective.  Indeed, since the left vertical map is surjective, by commutativity of the diagram the bottom horizontal map has image contained in the image of the right vertical map.  And by Riemann--Roch, we have 
\[ H^0(X,2K+E') \subsetneq H^0(X,2K+P+E') \] 
whenever $g \geq 2$ (indeed, equality holds if and only if $\deg E'=0$ and $g=1$), so (M) is not surjective.  A similar argument works for $\deg E'=1$, interchanging the roles of $E$ and $E'$ (the argument did not use $\deg E \geq \deg E'$).

Now suppose that $X$ is hyperelliptic (case~\ref{item:7}). The case~\ref{item:4} is classical (see section~\ref{ssec:low-genus-hyper}): in fact, the map $H^0(X,K) \otimes H^0(X,K) \to H^0(X,2K)$ fails to be surjective only in the case where $X$ is hyperelliptic of genus $g \geq 3$ (and is more or less identical to case~\ref{item:7}\ref{item:8} below).  

For case~\ref{item:7}\ref{item:8}, suppose that $\deg E =2$ and $E' = 0$, and $E$ 
is special.  Then we may take the canonical divisor to be $K=(g-1)E$ by Lemma~\ref{lem:special}, so that $H^0(X,K) = H^0(X,(g-1)E)$ has basis $1,x,\dots,x^{g-1}$, where $x\colon X \to \PP^1$ has degree 2 and $x(P) = x(\iota(P))=\infty$.  Then $H^0(X,K + E)$ has basis $1,x,\dots,x^g$.  Then the image of the multiplication map is generated by $1,x,\dots,x^{2g-1}$ and thus has dimension at most $2g$; since $H^0(X,2K+E)$ has dimension $3g - 1$ it follows that (\ref{eq:curves-multiplication-map-first}) is not surjective when $g \geq 2$, proving this case.

For case~\ref{item:11}, we may suppose that $E$ and $K$ have disjoint support and that $E = E'$.  By Riemann--Roch, we have $\dim H^0(X,K + E) = \dim H^0(X,K) + 1$. Let $y \in H^0(X,K + E) \smallsetminus H^0(X,K)$; then $y$ satisfies (with multiplicity) $(\divv y)|_E = E$ (where the notation $D|_E$ is defined in Definition \ref{D:divisor-notation}).  An element $z$ in the image of (\ref{eq:curves-multiplication-map-first}) is of the form $z=ay^2+fy+g$ with $a \in k$ and $f,g \in H^0(X,K)$, so by the ultrametric inequality, $\deg (\divv z)|_E \in \{0,1,2,4\}$. But by Riemann--Roch, $H^0(2K + 2E)$ contains an element with $\deg (\divv z)|_E = 3$; we conclude that (\ref{eq:curves-multiplication-map-first}) is not surjective in this case. (This argument also reproves the easy direction of Lemma~\ref{L:surjectivity-examples-g-1}(ii).)
\end{proof}

%----------------------------------------------------------------------------
\section{GMNT: nonhyperelliptic curves}
%----------------------------------------------------------------------------

In this section, we prove GMNT for nonhyperelliptic curves.

\begin{proposition}
\label{P:nonhyperelliptic-GMNT}
  Let $X$ be a nonhyperelliptic curve of genus $g\geq 3$, let $E,E'$ be effective divisors on $X$ with $\deg E \geq \deg E'$.  Suppose that $\deg E \geq 2$. Then the multiplication map \eqref{eq:curves-multiplication-map-first}
  is surjective if and only if one of the following holds:
  \begin{enumerate}[label=(\roman*)]
  \item \label{item:10}   $\deg E = 0$ and $\deg E' = 0$;
  \item \label{item:15}   $\deg E \geq 2$ and  $\deg E' = 0$;
  \item \label{item:16}   $\deg E = \deg E' = 2$ and $E \neq E'$; or
  \item \label{item:17}   $\deg E \geq 3$ and $\deg E' \geq 2$.
  \end{enumerate}

\end{proposition}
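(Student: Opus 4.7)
The converse direction is contained in Lemma~\ref{L:surjectivity-counterexamples}, so my plan is to establish surjectivity in each of (i)--(iv). My approach uses case (i) (classical Max Noether for nonhyperelliptic $X$ of genus $g \geq 3$) as the base, and case (iv) follows from Mumford's theorem after possibly swapping $E$ and $E'$ (since $\deg E \geq 3$ and $\deg E' \geq 2$ places us, after a swap if needed, in the range $(\deg E, \deg E') \geq (2,3)$ of Mumford's theorem). The substantive content is therefore in cases (ii) and (iii), and the key technique throughout is to produce sections with prescribed maximal pole orders via Riemann--Roch and verify linear independence using Lemma~\ref{L:independence}.

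For case (ii), with $\deg E \geq 2$ and $E' = 0$: Max Noether gives $H^0(X, K)^{\otimes 2} \twoheadrightarrow H^0(X, 2K)$, so the image of the multiplication map contains $H^0(X, 2K)$ of dimension $3g-3$ inside $H^0(X, 2K+E)$ of dimension $3g-3+\deg E$. It remains to produce $\deg E$ further independent elements. For each point $P$ in $\supp E$ and each step in a pole-order filtration at $P$, Riemann--Roch yields a section $y \in H^0(X, K+E)$ with prescribed pole order at $P$, up to the maximum $\nu_P(K+E)$. Multiplying such $y$ by a differential $\omega \in H^0(X, K)$ nonvanishing at $P$ produces an element of $H^0(X, 2K+E)$ attaining the corresponding maximum pole order. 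Ordering these products by their pole divisors along $E$ and applying Lemma~\ref{L:independence} gives $\deg E$ independent elements beyond $H^0(X, 2K)$.

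For case (iii), with $\deg E = \deg E' = 2$ and $E \not\sim E'$: invoking case (ii) once for $(E, 0)$ and once for $(0, E')$ shows that the image contains $H^0(X, 2K+E)$ and $H^0(X, 2K+E')$ as subspaces of $H^0(X, 2K+E+E')$. Their sum has dimension
\[
2(3g-1) - \dim H^0(X, 2K + (E \wedge E'))
\]
by inclusion-exclusion, where $E \wedge E'$ denotes the pointwise minimum; this equals $3g+1 = \dim H^0(X, 2K+E+E')$ precisely when $\supp E \cap \supp E' = \emptyset$. When the supports overlap at a point $P$, I would produce $y \in H^0(X, K+E)$ and $y' \in H^0(X, K+E')$ each with maximal pole at $P$; the product $y y'$ attains pole order $\nu_P(2K+E+E')$ at $P$, strictly exceeding what any element of $H^0(X, 2K+E) + H^0(X, 2K+E')$ can achieve. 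Each overlap point thus contributes the required extra dimension.

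The main obstacle is this overlap analysis in case (iii), where the excluded subcase $E \sim E'$ of Lemma~\ref{L:surjectivity-counterexamples}\ref{item:11} shows that a naive product construction can genuinely fail. The hypothesis $E \not\sim E'$ is precisely what permits the construction to succeed: under linear equivalence, the top-order sections of $H^0(X, K+E)$ and $H^0(X, K+E')$ are constrained in a way that collapses their products into proper subspaces of $H^0(X, 2K+E+E')$, whereas non-equivalence ensures their maximal-pole representatives remain genuinely independent. Verifying that the resulting product $yy'$ lies outside $H^0(X, 2K+E) + H^0(X, 2K+E')$ reduces to a pole-order comparison at each overlap point $P$, which is routine from Riemann--Roch for divisors of degree exceeding $2g-2$.
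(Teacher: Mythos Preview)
Your strategy is sound and cases (i), (iii), (iv) are correct, though only (i) matches the paper. For (iv) you cite Mumford (legitimate, as the chapter introduction notes), whereas the paper gives a self-contained argument treating (iii) and (iv) uniformly by exhibiting $d+d'$ explicit products with strictly increasing pole divisors along $E+E'$. Your inclusion--exclusion approach to (iii)---reducing to two instances of case (ii) and then supplying one extra product $yy'$ at the overlap point---is a genuinely different route and works, once you observe that for nonhyperelliptic $X$ two effective degree-$2$ divisors satisfy $E\not\sim E'$ iff $E\neq E'$, forcing $\deg(E\wedge E')\leq 1$.

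There is, however, a gap in case (ii). You build, for each $P\in\supp E$, a section $y\in H^0(X,K+E)$ with prescribed pole at $P$ and multiply by $\omega\in H^0(X,K)$ \emph{nonvanishing at $P$}, then claim the resulting products can be ordered by pole divisor along $E$ so that Lemma~\ref{L:independence} applies. But for $E=P_1+P_2$ with $P_1\neq P_2$ one has $H^0(X,K+P_i)=H^0(X,K)$, so every $y\in H^0(X,K+E)\setminus H^0(X,K)$ has simple poles at \emph{both} $P_i$; your two products thus generically share the pole divisor $P_1+P_2$ along $E$ and are incomparable, so the lemma does not apply. The repair is to make $\omega$ \emph{vanish} at suitably chosen other points of $\supp E$, which is exactly what the paper does: take $x\in H^0(X,K+E)$ with $(\divv_\infty x)|_E=E$ and $y\in H^0(X,K)$ with $(\divv_0 y)|_E=P_2$, so that $x$ and $xy$ have pole divisors $P_1+P_2>P_1$. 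The analogous construction (sections $x_i$ with $(\divv_\infty x_i)|_E=P_1+\cdots+P_i$, together with one auxiliary form vanishing at $P_2$) is what underlies the paper's uniform treatment of (ii)--(iv).
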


\begin{remark}
If $X$ is not hyperelliptic and $\deg E = \deg E'=2$ (as in case~\ref{item:16}), then by Lemma \ref{lem:special}, we have $E \not\sim E'$ if and only if $E \neq E'$ .
\end{remark}

\begin{proof}
We may and do suppose throughout that $K$ and $E+E'$ have disjoint support. The ``only if'' implication $(\Rightarrow)$ is Lemma~\ref{L:surjectivity-counterexamples}---the cases (i)--(iv) above are precisely the complementary cases under the hypothesis that $X$ is nonhyperelliptic (see the table in Theorem \ref{T:surjectivity-master} for this organization pattern).  So we prove the implication $(\Leftarrow)$, and in each of the cases~\ref{item:10}--\ref{item:17}, the map \eqref{eq:curves-multiplication-map-first} is indeed surjective.  

Case~\ref{item:10} is classical.  For case~\ref{item:15}, there exists $x \in H^0(X,K+E)$ with $(\divv_{\infty} x)|_E = E=P_1+P_2$ and $y \in H^0(X,K)$ such that $(\divv_0 y)|_E=P_2$; by Riemann--Roch, the functions $x,xy$ together with $H^0(X,2K)$ (in the image by case~\ref{item:10}) span $H^0(X,2K+E)$.  

For cases~\ref{item:16} and~\ref{item:17}, let $d = \deg E$ and $d' = \deg E'$ and write 
\[
\begin{aligned}
E  & = \, P_1 + \cdots + P_{d},\\
E'  & = \, P'_{1} + \cdots + P'_{d'}.
\end{aligned}
\]
By Riemann--Roch, there exist $x_2,\ldots,x_d \in H^0(X,K + E)$ satisfying $(\divv_{\infty} x_i)|_E =  P_1 + \cdots + P_i$ for $i=2,\dots,d$ and similarly $x_2',\ldots,x'_{d'} \in H^0(X,K+E')$.  We will need two other functions.  First, by Riemann--Roch there exists $y_d \in H^0(X,K+E-P_2') \subset H^0(X,K+E)$ such that $(\divv y_d)|_{E+E'}=E-P_2'$ (in case~\ref{item:16} we can reorder so that $P'_2 \not \in \{P_1, P_2\}$).  Second, there exists $y_2' \in H^0(X,K-P_2) \subset H^0(X,K)$ with $(\divv y_2')|_{E + E'}  = -P_2$, because $X$ is not hyperelliptic and so $K$ separates points.  Second

Now the $d+d'$ functions
\[ x_2y_2',x_2,x_3,\dots,x_{d-1},x_d,y_dx'_{2},x_dx'_2,\dots,x_dx'_{d'} \]
lie in the span of multiplication with divisor of poles restricted to $E+E'$ given by
\[ P_1,P_1+P_2,P_1+P_2+P_3,\dots,P_1+\dots+P_{d-1},E,E+P_1',E+P_1'+P_2',\dots,E+E', \]
so are linearly independent by Lemma~\ref{L:independence}.  And 
\[ \dim H^0(X,2K+E+E') - \dim H^0(X,2K)=d+d', \] 
so these functions generate $H^0(X,2K + E + E')$ over $H^0(X,2K)$; the result then follows from case (i).
\end{proof}

%----------------------------------------------------------------------------
\section{GMNT: hyperelliptic curves}
%----------------------------------------------------------------------------

In this section, we prove GMNT for hyperelliptic curves.  The proof in the hyperelliptic case is similar to the nonhyperelliptic case, with a wrinkle: the divisors $K$ and $K + D$ no longer separate (hyperelliptically conjugate) points or tangent vectors.

\begin{proposition}
\label{P:GMNT-hyperelliptic}
  Let $X$ be a hyperelliptic curve of genus $g \geq 2$, let $E,E'$ be effective divisors on $X$ with $\deg E \geq \deg E'$.  Then the multiplication map \eqref{eq:curves-multiplication-map-first}  is surjective if and only if one of the following holds:
  \begin{enumerate}[label=(\roman*)]
  \item \label{item:14} $g = 2$ and $\deg E = \deg E' = 0$;
  \item \label{item:5} $\deg E = 2$ and $\deg E' = 0$ and $E$ is not special;
  \item \label{item:12} $\deg E \geq 3$ and $\deg E' = 0$;
  \item \label{item:6} $\deg E = \deg E' = 2$ and $E \not \sim E'$; or
  \item \label{item:13} $\deg E \geq 3$ and $\deg E' \geq 2$.
  \end{enumerate}
\end{proposition}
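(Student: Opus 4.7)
The plan is to mirror the strategy of Proposition~\ref{P:nonhyperelliptic-GMNT}, building a spanning set of $H^0(X, 2K + E + E')$ from carefully chosen stair-step products and applying Lemma~\ref{L:independence}. The hyperelliptic subtlety is that $K$ is not very ample---the canonical image is a rational normal curve, and $H^0(X, K) \otimes H^0(X, K) \to H^0(X, 2K)$ fails to be surjective when $g \geq 3$. To compensate, I exploit the involution $\iota$ and an ``odd'' function $y$ with $y^2 \in k(x)$. Fix a degree-$2$ map $\pi \colon X \to \P^1$ with involution $\iota$, so $K \sim (g-1)H$ for any fiber $H$ of $\pi$. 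By Lemma~\ref{L:surjectivity-counterexamples}, only cases (i)--(v) remain, and throughout we may assume $\supp K \cap \supp(E + E') = \emptyset$. Case (i) is immediate from $\dim H^0(X, 2K) = 3$ and the explicit basis $\{1, x, x^2\}$.

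In cases (ii), (iii), (v), and the subcase of (iv) where at least one of $E, E'$ is not hyperelliptic fixed, I would argue in two layers. First, pick stair-step functions $x_2, \ldots, x_d \in H^0(X, K + E)$ with $(\divv_\infty x_i)|_{E + E'} = P_1 + \cdots + P_i$ and analogous $x'_j \in H^0(X, K + E')$; their pairwise products, combined with the image of $H^0(X, K) \otimes H^0(X, K)$, cover the $\iota$-even part of $H^0(X, 2K + E + E')$. Second, assuming say $E$ is not hyperelliptic fixed, Riemann--Roch supplies an $\iota$-odd section $y_0 \in H^0(X, K + E)$ (concretely $y$ times a rational function in $x$) whose pole divisor meets $E$ but not $E^\iota$; the products $y_0 \cdot x'_j$, together with $H^0(X, K) \cdot y_0$, then reach the remaining $\iota$-odd pole orders, and applying Lemma~\ref{L:independence} with a Riemann--Roch dimension count completes surjectivity.

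The main obstacle is case (iv) when both $E$ and $E'$ are hyperelliptic fixed but $E \not\sim E'$. Since all hyperelliptic fibers $H_c = P + P^\iota$ are linearly equivalent, at most one of $E, E'$ can be a fiber; so at least one, say $E'$, is a sum $W_1 + W_2$ of two Weierstrass points not linearly equivalent to any fiber. For such $E'$ I would exhibit the explicit $\iota$-odd element $y_0 = y / ((x - w_1)(x - w_2)) \in H^0(X, K + E')$ with $w_i = \pi(W_i)$; a direct divisor calculation verifies this, using that $\divv y = \sum_i W_i - (g+1) H_\infty$ (sum over all $2g+2$ Weierstrass points) and $\divv(x - w_i) = 2 W_i - H_\infty$. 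The surjectivity argument then proceeds as above with the roles of $E$ and $E'$ swapped. The main technical care throughout is to keep track of the divisor types (fibers versus sums of Weierstrass points) in matching the produced sections to the Riemann--Roch dimension of $H^0(X, 2K + E + E')$, and to use the non-hyperelliptic-fixed hypothesis precisely where needed.
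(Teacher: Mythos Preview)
Your even/odd framework has the key dependency backwards. An $\iota$-odd function has $\iota$-invariant divisor, so if $y_0 \in H^0(X, K+E)$ is $\iota$-odd then also $\divv(y_0) + K + E^\iota \geq 0$; when $E$ and $E^\iota$ have disjoint support (the generic situation for $E$ not hyperelliptic fixed), this forces $y_0 \in H^0(X, K) = \langle 1, x, \dots, x^{g-1} \rangle$, which is entirely $\iota$-even, so $y_0 = 0$. Thus there is \emph{no} nonzero $\iota$-odd section of $H^0(X, K+E)$ precisely in the cases where you invoke one; your explicit $y_0 = y/((x-w_1)(x-w_2))$ in the hard subcase of (iv) works only because $E' = W_1 + W_2$ \emph{is} $\iota$-invariant. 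The same issue undermines the first layer: your stair-step functions $x_i$ are not $\iota$-even when $E$ is not $\iota$-invariant, and $H^0(X, 2K+E+E')$ has no $\iota$-eigenspace decomposition unless $E + E'$ is $\iota$-invariant, so speaking of its ``$\iota$-even part'' is not meaningful in general.

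The paper bypasses parity entirely. In case (ii) it picks $y \in H^0(X, K+P+Q)$ with a pole at both $P$ and $Q$; this $y$ is neither even nor odd, and the independence of $1, x, \ldots, x^{2g-2}, y, xy, \ldots, x^{g-1}y$ follows because a relation would force $y \in k(x)$ and hence $\divv(y)^\iota = \divv(y)$, contradicting $Q \neq P^\iota$. For (iii)--(v) it first replaces $E$ by an equivalent effective divisor with a non-hyperelliptic-fixed degree-$2$ subdivisor, uses case (ii) on that subdivisor to put $H^0(X,2K)$ in the image, and then stair-steps over $H^0(X,2K)$ as in Proposition~\ref{P:nonhyperelliptic-GMNT}.
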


\begin{proof}
As in the proof of Proposition \ref{P:nonhyperelliptic-GMNT}, the ``only if'' implication $(\Rightarrow)$ is Lemma~\ref{L:surjectivity-counterexamples}, so we prove the implication $(\Leftarrow)$.  

Case~\ref{item:14} is classical \ref{eq:hyperell-sec1-eq2}.  For case~\ref{item:5}, by Lemma \ref{lem:special} we have $E=P+Q$ with $Q \neq \iota(P)$.  Without loss of generality, we may take $K=(2g-2)\infty$ with $\infty \neq P,Q$ a Weierstrass point (so that $\iota(\infty) = \infty$); then $H^0(X,K)$ has basis $1,x,\dots,x^{g-1}$ with $x$ a hyperelliptic map ramified at $\infty$.  By Riemann--Roch, 
\[ \dim H^0(X,K+P+Q) = \dim H^0(X,K)+1, \]
so there exists $y \in H^0(X,K+P+Q)$ with $(\divv_{\infty} y)|_{P + Q} = P + Q$ spanning $H^0(X,K+P+Q)$ over $H^0(X,K)$.  The image of the multiplication map (M) in this case
\[ H^0(X,K + P+Q) \otimes H^0(X,K) \to H^0(X,2K + P+Q) \]
is spanned by $1,x,\ldots,x^{2g-2},y,xy,\ldots,x^{g-1}y$.  Again by Riemann--Roch, if these elements are linearly independent then they span $H^0(X,2K+P+Q)$.  Assume for purposes of contradiction that $a(x)=b(x)y$ with $a(x),b(x) \in k[x]$, not both zero.  If $b(x) = 0$, then $a(x)=0$.  So $b(x) \neq 0$, but then $y=a(x)/b(x)$ and so $\iota(\divv(y))=\divv(y)$ while by hypothesis, $\iota(\divv(y)) \neq \divv(y)$, giving a contradiction. 

Case~\ref{item:6} begins similarly. After possibly switching $E$ and $E'$, we may assume that $E$ is not special and that $E' \neq E$. First, suppose that $E'$ is special. By Lemma \ref{lem:special} we may assume that $K = (g-1)E'$. Write $E = P + Q$. Let $x \in H^0(X,E')$ be nonconstant and let $y \in H^0(X,K + E)$ be a general element (in particular, $y$ is not hyperelliptic fixed). Then $H^0(X,K + E')$ is spanned by $1,x,\ldots,x^g$, and $H^0(X,K + E)$ is spanned by $1,x,\ldots,x^{g-1},y$. The image contains the elements $1,x,\ldots,x^{2g-1}, y,xy,\ldots,x^gy$; by the same argument as the proof of case~\ref{item:5}, these span. 

Continuing with case~\ref{item:6}, suppose now that $E'$ is not special. By case~\ref{item:5}, the maps
\[
H^0(X,K + E) \otimes H^0(X,K) \to H^0(X,2K + E)
\]
and
\[
H^0(X,K) \otimes H^0(X,K+E') \to H^0(X,2K + E')
\]
are surjective. Therefore the image of the full multiplication map
\[ H^0(X,K +E) \otimes H^0(X,K+E') \to H^0(X,2K+E+E') \]
contains both $V = H^0(X,2K +E)$ and $V' = H^0(X,2K +E')$. If $E$ and $E'$ have disjoint support, then the intersection $V \cap V'$ is $H^0(X,2K)$ and thus
\begin{align*}
\dim(V+V') &= \dim V + \dim V' - \dim(V \cap V') \\
&= (3g-1)+(3g-1)-(3g+1) = 3g-3 \\
&= \dim H^0(X,2K +E + E')
\end{align*}
so multiplication is surjective.  If on the other hand $E$ and $E'$ have a point $P$ in common in their supports, then $V \cap V'=H^0(X,2K + P)$ and the inclusion
\[
V+V' \subseteq  H^0(X,2K +E + E' - P)
\]
is an equality again by dimensions.  To conclude, let $z \in H^0(X,K +E)$ and $z' \in H^0(X,K +E')$ be general elements. Then by consideration of poles $zz' \not \in H^0(X,2K +E + E' - P)$, and so $H^0(X,2K +E + E')$ is spanned by $zz'$ over $H^0(X,2K +E + E' - P)$, completing the proof of this case.

Finally, we deduce the remaining cases \ref{item:12} and \ref{item:13} via induction on $\deg E \geq 3$ as follows.  Let $E = P_1 + \cdots + P_{d}$ and $E_0 = E - P_d$.  We use the following claim in both the base cases and in the inductive step: we claim that if
\begin{equation} 
\label{eq:hypothesis}
H^0(X,K + E_0) \otimes H^0(X,K + E') \to H^0(X,2K + E_0 + E')
\end{equation}
is surjective, then
\begin{equation} 
H^0(X,K + E) \otimes H^0(X,K + E') \to H^0(X,2K + E + E') 
\end{equation}
is surjective.  Indeed, by Riemann--Roch, there exists $z \in H^0(K + E)$ such that $\ord_{P_d}(z) = \ord_{P_d}(K + E)$, and similarly $z' \in H^0(K + E')$ with $\ord_{P_d}(z') = \ord_{P_d}(K + E')$.  Thus $\ord_{P_d}(zz') = \ord_{P_d}(2K + E + E')$; in particular, $zz' \not \in H^0(2K + E_0 + E)$, so $zz'$ generates  $H^0(2K + E + E')$ over $H^0(2K + E_0 + E')$.

We now use this claim to finish.  First we establish the base case $\deg E=3$.  Up to linear equivalence, we may assume that $E_0$ is not special.  
\begin{itemize}
\item If $\deg E'=0$, the hypothesis \eqref{eq:hypothesis} is satisfied by case \ref{item:5}.  
\item Suppose $\deg E'=2$.  This case (and the next) is covered by Mumford \cite[Theorem 6]{Mumford:bookQuestionsOn}, but we give a direct proof.  Write $E' = P_1' + P_2'$.  If $E'$ is special, then since $E_0$ is not special the hypothesis \eqref{eq:hypothesis} is satisfied by case~\ref{item:6}.  So suppose $E'$ is not special.  If $E' \neq P_1 + P_2$, we may again apply case~\ref{item:6}.  Otherwise, $E' = P_1 + P_2=E_0$ and we may reorder so $P_1=P_1'$ and $P_2=P_2'$.  If $P_3 \neq P_2'$, then we can appeal again to case~\ref{item:6} taking $E_0=P_1+P_3$ instead.  If $P_3 \neq P_1'$, we can argue similarly.  So we are left with the case where all of the points $P_1=P_2=P_3=P$ are equal, i.e., $E=3P$ and $E'=2P$.  

We finish off this case as follows.  The image of 
\[ H^0(X,K + 3P) \otimes H^0(X,K + 2P) \to H^0(X,2K+5P) \] 
contains products from $H^0(X,K) \otimes H^0(X,K + 2P)$ which, by case~\ref{item:5}, is equal to $H^0(X,2K + 2P)$.  So to deduce surjectivity, it suffices to find elements in the image with poles of order $3,4,5$ at $P$. As usual we may suppose that $P \not \in \supp K$. Let $z_0, z_2,z_3$ be general elements of $H^0(X,K), H^0(X,K + 2P),H^0(X,K+ 3P)$, respectively.  Then by Riemann--Roch these elements satisfy $\ord_P(z_i) = -i$, and the elements $z_0z_3, z_2^2,$ and $z_2z_3$ have poles of order $3,4,5$, finishing this case.
\item If $\deg E'=3$, then hypothesis \eqref{eq:hypothesis} is satisfied by the previous base case, interchanging $E'$ and $E_0$. 
\end{itemize}
Since $3=\deg E \geq \deg E'$, this handles all base cases.  Finally, the general case follows by induction from these base cases, using the claim.
\end{proof}

%****************************************************************************
\chapter{Canonical rings of classical log curves}
\label{ch:logclassical-curves}
%****************************************************************************

In this chapter, we consider the canonical ring of a classical (nonstacky) log curve.  This is a generalization of Petri's theorem to the situation where we allow logarithmic singularities of differentials along $\Delta$.  Although our results here do not use anything stacky, we will use these results later as base cases.  We work throughout over a field $k$.

%----------------------------------------------------------------------------
\section{Main result: classical log curves}
%----------------------------------------------------------------------------

We begin in this section by setting up notation and stating our main result.  Let $X$ be a curve over $k$.

\begin{definition}
A divisor $\Delta$ on $X$ is a \defiindex{log divisor} if $\Delta=\sum_i P_i$ is an effective divisor on $X$ given as the sum of distinct points of $X$.  

A \defiindex{log curve} is a pair $(X,\Delta)$ where $X$ is a curve and $\Delta$ is a log divisor on $X$.  The \defiindex{log degree} of a log curve $(X,\Delta)$ is $\delta=\deg \Delta\in \Z_{\geq 0}$. 
\end{definition}

\begin{definition}
The \defiindex{canonical ring} of a log curve $(X,\Delta)$ is
\[ R=R(X,\Delta)=\bigoplus_{d=0}^{\infty} H^0(X,dD) \] 
where $D=K+\Delta$.
\end{definition}

The canonical ring of a log curve is more complicated than it may seem at first: when the log degree $\delta=1,2$, the ring is not generated in degree 1 (see Theorem~\ref{T:surjectivity-master}) and $K + \Delta$ is ample but not very ample. There are many cases and some initial chaos, but eventually things stabilize.  Our main result is summarized as follows.

\begin{theorem} \label{thm:logcurverelat}
Let $X$ be a curve of genus $g \geq 1$ and let $\Delta$ be a log divisor on $X$ with $\delta=\deg \Delta \geq 1$, and let $R$ be the canonical ring of the log curve $(X,\Delta)$.  Then $R$ is generated in degrees up to $\deg P(R_{\geq 1};t)$ with relations in degrees up to $\deg P(I;t)$, according to the following table:
\begin{center}
\renewcommand{\arraystretch}{1.1}
\begin{tabular}{| c || c | c |}
\hline
$\delta$ & $\deg P(R_{\geq 1};t)$ & $\deg P(I;t$) \\
\hline \hline
1 & 3 & 6 \\
2 & 2 & 4 \\
3 & 1 & 3 \\
$\geq 4$ & 1 & 2 \\
\hline
\end{tabular}
\end{center}
In particular, if $\delta \geq 4$, then $R$ is generated in degree $1$ with relations in degree $2$.
\end{theorem}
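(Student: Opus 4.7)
The plan is to apply the generalized Max Noether theorem (Theorem~\ref{T:surjectivity-master}) to multiplication maps on $R$, writing $R_a = H^0(X, K + E_a)$ with $E_a = (a-1)K + a\Delta$, so that $\deg E_a = (a-1)(2g-2) + a\delta$. When $a \geq 2$ and $g \geq 2$ we have $\deg E_a \geq 4$, so GMNT guarantees surjectivity of $R_a \otimes R_b \to R_{a+b}$ for all $a, b \geq 2$. All potential failure of surjectivity is therefore concentrated in the maps $R_1 \otimes R_b \to R_{b+1}$, whose behavior is controlled by $\deg E_1 = \delta$, and the four rows of the table correspond to a case analysis on $\delta$.

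\emph{Generators.} For $\delta \geq 3$, the hypothesis $\deg E_1 \geq 3$ combined with $\deg E_b \geq 2$ (for $b \geq 1$) puts us in a YES cell of GMNT, so $R_1 \otimes R_b \to R_{b+1}$ is surjective for all $b \geq 1$, and $R$ is generated in degree~$1$. For $\delta = 2$, the map $R_1 \otimes R_1 \to R_2$ can fail (when $\Delta$ is hyperelliptic fixed, or in low genus by a straightforward dimension count), but $R_1 \otimes R_b \to R_{b+1}$ is surjective for all $b \geq 2$; adjoining finitely many degree-$2$ generators to span the cokernel of $R_1 \otimes R_1 \to R_2$ therefore produces a generating set in degrees $\leq 2$. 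For $\delta = 1$, $\deg E_1 = 1$ forces every $R_1 \otimes R_b$ to fail, but $R_2 \otimes R_b \to R_{b+2}$ and $R_3 \otimes R_b \to R_{b+3}$ are surjective for $b \geq 1$; adjoining cokernel generators in $R_2$ and $R_3$ yields a generating set in degrees $\leq 3$.

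\emph{Relations and sharpness.} Given the generation bound $d_0 = \deg P(R;t)$, I would bound relation degrees by a standard Koszul/syzygy argument: once the multiplications $R_a \otimes R_b \to R_{a+b}$ all surject for $a+b=d$ with $a,b \geq 1$, any element of $I_d$ can be traced back, modulo $\sum_i R_i \cdot I_{d-i}$, to a syzygy of a surjective multiplication and thus lifts from lower degree. Combined with GMNT and the above generation bounds, this yields $\deg P(I;t) \leq 2 d_0$ in the cases $\delta \in \{1,2\}$ and $\delta \geq 4$. The case $\delta = 3$ is mildly exceptional: $K+\Delta$ has degree $2g+1$, and the embedded curve can carry an extra cubic syzygy in the style of Petri's trigonal case, giving the bound $\deg P(I;t) = 3$. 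Sharpness of all bounds is confirmed by computing $\Phi(R;t)$ from Riemann--Roch and comparing with the Hilbert series of the conjectured minimal presentation. The hardest step will be the detailed case analysis of $\delta \in \{1,2\}$, especially the hyperelliptic and $g = 2$ subcases, where the additional low-degree generators and the top-degree relations must be produced explicitly by the methods of Section~\ref{subsec:returntohyp}.
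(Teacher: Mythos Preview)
Your generator argument via GMNT is correct in spirit and is indeed how the paper handles generation (it cites GMNT at several points for exactly this purpose). One small slip: for $\delta=1$ you assert that $R_2\otimes R_b\to R_{b+2}$ is surjective for $b\ge 1$, but $R_2\otimes R_1\to R_3$ has $\deg E_1=1$, which is a ``no'' row of GMNT; this does not affect the conclusion since you are adjoining degree-$3$ generators anyway, but the bookkeeping should read $b\ge 2$.

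The genuine gap is in the relation argument. The implication ``all multiplications $R_a\otimes R_b\to R_{a+b}$ with $a+b=d$ surject, hence no new relations in degree $d$'' is not a standard Koszul or syzygy fact, and it is false in general: take $R=k[x,y,z]/(xyz)$, where $R_1\otimes R_2\to R_3$ is surjective yet $xyz$ is a new minimal relation. Bounding relations requires control over the \emph{kernels} of multiplication maps (Koszul cohomology $K_{1,q}$, or equivalently $\mathrm{Tor}_2$), not just their images. For $\delta\ge 4$ you could salvage this by citing Green's $N_1$ theorem (since $\deg(K+\Delta)\ge 2g+2$), and for $\delta=3$ the Fujita/Saint-Donat circle of ideas gives cubics, but you have not invoked these, and for $\delta\in\{1,2\}$ there is no such black box.

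The paper takes a very different route. It does \emph{not} deduce the relation bounds from a general principle. Instead, Theorem~\ref{thm:logcurverelat} is a summary of the entire chapter: for each value of $\delta$ (and, when $\delta\le 2$, for each of the hyperelliptic/nonhyperelliptic/trigonal subcases separately), the paper constructs explicit coordinates, writes down explicit relations with specified leading terms, and verifies by a monomial count against the known Hilbert function that these form a Gr\"obner basis. The bounds in the table are then read off from these Gr\"obner bases. Your final paragraph correctly anticipates that the $\delta\in\{1,2\}$ cases need this kind of hands-on work; what you did not anticipate is that the paper does the $\delta=3$ and $\delta\ge 4$ cases the same way (explicit Petri-style bases, log Petri syzygies, and trisecant geometry), rather than via an abstract bound.
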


In Theorem~\ref{thm:logcurverelat}, the precise description in the case $\delta=1$ depends accordingly on whether $X$ is hyperelliptic, trigonal or a plane quintic, or nonexceptional, and in the case $\delta=2$ depends on whether $\Delta$ is hyperelliptic fixed or not; complete descriptions, as well as the cases of genus $g=0,1$, are treated in the sections below and are again summarized in Table (II) in the Appendix.  

Throughout this chapter, let $(X,\Delta)$ be a log curve with log degree $\delta$, and write $D=K+\Delta$.

\begin{remark}
By definition  a log divisor $\Delta$ is a sum of distinct points, each with multiplicity one. One can consider instead a general effective divisor $\Delta$, and the results of this chapter hold for such divisors as well with very minor modifications to the proofs (e.g.~in the log degree 2 case, $\phi_{D}(X)$ has a cusp instead of a node). 
\end{remark}

%----------------------------------------------------------------------------
\section{Log curves: Genus 0}
\label{ss:genus-0-log-can}
%----------------------------------------------------------------------------

Given what we have done, the canonical ring for a log curve of genus $0$ is simple to describe.  Suppose $g=0$, so $\deg K=-2$.  If  $\delta = 1$, then $R=k$ (in degree $0$) and $\Proj R = \emptyset$.  If $\delta=2$, then $K=0$ and $R=k[u]$ is the polynomial ring in one variable and $\Proj R=\Spec k$ is a single point.  In these cases, $D$ is not ample.  If $\delta=3$, then $R=k[x_1,x_2]$, so $P(R_{\geq 1};t)=2t$ and $I=(R;t)=(0)$.  
Finally, if $\delta \geq 4$, so  $\deg D = \delta-2=m \geq 2$, then $D$ is very ample and $R$ is generated in degree $1$ with relations in degree $2$: if $X \simeq \PP^1$ over $k$, then 
\[ R=\bigoplus_{\substack{d = 0 \\ m \mid d}}^{\infty} k[u_0,u_1]_{d} \]
is the homogeneous coordinate ring of the $m$-uple embedding of $\PP^1$ in $\PP^m$, a rational normal curve.  
This case is described in section~\ref{subsec:rationalnormal}: we have
\[ \gin_{\prec}(I;\calS) =\langle x_ix_j : 1 \leq i < j \leq \delta - 2 \rangle \]
and
\[ \gin_{\prec}(I)=\langle x_i x_j : 1 \leq i \leq j \leq \delta-3 \rangle. \]

%----------------------------------------------------------------------------
\section{Log curves: Genus 1}
\label{ss:genus-1-log}
%----------------------------------------------------------------------------

We now consider the canonical ring for a log curve with $g=1$.  Then $K=0$; and since $\delta \geq 1$, we have that $D=K+\Delta=\Delta$ is ample.  If $\delta=1$, then $\Delta$ consists of a single point in $X(k)$, and the divisor $\Delta$ is ample but not very ample.  By a direct calculation with a Weierstrass equation (giving $X$ the structure of an elliptic curve over $k$ with neutral element $\Delta$), we have $ R=k[y,x,u]/(f(y,x,u)) $
where $y,x,u$ have degrees $3,2,1$, and
\[ f(y,x,u) = \underline{y^2} + a_1 uxy + a_3 u^3 y  + x^3 + a_2 u^2 x^2 + a_4 u^4 x + a_6 u^6 \]
is homogeneous of degree $6$.  Thus $\Proj R \hookrightarrow \PP(3,2,1)$ is a weighted plane curve.  There is an isomorphism $\PP(3,2,1) \simeq \PP^2$ given by 
\[ \PP(3,2,1) = \Proj k[y,x,u] \simeq \Proj k[y,x,u]_{(3)} = \Proj k[y,ux,u^3] \simeq \PP^2 \] 
and we thereby recover a `usual' Weierstrass equation for the elliptic curve $X$ in $\PP^2$.  

In a similar way, if $\delta=2$, then we have $R=k[y,x_1,x_2]/I$ with $y,x_1,x_2$ having degrees $2,1,1$, respectively; and $I$ is principal, generated by
\[ \underline{y^2} + h(x_1,x_2) y + f(x_1,x_2) \]
where $h(x_1,x_2),f(x_1,x_2) \in k[x_1,x_2]$ are homogeneous of degrees $2,4$, respectively.  Thus $X$ is again a weighted plane curve $X \hookrightarrow \PP(2,1,1)$.  Taking $\Proj R_{(2)}$, we find $X$ embedded in $\PP^3$ as the complete intersection of two smooth quadric surfaces (as is seen for example in the method of $2$-descent).

If $\delta=3$, then $\Delta$ is very ample and $R$ is generated in degree $1$.  If $\delta=3$ then $R=k[x,y,z]/(f(x,y,z))$ where $f(x,y,z) \in k[x,y,z]$ is the equation of a plane cubic.  The (pointed) generic initial ideals in the cases $\delta \leq 3$ are clear.

So to conclude this section, we consider the case $\delta \geq 4$.  Then $R$ has relations generated in degree $2$ and $X \simeq \Proj R \hookrightarrow \PP^{\delta-1}$ is a elliptic normal curve cut out by quadrics.  This can be proven directly---for a more complete exposition of the geometry of elliptic normal curves, see Hulek \cite{hulek1986projective} (and also Eisenbud \cite[6D]{eisenbud2005geometry}).  More precisely, we claim that the pointed generic initial ideal is 
\[
\gin_{\prec}(I;\calS)=\langle x_ix_j : 1 \leq i < j \leq \delta-1, (i,j) \neq (\delta-2,\delta-1) \rangle + \langle x_{\delta-2}^2x_{\delta-1} \rangle 
\]
with respect to grevlex, where $\calS$ is the set of coordinate points.  The argument to prove this (and the statement that the ideal is generated by quadrics) is the same as for a nonexceptional curve with $\delta \geq 4$,
so we do not repeat it here, but refer to section~\ref{ss:general-case-g} below (with $d=g+\delta-1=\delta$).

%----------------------------------------------------------------------------
\section{Log degree 1: hyperelliptic} 
\label{ss:genus-at-least-3-d-1,hyperelliptic}
%----------------------------------------------------------------------------

In this section, we consider the canonical ring in the case where $X$ is hyperelliptic and with log degree $\delta=1$.  We retain the notation $D=K+\Delta$.

Suppose $X$ is hyperelliptic of genus $g \geq 2$.  We recall the classical pointed setup (when $\delta=0$) from section~\ref{subsec:returntohyp}.  By Riemann--Roch, we have $H^0(X,D)=H^0(X,K)$, so the canonical map still has image $Y \subset \PP^{g-1}$, a rational normal curve of degree $g-1$.  Let $P_i$ be general points of $X$ with $i=1,\dots,g$ (distinct from $\Delta$), let $E=P_1+\dots+P_g$, and let $x_i \in H^0(X,D)$ be dual to $P_i$; then the pointed generic initial ideal of $Y$ is 
\begin{equation} \label{eqn:ginxijld2}
\gin_{\prec}(J;\calS_1)=\langle x_ix_j : 1 \leq i < j \leq g-1 \rangle
\end{equation}
as recalled in section~\ref{ss:genus-0-log-can}.

By GMNT (Theorem~\ref{T:surjectivity-master}), the canonical ring $R$ is minimally generated in degrees $1, 2,3$---only finally is the multiplication map 
\[ H^0(X,2D) \otimes H^0(X,2D) \to H^0(X,4D) \] 
surjective.

In degree 2, by Riemann--Roch, we have 
\[ \dim H^0(X,2D-E)=\dim H^0(X,2K+2\Delta-E)=3g-3+2-g=2g-1; \] 
the space of products $x_ix_j$ still spans a space of dimension $g-1$ (inside $H^0(X,2D)$), spanned by $x_ix_g$ for $i=1,\dots,g-1$, and we augment this to a basis with elements $y_i$ with $i=1,\dots,g$.

Next, we consider generators in degree $3$.  The image of the multiplication map with degrees $1+2=3$ is contained in 
\[ H^0(X,3K+2\Delta)=H^0(X,3D-\Delta) \subset H^0(X,3D); \] 
by GMNT, this multiplication map is surjective onto its image.  A general element $z \in H^0(X,3D)$ spans a complementary subspace, and again we take
\[ z \in H^0(X,3D-E). \]
The images of the points $P_1,\dots,P_g$ in these coordinates then comprise the set
\begin{align*} 
\calS&=\{(0::0:\dots:0::1:0:\dots:0), \dots, (0::0:\dots:0::0:0:\dots:1)\} 
\end{align*} 
of $g$ ``tricoordinate'' points in $\PP(3,2^g,1^g)$. 

We equip $k[z,y_1,\ldots,y_g,x_{1},\dots,x_{g}]$ with grevlex (so that e.g.~$y_1^2 \succ y_2^2 \succ  x_1^4 \succ y_1x_2^2$).  The pointed generic initial ideal is then as follows.

\begin{proposition} \label{prop:pointedginlog1hyp}
The pointed generic initial ideal of the canonical ring of $(X,\Delta)$ is
\begin{align*}
\gin_{\prec}(I;\calS) =& \langle x_i x_j :  1 \leq i < j \leq g-1 \rangle 
                                 \\
                                 &\qquad + \langle y_ix_j : 1 \leq i,j \leq g-1 \rangle \\
                                 &\qquad +\langle y_i y_j :  1 \leq i \leq j \leq g : (i,j) \neq (g,g) \rangle \\
                                 &\qquad + \langle zx_i : 1 \leq i \leq g-1 \rangle \\
                                 &\qquad +\langle y_g^2 x_i, zy_i :  1 \leq i \leq g-1 \rangle + \langle  z^2\rangle.
\end{align*}
\end{proposition}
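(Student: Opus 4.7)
The plan is to mirror the argument of Section~\ref{subsec:returntohyp} (the classical hyperelliptic case), adapted for the log divisor $\Delta = P$ and the extra generators $y_{g-1}, y_g$ in degree $2$ and $z$ in degree $3$ that arise by Riemann--Roch for $D = K + \Delta$.  By the semicontinuity of ranks (Remark~\ref{R:semicontinuity}), it suffices to produce one choice of basis vanishing on $\calS$ for which the stated monomials are initial terms; the pointed generic initial ideal then coincides with this initial ideal.

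The central setup is to stratify the generators by pole order at $P$.  A Riemann--Roch calculation shows that the filtration
\[
H^0(X,2K-E) \subsetneq H^0(X,2K+P-E) \subsetneq H^0(X,2D-E)
\]
has successive quotients of dimension $1$.  We may therefore pick $y_1,\dots,y_{g-2} \in H^0(X,2K-E)$ so that together with the products $x_ix_g$ for $1 \leq i \leq g-1$ they form a basis of $H^0(X,2K-E)$; then $y_{g-1} \in H^0(X,2K+P-E) \setminus H^0(X,2K-E)$ has a simple pole at $P$, and $y_g \in H^0(X,2D-E) \setminus H^0(X,2K+P-E)$ has a double pole at $P$.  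The analogous filtration in degree $3$ produces $z \in H^0(X,3D-E)$ with a triple pole at $P$.  The quadratic relations with leading terms $x_ix_j$ for $1 \leq i < j \leq g-1$ arise from the rational normal curve structure of the canonical image, exactly as recalled in~\eqref{eqn:ginxijld2} via Lemma~\ref{lem:pointedginnormalP1}.

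For the higher-degree relations, we proceed degree by degree as in Section~\ref{subsec:returntohyp}: in each degree $d = 3,4,5,6$, produce a spanning set of the relevant linear series using monomials in the generators, then express the claimed leading monomial in that basis. In degree $3$, assemble a basis of $H^0(X,3D-E)$ from monomials divisible by $x_g$, $y_{g-2}$, $y_{g-1}$, $y_g$, or $z$, and separate them by evaluation at the hyperelliptic conjugates $Q_i = \iota(P_i)$ (where the $y$'s are generically nonzero but $x_j$ vanishes for $j \neq i$); writing $y_ix_j$ in this basis yields the cubic relations with leading term $y_ix_j$ for $i,j \leq g-1$. In degrees $4,5,6$, analogous calculations yield the quartic relations $y_iy_j$ (for $(i,j) \neq (g,g)$) and $zx_i$, the quintic relations $y_g^2x_i$ and $zy_i$ (for $i \leq g-1$), and the single sextic relation $z^2$.

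The main obstacle is to confirm that each new relation has leading term exactly as stated and not some earlier grevlex monomial.  The device is to track pole orders at $P$: the generators $y_{g-1}$, $y_g$, $z$ and the monomial $y_g^2$ each occupy a distinct rung in the pole-order filtration, so any monomial containing them cannot be canceled against a combination of monomials of smaller pole order at $P$.  Once the leading terms are pinned down, a direct enumeration shows that the monomials in $k[z,y_1,\ldots,y_g,x_1,\ldots,x_g]$ not divisible by any listed initial term number exactly $d(2g-1)+1-g = \dim H^0(X,dD)$ in each degree $d \geq 1$, so the collected relations form a Gr\"obner basis and the initial ideal is as claimed.
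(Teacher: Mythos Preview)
Your outline follows the same degree-by-degree strategy as the paper: build explicit bases of the spaces $H^0(X,dD-E)$, express the target monomials in those bases, verify leading terms via evaluation at the hyperelliptic conjugates $Q_k=\iota(P_k)$, and finish with a monomial count. The extra filtration of the $y_i$ by pole order at $P=\Delta$ is a reasonable organizing device, though it does not by itself determine the leading terms of the bulk of the cubic relations.

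There is, however, a concrete problem with your degree-$3$ basis. You propose building it from monomials divisible by $x_g$, $y_{g-2}$, $y_{g-1}$, $y_g$, or $z$, i.e.\ extending the classical hyperelliptic basis (which used $y_{g-2}x_i$ for $i\le g-1$). But if $y_{g-2}x_j$ are basis elements, they cannot also be leading terms of relations---and the Proposition requires leading terms $y_ix_j$ for \emph{all} $1\le i,j\le g-1$, including $i=g-2$. Worse, when you write $y_{g-1}x_j$ in such a basis the $Q_k$-evaluation argument only forces the $y_{g-2}$-coefficient to be a multiple of $x_j$, so a term $y_{g-2}x_j$ survives; under grevlex one has $y_{g-2}x_j\succ y_{g-1}x_j$, so the claimed leading term is wrong. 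The paper avoids this by working in $V=H^0(X,3D-\Delta-E)$ and taking the complementary basis to be $y_ix_g$ for $i=1,\dots,g$ together with $y_{g}x_i$ for $i=1,\dots,g-1$: the distinguished element is $y_g$, not $y_{g-2}$. Then every $y_ix_j$ with $i,j\le g-1$ is genuinely non-basis, and since $y_gx_j\prec y_ix_j$ for $i<g$ the $Q_k$-evaluation gives the stated leading terms. The same replacement of $y_{g-2}$ by $y_g$ is needed in degrees $4$ and $5$.
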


\begin{proof}
The relations in degree $2$ occur among the variables $x_i$ and arise from the rational normal curve, as above.

So consider the relations in degree $3$.  Let
\[ V = H^0(X,3D-\Delta-E) = H^0(X,3K+2\Delta-E). \]
Then $\dim V = 5g-5+2-g=4g-3$.  The subspace generated by the variables $x_i$ has dimension $3g-3+1-g=2g-2$, spanned by the elements $x_i^2x_g,x_ix_g^2$ for $i=1,\dots,g-1$; a complementary space has dimension $2g-1$.  We claim that a complementary basis is given by
\begin{center}
$y_i x_g$ for $i=1,\dots,g$, \quad and $y_{g} x_i$ for $i=1,\dots,g-1$.
\end{center}
Linear independence follows as before: if $a(y)x_g+y_gb(x)=c(x)x_g$, then substituting $Q_i=\iota(P_i)$ for $i=1,\dots,g-1$ gives $b(x)=0$, and then dividing by $x_g$ yields linear independence from degree $2$.  Therefore $y_ix_j \in V$
for $1 \leq i,j \leq g-1$ yields cubic relations of the form
\[ y_ix_j = a_{ij}(y)x_g + y_gb_{ij}(x)+c_{ij}(x)x_g; \]
substituting $Q_k$ for $k \neq j$ we find $b_{ij}(x)$ is a multiple of $x_j$ hence the leading term of this relation is $\underline{y_ix_j}$, as before.

Next, we turn to relations in degree $4$.  Now we consider the space
\[ W = H^0(X,4D-E) \]
of dimension $\dim W = 7g-7+4-g=6g-3$.  We have $x_g H^0(3D-P_g) \subseteq W$ with image of dimension $5g-5+3-1=5g-3$, spanned by
\begin{center}
$x_i^3x_g,x_i^2x_g^2,x_ix_g^3,y_gx_ix_g$ for $i=1,\dots,g-1$, \quad $y_jx_g^2$ for $j=1,\dots,g$.
\end{center}
A complementary basis is given by
\begin{center}
$y_gx_i^2$ for $i=1,\dots,g$,\quad and $y_g^2$;
\end{center} 
to prove linear independence, suppose
\[ ay_g^2 + b(x)y_g+c(x,y)x_g=0. \]
Plugging in $Q_i$ for $i=1,\dots,g-1$ gives that $b(x)=0$; then plugging in $Q_g$ gives $a=0$; so $c(x,y)=0$, and linear independence follows.  From $y_iy_j \in W$ we obtain relations
\[ \underline{y_iy_j} = a_{ij}y_g^2 + b_{ij}(x)y_g + c_{ij}(x,y)x_g; \]
substituting $Q_k$ for $k \neq i,j$ gives that the only monomials in $b_{ij}(x)$ are $x_i^2$ and $x_j^2$; then plugging in $P_i$ and $P_j$ gives $b_{ij}(x)=0$, so the leading term is as indicated.  In a similar way, we obtain relations with leading term $zx_i$.

By now, the pattern of this argument is hopefully clear.  For relations in degree $5$, we look in the space
$H^0(X,5D-E)$ which contains $x_g H^0(X,4D-4E+2P_g)$ with complementary basis $y_g y_i x_g$.  We obtain relations with leading terms $\underline{y_g^2x_i}, \underline{zy_i}$ for $i=1,\dots,g-1$.  Finally, for degree $6$ we turn to $H^0(X,6D-E) \supset x_g H^0(X,5D-E)$ and find a relation with leading term $\underline{z^2}$.  

A monomial count gives that this is a Gr\"obner basis, and since each successive initial term is not in the ideal generated by all of the monomials in all previous relations, this is also a minimal basis.  Finally, we conclude that this describes the pointed generic initial ideal by semicontinuity of ranks.  
\end{proof}

%----------------------------------------------------------------------------
\section{Log degree 1: nonhyperelliptic} 
\label{ss:genus-at-least-3-d-1,nonhyperelliptic}
%----------------------------------------------------------------------------

Now we suppose that $X$ is nonhyperelliptic, but we retain the assumption that $\Delta$ is a log divisor on $X$ of degree $\delta=1$.  We will see in this section that there is a uniform description of the Gr\"obner basis and hence the pointed generic initial ideal, but the minimal relations will depend on whether the curve is exceptional or not, just as in the classical case.  The crux of the argument: we find generators and relations simply by keeping track of the order of pole at $\Delta$. 

Let $P_1, \ldots, P_{g}$ be general points of $X$ with dual basis $x_1, \ldots, x_{g}$, and let $E=P_1+\dots+P_g$.  For $s=1,\dots,g$, let $\alpha_s(x_{g-1}, x_g)$ be a linear form with a double root at $P_s$; for a generic choice of points, the coefficient of $x_{g-1}$ is nonzero, and we scale $\alpha_s$ so that this coefficient is $1$.

Since $H^0(X,K) = H^0(X,K + \Delta)$, the subring generated by the degree one elements is the canonical ring $R(X)$ of $X$ and thus by Proposition~\ref{prop:pcangin} admits relations of the form 
\begin{equation} \label{eqn:petrirelations}
\begin{aligned}
  f_{ij}              &= x_ix_j   - \sum_{s = 1}^{g-2} \rho_{sij}\alpha_s(x_{g-1},x_g) x_{s} - b_{ij}(x_{g-1},x_g) \\  
  G_{ij}             &=  x_i^2\alpha_i(x_{g-1},x_g) - x_j^2\alpha_j(x_{g-1},x_g) + \text{lower order terms} \\
  H_{g-2}          &= x_{g-2}^3\alpha_{g-2}(x_{g-1},x_g) + \text{lower order terms} 
\end{aligned}
\end{equation}
for $1 \leq i < j \leq g-2$ which satisfy Petri's syzygies \eqref{eqn:petrisyzyz}. 

Choose generic elements
\begin{align*}
y_1 &\in H^0(X,2K-E+\Delta), \\
y_2 &\in H^0(X,2K-E+2\Delta), \\
z &\in H^0(X,3K-E+3\Delta)
\end{align*}
so that in particular the divisor of poles of each function is as indicated.  Each of these three generators are necessary by their order of pole at $\Delta$, and these are all generators by GMNT: the higher degree multiplication maps are surjective.

We again equip the ambient ring
\[ k[z,y_1,y_2,x_{1},\dots,x_{g}] \] 
with grevlex (so that e.g.~$z \succ y_ix_{1} \succ x_{1}^3 \succ y_ix_{2} $).  Let $\calS$ be the set of ``tricoordinate'' points in $\PP(3,2^2,1^g)$.  

\begin{proposition} \label{prop:pointedginlog1nonhyp}
The pointed generic initial ideal of the canonical ring of $(X,\Delta)$ is
\begin{align*}
\gin_{\prec}(I;\calS) =& \langle x_i x_j :  1 \leq i < j \leq g-2 \rangle 
                                 \\
                                 &\qquad + \langle y_1x_i, y_2x_i : 1 \leq i \leq g-1 \rangle + \langle x_i^2 x_{g-1} : 1 \leq i \leq g-3 \rangle \\
                                 &\qquad + \langle y_1^2,y_1y_2,x_{g-2}^3x_{g-1} \rangle + \langle zx_i : 1 \leq i \leq g-1 \rangle + \langle zy_1, z^2 \rangle.
\end{align*}
\end{proposition}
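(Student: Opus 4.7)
The plan is to mirror the hyperelliptic treatment of Section~\ref{ss:genus-at-least-3-d-1,hyperelliptic} almost line-for-line, substituting Petri's pointed generic initial ideal for the classical canonical ring $R_K$ (Proposition~\ref{prop:pcangin}) in place of the rational normal curve calculation. Since $H^0(X,D)=H^0(X,K)$, the subring of $R$ generated by $x_1,\dots,x_g$ is exactly $R_K$, so its Gr\"obner basis contributes the degree $2,3,4$ relations with leading terms $x_ix_j$ for $1\leq i<j\leq g-2$, $x_i^2 x_{g-1}$ for $1\leq i\leq g-3$, and $x_{g-2}^3 x_{g-1}$. The necessity of the new generators $y_1, y_2, z$ is forced by Riemann--Roch and GMNT (Theorem~\ref{T:surjectivity-master}): in degree $2$ the image of multiplication fills only $H^0(X,2K)\subset H^0(X,2D)$ of codimension $2$, while in degree $3$ the image of $H^0(X,D)\otimes H^0(X,2D)$ fills $H^0(X,3D-\Delta)$ of codimension $1$. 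GMNT applied in higher degrees shows that no further generators appear.

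Next, I would produce each remaining relation by the same pole-filtration strategy used in the hyperelliptic case. In each degree $d\in\{3,4,5,6\}$ I would work inside $H^0(X,dD-E)$, filter it by order of pole at $\Delta$ via the chain $H^0(X,dD-d\Delta)\subset\cdots\subset H^0(X,dD-E)$, and exhibit a spanning set consisting of $x_g$-multiples of lower-degree ``pole-normalized'' monomials together with the Petri-style basis elements in $R_K$. A dimension count then confirms this set is a basis. Each claimed leading monomial $M$ (namely $y_jx_i$ in degree $3$; $y_1^2$, $y_1y_2$, and $zx_i$ in degree $4$; $zy_1$ in degree $5$; and $z^2$ in degree $6$) lies in the corresponding graded piece of $R$, so expresses as a combination of the chosen basis, yielding a relation with $M$ among its terms. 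To confirm $M$ is in fact the \emph{leading} term, I would substitute the points $P_k$ for varying $k$: the identities $x_i(P_k)=0$ for $i\neq k$ together with the generic nonvanishing of $y_1, y_2, z$ at every $P_k$ will eliminate competing monomials of the same $\prec$-weight, just as the hyperelliptic involution did in Section~\ref{ss:genus-at-least-3-d-1,hyperelliptic}.

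Finally, I would verify the Gr\"obner basis property by a monomial count: enumerating degree-$d$ monomials in $k[z,y_1,y_2,x_1,\dots,x_g]$ that are not divisible by any of the proposed leading terms should produce $\dim H^0(X,dD)=(2d-1)(g-1)+d$ in every degree $d\geq 1$. Since each leading term identified above is not divisible by any previously identified one, the resulting generators are also minimal. Semicontinuity of ranks (Remark~\ref{R:semicontinuity}) then upgrades the construction from our explicit generic choice to an arbitrary generic choice as in Proposition~\ref{P:openginS}, yielding the pointed generic initial ideal. The main technical obstacle will be the bookkeeping in degrees $3$ and $4$, where one must carefully disentangle the three-way interaction among the Petri generators, the two pole-order-distinguished elements $y_1, y_2$, and the cubic generator $z$, using the pole filtration at $\Delta$ together with vanishing at the $P_k$. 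The small-genus edge cases (notably $g=3$, where several index ranges collapse or become empty) should also be checked by hand to ensure the argument is uniform.
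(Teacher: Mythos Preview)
Your overall framework matches the paper's proof closely: reuse Petri's classical relations for the $x$-subring, identify the new generators $y_1,y_2,z$ via GMNT and Riemann--Roch, filter by pole order at $\Delta$, and finish with a monomial count and semicontinuity. The paper follows exactly this outline.

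However, there is a genuine error in your proposed mechanism for pinning down leading terms. You assert ``the generic nonvanishing of $y_1, y_2, z$ at every $P_k$,'' but this is false: the elements $y_1,y_2,z$ are chosen in $H^0(X,2K-E+\Delta)$, $H^0(X,2K-E+2\Delta)$, $H^0(X,3K-E+3\Delta)$, respectively, so they \emph{vanish} at every $P_k$ by construction. (Indeed, this vanishing is forced by the pointed condition itself, since $\calS$ consists of the tricoordinate points where only one $x_i$-coordinate is nonzero.) The analogy with the hyperelliptic case breaks down: there the crucial evaluations were at the conjugate points $Q_k=\iota(P_k)$, where the $y_j$ generically did \emph{not} vanish; no such companion points exist in the nonhyperelliptic setting.

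The paper repairs this with a different device. For the degree-$3$ relations, it introduces linear forms $\beta_i(x_i,x_g)$ vanishing at $\Delta$, so that $y_1\beta_i\in H^0(X,3K-E)$ and $y_2\beta_i\in H^0(X,3K-E+\Delta)$; these spaces have explicit bases (from the basepoint-free pencil trick) consisting of monomials divisible by $x_{g-1}$ or $x_g$, which immediately forces the leading terms $\underline{y_1x_i},\underline{y_2x_i}$ under grevlex. For degrees $4,5,6$, no substitution is needed at all: the paper simply exhibits a complementary basis over $H^0(X,dK-E)$ by listing monomials with distinct pole orders at $\Delta$ (e.g.\ $y_1x_g^2,y_2x_g^2,zx_g$ in degree $4$), and the leading terms follow because the competing monomials are all divisible by $x_g$. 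So your pole-filtration strategy is in fact sufficient in the higher degrees; it is only in degree $3$ that you need the $\beta_i$ trick in place of your erroneous substitution argument.
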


\begin{proof}
Relations $f_{ij}, G_{ij},H_{g-2}$  (which involve only the $x_i$'s) arise classically.

So we begin with relations in degree $3$.  For $i=1,\dots,g-1$, let $\beta_i(x_i,x_g) \in H^0(X,K-\Delta)$ be a linear form in $x_i,x_g$ vanishing at $\Delta$ (unique up to scaling); generically, the leading term of $\beta_i$ is $x_i$, and we scale $\beta_i$ so that the coefficient of $x_i$ is $1$.  Then we have
\begin{equation} \label{eqn:pgindelta1non}
\begin{aligned}
 y_1\beta_i(x_i,x_g)  &\in H^0(X,3K-E) \\
 y_2\beta_i(x_i,x_g) &\in H^0(X,3K-E + \Delta).
\end{aligned}
\end{equation}
We then claim that the relations \eqref{eqn:pgindelta1non} have leading terms $\underline{y_1x_i},\underline{y_2x_i}$, respectively.  In the first case, we have the space $H^0(X,3K-E)$ of dimension $5g-5-g=4g-5$ spanned by
\[ \langle x_i^2x_g,x_ix_{g-1}^2,x_ix_{g-1}x_g,x_ix_g^2 : 1 \leq i \leq g-2 \rangle + \langle x_{g-1}^2x_g,x_{g-1}x_g^2 \rangle \]
using quadratic relations.  (We recall that this holds from the basepoint-free pencil trick, Lemma \ref{lem:bpfree-pencil}: there is a basis with each term divisible by $x_{g-1}$ or $x_g$.)  The leading term is then clear for $i=1,\dots,g-2$; it is also true for $i=g-1$ by more careful inspection.  In the second case, we have $H^0(X,3K-E+\Delta)$ is spanned by $H^0(X,3K-E)$ and (generically) $y_1 x_g$, and the result again follows.

We make similar arguments in each degree $d$ for the remaining relations, according to the following table:
\begin{center}
\renewcommand{\arraystretch}{1.2}
\begin{tabular}{| c | c || c | c |}
\hline
Leading term\rule{0pt}{2.25ex} & $d$ & Divisor of space & Complementary basis \\
\hline \hline
$y_1^2$ & $4$ & $4K-E+ 2\Delta$ & $y_1x_g^2, y_2x_g^2$ \\
$y_1y_2$ & $4$ & $4K-E+3\Delta$ & $y_1x_g^2, y_2x_g^2, zx_g$ \\
$zx_i$ & $4$ & $4K-E+3\Delta$ & $y_1x_g^2, y_2x_g^2, zx_g$ \\
$zy_1$ & $5$ & $5K-E+4\Delta$ & $y_1x_g^3, y_2x_g^3, zx_g^2, y_2^2x_g$ \\
$z^2$ & $6$ & $6K-E+6\Delta$ & $y_1x_g^4,y_2x_g^4,zx_g^3,y_2^2x_g^2,zy_2x_g,y_2^3$ \\[0.5ex]
\hline
\end{tabular}
\end{center}
In this table, by ``complementary basis'', we mean functions that span the space $H^0(X,dK-E+m\Delta)$ together with $H^0(X,dK-E)$; these are obtained just by looking for functions with distinct pole orders at $\Delta$, and the basis statement then follows.  As above, the space $H^0(X,dK-E)$ has a basis of monomials divisible by either $x_{g-1}$ or $x_g$, and the verification that the leading terms are as specified is routine.  

We claim that these relations are a Gr\"obner basis for the ideal of relations.  We prove this by a monomial count.  The relations $f_{ij}, G_{ij},H_{g-2}$  (which involve only the $x_i$'s) are a Gr\"obner basis for the classical canonical ideal $I_1$.  Let $I$ be the canonical ideal of the log curve and let $J \subset \init_{\prec} I$ be the ideal generated by the initial terms of the known relations.  Then for $d \geq 3$, the quotient 
\[ k[z,y_1,y_2,x_1,\ldots,x_g]/(J+I_1) \] 
is spanned in degree $d$ by the elements
\[ y_1 x_g^{d-2a}, y_2^a x_g^{d-2a},\ zy_2^bx_g^{d-3-2b} \]
with $a=1,\dots,\lfloor d/2 \rfloor$ and $b=0,\dots, \lfloor (d-3)/2 \rfloor$ and so has dimension 
\[
1 +  \lfloor{d/2\rfloor} + 
\lfloor{(d-3)/2\rfloor} + 1 = d
\]
But $d=\dim H^0(X,d(K+\Delta))-\dim H^0(X,dK)$, so we conclude that $J=\init_{\prec} I$.

Finally, we address minimality of the generators.  As classically, the minimality of the quadric relations $f_{ij}$ follows from a dimension count and by syzygy, the relation $H_{g-2}$ is nonminimal even (in contrast to the classical case) for $g = 3$: the syzygy 
\[ x_2A_{21} - x_1 A_{22} = BH_{g-2} + \text{lower order terms}  \]
where $A_{ij}$ denotes the new relations of \eqref{eqn:pgindelta1non}
exhibits non-minimality of $H_{g-2}$; a direct calculation reveals that $B \neq 0$ for general coordinate points.
The cubic relations $G_{ij}$ are minimal if and only if they were minimal in the canonical ring $R(X)$ of $X$: any syzygy implying nonminimality would be linear, and consideration of initial terms gives a contradiction.  
So as classically, these are minimal if and only if $X$ is exceptional (trigonal or plane quintic): a plane \emph{quartic} is not considered exceptional.
Finally, the other relations with leading term divisible by $z,y_1,$ or $y_2$ are necessary because each successive leading term is visibly not in the ideal generated by the monomials appearing in any of the previous relations.
\end{proof}

%----------------------------------------------------------------------------
\section{Exceptional log cases} 
\label{ss:log-trisecants}
%----------------------------------------------------------------------------

For the remainder of this chapter, we now pursue the case $\delta \geq 2$, retaining the notation $D=K+\Delta$.  In this section, we consider cases where the canonical ideal is not generated by quadrics.  

\begin{lemma} \label{lem:logexceptX}
Then the image of $X$ under the complete linear series on $D$ has image which is not cut out (ideal-theoretically) by quadrics if and only if one of the following hold.
\begin{enumerate}
\item[(i)] $X$ is hyperelliptic, $\delta = 2$, and $\Delta$ is not hyperelliptic fixed;
\item[(ii)] $X$ is trigonal, $\delta = 2$, and $\Delta$ extends to a $g_3^1$; or
\item[(iii)] $X$ is any curve and $\delta = 3$.
\end{enumerate}
\end{lemma}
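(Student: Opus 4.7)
The plan is to prove the forward direction---that in each of (i), (ii), (iii) the image $Y = \phi_D(X)$ is not cut out by quadrics---via a uniform \emph{trisecant obstruction}, then treat the converse by reduction to already-established results. In each case I will exhibit a line $L \subset \PP(H^0(D))$ meeting $Y$ in at least three distinct points. Any quadric in the vanishing ideal $I_Y$ then restricts to a degree-$2$ form on $L \cong \PP^1$ with three zeros, hence vanishes identically on $L$, so $L$ lies in the common zero locus $V(I_Y \cap \Sym^2 H^0(D))$; but $L \not\subset Y$ since a line is not contained in an irreducible curve of degree $\geq 4$, so the ideal $I_Y$ is not generated by its quadratic part (and in particular $Y$ is not cut out ideal-theoretically by quadrics).

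For case (iii), $\delta = 3$: writing $\Delta = P_1 + P_2 + P_3$, Riemann--Roch gives $h^0(D) = g+2$ and $h^0(D - \Delta) = h^0(K) = g$, so $\Delta$ imposes only $2$ conditions on $|D|$ and $\phi_D(P_1), \phi_D(P_2), \phi_D(P_3)$ are collinear. For case (ii), trigonal with $\Delta + P_3 \in g_3^1$: Serre duality yields $h^1(D - D') = h^0(D' - \Delta) = h^0(g_3^1 - \Delta) = 1$ for any $D' \in |g_3^1|$, since $g_3^1 - \Delta \sim P_3$ is effective. Hence $h^0(D - D') = g-1$ by Riemann--Roch, and choosing $D' = P_1' + P_2' + P_3' \in |g_3^1|$ with pairwise distinct support disjoint from $\Delta$, the three image points impose only $2$ conditions on $|D|$ and are collinear.

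Case (i), hyperelliptic $\delta = 2$ with $\Delta$ not hyperelliptic fixed, is the most delicate. First, $h^0(D - P_1 - P_2) = h^0(K) = g = h^0(D) - 1$ forces $\phi_D(P_1) = \phi_D(P_2) =: S$. Next, using $K \sim (g-1)g_2^1$ on a hyperelliptic curve, for any pair $Q + Q^\iota \sim g_2^1$ disjoint from $\Delta$ we compute $h^0(D - \Delta - Q - Q^\iota) = h^0((g-2)g_2^1) = g-1$, so the four points $P_1, P_2, Q, Q^\iota$ impose only $2$ conditions and have collinear images. Finally, $\Delta$ not hyperelliptic fixed implies $\Delta \not\sim g_2^1$ (since $g_2^1$ is always hyperelliptic fixed), and Serre duality applied to $h^0(D - Q - Q^\iota) = h^0((g-2)g_2^1 + \Delta)$ gives $h^1 = h^0(g_2^1 - \Delta) = 0$, hence $\phi_D(Q) \neq \phi_D(Q^\iota)$ for generic $Q$. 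The three points $S, \phi_D(Q), \phi_D(Q^\iota)$ on $Y$ are then distinct and collinear, supplying the trisecant.

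For the converse, outside of (i), (ii), (iii) the image is cut out by quadrics. For $\delta \geq 4$ this is part of Theorem~\ref{thm:logcurverelat}; for $\delta = 2$ hyperelliptic with $\Delta$ hyperelliptic fixed, $|K+\Delta|$ is $\iota$-invariant and $\phi_D$ factors through $X/\iota \cong \PP^1$, realizing $Y$ as a rational normal curve, classically cut out by quadrics; and for $\delta = 2$ with $X$ non-hyperelliptic (and, if trigonal, $\Delta$ not extending to $g_3^1$), a Petri-style argument combining GMNT (Theorem~\ref{T:surjectivity-master}) with the basepoint-free pencil trick produces a quadric generating set. The main obstacle is case (i): the argument requires combining two Riemann--Roch computations (for the identification $\phi_D(P_1) = \phi_D(P_2)$ and for the four-point collinearity) while simultaneously establishing $\phi_D(Q) \neq \phi_D(Q^\iota)$, with the hypothesis ``$\Delta$ not hyperelliptic fixed'' entering at precisely the right place via $\Delta \not\sim g_2^1$; and verifying the converse in the $2$-torsion subcase of hyperelliptic-fixed $\Delta$ (with $\Delta \not\sim g_2^1$) requires additional care in tracking the $\iota$-eigenspace decomposition of $H^0(D)$.
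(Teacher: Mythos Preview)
Your forward direction is correct and uses the same trisecant-plus-Bezout obstruction as the paper. The interesting difference is in the choice of trisecant for cases (i) and (ii). The paper takes $\Delta + \iota(Q)$ with $Q \in \operatorname{supp}\Delta$ for (i), and $\Delta + Q$ (completing $\Delta$ to the $g_3^1$) for (ii); but since $h^0(D-P_1)=h^0(D-P_1-P_2)=g$ forces $\phi_D(P_1)=\phi_D(P_2)$ regardless of whether $X$ is hyperelliptic, the paper's ``three collinear points'' are only two set-theoretically, and one must read the Bezout step as a scheme-theoretic intersection through the node. Your variants---a general conjugate pair $Q+Q^\iota$ disjoint from $\Delta$ in (i), and a general member $D'\in|g_3^1|$ with distinct support disjoint from $\Delta$ in (ii)---yield three honestly distinct collinear image points, which is cleaner and avoids that subtlety.

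For the converse both you and the paper defer; the paper is more specific for $\delta=2$ nonhyperelliptic, invoking Schreyer's extension of Petri's theorem to singular canonically embedded curves (the image $\phi_D(X)$ being nodal of arithmetic genus $g+1$), and you should cite that rather than a vague ``Petri-style argument''. Your closing caveat about the $2$-torsion subcase---$\Delta$ hyperelliptic fixed with $\Delta\not\sim g_2^1$, e.g.\ $\Delta=W_1+W_2$ for distinct Weierstrass points---flags a genuine subtlety: there $\iota$ acts on $H^0(K+\Delta)$ with a one-dimensional $(-1)$-eigenspace, so $\phi_D$ does \emph{not} factor through the hyperelliptic quotient and the image is not a rational normal curve, contrary to what the paper's converse asserts.
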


If one of the three cases (i)--(iii) holds, we say that $(X,\Delta)$ is \defiindex{exceptional}.

\begin{proof}
We begin with case (iii).  Let $\Delta=Q_1+Q_2+Q_3$.  Then the images of $Q_1,Q_2,Q_3$ under the complete linear series $\phi_D$ are colinear.  Indeed, by Riemann--Roch, $H^0(X,D - Q_1 -Q_2) = H^0(X,D - Q_1 -Q_2 - Q_3)$, so any linear subspace containing $\phi_{D}(Q_1)$ and $\phi_D(Q_2)$ also contains $\phi_D(Q_3)$. In particular, $Q_3$ lies on the line $L$ through $Q_1$ and $Q_2$.
This colinearity forces a relation in higher degree. Indeed, any quadric $Z$ containing the image of $X$ contains $Q_1,Q_2,Q_3$. But $Z \cap L \supset \{Q_1,Q_2,Q_3\}$, so by Bezout's theorem, $Z$ contains $L$. Since this holds for any such quadric vanishing on $\phi_D(X)$, at least one relation of degree at least $3$ is necessary. 

Case (ii) is similar: if $\Delta+Q$ generates a $g_1^3$, then the same Riemann--Roch argument shows that any linear subspace containing the points in $\Delta$ also contains $Q$.  Finally, for case (i), the same argument applies to $\Delta + \iota(Q)$ where $\iota$ is the hyperelliptic involution and $Q$ is in the support of $\Delta$.

For the converse, we defer the $\delta \geq 4$ case to the end of section~\ref{ss:general-case-g}. If $X$ is hyperelliptic,  $\delta = 2$, and $\Delta$ is hyperelliptic fixed, then the image of $\phi_D$ is a smooth rational normal curve, so there are no cubic relations. Finally, if $X$ is trigonal, $\delta = 2$, and $\Delta$ does not extend to a $g_3^1$, then the image of $\phi_D$ is a singular, integral, non-trigonal curve, and by Schreyer \cite[Theorem 1.4]{Schreyer:Petri} the cubic relations are not minimal. 
\end{proof}

\begin{remark}
In the classical case, a similar thing happens when $X$ is a plane quintic: under the canonical map to $\P^5$, the $5$ points of a $g_5^2$ (cut out by the intersection of a line with $X$) span a plane and are thus contained in a unique conic in that plane.  The intersection of this conic with any quadratic hypersurface contains 5 points and is again, by Bezout's theorem, the conic itself.
Any quadratic hypersurface containing $\phi_K(X)$ thus contains a net of conics and is in fact a surface of minimal degree (in this case, a copy of $\P^2$ under the Veronese embedding). Numerically, one sees by the above calculation that this does not happen for a plane quintic in the log case.
\end{remark}

\begin{remark}
Lemma~\ref{lem:logexceptX} holds also for some divisors $\Delta$ that are not log divisors, with the same auxiliary hypotheses: for example, if $X$ is general but some $Q_i = Q_j$, one argues instead that $Z \cap L$ intersects with multiplicity greater than one at $Q_i$. 
\end{remark}

%----------------------------------------------------------------------------
\section{Log degree 2} 
%----------------------------------------------------------------------------

Now suppose that $\delta=2$.  Then the divisor $D=K + \Delta$ is ample but not very ample and the structure of the canonical ring depends on whether $\Delta$ is hyperelliptic fixed.  In the hyperelliptic-fixed case, the image of $X$ under the complete linear series on $D$ is a smooth rational normal curve of degree $g$ in $\PP^{g}$ obtained from the hyperelliptic map; otherwise, the image of $X$ is singular at $\Delta$ with one node and having arithmetic genus $h = \dim H^0(X,K + \Delta) = g+1=g+\delta-1$.  

\begin{lemma} \label{lem:loghypfixed}
Suppose $\Delta$ is hyperelliptic fixed and let $h=g+1$.  Then the pointed generic initial ideal is 
\begin{align*}
\gin_{\prec}(I;\calS) &= \langle x_i x_j : 1 \leq i < j \leq h-1 \rangle \\
&\qquad +\langle x_iy_j : 1 \leq i,j \leq h-2,\ (i,j) \neq (h-2,h-2) \rangle \\
&\qquad +\langle y_i y_j : 1 \leq i,j \leq h-2 \rangle \subset k[y_1,\dots,y_{h-2},x_1,\dots,x_h]
\end{align*}
with $\calS$ the set of bicoordinate points in $\PP(2^{h-2},1^h)$.  
\end{lemma}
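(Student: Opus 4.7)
The strategy is to transplant the pointed-gin argument from the classical hyperelliptic case in subsection~\ref{subsec:returntohyp}. Since $\Delta$ is hyperelliptic fixed of degree $2$, we have $\Delta \sim g_2^1$ and hence $D = K + \Delta \sim g \cdot g_2^1 = \pi^*\mathcal{O}_{\PP^1}(g)$, where $\pi \colon X \to \PP^1$ is the hyperelliptic cover. Consequently $\phi_D$ factors through $\pi$ composed with the degree-$g$ Veronese of $\PP^1$, and its image is the smooth rational normal curve $Y$ of degree $g = h - 1$ in $\PP^{h-1}$.

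I would first pick general $P_1,\dots,P_h \in X(k)$ with dual basis $x_1,\dots,x_h \in H^0(X,D)$, so that the $P_j$ map to the coordinate points $\calS$ of $\PP^{h-1} \subset \PP(2^{h-2},1^h)$. Lemma~\ref{lem:pointedginnormalP1} applied to the vanishing ideal of $Y$ then supplies the quadratic part $\langle x_i x_j : 1 \leq i < j \leq h-1 \rangle$ of the claimed pointed gin. Using the pushforward $\pi_*\mathcal{O}_X = \mathcal{O}_{\PP^1} \oplus \mathcal{O}_{\PP^1}(-g-1)$, I would decompose $H^0(X,dD) = H^0(\PP^1,\mathcal{O}(dg)) \oplus H^0(\PP^1,\mathcal{O}(dg-g-1))$ and identify the image of $\mathrm{Sym}^\bullet H^0(X,D)$ with the first summand. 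Lifting a basis of the second summand in degree $2$ to generators $y_1,\dots,y_{h-2}$ of $R_2$, chosen with vanishing conditions at $E = P_1 + \cdots + P_h$ in the style of subsection~\ref{subsec:returntohyp}, fixes their behavior under multiplication by the $x_j$.

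In each higher degree $d$, I would exhibit an explicit complementary basis to $x_h \cdot H^0(X,(d{-}1)D - E + P_h)$ inside $H^0(X,dD - E)$ and expand each $x_i y_j$ or $y_i y_j$ against it; evaluating at the hyperelliptic conjugates $Q_i = \iota(P_i)$ forces the vanishing of all coefficients except the one with leading monomial exactly $x_i y_j$ (for the pairs in the claim, with the single exclusion $(i,j) = (h-2,h-2)$ arising from a coincidence in the cubic expansion) and $y_i y_j$ under grevlex. A monomial count matching $\dim H^0(X,dD)$ certifies that the relations form a Gr\"obner basis, and semicontinuity of ranks (Remark~\ref{R:semicontinuity}) promotes this to the pointed generic initial ideal. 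The main obstacle is bookkeeping the vanishing data on the $y_i$ so that the hyperelliptic-conjugate evaluation argument isolates the claimed leading monomial in each degree; this is precisely what produces the single exclusion $(i,j) = (h-2,h-2)$, exactly mirroring the $(g-2,g-2)$ exclusion in Theorem~\ref{T:canonical-gin-hyperelliptic}.
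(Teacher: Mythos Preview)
Your approach matches the paper's: its proof simply asserts that the analysis is identical to the classical pointed hyperelliptic case of subsection~\ref{subsec:returntohyp} and omits the details, which is precisely the transplantation you describe. One small correction: the $(h-2,h-2)$ exclusion should not be explained by analogy with Theorem~\ref{T:canonical-gin-hyperelliptic}, which computes the \emph{non-pointed} gin; the pointed analogue you are actually transplanting is display~\eqref{eqn:innotgin}, whose exclusion pattern for the $y_ix_j$ terms has a different shape, so the bookkeeping you allude to must be redone rather than imported from that theorem.
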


\begin{proof}
The analysis is identical to the classical case (section~\ref{subsec:returntohyp}) and is omitted.
\end{proof}

We now turn to the case where $\Delta$ is not hyperelliptic fixed (such as when $X$ itself is not hyperelliptic).  

\begin{proposition} \label{prop:nothypfixlog2_grevlex}
Suppose $\Delta$ is not hyperelliptic fixed and let $h=g+1$.  Then the pointed generic initial ideal is 
\begin{align*}
\gin_{\prec}(I;\calS) &= \langle x_ix_j                          : 1 \leq i < j \leq h-2\rangle \\
&\qquad + \langle x_i^2x_{h-1}                : 1 \leq i      \leq h-3\rangle 
+ \langle yx_i : 1 \leq i \leq h-1 \rangle \\
  &\qquad  + 
  \langle y^2,x_{h-2}^3x_{h-1}                                 \rangle   
   \subset k[y,x_1,\dots,x_h].  
\end{align*}
with $\calS$ the set of bicoordinate points in $\PP(2,1^h)$.
\end{proposition}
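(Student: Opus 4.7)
Choose general points $P_1, \ldots, P_h \in X(\kbar)$ disjoint from $\supp \Delta$, let $E = P_1 + \cdots + P_h$, and let $x_1, \ldots, x_h \in H^0(X, D)$ be the dual basis, so that $x_i(P_j) = \delta_{ij}$.  The image $\phi_D(X) = \bar{X} \subset \PP^{h-1}$ is a canonically embedded nodal curve of arithmetic genus $h$, where the node arises from $\phi_D(\Delta)$ (since $\Delta$ is not hyperelliptic fixed).  For general $P_i$, $\bar X$ has a simple $(h-2)$-secant, so Schreyer's extension of Petri to singular canonically embedded curves applies and yields a Gr\"obner basis for the homogeneous ideal of $\bar X$ with leading terms $\{x_ix_j : 1 \leq i<j \leq h-2\} \cup \{x_i^2 x_{h-1} : 1 \leq i \leq h-3\} \cup \{x_{h-2}^3 x_{h-1}\}$---which accounts for all $x$-only generators in the proposed $\gin$.

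By Riemann--Roch, $\dim H^0(X, 2D) = 3g + 1$ while $\dim H^0(\bar{X}, \calO(2)) = 3g$, so exactly one additional degree-$2$ generator is required.  Choose $y \in H^0(X, 2D - E)$ generic (a space of dimension $2g$); this ensures each $P_i$ maps to a bicoordinate point of $\calS$, enforcing the pointed condition.  By Theorem~\ref{T:surjectivity-master}, the multiplication $H^0(X, D) \otimes H^0(X, mD) \to H^0(X, (m+1)D)$ is surjective for every $m \geq 2$, so no further generators appear.

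For the cubic relations with leading term $yx_i$ ($1 \leq i \leq h-1$) and the quartic $y^2$, the plan is to adapt the complementary-basis technique from Section~\ref{ss:genus-at-least-3-d-1,hyperelliptic}.  Inside $V_3 = H^0(X, 3D - E)$ (of dimension $4g$), construct an explicit spanning set built from products of the $x_j$'s with elements of $H^0(X, 2D - E)$, together with $y$-multiples indexed by the last variables; evaluating at the points $P_k$ (or their hyperelliptic conjugates when $X$ is hyperelliptic) and exploiting the genericity of $y$, one forces the coefficients of grevlex-larger monomials to vanish in the expansion of $yx_i$, producing a relation with leading term exactly $yx_i$.  The same procedure inside $V_4 = H^0(X, 4D - E)$ yields the quartic relation with leading term $y^2$.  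A monomial count then matches the number of standard monomials in each degree with $\dim H^0(X, dD) = (2d-1)g + 1$, so the displayed relations form a Gr\"obner basis, and semicontinuity of ranks (Remark~\ref{R:semicontinuity}) upgrades this to the pointed generic initial ideal.

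The principal technical obstacle will be verifying the leading term of the relation for $yx_i$ ($i \leq h-1$): in grevlex, certain $x$-only monomials (e.g., $x_1^3$) are larger than $yx_i$ for $i \geq 2$, so the vanishing properties of $y$ at $E$ and the pointed genericity of the $x_i$'s must be carefully leveraged to ensure the corresponding obstruction coefficients vanish in the expansion.
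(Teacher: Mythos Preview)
Your outline matches the paper's proof almost exactly: nodal image $\bar X$ via Schreyer, extra generator $y \in H^0(X,2D-E)$, generation by GMNT, then relations $yx_i$ and $y^2$, finishing with a monomial count and semicontinuity.  The only real gap is the ``principal technical obstacle'' you flag at the end, and its resolution is simpler than you suggest---it follows immediately from the pointed condition you have already imposed.

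Because $y \in H^0(X,2D-E)$, we have $y(P_j)=0$ for every $j$.  Hence $yx_i \in H^0(X,3D-E)$, and evaluating any relation $yx_i = c_i\, yx_h + r_i$ (with $r_i$ written in the Schreyer standard basis for $(R_1)_3$) at $P_j$ forces the coefficient of $x_j^3$ in $r_i$ to vanish, since $x_j^3$ is the \emph{only} standard monomial of $(R_1)_3$ that is nonzero at $P_j$.  All remaining standard monomials involve $x_{h-1}$ or $x_h$ and hence lie below $yx_i$ in grevlex for $i\le h-1$; this gives the leading term $\underline{yx_i}$ directly.  The $y^2$ relation needs no such argument: $y^2$ is the unique largest monomial in degree $4$.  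The paper packages the first step a bit differently---choosing a linear form $\beta_i$ vanishing at the node so that $y\beta_i \in H^0(X,3K+2\Delta) \subseteq (R_1)_3$---but the heart of the leading-term verification is the same evaluation at $P_j$.  So your plan works; you just need to observe that the vanishing of $y$ along $E$ is precisely what eliminates the offending cubes.
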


\begin{proof}
We have $\dim H^0(X,D)=h=g+1$, so the image of $X$ under the linear series on $D$ gives a birational map $X \to \PP^{h-1}$: even if $X$ is hyperelliptic, by assumption $D$ is not hyperelliptic fixed, so the log canonical map has degree $1$.  However, this map is \emph{not} a closed embedding since it does not separate points: letting $\Delta=Q_1+Q_2$, by Riemann--Roch, there is no $f \in H^0(X,D)$ separating $Q_1,Q_2$.  So the image $\phi_D(X)$ has a node at $\phi(Q_1)=\phi(Q_2)$.  

As in the classical case, let $P_1, \ldots, P_{h}$ be general points of $X$ with dual basis $x_i \in H^0(X,D)$ and set $E = P_1 + \cdots + P_h$.  Then the subring $R_1$ of the log canonical ring $R$ generated by \emph{all} degree one elements is the homogeneous coordinate ring of $\phi_D(X)$. We have $X \simeq \Proj R$, so the map $\Proj R \to \Proj R_1$ is the normalization of the singular curve. By Petri's theorem applied to $\phi_D(X)$ (as generalized to singular curves by Schreyer \cite[Theorem 1.4]{Schreyer:Petri}), we obtain relations as in \eqref{eqn:multbfpt-petri}--\eqref{eq:Hgm2}: 
\begin{center}
quadrics $f_{ij}$ with leading term $\underline{x_ix_j}$ for $1 \leq i<j \leq h-2$, \\
cubics $G_{i,h-2}$  with leading term $\underline{x_i^2x_{h-1}}$ for $i=1,\dots,h-3$, \\
a quartic $H_{h-2}$ with leading term  $\underline{x_{h-2}^3x_{h-1}}$;
\end{center}
similarly, we obtain syzygies as in Equation~\ref{eqn:petrisyzyz}.

To analyze the full ring $R$, first note that $R_1$ is spanned by elements of the form $x_i^ax_{h-1}^bx_h^c$ with $i < h-1$. 
Let $y \in H^0(X,2D - E)$ be generic; then by GMNT (Theorem~\ref{T:surjectivity-master}),  $y \not \in R_1$, $y$ generates $R$ over $R_1$, and $(\divv y)|_{\Delta} = 2\Delta$; in fact, elements $yx_h^a$ span $R$ over $R_1$. We equip $k[y,x_{1},\dots,x_{h}]$ with the (weighted graded) reverse lexicographic order. 

Additional relations arise as follows.   Let $\beta_i(x_{h-1},x_h)$ be a linear form vanishing to order $1$ at $\Delta$ with (generically) leading term $x_{h-1}$. Then $y\beta_i \in H^0(X,3K + 2\Delta)$, which is generated by elements of degree one.  For $i=1,\dots,h-1$ we thus obtain a relation with leading term $\underline{yx_i}$ (evaluation at $P_j$ with $j < i$ gives that the term $x_j^3$ does not occur). In a similar way, we obtain a relation with leading term $\underline{y^2}$.  (Alternatively, it is clear from the geometric description that the ``normalizing'' function $y$ in degree $2$ satisfies a monic, quadratic relation over $R_1$.)  

We claim that these relations are a Gr\"obner basis for the ideal of relations, by a monomial count.  Among the variables $x_1,\dots,x_h$, we obtain the same count as in the classical case, and according to the relations the only extra monomial in degree $d \geq 2$ is $yx_h^{d-2}$; thus the Hilbert function of the quotient by the leading terms of the above relations matches that of the canonical ring, so there are no further relations.  
\end{proof}

Finally, we address minimality of the generators.  The quartic relation $H_{h-2}$ is again obtained from a syzygy, and the relations with leading terms $yx_i$ and $y^2$ are minimal as they are not in the ideal generated by the monomials appearing in any of the previous relations.  So the issue that remains is the minimality of the relations $G_{i,h-2}$: they are minimal if and only if the image $\phi_{D}(X)$ of the log canonical map has a $g_3^1$, which can only happen under the conditions in section~\ref{ss:log-trisecants}.  

\begin{remark}
We can see the case $g = 2$ in another way: the projection to $\P^2$ has an ordinary singularity so is a canonically embedded nodal plane quartic.  The argument from the plane quartic case of the $\delta = 1$ analysis adapts in the same way to give $2$ cubics and $2$ quartic relations in the Gr\"obner basis, with $2$ cubics and $1$ quartic minimal generators.
\end{remark}

%----------------------------------------------------------------------------
\section{General log degree}
\label{ss:general-case-g}
%----------------------------------------------------------------------------

We conclude this chapter with the treatment of the case $\delta \geq 3$ (and still $g \geq 2$).  Our argument will continue to mimic the approach to Petri's theorem.  Since $\delta \geq 3$, we now have that $D=K + \Delta$ is very ample and the log canonical map $X \to \PP^{h-1}$ is an embedding, where $h=\dim H^0(X,D)=g+\delta-1$.  We will see below that for $\delta = 3$, the image of $X$ is cut out by relations in degree at most $3$ and for $\delta \geq 4$ the image is cut out by just quadrics.

\begin{remark}[Comparison to classical case]
There are a few differences between the log and classical case: there are no trisecants when $\delta \geq 4$ (as in section~\ref{ss:log-trisecants}), there are no quartic relations in the Gr\"obner basis, there are new quadratic relations (and hence the ``old'' relations $f_{ij}$ have a slightly different shape), the cubic relations have a different shape (and there are $g$ instead of $g-3$ cubics in a Gr\"obner basis), and  there are now two flavors of syzygies.  
\end{remark}

\begin{proposition} \label{prop:ginfinallog}
The pointed generic initial ideal is
\begin{align*}
\gin_{\prec}(I;\calS) &= \langle x_ix_j : 1 \leq i < j \leq h-2 \rangle + 
\langle x_ix_{h-1} : 1 \leq i \leq \delta - 3 \rangle \\
&\qquad + \langle x_i^2x_{h-1} : \delta - 2 \leq i \leq h-2 \rangle \subset k[x_1,\dots,x_h]
\end{align*}
with $\calS$ the set of coordinate points in $\PP^{h-1}$.
\end{proposition}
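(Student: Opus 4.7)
The plan is to adapt the Petri-style arguments of the previous subsections, combining the basepoint-free pencil trick with vanishing at generic points. Since $\delta \geq 3$, $D = K + \Delta$ is very ample, and by Theorem~\ref{T:surjectivity-master} the canonical ring $R$ is generated in degree $1$, so $R \cong k[x_1,\dots,x_h]/I$ with $h = g + \delta - 1$. I would fix general points $P_1,\dots,P_h \in X$ (disjoint from $\Delta$), let $x_1,\dots,x_h$ be the dual basis of $H^0(X,D)$ so that $x_i(P_j) = \delta_{ij}$, and set $E = P_1 + \cdots + P_{h-2}$, of degree $g + \delta - 3$. For generic $E$, $V = H^0(X, D - E) = \langle x_{h-1}, x_h\rangle$ is a basepoint-free pencil, and the basepoint-free pencil trick yields
\[
0 \to H^0(X, E) \to V \otimes H^0(X, D) \to H^0(X, 2D - E) \to 0
\]
with $\dim H^0(X,E) = \max(1, \delta - 2)$ by Riemann--Roch.

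In degree $2$, the products $x_{h-1}x_i, x_h x_i$ for $1 \leq i \leq h$ span $H^0(X, 2D - E)$; adjoining $x_1^2,\dots,x_{h-2}^2$ (which are independent modulo $H^0(X, 2D - E)$ since $x_i^2(P_i) \neq 0$) produces a spanning set for $H^0(X, 2D)$ of size $3h - 2$, forcing $\delta - 2$ linear relations. One is the trivial commutativity relation; the remaining $\delta - 3$ relations (present only for $\delta \geq 4$) come from non-constant $\alpha \in H^0(X, E)$ via $(\alpha x_{h-1}) x_h = (\alpha x_h) x_{h-1}$, expanded in the basis $x_1,\dots,x_h$. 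Since under grevlex $x_i x_{h-1} \succ x_j x_h$ for $i \leq h - 2$ and every $j$, row reduction in generic coordinates would yield relations with leading terms $\underline{x_1 x_{h-1}}, \dots, \underline{x_{\delta-3} x_{h-1}}$. The remaining quadric relations with leading terms $\underline{x_i x_j}$ (for $1 \leq i < j \leq h - 2$) arise as in Petri's argument: expressing each $x_ix_j \in H^0(X, 2D)$ in the spanning basis, evaluation at $P_s$ with $s \notin \{i, j\}$ kills all $x_s^2$ contributions, and since $x_i x_j$ is larger under grevlex than any $x_s x_{h-1}$ or $x_s x_h$ in our range, the leading term is forced to be $x_i x_j$.

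For the cubic relations with leading terms $x_i^2 x_{h-1}$ (for $\delta - 2 \leq i \leq h - 2$), I would multiply up: GMNT gives surjectivity of $H^0(X,D) \otimes H^0(X,2D) \to H^0(X, 3D)$, so after applying the quadric relations $H^0(X, 3D)$ is spanned by products $x_j \cdot m$ with $m$ a non-leading quadric monomial. A direct count would show that the ``missing'' cubics $x_i^2 x_{h-1}$ for $i \geq \delta - 2$ yield exactly $g$ further relations, and vanishing of $x_i$ at $P_s$ for $s \neq i$ eliminates competing monomials so that the leading term is $\underline{x_i^2 x_{h-1}}$ for generic coordinates.

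To conclude, I would verify via a direct Hilbert function count that $k[x_1,\dots,x_h]/J$ (with $J$ the monomial ideal generated by the proposed leading terms) has Hilbert series equal to $\Phi(R;t)$ in every degree, confirming that the above elements form a complete Gr\"obner basis. Semicontinuity of ranks (Remark~\ref{R:semicontinuity}) then transfers the conclusion to generic coordinates, identifying the pointed generic initial ideal with $J$. The main obstacle is handling the ``extra'' quadric leading terms $x_i x_{h-1}$ for $1 \leq i \leq \delta - 3$ in the case $\delta \geq 4$: these reflect the extra sections of $H^0(X, E)$ beyond the constants and require the careful row reduction and grevlex verification sketched above.
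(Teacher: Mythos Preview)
Your overall strategy---dual basis at generic points, basepoint-free pencil trick with $V = H^0(X,D-E)$, then a Hilbert-function check and semicontinuity---matches the paper's. The substantive difference is in how you locate the extra $\delta-3$ quadrics with leading terms $x_i x_{h-1}$.

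You extract them from the kernel $H^0(X,E)$ of the pencil trick, via $(\alpha x_{h-1})x_h = (\alpha x_h)x_{h-1}$ expanded in the dual basis. This does produce $\delta-3$ independent elements of $I_2$ supported on $\{x_s x_{h-1}, x_s x_h\}$, and in fact one can check they span the same subspace of $I_2$ as the paper's relations. But your claim that ``row reduction in generic coordinates'' forces pivots at positions $1,\dots,\delta-3$ is not justified: you need that the map $H^0(X,E)/k \to k^{h-2}$, $\alpha \mapsto ((\alpha x_h)(P_i))_{i\le h-2}$, has image meeting $\langle e_{\delta-2},\dots,e_{h-2}\rangle$ trivially. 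This holds for generic points, but it requires an argument (an intersection-dimension count inside $H^0(X,D)$), and you haven't given one.

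The paper avoids this by working instead inside $W = H^0(X,2D-2E)$, of dimension $g+3$. The elements $\alpha_s x_s$ (with $\alpha_s \in k[x_{h-1},x_h]_1$ vanishing doubly at $P_s$) lie in $W$; taking $\{\alpha_s x_s : \delta-2 \le s \le h-2\} \cup \{x_{h-1}^2, x_{h-1}x_h, x_h^2\}$ as a basis and expressing $\alpha_i x_i$ for $i \le \delta-3$ in it gives relations $F_i$ whose leading term is \emph{visibly} $x_i x_{h-1}$ (since $\alpha_i$ has leading coefficient on $x_{h-1}$ and all other terms involve $x_s x_{h-1}$ with $s > i$ or monomials in $x_{h-1},x_h$). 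No row-reduction genericity is needed.

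Similarly, your cubic step (``a direct count would show'') is too vague. The paper applies the pencil trick a second time to $V \otimes H^0(X,2D-E) \to H^0(X,3D-2E)$, shows surjectivity, and then observes $\alpha_i x_i^2 \in H^0(X,3D-2E)$ yields relations $G_i$ with leading term $x_i^2 x_{h-1}$; only those with $i \ge \delta-2$ are new. You should make this explicit.
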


\begin{proof}
Let $P_1, \ldots, P_h$ be general points of $X$ with dual basis $x_1, \ldots, x_h$, and let $E=P_1+\dots+P_{h-2}$.  Choose also, for each $s=1,\dots,h-2$, a linear form $\alpha_s(x_{h-1}, x_h)$ with a double root at $P_s$ and (for generic choices of coordinates) leading term $x_{h-1}$.  

Let $V=H^0(X,D - E)$, by the basepoint-free pencil trick (Lemma \ref{lem:bpfree-pencil}), there is an exact sequence 
\[
0 \to \textstyle{\bigwedge}^2 V \otimes \calO_X(E) \to V \otimes \calO_X(D) \to \calO_X(2D - E) \to 0.
\]
As in the classical case, the latter map is surjective on global sections, since by Riemann--Roch (and genericity of the coordinate points)
we have
\begin{align*} 
\dim H^0(2D-E) &=2g+\delta=(2g+2\delta-2)-(\delta-2) \\
&=\dim V \otimes H^0(X,D)- \dim H^0(X,E). 
\end{align*}
We have $x_ix_j \in H^0(X,2K + 2 \Delta - E)$ for $1 \leq i < j \leq h-2$, so we obtain quadratic relations, arguing as in the classical case:
\[
f_{ij} = \underline{x_ix_j} - \sum_{s = 1}^{h-2} \rho_{sij} \alpha_s x_s - b_{ij}
\]
with $b_{ij} \in k[x_{h-1},x_h]$ quadratic.

The image of $\bigwedge^2 V \otimes H^0(X,D)$ contributes $\delta - 3$ additional relations, as follows.  The space $W=H^0(X,2D-2E)$ has dimension 
\[ 3g-3+2\delta-2(h-2)=3g+1+2\delta-2(g+\delta-1)=g+3=h-\delta+4. \]
For $s=1,\dots,h-2$ we have $\alpha_s x_s \in W$, since $x_s \in H^0(X,D-E+P_s)$ and $\alpha_s \in H^0(X,D-E-P_s)$.  Taking a basis as $\alpha_s x_s$ for $s=\delta-2,\dots,h-2$ together with $x_{h-1}^2,x_{h-1}x_h,x_h^2$, we obtain relations
\[ F_i=\alpha_i x_i - \sum_{s=\delta-2}^{h-2} c_{si} \alpha_s x_s - d_i \]
for $i=1,\dots,\delta-3$, with $d_i \in k[x_{h-1},x_h]$ quadratic.  The leading term of $F_i$ is $\underline{x_ix_{h-1}}$.  

Counting gives that these generate all quadrics in the ideal, since there are $\binom{h-2}{2}$ relations of the form $f_{ij}$ and $\delta - 3$ of the form $F_i$ and 
\[ \dim I_2 =  \binom{h + 1}{2} - (3g-3 + 2\delta) = \binom{h-2}{2} +\delta-3. \]
Together, the leading terms of these quadrics generate the ideal
\begin{equation} \label{eqn:xixjh-log}
\langle x_ix_j : 1 \leq i < j \leq h-2 \rangle + \langle x_ix_{h-1} : 1 \leq i \leq \delta-3 \rangle.
\end{equation}

As in Petri's case, we do not obtain a Gr\"obner basis yet---there are $g$ additional cubic relations.  (One can check, for example, that the degree 3 part of the quotient of $k[x_1,\dots,x_h]$ by the ideal \eqref{eqn:xixjh-log} has dimension $6g+3(\delta-3)+4$ but $\dim H^0(X,3D-3E)=5g-5+3\delta$, so $g$ cubics are missing; but we will exhibit them below anyway.)

We find cubic relations following Petri.  Let $V=H^0(X,D-E)$.  Then by the basepoint-free pencil trick (Lemma \ref{lem:bpfree-pencil}), the multiplication map
\begin{equation} \label{eqn:PetriCubicMult}
 V \otimes H^0(X,2D-E) \to H^0(X,3D - 2E) 
\end{equation}
has kernel $\bigwedge^2 V \otimes H^0(X,D)$ of dimension $h$ and thus has image of dimension 
\[ 2(3g-3+2\delta-(g+\delta-3)) - (g+\delta-1) = 3g+\delta+1. \]
On the other hand, the codomain has dimension 
\[ \dim H^0(X,3D-2E) = 5g-5+3\delta-2(g+\delta-3)=3g + \delta + 1. \]
Therefore \eqref{eqn:PetriCubicMult} is surjective and $H^0(X,3D -2E)$ is generated by monomials in $x_i$ divisible by $x_{h-1}^2,x_{h-1}x_h$, or $x_h^2$.  
We have $\alpha_i x_i^2 \in H^0(X,3D-2E)$ for $1 \leq i \leq h-2$, so we obtain relations $G_i$ with leading term $\underline{x_i^2 x_{h-1}}$.  However, for $1 \leq i \leq \delta-3$, already $x_i^2x_{h-1}$ is in the ideal \eqref{eqn:xixjh-log} generated by the initial terms of quadratic relations; therefore for $\delta-2 \leq i \leq h-2$, we obtain $h-2-(\delta-3)=g$ new relations.  

We claim that the elements $f_{ij},F_i,G_i$ form a Gr\"obner basis; since the set of points is general, this would imply the proposition.  
This follows from a count of monomials.  For $d \geq 4$, the quotient of $k[x_1,\dots,x_h]$ by the ideal of leading terms from these relations is generated by
\[
\begin{array}{lll}
  x_ix_{h-1}^ax_{h}^{d - a  - 1},  & \text{for $1 \leq a \leq d-1$} & \text{and $\delta - 2 \leq i \leq h-2$, and} \\
  x_i^ax_{h}^{d-a},  & \text{for $1 \leq a \leq d$} & \text{and $1 \leq i \leq h$}; \\
\end{array}
\]
thus it has dimension 
\[ ((h-2) - (\delta - 3))(d-1) + (h-1)d + 1 = (2d-1)(g-1) + \delta d, \] 
proving the claim and the proposition.
\end{proof}

We obtain \defiindex{log Petri syzygies} analogous to the classical case \eqref{eqn:petrisyzyz} by division with remainder.  They now come in two flavors:
\begin{equation} \label{eqn:logpetrisyzyzgies}
\begin{array}{r}
x_j f_{ik} - x_k f_{ij} + \sum_{\substack{s=1 \\ s \neq j}}^{h-2} \rho_{sik} f_{sj} - \sum_{\substack{s=1 \\ s \neq k}}^{h-2} \rho_{sij} f_{sk} + \rho_{jik} G_j - \rho_{kij}G_k = 0 \\[2.75ex]
x_kF_j - \alpha_jf_{jk} +\, \sum_{\substack{s =\delta-2\\ s \neq k}}^{h-2} c_{sj}\alpha_{s}f_{sk} + c_{kj}G_k = 0
\end{array}
\end{equation}
 where $j \leq \delta - 2 < k \leq h-2$.

To conclude, we consider when the relations obtained in the proof of the previous proposition are minimal.  When $\delta = 3$, the image of $X$ admits a pencil of trisecants and thus lies on a scroll $U$ (see section~\ref{ss:log-trisecants}). We claim that this scroll is given by the vanishing of the quadratic relations $f_{ij}$ and $F_i$.  Indeed, inspection of the Hilbert function of $X$ gives that $U$ is a surface.  Moreover, since each quadric hypersurface $Z$ containing $X$ also contains the 3 points of any trisecant, $U$ contains the line through them (by Bezout's theorem), and thus also contains the pencil.  Since $X$ is smooth and nondegenerate, $U$ is a smooth surface, equal to the scroll induced by the pencil of trisecants.  (As an additional check: inspection of the Hilbert function gives that $U$ is a minimal surface and thus rational by Bertini's classification.)  The image of $X$ is then cut out by the remaining $g$ cubic relations; comparing Hilbert functions, all $g$ are necessary.

For $\delta \geq 4$, by Riemann--Roch there are no trisecants; we claim that the cubics are in the ideal generated by the quadratics.  First we note that for generic coordinate points, the coefficients $\rho_{ijk}$ either all vanish or are all nonvanishing, and similarly the $c_{si}$ either all vanish or are all nonvanishing, just as in the classical case.  

If these coefficients are all nonvanishing, then the log Petri syzygies \eqref{eqn:logpetrisyzyzgies} imply that the cubics lie in the ideal generated by the quadrics.  On the other hand, if the coefficients are all zero, then $X$ is singular, a contradiction.  We verify this by direct computation.  For $i_1 \neq \delta-2$, $i_2 \leq \delta - 3$, and $\delta - 2 \leq i_3$, we have
\[
\frac{\partial f_{i_1,j}}{\partial x_k}(P_{\delta-2}) =
\frac{\partial F_{i_2}}{\partial x_k}(P_{\delta-2}) =
\frac{\partial G_{i_3}}{\partial x_k}(P_{\delta-2}) = 0;
\]
indeed, since $\rho = \beta = 0$, the first two are homogenous linear forms with no $x_{\delta-2}$ term, and the third is a homogenous quadratic form with no $x_{\delta-2}^2$ term.  The Jacobian matrix thus has rank at most $h-3$ (since there are only $h-3$ terms of the form $f_{1j}$), and this contradicts the smoothness of $X$.

\begin{remark}
The argument above works also for some singular log canonical curves: by symmetry, $X$ is singular at each coordinate point, and since the points were general $X$ is singular at every point.
\end{remark}

%----------------------------------------------------------------------------
\section{Summary}
%----------------------------------------------------------------------------

We now officially prove the main result of this section, Theorem~\ref{thm:logcurverelat}.

\begin{proof}[{Proof of Theorem~\ref{thm:logcurverelat}}]
If $g=1$, the result is proven in section~\ref{ss:genus-1-log}. 

So suppose $g \geq 2$ and let $\delta=\deg \Delta$.  If $\delta=0$, we are in the classical case provided by Theorem~\ref{thm:degrelatmax}.  For $\delta=1$, the case when $X$ is hyperelliptic is proven in section~\ref{ss:genus-at-least-3-d-1,hyperelliptic}
(see Proposition~\ref{prop:pointedginlog1hyp}); when $X$ is nonhyperelliptic, we refer to section~\ref{ss:genus-at-least-3-d-1,nonhyperelliptic}
(see Proposition~\ref{prop:pointedginlog1nonhyp}).  For $\delta=2$, combine
Lemma~\ref{lem:logexceptX} for the exceptional cases with Lemma~\ref{lem:loghypfixed} when $\Delta$ is hyperelliptic fixed and Proposition~\ref{prop:nothypfixlog2_grevlex} for the remaining cases when $\Delta$ is not hyperelliptic fixed.  Finally, for $\delta \geq 3$ we appeal to Proposition
\ref{prop:ginfinallog}.
\end{proof}

The results proven above are also summarized in Table (II) in the Appendix and succinctly in the following theorem.  

\begin{corollary} \label{cor:logdegrelatmax}
Let $(X,\Delta)$ be a log curve.  Then the canonical ring $R$ of $(X,\Delta)$ is generated by elements of degree at most $3$ with relations of degree at most $6$.
\end{corollary}

\begin{proof}
If $g=0$, the result is proven in section~\ref{ss:genus-0-log-can}; the rest follows from Theorem~\ref{thm:logcurverelat}.
\end{proof}

\begin{remark}
The Hilbert series $\Phi(R_{\Delta};t)$, where $\delta = \deg \Delta$, is
\[
\Phi(R;t) = g + \sum_{n = 0}^{\infty} \left( n(2g - 2 + \delta) + 1-g \right)t^n.
\]
This breaks up as 
\[
g + (2g - 2 + \delta)\sum_{n = 0}^{\infty}nt^n  + (1-g)\sum_{n = 0}^{\infty} t^n =  g + \frac{(2g - 2 + \delta)t}{(1-t)^2}+ \frac{1-g}{1-t}.
\]
The Hilbert numerator can vary of course (since the generation of $R$ can vary greatly), but the computation is straightforward. For instance, setting
\[
g + \frac{(2g - 2 + \delta)t}{(1-t)^2}+ \frac{1-g}{1-t} = \frac{Q(t)}{(1-t)^{g+\delta-1}}
\]
gives (in the general case of $\delta \geq 3$) 
\[ 
Q(t) = (1-t)^{g+\delta-3}\left( g(1-t)^2 + (1-g)(1-t) + (2g-2+\delta) t \right).
\]

A similar computation is possible in the general (log stacky) case, when the degrees of the generators are specified; we do not pursue this further here.
\end{remark}

\begin{remark}
We have only computed pointed generic initial ideals in this chapter.  Based on some computational evidence, we believe that the case of the generic initial ideal itself will be tricky to formulate  correctly.  On the other hand, we expect that the above methods can be modified to give the generic initial ideal for $\delta \geq 3$.
\end{remark}

%****************************************************************************
\chapter{Stacky curves}
\label{ch:stacky-curves}
%****************************************************************************

In this chapter, we introduce stacky curves.  Many of the results in this chapter appear elsewhere (oftentimes in a much more general context), but others are new.  For further reading, consult the following: Kresch \cite{Kresch:geometryOfDM} gives a survey of general structure results for Deligne--Mumford stacks; Abramovich--Graber--Vistoli \cite{AGV:GW} give proofs of some results we will use and indeed more general versions of the material below (these authors \cite{AbramovichGV:orbifoldQuantum} also give an overview of the Gromov-Witten theory of orbifolds); and Abramovich, Olsson, and Vistoli \cite{AbramovichOV:twistedMaps} work with more general (tame) Artin stacky curves.  A general reference for all things stacky is the stacks project \cite{stacks-project}, which also contains a useful guide to the stacks literature. A recent, comprehensive refence is Olsson \cite{olsson:stacks-book}.

We follow the conventions of the stacks project \cite{stacks-project} and restrict considerations to Deligne--Mumford stacks. Until chapter \ref{ch:relative}, all stacks will be relative to a particular field $k$.

%----------------------------------------------------------------------------
\section{Stacky points}  \label{sec:stackypointsdefs}
%----------------------------------------------------------------------------

We begin with a discussion of points.

\begin{definition}
A  \defiindex{point} of a stack $\XX$ is a map $\Spec F \to  \XX$, with $F$ a field. We denote by $|\!\XX\!|$ the space of isomorphism classes of points (with the Zariski topology \cite[\S 5]{LaumonMB:champs}), and by $|\!\XX(F)|$ the set of isomorphism classes of $K$-points. 
\end{definition}

\begin{definition}
To a pair of points  $x,x' \colon \Spec F \to \XX$ one associates the functor $\Isom(x,x')$ (see Olsson \cite[3.4.7]{olsson:stacks-book}), and part of the definition of a stack is that $\Isom(x,x')$ is representable by an algebraic space. In particular, to a single point  $x\colon \Spec F \to \XX$ we associate its \defiindex{stabilizer} $G_x := \Isom(x,x)$. If $G_x$ is a finite group scheme, we say that $x$ is a \defiindex{tame} point if $\deg G_x$ is not divisible by $\Char F$. 
\end{definition}

If $x$ is tame point of $\XX$, then $\deg G_x = \#G_x(\overline{F})$ and the base change of $G_x$ to $\overline{F}$ is a constant group scheme ($G_x$ is not necessarily constant).

\begin{definition}
  Let $\XX$ be stack and let ${x}\colon \Spec F \to \XX$ be a point with stabilizer group $G_{{x}}$.  If $G_{{x}} \neq \{1\}$, we say that ${{x}}$ is a \defiindex{stacky point} of $\XX$.  The \defiindex{residue gerbe} at ${x}$ is the unique monomorphism (in the sense of \cite[Tag 04XB]{stacks-project}) $\calG_x \hookrightarrow \XX$ through which $x$ factors.
\end{definition}

We note that the base change of $\calG_x$ to $\Spec F$ is a neutral gerbe and thus isomorphic to the quotient stack $BG_{x,F} = [\Spec F / G_x]$.

\begin{definition}
\label{D:separably-rooted}
  We say that a stack $\XX$ is \defiindex{separably rooted} at a stacky point $x\colon \Spec F \to \XX$ if $x$ factors through a point $\Spec k' \to \XX$ with $k \subset k' \subset F$ and $k'$ separable over $k$. We say that $\XX$ is \defiindex{separably rooted} if it is separably rooted at every stacky point.
\end{definition}

\begin{lemma}
If $x$ is a point of $\XX$ whose image in $|\!\XX\!|$ is closed, then $\calG_x \subset \XX$ is a closed immersion.
\end{lemma}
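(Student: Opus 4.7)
The question is local on $\XX$ in the Zariski topology, so by Remark~\ref{rmk:quotients} I may shrink $\XX$ to a Zariski open neighborhood of the image of $x$ and assume that $\XX = [U/G]$, where $U = \Spec A$ is a smooth affine curve and $G$ is a finite constant group acting on $U$ over $k$. Under the quotient map $\pi\colon U \to \XX$, the underlying topological space $|\XX|$ is identified with $|U|/G$, so the image of $x$ in $|\XX|$ corresponds to a single $G$-orbit of closed points in $U$; pick any lift $u \in U$ of $x$. The closure of $\{x\}$ in $|\XX|$ is then the image of the closure of $Gu$ in $|U|$, so the hypothesis that $x$ is closed in $|\XX|$ is equivalent to the orbit $Gu \subseteq U$ being closed.

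Assume henceforth that $Gu$ is closed in $U$, and equip it with the reduced subscheme structure; since $Gu$ is $G$-stable, the quotient substack $\calZ := [\,Gu/G\,]$ is a reduced closed substack of $[U/G] = \XX$, and the inclusion $\calZ \hookrightarrow \XX$ is a closed immersion (in particular a monomorphism). The tautological map $x\colon\Spec k \to \XX$ lifts to $\Spec k \to U$ with image $u$, which factors through $Gu$; hence $x$ factors through $\calZ \hookrightarrow \XX$. Moreover, because $G$ acts transitively on the geometric points of $Gu$, the stack $\calZ$ is a gerbe over its coarse space $\Spec k(x)$, so $\calZ$ is itself a residue gerbe. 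By the uniqueness clause in the definition of $\calG_x$, one may then identify $\calG_x$ with $\calZ$, so that $\calG_x \hookrightarrow \XX$ is a closed immersion, as desired.

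\textbf{Main obstacle.} The substantive content is bookkeeping: verifying that the candidate reduced closed substack $[\,Gu/G\,]$ really realizes the residue gerbe, rather than some slightly larger closed substack containing it. This comes down to checking that $[\,Gu/G\,]$ is itself a gerbe over $\Spec k(x)$, which uses transitivity of the $G$-action on the geometric points of the orbit $Gu$ together with the fact that stabilizers of these geometric points are all conjugate to $G_x$.
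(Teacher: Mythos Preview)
Your argument is essentially correct and more explicit than the paper's, which simply cites the Stacks Project \cite{stacks-project}*{Tag 06MU} for the general statement that residual gerbes of locally noetherian algebraic stacks exist and are locally closed (hence closed when the point is closed). Your approach instead exploits the special local structure of stacky curves recorded in Remark~\ref{rmk:quotients}: reducing to $\XX=[U/G]$, you identify the residual gerbe concretely as $[\,Gu/G\,]$ with its reduced induced structure, which is manifestly a closed substack. The paper's route is cleaner in that it invokes a ready-made general result; your route is more self-contained and makes the residual gerbe tangible in the quotient-stack presentation, at the cost of the bookkeeping you flag (checking that $[\,Gu/G\,]$ is reduced with one-point underlying space, hence matches the Stacks Project characterization of a residual gerbe, so that uniqueness applies). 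One small point to tighten: the factorization of $x$ through $\calZ$ does not literally require lifting $x$ to a $k$-point of $U$ (which may fail if $k$ is not separably closed); it suffices that the image of $x$ in $|\XX|$ lies in $|\calZ|$ together with the fact that $\calZ$ is reduced and $\Spec k$ is reduced, which forces the scheme-theoretic factorization.
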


\begin{proof}
See the stacks project \cite[{Definition \href{http://stacks.math.columbia.edu/tag/06MU}{06MU}}]{stacks-project}.
\end{proof}

%----------------------------------------------------------------------------
\section{Definition of stacky curves} 
%----------------------------------------------------------------------------

We now define the main object of interest, a stacky curve.

\begin{definition}
A \defiindex{stacky curve} $\XX$ over $k$ is a smooth proper geometrically connected Deligne--Mumford stack of dimension $1$ over $k$ that contains a dense open subscheme. 
\end{definition}

\begin{remark} \label{R:notwhatyouthink}
A stacky curve $\XX$ is by definition smooth.  Although $\XX$ may have stacky points, like those with nontrivial stabilizer in Example~\ref{ex:quotient-example}, these points are not singular points.  
\end{remark}

\begin{remark}
\label{R:fractional-degree-gerbe}
  The main care required in the study of stacky curves is that residue gerbes should be treated as fractional points, in the sense that  $\deg \calG_x = [k(x):k]/\deg G_x$; see Remark~\ref{R:fractional-order-zeroes} below, Vistoli \cite[example after Definition 1.15]{vistoli:intersectionTheoryStacks},  and Edidin \cite[4.1.1]{ediden:RR} for discussions of this feature.
\end{remark}

The meaningfulness of the hypotheses in this definition is as follows. First, the Deligne--Mumford hypothesis implies that the stabilizers of points in characteristic $p > 0$ do not contain copies of $\mu_p$ (or other non-\'etale group schemes). Second, properness implies (by definition) that the diagonal is proper; since $\XX$ is Deligne--Mumford and locally of finite type (since it is smooth) the diagonal is unramified and therefore quasi-finite, and thus finite. This implies that the stabilizer groups are finite and (unlike a stack with quasi-finite diagonal) implies that a coarse moduli space exists. Finally, the dense open subscheme hypothesis implies that there are only finitely many points with a non-trivial stabilizer group.

\begin{definition}
A stacky curve $\XX$ over $k$ is said to be \defiindex{tame} if every point is tame.
\end{definition}

\begin{remark} \label{rmk:nonoreallyitsenough}
There is a more subtle notion of tameness for Artin stacks \cite[Definition 2.3.1]{AbramovichOV:Tame}. For a Deligne--Mumford stack, these notions of tame are equivalent. 

While arithmetically interesting non-tame stacky curves arise naturally (see e.g.~Remark~\ref{R:non-tame}), we later restrict to tame Deligne--Mumford stacky curves.  This restriction affords several benefits:  the canonical divisor of a tame stacky curve admits a simple formula (see Proposition~\ref{P:canKR}), and tame stacky curves have a simple bottom-up description (see Lemma~\ref{L:stacky-curves-characterization-cyclicity}(a)). In contrast, non-tame curves are messier in each of these regards: the sheaf of differentials (Definition~\ref{D:differentials}) is not coherent, there is no combinatorial bottom-up description as in Lemma~\ref{L:stacky-curves-characterization-cyclicity} (see Remark~\ref{R:non-tame}), and any formula for the canonical divisor must incorporate higher ramification data.
\end{remark}

\begin{remark}
If we relax the condition that $\XX$ has a dense open subscheme, then by Geraschenko--Satriano \cite[Remark 6.2]{GeraschenkoS:torusQuotients} $\XX$ is a gerbe over a stacky curve (which, by definition, has trivial generic stabilizer).
\end{remark}

\begin{example}[Stacky curves from quotients]
\label{ex:quotient-example}
Let $X$ be a smooth projective curve over $k$.  Then $X$ can be given the structure of a stacky curve, with nothing stacky about it.  

Less trivially, the stack quotient $[X/G]$ of $X$ by a finite group $G \leq \Aut(X)$ naturally has the structure of a stacky curve, and the map $X \to [X/G]$ is an \'etale morphism of stacky curves; moreover, if the stabilizers have order prime to $\Char k$ (e.g.~if $\gcd(\#G,\Char k)=1$) then $[X/G]$ is tame.  For example, if $\Char k \neq 2$, the quotient of a hyperelliptic curve of genus $g$ by its involution gives an \'etale map $X \to [X/\langle -1 \rangle]$ with $[X/\langle -1 \rangle]$ a stacky curve of genus $0$ with $2g+2$ stacky geometric points with stabilizer $\Z/2\Z \simeq \mu_2$.  
(If $\Char k \mid \#G$ and the orders of the stabilizers are not divisible by the characteristic then $[X/G]$ is still a stacky curve; in general the quotient may have a stabilizer of $\mu_p$ and thus fail to be a Deligne--Mumford stack.)
\end{example}

\begin{remark} \label{rmk:quotients}
Example~\ref{ex:quotient-example} is close to being the universal one in the following sense:  Zariski locally, every stacky curve is the quotient of a smooth affine curve by a finite (constant) group \cite[Lemma 2.2.3]{abramovichV:compactifyingStableMaps}; see also Lemma~\ref{L:stacky-curves-characterization-cyclicity} below for a slightly stronger statement.

It is necessary to work Zariski locally: not every stacky curve is the quotient of a scheme by a finite group (see Example~\ref{ex:ballin} below). However, by Edidin \cite[Theorem 2.18]{EdidinHKV}, any smooth Deligne--Mumford stack with trivial generic stabilizer (in particular, a stacky curve) is isomorphic to a global quotient $[X/G]$ where $G \leq \GL_n$ is a linear algebraic group and $X$ is a scheme (or algebraic space).
\end{remark}

%----------------------------------------------------------------------------
\section{Coarse space} 
%----------------------------------------------------------------------------

In this section, we relate a stacky curve to an underlying scheme, called its coarse space.

\begin{definition}
Let $\scrX$ be a stacky curve over $k$.  A \defiindex{coarse space morphism} is a morphism $\pi\colon\XX\to X$ with $X$ a scheme over $k$ such that the following hold:
  \begin{enumerate}
   \item[(i)] The morphism $\pi$ is universal for morphisms from $\XX$ to schemes; and 
   \item[(ii)] If $F \supset k$ is an algebraically closed field, then the map $|\!\XX(F)|\to
     X(F)$ is bijective, where $|\!\XX(F)|$ is the set of isomorphism classes of $F$-points of $\XX$. \qedhere
  \end{enumerate}
The scheme $X$ is called the \defiindex{coarse space} associated to $\XX$.
\end{definition}

\begin{remark}
  Given $\XX$, if a coarse space morphism $\pi\colon\XX\to   X$ exists, then it is unique up to unique isomorphism (only property (i) is needed for this). 
\end{remark}

\begin{proposition}
Every stacky curve has a coarse space morphism.
\end{proposition}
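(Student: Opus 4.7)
The plan is to invoke the Keel--Mori theorem to produce a coarse moduli space as an algebraic space, and then upgrade this algebraic space to a scheme using the low-dimensional geometry of $\XX$.

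First, I would verify that $\XX$ satisfies the hypotheses of the Keel--Mori theorem \cite{KeelM:Quotients,conrad:keelMori}, which produces a coarse moduli morphism $\pi\colon\XX\to X$ to an algebraic space $X$ for any Deligne-Mumford stack (or more generally Artin stack) with finite inertia. Since $\XX$ is proper over $k$, its diagonal $\Delta_\XX\colon\XX\to\XX\times_k\XX$ is proper; since $\XX$ is Deligne-Mumford and locally of finite type over $k$, this diagonal is also unramified, hence quasi-finite, and a proper quasi-finite morphism is finite. Thus the inertia stack $I_\XX = \XX\times_{\XX\times_k\XX}\XX$ is finite over $\XX$, which is exactly the hypothesis needed. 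The Keel--Mori theorem then provides $\pi\colon\XX\to X$ with $X$ an algebraic space satisfying the universal property~(i) among algebraic-space targets (and hence among schemes), and inducing a bijection on $\overline{k}$-points, so condition~(ii) holds as well.

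Next, I would show $X$ is actually a scheme rather than just an algebraic space. Here I would use that $X$ is smooth, separated, and one-dimensional over $k$: smoothness and separatedness of $X$ follow from the corresponding properties of $\XX$ via the étale-local description of $\XX$ as a quotient of a smooth affine curve by a finite group (Remark~\ref{rmk:quotients}), which shows that \'etale-locally on $X$ the coarse space is a quotient of a smooth affine curve by a finite group and is hence smooth. Dimension one is immediate since $\pi$ is quasi-finite. A one-dimensional smooth separated algebraic space over a field is a scheme by Knutson's results \cite{Knutson:algebraicSpaces}*{Proposition I.5.14, Theorem V.4.9}, as already noted in the remark preceding the proposition. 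This completes the argument.

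The only mildly delicate step is the verification of finite inertia, but this is forced quickly by the combination of properness (giving proper diagonal) and the Deligne-Mumford property (giving unramified, hence quasi-finite, diagonal); everything else is a direct invocation of Keel--Mori together with the standard fact that smooth separated one-dimensional algebraic spaces are schemes.
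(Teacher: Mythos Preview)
Your proof is correct and follows exactly the paper's approach: invoke Keel--Mori using finite inertia, then upgrade the resulting algebraic space to a scheme using that it is smooth, separated, and one-dimensional. The paper's actual proof is a one-line citation of Keel--Mori, because the verification of finite diagonal and the algebraic-space-to-scheme argument have already been recorded in the surrounding discussion and remarks; you have simply assembled those scattered pieces into a single self-contained argument.
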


\begin{proof}
It was proved by Keel-Mori \cite[Theorem 1.1]{KeelM:Quotients} (see also Rydh \cite[Theorem 6.12]{rydh:quotients} or unpublished notes of Conrad \cite[Theorem 1.1]{conrad:keelMori}), that if $\XX$ has finite diagonal (or finite inertia stack) then a coarse space morphism exists.
\end{proof}

\begin{lemma} 
The coarse space of a stacky curve is smooth.
\end{lemma}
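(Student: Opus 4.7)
The plan is to reduce the question to a local computation on the coarse space. Smoothness of $X$ over $k$ is Zariski-local, and since the coarse space morphism $\pi\colon\XX\to X$ is surjective, it suffices to show that each point of $X$ admits a smooth Zariski neighborhood.

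First, I would invoke Remark~\ref{rmk:quotients} to choose an open $V\subseteq X$ such that $\pi^{-1}(V)\cong[U/G]$, where $U$ is a smooth affine $k$-curve and $G$ is a finite (constant) group acting on $U$. The universal property of the coarse space then identifies $V$ with the geometric quotient $U/G$, which exists as a $k$-scheme since $U$ is affine and $G$ is finite.

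Next, I would show that $U/G$ is smooth over $k$. Being a quotient of the normal scheme $U$ by a finite group, $U/G$ is normal, and since it is noetherian of dimension $1$, it is therefore regular. To pass from regularity to smoothness, which can fail over a non-perfect $k$, I would base change along the faithfully flat morphism $\Spec\bar k\to\Spec k$: since formation of quotients by a finite group commutes with flat base change, $(U/G)_{\bar k}\cong U_{\bar k}/G$ is again a normal noetherian scheme of dimension $1$, hence regular, hence smooth over $\bar k$ (regular equals smooth over algebraically closed fields). Smoothness of $U/G$ over $k$ then follows by fpqc descent.

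The main obstacle I expect is the last passage from regularity to smoothness in the possibly wild case. In higher dimensions, quotients of smooth varieties by finite groups are in general singular, and smoothness requires tameness together with reflection-group hypotheses \`a la Chevalley--Shephard--Todd. In dimension $1$ this difficulty evaporates: a normal one-dimensional local ring is automatically a DVR, independent of the characteristic or the order of $G$, so the dimension hypothesis is precisely what makes the argument go through uniformly across the tame and wild cases.
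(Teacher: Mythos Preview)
Your proof is correct and follows essentially the same route as the paper: reduce to the local form $[U/G]$ via Remark~\ref{rmk:quotients}, identify the coarse space with $U/G$, observe that this is normal, and use that normal one-dimensional schemes are regular. The paper compresses this into one sentence (``the coarse space has at worst quotient singularities so is in particular normal, and consequently the coarse space of a stacky curve is smooth''), whereas you additionally address the distinction between regularity and smoothness over a possibly non-perfect base field by passing to $\bar k$; this extra care is a genuine refinement, since the paper's phrasing elides that step.
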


\begin{proof}
\'Etale locally on the coarse space $X$, a stacky curve $\XX$ is the quotient of an affine scheme by a finite (constant) group (see Remark~\ref{rmk:quotients}).  Thus, the coarse space has at worst quotient singularities so is in particular a normal curve, and consequently the coarse space of a stacky curve is smooth.
\end{proof}

\begin{remark}
 Our definition of coarse space morphism is equivalent to the one where the target $X$ is allowed to be an algebraic space.  When $\XX$ is a stacky curve, the coarse space $X$ (a priori an algebraic space) is smooth, separated \cite[Theorem 1.1(1)]{conrad:keelMori,KeelM:Quotients}, and 1-dimensional, so $X$ is a scheme \cite[Proposition I.5.14, Theorem V.4.9]{Knutson:algebraicSpaces}.  Similarly, the standard proofs that coarse spaces exist show that when $\XX$ is a stacky curve one can allow the target of the universal property (i) to be an algebraic space.
\end{remark}

\begin{example}
Continuing with Example~\ref{ex:quotient-example}, the map $[X/G] \to X/G$ is a coarse space morphism, where $X/G$ is the quotient of $X$ by $G$ in the category of schemes, defined by taking $G$-invariants on affine open patches.
\end{example}

\begin{example}[Generalized Fermat quotients]
\label{ex:generalied-Fermat}

Let $a,b,c \in \Z_{\geq 1}$ be relatively prime, let $A,B,C \in \Z \smallsetminus \{0\}$, and let $S$ be the generalized Fermat surface defined by the equation $Ax^{a} + By^{b} + Cz^{c} = 0$ in $\A_{\Q}^3 \smallsetminus \{(0,0,0)\}$.  Then $\G_m$ acts naturally on $S$ with monomial weights  $(d/a,d/b,d/c)$ where $d=abc$.  The map 
\begin{align*}
S &\to \P^1 \\
(x,y,z) &\mapsto [y^b:z^c]
\end{align*}
is $\G_m$ equivariant; in fact, one can show that the field of invariant rational functions is generated by the function $y^b/z^c$, so that the scheme quotient $S/\G_m$ is isomorphic to $\P^1$ and the induced map $[S/\G_m] \to \P^1$ is a coarse moduli morphism. There are stabilizers if and only if $xyz = 0$, so that $[S/\G_m]$ is a stacky curve with coarse space $\P^1$ and non-trivial stabilizers of $\mu_a, \mu_b, \mu_c$. 
(More generally, if $d = \gcd(a,b,c)$, then the coarse space is the projective Fermat curve $Ax^{d} + By^{d} + Cz^{d} = 0 \subset \P^2$.)

The quotient $[S/\G_m]$ is a tame stacky curve over $\Q$ and, though it is presented as a quotient of a surface by a positive dimensional group it is in fact (over $\C$) the quotient of a smooth proper curve by a \emph{finite} group; this follows from stacky Riemann existence (Proposition~\ref{P:StackyGAGA}) and knowledge of its complex uniformization.

\begin{equation} \label{pic:M-curve} \notag
  \begin{tikzpicture}
    \draw[thick] (-3,0) -- (3,0);
    \filldraw[draw=black,fill=black] (-1.5,0) node [below=2pt]
    {$\mu_a$} circle (1.5pt); \filldraw[draw=black,fill=black] (0,0)
    node [below=2pt] {$\mu_b$} circle (1.5pt);
    \filldraw[draw=black,fill=black] (1.5,0) node [below=2pt] {$\mu_c$}
    circle (1.5pt);
  \end{tikzpicture}
\end{equation}
\begin{center}
Figure~\ref{pic:M-curve}:  The generalized Fermat quotient $[S/\G_m]$ is a stacky $\P^1$ 
\end{center}
\addtocounter{equation}{1}

\end{example}

\begin{example}
An $M$-curve (see Darmon \cite{darmon:faltings}, Abramovich \cite{abramovich:clayLectures} for Campana's higher dimensional generalization, and Poonen \cite{poonen:fractionalP1}) is a variant of a stacky curve, defined to be a smooth projective curve $X$ over $k$ together with, for each point $P \in X(F)$, a multiplicity $m_P \in \Z_{> 0} \cup \{\infty\}$. An $S$-integral point of such an $M$-curve  is a rational point $Q$ such that, for each $\pp \not \in S$ and each $P \in X(F)$, the intersection number of $Q$ and $P$ at $\pp$ (as defined using integral models) is divisible by $m_P$. 

To the same data one can associate a stacky curve with identical notion of integral point, and the main finiteness theorem of Darmon--Granville \cite{DarmonG:generalizedFermat} (proved via $M$-curves) can be rephrased as the statement that the Mordell conjecture holds for hyperbolic stacky curves, with an essentially identical proof entirely in the language of stacks; see Poonen--Schaefer--Stoll \cite[Section 3]{pss} for a partial sketch of this stack-theoretic proof. 
\end{example}

The following lemma characterizes a stacky curve by its coarse space morphism and its ramification data.

\begin{lemma}
\label{L:stacky-curves-characterization-cyclicity}  
Let $\scrX$ be a tame stacky curve.
  \begin{enumerate} \renewcommand{\labelenumi}{\textnormal{(\alph{enumi})}}
  \item \label{item:1}\label{P:stacky-curves-characterization-unicity} Two tame stacky curves $\scrX$ and $\scrX'$ are isomorphic if and only if there exists an isomorphism $\phi\colon X \to X'$ of coarse spaces inducing a stabilizer-preserving bijection between $|\scrX|$ and $|\scrX'|$
(i.e.~for every $x \in |\scrX|, x' \in |\scrX'|$, if $\phi(\pi(x)) = \pi'(x')$, then there exists an isomorphism $G_x \cong G_{x'}$).
  \item The stabilizer groups of $\scrX$ are isomorphic to $\mu_n$.
  \item In a Zariski neighborhood of each point $x$ of $\scrX$, the coarse space $X$ is isomorphic to a quotient of a scheme by the stabilizer $G_x$ of $x$.  
  \end{enumerate}

\end{lemma}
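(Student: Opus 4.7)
\medskip
\noindent\textbf{Proof proposal.} The plan is to prove the three assertions in the order (a), (c), (b), since each is used in the next.

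For (a), the idea is to reduce to linear algebra at a geometric point. If $x$ is a closed geometric point with stabilizer $G_x$, then because $\scrX$ is smooth of dimension $1$ the tangent space $T_x\scrX$ is $1$-dimensional over the residue field. Since $\scrX$ is tame Deligne--Mumford, $G_x$ is a finite \'etale group scheme, and it acts faithfully on $T_x\scrX$ (this faithfulness is the content of the slice/linearization theorem for tame stacks at a point: the representation on the tangent space determines, and hence detects, the local structure, so no nontrivial element can act trivially). Therefore $G_x$ embeds in $\mathrm{GL}(T_x\scrX)\cong\G_m$. A finite \'etale subgroup scheme of $\G_m$ is $\mu_n$ for some $n$ coprime to $\Char k$, which is cyclic.

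For (c), the plan is to realize $\scrX$ Zariski locally as a root stack. Fix a point $x$ with stabilizer $G_x\cong\mu_n$ (by (a)). Choose a Zariski open $V\subset X$ containing $\pi(x)$ and no other image of a stacky point, and write $\scrX_V=\pi^{-1}(V)$. The \'etale-local structure theorem gives $\scrX_V\otimes_V\widehat{\calO}_{V,\pi(x)}\cong[\Spec k[[t]]/\mu_n]$, with $\mu_n$ acting by $t\mapsto\zeta t$; in particular, the ramification of $\scrX_V\to V$ is concentrated at $x$ and is tamely cyclic of order $n$. After possibly shrinking $V$ so that $D:=\pi(x)$ is a principal divisor, cut out by a function $u\in\calO(V)$, the scheme $U:=\Spec\calO(V)[w]/(w^n-u)$ is a smooth affine curve (smoothness follows from the tameness of $n$), carries a natural $\mu_n$-action by $w\mapsto\zeta w$, and the quotient $[U/\mu_n]$ has the same coarse space $V$ and the same stabilizer data as $\scrX_V$. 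The uniqueness from the root-stack construction (equivalently, the fact that a tame DM stack with a single stacky point of order $n$ on a smooth curve is canonically the $n$-th root stack $\sqrt[n]{D/V}$) then gives $\scrX_V\cong[U/G_x]$, proving (c).

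For (b), use (c) to cover $\scrX$ by Zariski opens of the form $\scrX_{V_i}\cong\sqrt[n_i]{D_i/V_i}$, where $V_i\subset X$ is a Zariski open meeting the image of at most one stacky point with stabilizer of order $n_i$ and divisor $D_i$. The given isomorphism $\phi\colon X\to X'$ pulls back the analogous data $(V_i',D_i',n_i')$ on $\scrX'$ to $(V_i,D_i,n_i)$, by the stabilizer-preserving hypothesis and the fact that $\phi$ is an isomorphism of schemes. Since the root-stack construction is functorial, $\phi$ induces isomorphisms $\scrX_{V_i}\isomto\scrX'_{\phi(V_i)}$ for each $i$, and on overlaps these agree away from the stacky locus (where they just equal $\phi$), hence agree everywhere by separatedness of the diagonal. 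These glue to the desired isomorphism $\scrX\isomto\scrX'$; the converse is immediate.

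The main obstacle is the passage in (c) from the standard \emph{\'etale-local} quotient presentation to a \emph{Zariski-local} one. This is what forces the use of tameness (to get a smooth Kummer cover $w^n=u$) and of the cyclicity established in (a) (so that the cover is by a single function), and is the reason that the whole lemma fails without the tameness hypothesis.
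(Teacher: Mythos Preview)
Your proposal is correct and rests on the same key idea as the paper: a tame stacky curve is the root stack of its coarse space along its stacky divisor. The differences are organizational. For (a), you argue via the faithful action of $G_x$ on the one-dimensional tangent space (hence $G_x\hookrightarrow\G_m$), while the paper simply cites Serre's result that tame inertia is cyclic; your argument is more self-contained. For (b) and (c), you reverse the paper's order: you first establish the Zariski-local root-stack presentation (c) by an explicit Kummer cover $w^n=u$, then glue these local isomorphisms to prove (b). The paper instead proves (b) first, invoking Geraschenko--Satriano's global result (or the universal property of root stacks) that two tame stacky curves with the same coarse space and stabilizer data are isomorphic, and then reads off (c) as a property of root stacks. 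Both routes ultimately appeal to the same characterization of tame stacky curves as root stacks, so the substance is identical; your approach just makes the Zariski-local construction more explicit.
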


 Another proof of claim (a) can be found in the work of Abramovich--Graber--Vistoli \cite[Theorem 4.2.1]{AGV:GW}, and cyclicity of the stabilizers follows from  Serre \cite[IV, \S 2, Corollary 1]{serre:localFields}.

\begin{proof}
For the first claim (a), Geraschenko--Satriano \cite[Theorem 1]{geraschenkoS:bottomUp}
 show that two \emph{tame} stacky curves over a separably closed field are root stacks over the same scheme (their coarse spaces) with respect to the same data (their ramification divisors) and are thus isomorphic: indeed, since the coarse space $X$ is a smooth curve, $X^{\text{can}} = X$, and since rooting along a smooth normal crossing divisor gives a smooth stack, we have $\sqrt{\calD/X}^{\text{can}} = \sqrt{\calD/X}$.  

To finish the proof of (a) when $k$ is not necessarily separably closed we make a computation with Galois cohomology. Let $X$ be the coarse space of $\XX$ and let $\XX'$ be $X$ rooted along the ramification divisor of the coarse space map $\XX \to X$, with degrees equal to the order of the geometric automorphism group of each stacky point of $\XX$. Then by the first paragraph, the map $\XX' \to X$ is a twist of the map $\XX \to X$; 
% Let $\XX \to X$ be the coarse space morphism of a root stack over $k$, and suppose that $\XX' \to X$ is a twist of $\XX \to X$,
i.e.~$\XX'_{k^{\textup{sep}}} \simeq \XX_{k^{\textup{sep}}}$ over $X_{k^{\textup{sep}}}$, so that $\XX'$ has the same coarse space and geometric ramification data. Such twists are classified by $H^1(k,\Aut(\XX\!/X))$, which is trivial since the group $\Aut(\XX_{k^{\textup{sep}}}/X_{k^{\textup{sep}}})$ of automorphisms of $\XX \to X$ is trivial (by the universal property of the root stack).

The remaining claims follow from (a) since the same is true of root stacks \cite[Lemma 3.9]{GeraschenkoS:torusQuotients}.  
\end{proof}

% This used to be \label{R:non-abelian-stabilizers}
% example vs remark
\begin{remark}[Non-tame stacky curves]
\label{R:non-tame}
Several complications arise if $\XX$ is not tame. For example, let $C$ be an Artin--Schreier curve (necessarily over a field of positive characteristic), with affine equation $y^p - y = f(x)$. This admits an action of $\F_p$, which is free on the affine part, and not free at the point at infinity. The quotient $\scrX$ is a stacky $\P^1$, with a single stacky point at infinity; the stabilier of this point is $\F_p$, and thus $\scrX$ is not tame. As one varies $f(x)$, the associated stacky curves $\scrX$ are generally not isomorphic (for instance: the genus of the \'etale cover $C$ varies as $f(x)$ varies); in particular, the tameness assumption in Lemma~\ref{L:stacky-curves-characterization-cyclicity}  is necessary!

Stabilizers of non-tame stacky curves can also be nonabelian. The stack quotient of the modular curve $X(p)$ by  $\PSL_2(\F_p)$ in characteristic 3 has genus 0 coarse space and two stacky points, one with stabilizer $\Z/p\Z$ and one with stabilizer $S_3$ (see e.g.~Bending--Camina--Guralnick \cite[Lemma 3.1 (2)]{BendingCA:automorphismsOfTheModularCurve}), and it is thus a stacky $\P^1$ with a non-cyclic stabilizer. 

By the theory of higher ramification groups (see e.g.\ Serre \cite[Chapter IV]{serre:localFields} or Katz \cite[Theorem 2.1.5]{Katz:local-global-extensions}),
there is a complete answer to the question of which nonabelian groups can occur as stabilizers of \defiindex{wild} (non-tame) stacky curves in characteristic $p>0$: you can get any group that is \emph{cyclic-by-$p$}, which means a group $G$ which admits a normal $p$-Sylow subgroup whose quotient is cyclic with order prime to $p$.
\end{remark}

\begin{remark}
If one allows either singular or nonseparated one-dimensional Deligne--Mumford stacks, Lemma~\ref{L:stacky-curves-characterization-cyclicity} is false: for example, glue $\#G$ many copies of $\P^1$ together at their origins and take the quotient by $G$.
\end{remark}

Following Behrend and Noohi \cite[4.3]{BehrendN:uniformization}, we consider the following two examples.

\begin{example}[Weighted projective stack]
\label{ex:weighted-projective-stack}
We define \defiindex{weighted projective stack} $\calP(n_1,\ldots,n_k)$ to be the quotient of $\A^k\smallsetminus\{(0,0)\}$ by the $\G_m$ action with weights $n_i \in \Z_{\geq 1}$; when $k = 2$ we call this a \defiindex{weighted projective stacky line}. The coarse space of $\calP(n_1,\ldots,n_k)$ is the usual weighted projective space $\P(n_1,\ldots,n_k)$, but in general $\calP(n_1,\ldots,n_k)$ is a stack which is not a scheme.
\end{example}

\begin{example}[Footballs]
\label{ex:ballin}
Let $n,m \geq 1$ and $\Char k \nmid m,n$.  We define the \defiindex{football} $\calF(n,m)$ to be the stacky curve with coarse space $\P^1$ and two stacky points with cyclic stabilizers of order $n$ and $m$.  Locally, one can construct $\calF(n,m)$ by gluing $[\A^1/\mu_n]$ to $[\A^1/\mu_m]$ like one glues affine spaces to get $\PP^1$.
If $\gcd(n,m)=1$  then  $\calF(n,m) \simeq  \calP(n,m)$, and $\calF(n,m)$ is simply connected (i.e.~has no non-trivial connected \'etale covers), and if $(n,m) \neq (1,1)$ then $\calF(m,n)$ is not (globally) the quotient of a curve by a finite group, though this is still true Zariski locally.  In general, 
$\calP(n,m)$ is a $\Z/d\Z$ gerbe over the football $\calF(n/d,m/d)$ where $d=\gcd(n,m)$.  
\end{example}

%----------------------------------------------------------------------------
\section{Divisors and line bundles on a stacky curve}
%----------------------------------------------------------------------------

Having defined stacky curves, we now show that the definitions for divisors and line bundles carry over for stacky curves.  Let $\XX$ be a stacky curve over $k$.

\begin{definition}[Weil divisors]
A \defiindex{Weil divisor} on $\XX$ is a finite formal sum of irreducible closed substacks of codimension 1 defined over $k$, i.e.~an element of the free abelian group on the set of closed $k$-substacks of $\XX$.  A Weil divisor is \defiindex{effective} if it is a nonnegative such formal sum. We define the \defiindex{degree} of a Weil divisor $D = \sum_Z n_Z Z$ to be $\sum_Z n_Z \deg Z$.
\end{definition}

As in Remark~\ref{R:fractional-degree-gerbe} we note that  $\deg \calG_x = [k(x):k]/\!\deg G_x$.

\begin{definition}[Linear equivalence]
Let $\scrL$ be a line bundle on $\XX$.  A \defiindex{rational section} of $\scrL$ is a nonzero section over a Zariski dense open substack.  The \defiindex{divisor} of a rational section $s$ of $\scrL$ is $\divv s=\sum_Z v_Z(s) Z$, where the sum runs over irreducible closed substacks $Z$ of $\XX$, and $v_Z(s)$ is the valuation of the image of $s$ in the field of fractions of the \'etale local ring of $\scrL$ at $Z$.   

We say that two Weil divisors $D$ and $D'$ are \defiindex{linearly equivalent} if $D - D' = \divv f$ for $f$ a rational section of $\scrO_{\XX}$ (equivalently, a morphism $f \colon \XX \to \PP^1$).
\end{definition}

\begin{definition}[Cartier divisors]
A \defiindex{Cartier divisor} on $\XX$ is a Weil divisor that is \defiindex{locally principal}, i.e.~locally of the form $\divv f$ in the \'etale topology.  
\end{definition}

If $P$ is an irreducible closed substack of $\XX$, we define $\scrO_{\XX}(-P)$ to be the ideal sheaf of $P$.  Defining as usual 
\[ \scrO_{\XX}(D) = \scrO_{\XX}(-D)^{\vee} = \calH{}om\left(\scrO_{\XX}(-D),\scrO_{\XX}\right), \] 
this definition extends linearly to any Weil divisor $D$.

\begin{remark}[Fractional order zeros of sections]
  \label{R:fractional-order-zeroes}

Since any map  $f \colon \XX \to \P^1$ factors through the coarse space map $\pi\colon \XX \to X$ via a map $f_X\colon X \to \P^1$, we have $\divv f=\pi^* (\divv f_X) = \pi^*(\pi_*(\divv f))$; and since $\pi$ is ramified at a stacky point $x$ with degree $\deg \calG_x = 1/\# G_x$, the coefficients of $\divv f$ are integers.

The same is not true when $f$ is replaced by a rational section $s$ of general line bundle.
  For example, let $\XX$ be the quotient of $\A^1_{\C}$ by $\mu_r$ for an integer $r \geq 1$ not divisible by $\Char k$, and consider the section $dt$ of $\Omega^1_{\XX}$ (defined below).  The pullback to $\A^1_{\C}$ of $dt$ is $dt^r = rt^{r-1}\,dt$; the pullback of $\divv dt$  is $\divv dt^r  = (r-1)O$ where $O$ is the origin, and thus $\divv dt$ is $(r-1) \calG_O$, which has degree $(r-1)/r$ (since $\deg \calG_O = 1/r$).

 Fractional zeroes of sections appear in many other contexts; see for instance Gross \cite[Section 2]{gross:tameness} or Katz--Mazur \cite[Corollary 12.4.6]{KatzMazur}, the latter of which discusses a stacky proof of Deuring's formula for the number of supersingular elliptic curves.
\end{remark}

We prove next that any invertible sheaf $\scrL$ on a stacky curve $\XX$ is isomorphic to $\scrO(D)$ for some Weil divisor $D$ on $\XX$. The vector space of global sections $H^0(\XX,\scrO(D))$ is, as in the case of a nonstacky curve, in bijection with the set of morphisms $f \colon \XX \to \P^1$ such that $D + \divv f$ is effective.  (We add as a warning that this bijection does not preserve degrees: the degree of $\divv f$ is necessarily zero, but the degree of the corresponding section $s$ of $\scrL$ has nonzero degree, generically equal to $\deg D$.)

\begin{lemma} \label{L:divisor-vs-line-bundle} The following are true.
  \begin{enumerate} \renewcommand{\labelenumi}{\textnormal{(\alph{enumi})}}
  \item A Weil divisor on a smooth Deligne--Mumford stack is Cartier.
  \item A line bundle $\scrL$ on a stacky curve is isomorphic to $\scrO_{\XX}(D)$ for some Weil divisor $D$.
  \item We have $\scrO_{\XX}(D) \simeq \scrO_{\XX}(D')$ if and only if $D$ and $D'$ are linearly equivalent.
  \end{enumerate}
\end{lemma}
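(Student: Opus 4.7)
The plan is to imitate the classical proof on a regular one-dimensional scheme, using the \'etale-local structure of $\XX$ from Lemma~\ref{L:stacky-curves-characterization-cyclicity}(c) to reduce to that setting.

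For (a), since $\XX$ has non-gerby generic point $\eta = \Spec K(\XX)$, the stalk $\scrL_\eta$ is a one-dimensional $K(\XX)$-vector space, so any nonzero element extends to a rational section $s$ of $\scrL$ on a Zariski dense open substack. Set $D := \divv s = \sum_Z v_Z(s)\,Z$; each $v_Z(s)$ is an integer (being the valuation of $s$ in the DVR obtained as the \'etale local ring of $\scrL$ at $Z$) and only finitely many are nonzero, so $D$ is a well-defined Weil divisor. To show $\scrO_\XX(D) \isomto \scrL$, I would construct the isomorphism \'etale-locally: every point of $\XX$ admits an \'etale neighborhood $[U/G] \to \XX$ with $U = \Spec A$ a smooth one-dimensional scheme, on which $\scrL$ pulls back to a $G$-equivariant line bundle $\scrL_U$, $s$ to a $G$-equivariant rational section $s_U$ with $\divv s_U$ equal to the pullback of $D$, and the pullback of $\scrO_\XX(-D)$ to the ideal sheaf $\scrO_U(-\divv s_U)$. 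On the regular scheme $U$ the classical ``multiplication by $s_U$'' construction produces an isomorphism $\scrO_U(\divv s_U) \isomto \scrL_U$; since $s_U$ is $G$-equivariant so is this map, hence it descends to $[U/G]$. The local isomorphisms, all built from the single global section $s$, agree on overlaps and glue to a global isomorphism $\scrO_\XX(D) \isomto \scrL$.

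For (b), if $D - D' = \divv f$ then applying (a) to $\scrO_\XX$ and the rational section $f$ gives $\scrO_\XX \cong \scrO_\XX(D - D')$, and tensoring with $\scrO_\XX(D')$ yields $\scrO_\XX(D) \cong \scrO_\XX(D')$. For the converse, $\scrO_\XX(D)$ carries a canonical nonzero rational section $s_D$, obtained by extending the tautological inclusions $\scrO_\XX(-P) \injects \scrO_\XX$ multiplicatively to $D = \sum n_i P_i$; a local computation (at each $P_i$, with a uniformizer in an \'etale chart) gives $\divv s_D = D$. Given an isomorphism $\varphi\colon \scrO_\XX(D) \isomto \scrO_\XX(D')$, the rational section $\varphi(s_D)$ of $\scrO_\XX(D')$ has the form $f \cdot s_{D'}$ for a unique rational function $f$ on $\XX$, and comparing divisors yields $D = \divv f + D'$, so that $D - D' = \divv f$.

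The main obstacle will be the \'etale-descent bookkeeping in (a): one must verify that the ``multiplication by $s$'' map, defined unambiguously on each \'etale chart because $s$ is global, is $G$-equivariant and an isomorphism of line bundles, so that the local isomorphisms patch into a morphism of sheaves on $\XX$. Once this descent is in place, everything else reduces to the classical theory on smooth one-dimensional schemes together with formal manipulations of the ideal-sheaf definition of $\scrO_\XX(D)$.
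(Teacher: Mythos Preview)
Your proof is correct and takes essentially the same approach as the paper: choose a rational section $s$ of $\scrL$ (available since the generic point is non-gerby), set $D = \divv s$, and check locally that multiplication by $s$ gives $\scrO_\XX(D) \isomto \scrL$; part (b) is then the standard formal manipulation. The paper is terser---citing Geraschenko--Satriano for the reduction to schemes and defining the map $f \mapsto fs$ globally before checking it is an isomorphism locally, rather than gluing local constructions---but the substance is the same.
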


\begin{proof}
One can check statement (a) on a smooth cover, reducing to the case of schemes \cite[Lemma 3.1]{geraschenkoS-toric-II}.  

For statement (b), note that $\XX$ has  a Zariski dense open substack $U \subseteq \XX$ that is a scheme such that $\scrL|_U \simeq \scrO_U$. Let $s$ be a nonzero section of $\scrL(U)$ and let $D=\divv s$. Let $f \colon \XX \to \P^1$ correspond to a section of $\scrO(D)$. Then since $\divv f + \divv s$ is effective, $fs$ is a global section of $\scrL$; the corresponding map $\scrO(D) \to \scrL$ given by $f \mapsto fs$ can be checked locally to be an isomorphism.

For (c), if $\scrO_{\XX}(D) \simeq \scrO_{\XX}(D')$, then the image of $1$ under the composition 
\[ \scrO_{\XX}  \simeq \scrO_{\XX}(D) \otimes \scrO_{\XX}(D')^{\vee} \simeq \scrO_{\XX}(D-D') \]
gives a map $f$ such that $D - D' + \divv f$ is effective. Similarly, $1/f$ is a global section of $\scrO_{\XX}(D-D')$, so  $D' - D + \divv 1/f$ is effective. Since $D - D' + \divv f$ and $-(D - D' + \divv f)$ are both effective, $D - D' + \divv f$ is zero and $D$ is equivalent to $D'$ as claimed. The converse follows similarly.
\end{proof}

Let $\pi\colon\XX \to X$ be a coarse space morphism.   We now compare divisors on $\XX$ with divisors on the coarse space $X$.

\begin{definition} \label{def:floor}
The \defiindex{floor} $\lfloor D \rfloor$ of a Weil divisor $D = \sum_i a_i P_i$ on $\XX$ is the divisor on $X$ given by 
 \[
\lfloor D \rfloor = \sum_i  \left\lfloor \frac{a_i}{\# G_{P_i}} \right\rfloor \pi(P_i).
 \]
\end{definition}

\begin{lemma}
\label{L:floor}
The natural map
\[
\scrO_{X}(\lfloor D \rfloor) \to \pi_*\scrO_{\XX}( D )
\]
of sheaves on the Zariski site of $X$ given on sections over $U \subset X$ by 
\[
(f\colon U \to \P^1) \,\mapsto\, (\pi \circ f\colon \XX \times_X U \to \P^1)
\]
is an isomorphism.
\end{lemma}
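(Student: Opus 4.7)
The plan is to verify the map is an isomorphism étale locally on $X$, which suffices since both sides are coherent sheaves on $X$ and the map is defined by a natural transformation of presheaves. Away from the finitely many images $\pi(P_i)$ of stacky points, the coarse space morphism $\pi$ is an isomorphism, $\lfloor D \rfloor$ coincides with $D$ under this identification, and the claim is immediate. The content of the lemma thus lies at the images of stacky points, so I will fix such a point $P \in |\XX|$ with (cyclic, by Lemma~\ref{L:stacky-curves-characterization-cyclicity}(a)) stabilizer $G_P$ of order $n$, and work in an étale neighborhood of $\pi(P)$.

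By Remark~\ref{rmk:quotients} together with Lemma~\ref{L:stacky-curves-characterization-cyclicity}(c), I may assume $\XX \cong [\Spec A / G_P]$ in an étale neighborhood of $P$, where $A$ is a regular one-dimensional local ring with uniformizer $t$ on which $G_P$ acts via a faithful character (which exists in the tame setting). Then locally $X = \Spec A^{G_P}$, and $u = t^n$ is a uniformizer at $\pi(P)$. Writing $D = aP$ in this neighborhood, a section of $\scrO_{\XX}(D)$ over $\Spec A$ is a rational function $f$ with $v_t(f) \geq -a$; passing to the pushforward amounts to imposing $G_P$-invariance, so $f \in \Frac(A)^{G_P} = \Frac(A^{G_P})$, meaning $f$ is a Laurent polynomial in $u$. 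The condition $v_t(f) \geq -a$ then becomes $v_u(f) \geq -\lfloor a/n \rfloor$, which is exactly the defining condition for $f$ to be a section of $\scrO_X(\lfloor a/n \rfloor \pi(P))$.

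To conclude, I would verify that the natural map defined in the lemma is realized by this local identification of function spaces, which is immediate upon unwinding the definitions: composing a map $f\colon U \to \P^1$ with $\pi$ gives a map whose divisor on $\XX \times_X U$ is the $\pi$-pullback of $\divv f$, and the factor of $n$ arising from ramification of $\pi$ at $P$ is precisely what converts $v_u(f) \geq -\lfloor a/n\rfloor$ into $v_t(\pi\circ f) \geq -n\lfloor a/n\rfloor \geq -a$. For general $D = \sum_i a_i P_i$ the argument patches together over disjoint étale neighborhoods of the $\pi(P_i)$. The main obstacle, and the whole point of introducing the floor, is reconciling the valuation $v_t$ governing sections on $\XX$ with the valuation $v_u = v_t/n$ on $X$; this is exactly the ``fractional'' phenomenon recorded in Remark~\ref{R:fractional-order-zeroes}, and the floor function in Definition~\ref{def:floor} is the precise correction needed to make the map bijective.
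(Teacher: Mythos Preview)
Your proof is correct, but the paper takes a more conceptual and shorter route. The paper argues in two lines: first, the map is well-defined because $\lfloor D \rfloor + \divv f$ is effective if and only if $D + \divv(\pi\circ f)$ is effective (this is your valuation computation, stated globally); second, the inverse exists by the universal property of the coarse space together with commutation of coarse spaces with flat base change, so that any $g\colon \XX\times_X U \to \P^1$ automatically factors as $\pi\circ f$ for some $f\colon U\to\P^1$. No local charts are needed.

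Your approach instead passes to the \'etale-local quotient presentation $[\Spec A/G_P]$ and computes $G_P$-invariants of $\Frac(A)$ with a uniformizer explicitly, recovering the floor via $v_u(f)\geq -\lfloor a/n\rfloor$. This is perfectly valid and has the virtue of making the fractional-zero phenomenon of Remark~\ref{R:fractional-order-zeroes} concrete at the level of valuations; the paper's argument buys brevity and avoids any chart-dependence by packaging the same content into the universal property. Incidentally, Lemma~\ref{L:stacky-curves-characterization-cyclicity}(c) actually gives you a Zariski-local quotient description, so you need not pass to the \'etale topology at all.
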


\begin{proof}

Note that $\lfloor D \rfloor + \divv f$ is effective if and only if $D + \divv \pi \circ f$ is effective. The above map is thus well defined.  The inverse map is given by factorization through the coarse space---by the universal property of the coarse space, and commutativity of formation of coarse spaces with flat base change on the coarse space, any map $g\colon \scrX \times_X U \to \P^1$ is of the form $\pi \circ f$ for some map $f \colon U \to \P^1$.
\end{proof}

%----------------------------------------------------------------------------
\section{Differentials on a stacky curve}
%----------------------------------------------------------------------------

Next, we consider differentials on a stacky curve, in a manner analogous to the classical case.

\begin{definition}
\label{D:differentials}
 Let $f\colon \XX \to \YY$ be a morphism of Deligne--Mumford stacks.  We define the \defiindex{relative sheaf of differentials} to be the sheafification of the  presheaf $\Omega^1_{\XX\!/\!\YY}$ on $\XX_{\text{\'et}}$ given by 
\[
(U \to \XX)  \mapsto \Omega^1_{\scrO_{\XX}(U) / f^{-1}\scrO_{\YY}(U)}.
\]
If $Y=\Spec k$, we also write $\Omega^1_{\XX} = \Omega^1_{\XX\!/\Spec k}$.
\end{definition}

\begin{remark}[Alternate definitions of differentials]
\label{R:differential-agreement}
 The natural map $\scrO_{\XX} \xrightarrow{d}\Omega^1_{\XX\!/\!\YY}$, defined in the usual way at the level of presheaves,   is universal for $f^{-1}\scrO_{\YY}$ linear derivations of $\scrO_{\XX}$.  We have  $\Omega^1_{\XX\!/\!\YY} \simeq \calI/\calI^2$, where $\calI$ is the kernel of the homomorphism $\scrO_{\XX} \otimes_{f^{-1}\scrO_{\YY}} \scrO_{\XX} \to \scrO_{\XX}$; see Illusie \cite[II.1.1, remark after II.1.1.2.6]{Illusie:complexeCotangentEtI}.

When $X  \to Y$ is a morphism of schemes, $\Omega^1_{X/Y}$ is the \'etale sheafification of the usual relative sheaf of differentials on $X$ \cite[{\href{http://stacks.math.columbia.edu/tag/04CS}{Tag 04CS}}]{stacks-project}; conversely, its restriction to the Zariski site of $X$ is the usual sheaf of differentials. 
\end{remark}

\begin{lemma}[Usual exact sequence for differentials]
\label{L:usual-exact-sequence}
  Let $\XX \xrightarrow{f} \YY$ and $\YY \xrightarrow{g} \ZZ  $ be separable morphisms of Deligne--Mumford stacks. Then the sequence 
\[
 f^*\Omega^1_{\YY\!/\!\ZZ}\to \Omega^1_{\XX\!/\!\ZZ}\to \Omega^1_{\XX\!/\!\YY} \to 0
\]
is exact, where $\Omega^1_{\XX\!/\!\ZZ}$ is relative to the composition $g \circ f$.  

Moreover, if $f$ is a nonconstant, separable morphism of stacky curves, then the sequence 
\[
0 \to  f^*\Omega^1_{\YY}\to \Omega^1_{\XX} \to \Omega^1_{\XX\!/\!\YY} \to 0
\]
is exact.
\end{lemma}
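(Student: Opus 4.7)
The plan for the first (right) exact sequence is to reduce to the classical first fundamental exact sequence for modules of Kähler differentials over a composition of ring maps $A \to B \to C$, namely
\[
C \otimes_B \Omega^1_{B/A} \to \Omega^1_{C/A} \to \Omega^1_{C/B} \to 0.
\]
By Definition~\ref{D:differentials}, $\Omega^1_{\XX/\YY}$ is the étale sheafification of a presheaf that on affine étale opens is given by the usual module of Kähler differentials. Since sheafification is exact, it suffices to produce exactness at the presheaf level, and this is exactly the classical right exact sequence applied to the ring maps $g^{-1}\scrO_{\ZZ}(U) \to f^{-1}\scrO_{\YY}(U) \to \scrO_{\XX}(U)$ for $U$ ranging over étale opens of $\XX$. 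Concretely, one chooses compatible étale presentations $W \to \ZZ$, $V \to \YY \times_{\ZZ} W$, and $U \to \XX \times_{\YY} V$, reducing to the scheme (indeed affine) case; pullback along $f$ corresponds to the base change $- \otimes_B C$, so the stated sequence is precisely the sheafification of the classical one.

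For the second statement, the only new content is injectivity of $f^{*}\Omega^1_{\YY} \to \Omega^1_{\XX}$. First I would note that on a smooth stacky curve, $\Omega^1$ is a line bundle: this can be checked on any smooth étale chart, where it is the usual sheaf of differentials on a smooth curve. So both $f^{*}\Omega^1_{\YY}$ and $\Omega^1_{\XX}$ are invertible sheaves on $\XX$. Since $\XX$ has a non-gerby (hence scheme-like) generic point $\eta$ with residue field the function field $K(\XX)$, and since $f$ is separable and nonconstant, the induced field extension $K(\YY) \hookrightarrow K(\XX)$ is finite separable, whence
\[
\Omega^1_{\XX/\YY} \otimes_{\scrO_{\XX}} K(\XX) \;\cong\; \Omega^1_{K(\XX)/K(\YY)} \;=\; 0.
\]

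Applying the right exact sequence of the first part at $\eta$, we conclude that $f^{*}\Omega^1_{\YY} \to \Omega^1_{\XX}$ is surjective on generic stalks; since both stalks are one-dimensional $K(\XX)$-vector spaces, it is in fact an isomorphism at $\eta$. The kernel is therefore a coherent subsheaf of the line bundle $f^{*}\Omega^1_{\YY}$ whose stalk at the generic point vanishes. Because an invertible sheaf on the integral stack $\XX$ is torsion-free (as can be verified on a smooth scheme cover), this kernel must be zero, giving the desired injectivity.

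The work is routine; the only mild obstacle is in verifying that the standard scheme-theoretic ingredients — exactness of étale sheafification, local freeness of $\Omega^1$ on a smooth stack, generic stalks of coherent sheaves at a non-gerby generic point, and torsion-freeness of invertible sheaves on an integral Deligne–Mumford stack — all transfer from the scheme setting via a smooth presentation, which they do without incident.
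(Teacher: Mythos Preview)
Your proposal is correct and follows essentially the same approach as the paper: exactness of the first sequence is deduced from exactness at the presheaf level (the classical first fundamental sequence for K\"ahler differentials) together with exactness of sheafification, and injectivity in the second sequence is checked at the generic point using that both sheaves are line bundles and $f$ is nonconstant (separable). Your write-up simply makes explicit several routine verifications (local freeness of $\Omega^1$, torsion-freeness on an integral stack) that the paper leaves implicit by citing the classical curve case.
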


\begin{proof}
The first claim follows since the sequence is exact at the level of presheaves.  The second claim follows as in the case of curves \cite[Proposition IV.2.1]{Hartshorne:AG}---surjectivity follows by taking $\ZZ=\Spec k$, and for injectivity it suffices check that the map  $f^*\Omega^1_{\YY}\to \Omega^1_{\XX}$ of line bundles is injective at the generic point of $\XX$, which follows since $f$ is nonconstant. 
\end{proof}

\begin{definition} \label{def:candiv}
A \defiindex{canonical divisor} $K$ of a stacky curve $\XX$ is a Weil divisor $K$ such that $\Omega^1_{\XX} \simeq \scrO_{\XX}(K)$. 
\end{definition}

It follows from Lemma~\ref{L:usual-exact-sequence} that $\Omega^1_{\XX}$ is a line bundle if  $\XX$ is a stacky curve.
By Lemma~\ref{L:divisor-vs-line-bundle}, it thus follows that a canonical divisor always exists and any two are linearly equivalent. 

\begin{remark}
Working with the dualizing sheaf instead of the sheaf of differentials above, we can work more generally with curves with controlled singularities (e.g.~ordinary double points)
\end{remark}

We now turn to Euler characteristics.  The formula for the Euler  characteristic of a complex orbifold curve appears in many places (see e.g.~Farb--Margalit \cite[before Proposition 7.8]{FarbM:mappingClassPrimer}).  We need a finer variant for tame stacky curves: the following formula follows from Lemma~\ref{L:usual-exact-sequence} as in the case of schemes \cite[Proposition IV.2.3]{Hartshorne:AG}.

\begin{proposition} \label{P:canKR}
 Let $\XX$ be a tame stacky curve over $k$ with coarse space $X$.  Let $K_{\XX}$ be a canonical divisor on $\XX$ and $K_X$ a canonical divisor on $X$.  Then there is a linear equivalence
$$K_{\XX} \sim K_X + R =  K_X + \sum_{x} \left(\deg G_x - 1\right)x$$ 
where the sum is taken over closed substacks of $\XX$.
\end{proposition}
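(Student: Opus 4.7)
The plan is to apply Lemma~\ref{L:usual-exact-sequence} to the coarse space morphism $\pi\colon \XX \to X$ and then compute the relative differentials $\Omega^1_{\XX\!/X}$ locally around each stacky point. Since $\XX$ has non-gerby generic point, $\pi$ is nonconstant (indeed generically an isomorphism), and since $\XX$ is tame, $\pi$ is separable. Hence the lemma yields a short exact sequence
\[
0 \to \pi^*\Omega^1_X \to \Omega^1_{\XX} \to \Omega^1_{\XX\!/X} \to 0,
\]
and the generic fiber of the middle map is an isomorphism, so $\Omega^1_{\XX\!/X}$ is a torsion sheaf supported on the (finitely many) stacky points of $\XX$.

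The main work is the local computation at a stacky point $x$ of order $n = \#G_x$. By Lemma~\ref{L:stacky-curves-characterization-cyclicity}, the stabilizer $G_x$ is cyclic and, in an \'etale neighborhood of $x$, we may take $\XX = [\Spec k[t]/\mu_n]$ with $\mu_n$ acting by $t \mapsto \zeta t$; the coarse space is then $\Spec k[s]$ with $s = t^n$. The map $\pi^*\Omega^1_X \to \Omega^1_{\XX}$ sends $ds \mapsto d(t^n) = nt^{n-1}\,dt$, and since $n$ is invertible in $k$ by tameness, the cokernel is $k[t]/(t^{n-1})$, i.e., the structure sheaf of the divisor $(n-1)(t=0)$ on $\XX$. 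The irreducible divisor $(t = 0)$ on $\XX$ is the residue gerbe $\calG_x$ (the $k$-point $x$ as a closed substack), so locally $\Omega^1_{\XX\!/X}$ is the structure sheaf of $(n-1)\,x = (\#G_x - 1)\,x$. Away from stacky points $\pi$ is \'etale and $\Omega^1_{\XX\!/X}$ vanishes; combining over all stacky points shows $\Omega^1_{\XX\!/X} \cong \scrO_{R}$ as $\scrO_{\XX}$-modules, where $R = \sum_{x}(\#G_x - 1)\,x$.

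To finish, in the short exact sequence above, the quotient of the two line bundles $\pi^*\Omega^1_X \hookrightarrow \Omega^1_{\XX}$ is the structure sheaf of an effective Cartier divisor, which by the local computation equals $R$; this gives $\Omega^1_{\XX} \cong \pi^*\Omega^1_X \otimes \scrO_{\XX}(R)$. Translating via Lemma~\ref{L:divisor-vs-line-bundle} into linear equivalence of Weil divisors yields $K_{\XX} \sim \pi^*K_X + R$, which is the claimed formula (with $K_X$ chosen, as we may, to have support disjoint from the stacky locus, so that $\pi^*K_X$ agrees with $K_X$ read as a divisor on $\XX$).

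The main obstacle is the local computation of step two: we must pass from the description of $\Omega^1_{\XX}$ via Definition~\ref{D:differentials} (as the \'etale sheafification of Kähler differentials) to an explicit calculation on the $\mu_n$-equivariant chart $\Spec k[t]$, and correctly identify the resulting torsion sheaf with the structure sheaf of a Cartier divisor on $\XX$ whose associated Weil divisor has the fractional-looking (but integer-coefficient) expression $(\#G_x - 1)\,\calG_x$ in the sense of Remark~\ref{R:fractional-order-zeroes}. Tameness is essential here: without $n$ invertible in $k$, the factor $n$ in $d(t^n) = nt^{n-1}\,dt$ could vanish and the cokernel would have a larger length, producing a different ramification divisor.
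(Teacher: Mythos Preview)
Your proof is correct and follows essentially the same approach as the paper: both use the exact sequence of Lemma~\ref{L:usual-exact-sequence} for the coarse space morphism, observe that $\Omega^1_{\XX/X}$ is supported at the stacky points, and compute its length locally via the model $[U/\mu_n]$. The only cosmetic difference is that the paper phrases the local step as an \'etale pullback along $U \to [U/\mu_n]$ (so that $f^*\Omega^1_{\XX/X} = \Omega^1_{U/X}$, reducing to the classical Hurwitz computation), whereas you compute the cokernel of $ds \mapsto nt^{n-1}\,dt$ directly---these are the same calculation.
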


\begin{proof}
Since $\XX \to X$ is an isomorphism over the nonstacky points, the sheaf $\Omega^1_{\XX\!/\!X}$ is a sum of skyscraper sheaves supported at the stacky points of $\XX$. As in the proof of \cite[Proposition IV.2.3]{Hartshorne:AG}, is suffices to compute the length of the stalk $\Omega^1_{\XX\!/\!X,P}$ at a stacky point $P$.  We may compute the length of the stalk locally; by Lemma~\ref{L:stacky-curves-characterization-cyclicity}, we may suppose that $\XX \simeq [U/\mu_r]$ and that $\XX$ has a single stacky point. The cover $f\colon U \to [U/\mu_r]$ is \'etale since $\scrX$ is tame, so by Lemma~\ref{L:usual-exact-sequence}, $f^* \Omega^1_{\XX\!/X} =  \Omega^1_{U/X}$; the stalk at $P$ thus has length $r-1$ by the classical case, proving the proposition.
\end{proof}

\begin{remark}[Inseparable stacky points]

The canonical sheaf $\Omega^1_{\XX}$ does not commute with inseparable base change if the stacky points are not separably rooted (c.f~Definition~\ref{D:separably-rooted}). (This is not a new phenomenon, even for classical curves; see e.g.~\cite{tate:genusChange}.) The statements our theorems are stable under separable base change, but the proofs often require passage to an algebraically closed field, and for this reason we usually suppose that $\XX$ is separably rooted. (It also clearly suffices to suppose that $k$ is perfect, which is natural to suppose for the applications to modular forms.)

One can see this numerically via Proposition~\ref{P:canKR}  and computation of the degree of a canonical divisor as follows. 
Let $k$ be a non-perfect field of characteristic $p$, $X$ a curve over $k$, and $x \in X$ a closed point with purely inseparable residue field $k(x)$ of degree $p$ over $k$. Let $n$ be prime to the characteristic, $\XX$ be the $n$th root of $X$ at $x$, and $\XX'$ be the base change of $\XX$ to $\overline{k}$; it follows from the universal property of root stacks that $\XX'$ is isomorphic to $X_{\overline{k}}$ rooted at the point $x'$ above $x$. Then $\deg \calG_x = p/n$, but $\deg \calG_{x'} = 1/n$, so Proposition~\ref{P:canKR} gives that $\deg K_{\XX} - \deg K_{\XX'} = (p-1)/n$.

\end{remark}

\begin{definition} \label{def:eulerchar}
The \defiindex{Euler characteristic} of $\scrX$ is $\chi(\scrX)=-\deg K_\scrX$ and the \defiindex{genus} $g(\scrX)$ of $\scrX$ is defined by $\chi(\scrX)=2-2g(\scrX)$. We say that $\XX$ is \defiindex{hyperbolic} if $\chi(\scrX) < 0$.
\end{definition}

\begin{remark}
For stacky curves, the notion of cohomological Euler characteristic differs from this one: for example, the cohomological Euler characteristic is an integer.  The reason is that sections of line bundles come from sections of the push forward of the line bundle to the curve.
\end{remark}

For a stacky curve, the genus is no longer necessarily a nonnegative integer. Indeed, the coarse space map $\pi\colon \XX \to X$ has degree 1 and is ramified at each stacky point $x$ with ramification degree $\deg G_x$; since the degree of $x$ is $\deg |x|/\deg G_x$, by Proposition~\ref{P:canKR} we have
\[ 2g(\scrX)-2 = 2g(X)-2 + \sum_x \left(1-\frac{1}{\deg G_x}\right)\deg |x| \]
so
\begin{equation} \label{eqn:genus}
g(\scrX) = g(X) + \frac{1}{2}\sum_x \left(1-\frac{1}{\deg G_x}\right)\deg |x|
\end{equation}
where $g(X)$ is the (usual) genus of the coarse space.  In particular, $g(\XX)$ is a rational number, but need not be an integer (nor positive).

\begin{remark} \label{rmk:maybenice}
The observation that the canonical divisor of the stack records information about the stacky points was the starting point of this project.  Formulas like (\ref{eqn:genus}) already show up in formulas for the dimension of spaces of modular forms, and it is our goal to show that these can be interpreted in a uniform way in the language of stacks.  In particular, the genus of $\scrX$ is not equal to $\dim_k H^0(\scrX,K)$, and the difference between these two is one of the things makes the problem interesting.
\end{remark}

\begin{remark} 
Similarly, Riemann--Roch does not hold in the usual sense for stacky curves, for an obvious reason: the degree of a divisor is generally not an integer, while the other terms arising in Riemann--Roch are integers. The correct statement punts to the coarse space: for a divisor $D$ on a stacky curve $\scrX$,
\[
\begin{aligned}
&\dim_k H^0(\scrX,D) - \dim_k H^0(\scrX,K-D)\\
&\qquad = \dim_k H^0(X,\lfloor D \rfloor) - \dim_k H^0(X,\lfloor K-D \rfloor)  \\
&\qquad =  \deg \lfloor D \rfloor + 1 - g(X).
\end{aligned}
\]
\end{remark}

%----------------------------------------------------------------------------
\section{Canonical ring of a (log) stacky curve}
%----------------------------------------------------------------------------

We have finally arrived at the definition of the canonical ring of a stacky curve.

\begin{definition}[Canonical ring]
  Let $D$ be a Weil divisor on $\XX$.  We define the \defiindex{homogeneous coordinate ring} $R_D$ relative to $D$ to be the ring 
\[
R_D = \bigoplus_{d = 0}^{\infty} H^0\left(\XX, dD\right).
\]
If $D=K_{\XX}$ is a canonical divisor, then $R(\scrX) =R_D$ is the \defiindex{canonical ring} of $\scrX$.  
\end{definition}

\begin{definition}[Log structure]
A Weil divisor $\Delta$ on $\XX$ is a \defiindex{log divisor} if $\Delta=\sum_i P_i$ is an effective divisor on $\scrX$ given as the sum of distinct nonstacky points of $\scrX$.  A \defiindex{log stacky curve} is a pair $(\XX,\Delta)$ where $\XX$ is a stacky curve and $\Delta$ is a log divisor on $\XX$.  

If $D=K_{\XX}+\Delta$, where $\Delta$ is a log divisor, we say that $R(\XX,\Delta) = R_D$ is the \defiindex{canonical ring} of the log stacky curve $(\scrX,\Delta)$.  
\end{definition}

Sometimes, to emphasize we will call the canonical ring of a log curve a \defi{log canonical ring}.

\begin{example}
Every stacky curve can be considered as a log stacky curve, taking $\Delta=0$.  
\end{example}

\begin{remark} \label{rmk:logstructtoohard}
Allowing a log structure to be comprised of stacky points would provide greater generality but the complexity of both the statements and the proofs increases greatly: see for example O'Dorney \cite{dorney:canonical}.
\end{remark}

In what follows, we take $D$ to be a (log) canonical divisor with $\deg D >0$, since otherwise the homogeneous coordinate ring $R_D$ is small (as in the case $g \leq 1$ in chapter~\ref{ch:classical}).  If $\scrX=X$ is a nonstacky curve, then $K_\scrX=K_X$ and the notion of canonical ring agrees with the classical terminology.

\begin{remark} \label{lem:canlinfunc}
An isomorphism $\pi\colon \XX' \to \XX$ of stacky curves induces an isomorphism $R_D \to R_{\pi^*(D)}$, given by $f \mapsto \pi\circ f$. Similarly, a linear equivalence $D \sim D'$, witnessed by $g$ with $\divv g  = D' - D$, induces an isomorphism $R_D \to R_{D'}$, given on homogenous elements by $f \mapsto g^{\deg f}f$. In particular, the generic initial ideal of a canonical ring is independent of both of these.
\end{remark}

Our main theorem is an explicit bound on the degree of generation and relations of the canonical ring $R(\XX,\Delta)$ of a log stacky curve in terms of the signature of $(\XX,\Delta)$.  

\begin{definition}
Let $(\XX,\Delta)$ be a tame log stacky curve.  The \defiindex{signature} of $(\XX,\Delta)$ is the tuple $(g;e_1,\ldots,e_r;\delta)$ where $g$ is the genus of the coarse space $X$, the integers $e_1,\ldots,e_r \geq 2$ are the orders of the stabilizer groups of the geometric points of $\XX$ with non-trivial stabilizers ordered such that $e_i \leq e_{i+1}$ for all $i$, and $\delta=\deg \Delta$.
\end{definition}

We will almost always work with ordered signatures $\sigma=(g;e_1,\dots,e_r;\delta)$ with $e_1 \leq \ldots \leq e_r$, as this is without loss of generality. In certain circumstances, it will be convenient to relax the condition that $e_i \leq e_{i+1}$ (for example the inductive theorems of chapter~\ref{ch:inductive-root}), and in such a case we refer to an \defiindex{unordered signature}.

\begin{definition}
Let $(\XX,\Delta)$ be a log stacky curve with signature $\sigma=(g;e_1,\dots,e_r;\delta)$. We define the \defiindex{Euler characteristic} of $(\XX,\Delta)$ to be 
\[ \chi(\XX,\Delta)=-\deg \left(K_{\XX} + \Delta\right) =2-2g- \delta - \sum_{i=1}^r \left(1-\frac{1}{e_i}\right) \]
and we say that $(\XX,\Delta)$ is \defiindex{hyperbolic} if $\chi(\XX,\Delta) < 0$.
\end{definition}

\begin{remark}
  \label{R:rigidification-example}
The moduli space of stable elliptic curves $\overline{\mathcal{M}}_{1,1}$ has a generic $\mu_2$ stabilizer, and so it is not a stacky curve but (as noted earlier) a gerbe over a stacky curve. 

In general, given a geometrically integral Deligne--Mumford stack $\XX$ of relative dimension $1$ over a base scheme $S$ whose generic point has a stabilizer of $\mu_n$, it follows from work of Abramovich--Olsson--Vistoli \cite[Appendix A]{AbramovichOV:Tame} that there exists a stack $\XX \thickslash\, \mu_n$ (called the \defiindex{rigidification} of $\XX$) and a factorization $\XX \xrightarrow{\pi} \XX \thickslash \, \mu_n \to S$ such that $\pi$ is a $\mu_n$-gerbe and the stabilizer of a point of $\XX \thickslash \, \mu_n$ is the quotient by $\mu_n$ of the stabilizer of the corresponding point of $\XX$.  Finally, since $\pi$ is a gerbe, and in particular \'etale, this does not affect the sections of the relative sheaf of differentials or the canonical ring.
\end{remark}

\begin{example} \label{exm:trivcano}
If $\chi(\XX,\Delta)>0$, then $\deg K_{\XX}<0$ so $R(\XX,\Delta)=k$ (the log canonical ring is trivial).  If $\chi(\XX,\Delta)=0$, then either the signature is $(1;-;0)$ and $R(\XX,\Delta)=k$ (see Example~\ref{exm:lowgenus1}), or $g=0$ and by elementary arguments we have $R(\XX,\Delta)\simeq k[x]$ the polynomial ring in a single element: for further details, see Lemma~\ref{lem:eieq0g}.
\end{example}

\begin{lemma} \label{lem:chischemisom}
If $\chi(\XX,\Delta)<0$, then $X \simeq \Proj R(\XX,\Delta)$ as schemes.
\end{lemma}

\begin{proof}
By hypothesis we have $\deg K_{\XX} < 0$, so there is a multiple $m$ of $\lcm(1,e_1,\dots,e_r)$ so that $mK_{\XX}=D$ is a nonstacky, very ample divisor on $X$.  Then the $m$th truncation  
\[ R(\XX,\Delta)^{(m)} = \bigoplus_{d = 0}^{\infty} H^0(\XX,dD)=R_D \]
of $R(\XX,\Delta)$ has 
\[ \Proj R(\XX,\Delta)^{(m)} \simeq \Proj R(\XX,\Delta) \] 
(the degree $0$ parts are equal, see Eisenbud \cite[Exercise 9.5]{Eisenbud:commutativeAlgebra}), $\Proj R_D$ is an ordinary projective space, and finally $X \simeq \Proj R_D$ as $D$ is very ample.
\end{proof}

\begin{remark}
It would be desirable to extend Lemma~\ref{lem:chischemisom} to an isomorphism on the level of stacks by taking a more refined version of the canonical ring and taking advantage of the stacky structure of the ambient weighted projective space.
\end{remark}

\begin{example}
\label{ex:X0N-Signature}
  
The moduli stack $X_0(N)_{k}$ (with $\gcd(\Char k, N) = 1$) is \emph{not} a stacky curve---it has a uniform $\mu_2$ stabilizer, as is clear either from either the moduli interpretation (noting that $-1$ is an automorphism of every point) or the construction as the quotient $[X(N)_{k}/\Gamma_0(N)]$ (noting that $-I \in \Gamma_0(N)$ acts trivially) as in Deligne--Rapoport \cite{DeligneRapoport}.  
Its rigidification $X_0(N)_{\C} \thickslash \mu_2$ is a stacky curve with signature 
\[
(g;\underbrace{2,\ldots,2}_{v_2},\underbrace{3,\ldots,3}_{v_3};v_{\infty})
\] 
where formulas for $g, \nu_2,\nu_3,\nu_{\infty}$ are classical \cite[Proposition 1.43]{Shimura:automorphicFunctions} (the same formulas hold for $X_0(N)_{\F_p}$ with $p \nmid 6N$, but with a moduli theoretic, rather than analytic, proof) and $X_0(N)_{\C} \thickslash \mu_2$ is hyperbolic for all values of $N$.
For instance, for $N = 2,3,5,7,13$, the signatures are 
\[ (0; 2; 2), (0; 3; 2), (0; 2, 2; 2), (0; 3,3; 2), (0; 2, 2, 3, 3; 2). \]
\end{example}

%----------------------------------------------------------------------------
\section[Examples]{Examples of canonical rings of log stacky curves} \label{sec:cangen1ex}
%----------------------------------------------------------------------------
 To conclude this chapter, we exhibit several examples of the structure of the canonical ring of a stacky curve in genus $1$.  These are useful to illustrate the arc of the arguments we will make later as well as important base cases for the purposes of induction.

 \begin{example}[Signature $(1;2;0)$] 
\label{ex:1-2-0}
Let $(\XX,\Delta)$ be a log stacky curve over a separably closed field $k$ with signature $(1;2;0)$ and stacky point $Q$.  Since $g = 1$, the canonical divisor $K_X$ of the coarse space is trivial, and $K_{\XX} \in \Div \XX$ is thus the divisor $\frac{1}{2}Q$ with $\deg \frac{1}{2}Q = \frac{1}{2}$.  By Riemann--Roch we have
\[  \dim H^0(X,\lfloor dQ \rfloor) = \max \left\{\lfloor d/2 \rfloor, 1 \right\} = 1, 1, 1, 1, 2, 2, 3,\ldots; \]
so any minimal set of generators for the canonical ring must include the constant function $u$ in degree 1, a function $x$ in degree 4 with a double pole at $Q$, and an element $y$ in degree 6 with a triple pole at $Q$. 

We claim that in fact $u,x,y$ generate the canonical ring. The following table exhibits generators for degrees up to 12.
\bgroup
\def\arraystretch{1.2}
\begin{center}
\begin{tabular}{|c||c|c|c|}
\hline
$d$  &  $\deg dK_{\XX}$  &  $\dim H^0(\XX,dK_{\XX})$ & $H^0(\XX,dK_{\XX})$ \\
\hline
\hline
   0  &                0  &                         1  &  $1$                                   \\
   1  &                0  &                         1  &  $u$                                   \\
   2  &                1  &                         1  &  $u^2$                                 \\
   3  &                1  &                         1  &  $u^3$                                 \\
   4  &                2  &                         2  &  $u^4,x$                               \\
   5  &                2  &                         2  &  $u^5,ux$                              \\
   6  &                3  &                         3  &  $u^6,u^2x, y$                         \\
\hline
   7  &                3  &                         3  &  $u^7,u^3x, uy$                        \\
   8  &                4  &                         4  &  $u^8,u^4x, u^2y,x^2$                  \\
   9  &                4  &                         4  &  $u^9,u^5x, u^3y,ux^2$                 \\
  10  &                5  &                         5  &  $u^{10},u^6x, u^4y,u^2x^2,xy$         \\
  11  &                5  &                         5  &  $u^{11},u^7x, u^5y,u^3x^2,uxy$        \\
  12  &                6  &                         6  &  $u^{12},u^8x, u^6y,u^4x^2,u^2xy,x^3$  \\
\hline
\end{tabular}
\end{center}
\egroup
In each degree, the given monomials have poles at $Q$ of distinct order and are thus linearly independent, and span by a dimension count. By GMNT (Theorem~\ref{T:surjectivity-master}), the multiplication map 
\[
  H^0(\XX,6K_{\XX}) \otimes  H^0(\XX, (d-6)K_{\XX})   \to H^0(\XX,dK_{\XX})  
\]
is surjective for $d > 12$ (noting that $\deg nK_{\XX} \geq 3$ for $n \geq 6$), so $u,x,$ and $y$ indeed generate.

We equip $k[y,x,u]$ with grevlex and consider the ideal $I$ of relations. Since $y^2$ is an element of $H^0(\XX,12K_{\XX})$, there is a relation $f \in I$ expressing $y^2$ in terms of the generators above with leading term $y^2$.  (This is a weighted homogeneous version of a classical Weierstrass equation.)  We claim that the ideal $I$ of relations is generated by this single relation. Let $g \in I$ be a homogenous relation; then modulo the relation $f$, we may suppose that $g$ contains only terms of degree $\leq 1$ in $y$, so that 
\[
g(y,x,u) = g_0(x,u) + yg_1(x,u).
\]
But then each monomial of $g$ is of the form $y^ax^bu^c$ (where $a = 0$ or $1$), and for distinct $a,b,c$  these monomials (of the same degree) have distinct poles at $Q$ and are thus linearly independent. The relation $g$ is thus zero mod $f$, proving the claim. 

Since $I$ is principal, $f$ is a Gr\"obner basis for $I$. The above discussion holds for \emph{any} choices of $u,x,$ and $y$ with prescribed poles at $Q$, so in fact the generic initial ideal is
\[\langle y^2  \rangle \subset k[y,x,u].\]

 \end{example}

 \begin{example}[Signature $(1;3;0)$]
\label{ex:1-3-0} 
With the same setup as Example~\ref{ex:1-2-0}, we now have $K_{\XX} = \frac{2}{3}Q$. 
Since
\[  \dim H^0(X,\lfloor dQ \rfloor) = \max \{\lfloor 2d/3 \rfloor, 1\} =  1, 1, 1, 2, 2, 3, 4,4,5,6,6,7, \ldots, \]
the canonical ring is minimally generated by the constant function $u$ in degree 1, an element $x$ in degree 3, and an element $y$ in degree 5, with a single relation in degree 10 with leading term $y^2$, giving generic initial ideal
\[\langle y^2  \rangle \subset k[y,x,u].\]
A full justification can be obtained in a similar manner as Example~\ref{ex:1-2-0}. 
 \end{example}

 \begin{example}[Signature $(1;4;0)$]
\label{ex:1-4-0} 
With the same setup as Example~\ref{ex:1-2-0}, we now have $K_{\XX} = \frac{3}{4}Q$. 
Since
\[  \dim H^0(X,\lfloor dQ \rfloor) = \max \{\lfloor 3d/4 \rfloor, 1\} =  1, 1, 1, 2, 3, 3, 4,5,6,6,7, \ldots, \]
the canonical ring is minimally generated by the constant function $u$ in degree 1, an element $x$ in degree 3, and an element $y$ in degree 4, with a single relation $Auy^2 + Bx^3 + \ldots$ in degree 9 with leading term $x^3$ (under grevlex), giving generic initial ideal
\[\langle x^3  \rangle \subset k[y,x,u].\]
A full justification can be obtained in a similar manner as Example~\ref{ex:1-2-0}. 
 \end{example}

 \begin{example}[Signature $(1;e;0)$]
\label{ex:genus-1-base-cases}

Consider now the case of signature $(1;e;0)$ with $e \geq 5$ and stacky point $Q$, so that $K_{\XX} = (1-1/e)Q$. For $d = 1,3,\ldots,e$, let $x_d$ be any function of degree $d$ with a pole of order $d-1$ at $Q$. Since
\begin{align*} 
\dim H^0(X,\lfloor dQ \rfloor) &= \max \{\lfloor (e-1)d/e \rfloor,1\} \\
& =  1, 1, 1,  2, 3,  \ldots e-1,e-1,e,e + 1, \ldots,
\end{align*}
these elements are necessary to generate the canonical ring.

We claim that these generate the canonical ring. A short proof in the spirit of the previous examples is to first check generation directly for degree up to $e+3$ and then to note that by GMNT, the multiplication map
\[
  H^0(\XX,(d-e)K_{\XX}) \otimes  H^0(\XX, eK_{\XX})   \to H^0(\XX,dK_{\XX})  
\]
is surjective for $d > e + 3$ (since $\deg nK_{\XX} \geq 3$ for $n \geq 4$). 

We instead prove a stronger claim, as follows. Let $I \subset k[x_e,\ldots,x_3,x_1]=k[x]$ (equipped with grevlex) be the ideal of relations and let $R = k[x] / I$ be the canonical ring.
We claim that $R$ is spanned by all monomials of the form 
\[ x_e^ax_jx_1^b, x_e^ax_{e-1}x_3x_1^b, \quad \text{with $a,b \in \Z_{\geq 0}, 1 < j < e$}. \]
We proceed as follows. The codimension of $x_1H^0(\XX,dK_{\XX}) \subset H^0(\XX,(d+1)K_{\XX}))$ is $1$ unless $d$ is divisible by $e-1$. In the first case, comparing poles at $Q$ gives that $H^0(\XX,(d+1)K_{\XX}))$ is spanned over 
$x_1H^0(\XX,dK_{\XX})$ by either $x_e^ax_j$ or $x_e^ax_{e-1}x_3$ (where $a$ and $j$ are the unique integers such that this monomial is of the correct degree), and the claim follows by induction.

To access the relations, we begin by noting that a monomial is \emph{not} in this spanning set if and only if it is divisible by some $x_ix_j \neq x_{e-2}x_3$ with $1 < i \leq j < e$. Since such $x_ix_j$ are also elements of $R$, for $(i,j) \neq (3,e-2)$ there exist relations $f_{ij} = x_ix_j + \text{other terms}$. The initial term of $f_{ij}$ is  $x_ix_j$, since every other spanning monomial of degree $i+j$ is either a minimal generator $x_k$ (and, by minimality, absent from any relation), or of the form $x_kx_1$ (and hence not the leading term under grevlex).  The initial ideal is thus 
\[ \init_{\prec}(I) = \langle x_i x_j : 3 \leq i \leq j \leq e-1, (i,j) \neq (3,e-2) \rangle.\]
Since this argument holds for arbitrary choices of $x_d$ (subject to maximality of $-\ord_Q$) and generic $x_d$'s maximize $-\ord_Q$, it follows that $\gin_{\prec}(I) = \init_{\prec}(I)$.

We have 
\[ P(R_{\geq 1};t) = t + (t^3+\dots+t^e) = t+\sum_{i=2}^e t^i \]
and (when $e \geq 5$, at least) we have
\[ P(I;t) = -t^{e-1}+\sum_{3 \leq i \leq j \leq e-1} t^{i+j}. \]
By induction, one can prove that
\begin{equation} \label{eqn:tuvpol}
\sum_{0 \leq i \leq j \leq m} t^{i+j} = \sum_{0 \leq i \leq 2m} \min\left(\lfloor i/2 \rfloor+1, m+1-\lceil i/2 \rceil\right)t^i. 
\end{equation}
Therefore
\begin{align*}
P(I;t) &= -t^{e-1}+t^6\sum_{0 \leq i \leq j \leq e-4} t^{i+j} \\
&= -t^{e-1} + \sum_{0 \leq i \leq 2(e-4)} \left(\min\left(\lfloor i/2 \rfloor, (e-4)-\lceil i/2 \rceil\right) + 1\right)t^{i+6} \\
&= -t^{e-1} + \sum_{6 \leq i \leq 2(e-1)} \min\left(\lfloor i/2 \rfloor-2, e-\lceil i/2 \rceil\right)t^i. 
\end{align*}
\end{example}

We record a particular consequence of the preceding example which we will use later as a basis for induction.

\begin{corollary} \label{cor:X1e5}
Let $\XX$ be a stacky curve with signature $(1;e;0)$ with $e \geq 2$.  Then the canonical ring $R(\XX)$ is generated in degrees at most $3e$ with relations of degree at most $6e$.  If $e \geq 5$, then $R(\XX)$ is generated in degrees at most $e$ with relations of degree at most $2e$.
\end{corollary}

\begin{proof}
Immediate from Example \ref{ex:genus-1-base-cases}.
\end{proof}

 \begin{example}[Signature $(1;2,2;0)$]
\label{ex:1220}
Now consider a stacky curve with signature $(1;2,2;0)$.  Then the canonical divisor is now of the form $D = \frac{1}{2}Q_1 + \frac{1}{2}Q_2$ for stacky points $Q_1,Q_2$.
Since
\[  \dim H^0(X,\lfloor dD \rfloor) = \max \left\{2\lfloor d/2 \rfloor, 1 \right\} = 1, 1, 2, 2, 4, 4, 6, 6, 8, \ldots, \]
the canonical ring $R$ is minimally generated by elements $u,x,y$ in degrees $1,2,4$. Consider $k[y,x,u]$ equipped with grevlex. The subring $R^{(2)}$ of even degree elements is the log canonical ring of the divisor $Q_1 + Q_2$. Applying the $\deg D = 2$ case of section~\ref{ss:genus-1-log} gives that $R^{(2)}$ is generated by $x$ and $y$ with a single relation in degree 8 with leading term $y^2$, and arguing as in the above examples gives that this relation also generates the ideal of relations of $R$. This embeds the curve into $\P(4,2,1)$; the above discussion holds for any choices of $u,x,y$ with prescribed poles, and so holds for generic choices; the generic initial ideal is thus
\[ \gin_{\prec}(I)=\langle y^2  \rangle \subset k[y,x,u].\]
\end{example}

 \begin{example}[Signature $(1;2,3;0)$]
\label{ex:1230}
Now consider a stacky curve with signature $(1;2,3;0)$.  The canonical divisor is now of the form $D = \frac{1}{2}Q_1 + \frac{2}{3}Q_2$ for stacky points $Q_1,Q_2$.
Since
\[  \dim H^0(X,\lfloor dD \rfloor)  = 1, 1, 2, 3, 4, 5, 7, 7, 9, \ldots \]
the canonical ring $R$ is minimally generated by elements $u,x,z$ in degrees $1,2,3$. 

We can bootstrap from Example \ref{ex:1220} as follows. Let $D' = \frac{1}{2}Q_1 + \frac{1}{2}Q_1$. Then, since $D' \leq D$, $R_{D'}$ is a subring of $R_D$, and by Example \ref{ex:1220}, $R_{D'}$ is generated by elements $y,x,u$ of degrees $4,2,1$, and admits a single relation with leading term $y^2$.  Arguing as in the previous examples (via GMNT), $R_D$ is generated over $R_{D'}$ by a single element $z$ of degree $3$. Moreover, we can choose $z$ to satisfy $uz = y$. We conclude that $R_D$ is generated by elements $z,x,u$ of degrees $3,2,1$, and admits a single relation beginning with terms $(zu)^2 - x^4$ after rescaling $x$, and with leading term (under grevlex) $x^4$.  
\end{example}

\begin{example}[Signature $(1;2,2,2;0)$] 
\label{ex:12220}
For signature $(1;2,2,2;0)$, the canonical divisor is now of the form $D = \frac{1}{2}Q_1 + \frac{1}{2}Q_2 + \frac{1}{2}Q_3$. 
Since
\[  \dim H^0(X,\lfloor dD\rfloor) = \max \left\{3\lfloor d/2 \rfloor, 1 \right\} = 1, 1, 3, 3, 6, 6, 9  \ldots, \]
$R$ is minimally generated by the constant function $u$ in degree 1 and functions $x_1,x_2$ in degree 2. 
Applying the $\deg D = 3$ case of section~\ref{ss:genus-1-log} to the subring $R^{(2)}$ gives a single relation in degree 6 with leading term $x_1^3$, and the generic initial ideal (with respect to grevlex) is thus
\[ \gin_{\prec}(I)=\langle x_1^3   \rangle \subset k[x_1,x_2,u] \] 
in analogy with Example~\ref{ex:1220}.
 \end{example}
 
These example signatures are listed in Table (III) in the Appendix and will partly form the basis of a later inductive argument.

%****************************************************************************
\chapter{Rings of modular forms}
\label{ch:comparison}
%****************************************************************************

In this chapter, we define the stacky curve $\XX$ associated to the orbifold quotient of the upper half-plane by a Fuchsian group $\Gamma$ and relate the ring of modular forms on $\Gamma$ to the canonical ring of $\XX$.  See work of Behrend and Noohi \cite{BehrendN:uniformization} for further discussion of the analytic theory (and in particular uniformization) of complex orbifold curves.

%----------------------------------------------------------------------------
\section{Orbifolds and stacky Riemann existence} 
%----------------------------------------------------------------------------

In this section, we briefly define orbifolds.  References on orbifolds include work of Scott \cite[\S\S 1--2]{Scott:3manifolds}, Adem--Leida--Ruan \cite[Chapter 1]{MR2359514}, Gordon \cite{MR2985309}, and the lucky last chapters in the books by Thurston \cite[Chapter 13]{Thurston:book} and Ratcliffe \cite[Chapter 13]{Ratcliffe:hypman}.  For the categorical perspective of orbifolds as groupoids, see Moerdijk \cite{MR1950948} and Moerdijk--Pronk \cite{MR1466622}.

\begin{definition}
A \defiindex{complex $1$-orbifold} (or \defiindex{complex orbifold curve}) is a smooth proper connected Deligne--Mumford complex analytic stack of dimension $1$ that contains a dense open subvariety.  
\end{definition}

For a hands-on definition that gives an equivalent definition of a complex $1$-orbifold in terms of orbifold charts---namely as a compact Hausdorff space equipped with complex $1$-orbifold atlas up to equivalence, locally modelled by the quotient of $\C$ by a finite group acting holomorphically---see Adem--Leida--Ruan \cite[Definition 1.1, \S 1.4]{MR2359514}.  

A finite group acting holomorphically on $\C$ is necessarily cyclic, so the stabilizer group of any point of a complex $1$-orbifold is cyclic.

\begin{example}
A complex $1$-orbifold is a Riemann surface if and only if the dense open subvariety is the entire orbifold if and only if every point has trivial stabilizer \cite[\S 1.3]{MR2359514}.
\end{example}

\begin{remark} \label{rmk:orbifoldonlyBN}
Some authors refer to our notion of complex orbifold curves as being \emph{reduced}, as we do not allow a generic stabilizer.  (If the generic point has nontrivial stabilizer, then it can instead be considered as a gerbe over an associated complex orbifold curve.)  
\end{remark}

\begin{example}
Let $\Gamma \leq \PSL_2(\R)$ be a Fuchsian group with finite coarea, i.e.~the quotient $X=\Gamma \backslash \calH^{(*)}$ has finite area.  Then $X$ has the structure of a complex orbifold curve and the normalized area of $X$ is equal to the orbifold Euler characteristic (Definition~\ref{def:eulerchar}): if $X$ has signature $(g;e_1,\dots,e_r;0)$ then
\[ A(X)=\deg K_{\XX}=2g-2+\sum_{i=1}^r \left(1-\frac{1}{e_i}\right). \]
\end{example}

\begin{remark}[Orbifolds are natural]
Let $X$ be a complex orbifold curve and let $Z$ be the finite set of points with nontrivial group action.  Then $X \smallsetminus Z$ is a Riemann surface, and there is a unique way to complete $X \smallsetminus Z$ into a (compact, connected) Riemann surface $X_M \supseteq X \smallsetminus Z$.  In this monograph, we specifically do not want to perform this procedure on $X$, as it changes the notion of holomorphic function in the neighborhood of a point with nontrivial group action and thereby will affect the canonical ring, as explained in the introduction.  Instead, we allow $X$ to retain its natural structure as an orbifold.
\end{remark}

The original statement of Riemann existence, in its modern formulation, implies an equivalence of categories between nonsingular projective (algebraic) curves over $\C$ and compact, connected Riemann surfaces; a morphism of curves corresponds to a holomorphic map between compact Riemann surfaces; see Harbater \cite{harbater:riemanns-existence-theorem} for history and references to modern proofs.  The functor from curves to Riemann surfaces is \emph{complex analytification} $X \mapsto X^{\textup{an}}$, and this functor is fully faithful and essentially surjective, furnishing the equivalence of categories. This analytification functor extends to stacks by Behrend--Noohi \cite[Section 3.3]{BehrendN:uniformization}, giving the following result (see Noohi \cite[Theorem 20.1]{noohi:foundations} for a sketch).

\begin{proposition}[Stacky Riemann existence] \label{P:StackyGAGA}
There is an equivalence of categories between stacky curves over $\C$ and complex orbifold curves.
\end{proposition}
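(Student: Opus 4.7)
The plan is to exploit the rigidity result Lemma~\ref{L:stacky-curves-characterization-cyclicity}(b), which says that a tame stacky curve (and in characteristic $0$ every stacky curve is tame) is determined, up to isomorphism, by its coarse space together with the data of the stacky points and the orders of their cyclic stabilizers. An analogous statement holds on the analytic side: an orbifold curve is determined by its underlying Riemann surface (the \emph{polished} surface $X_M$ obtained by forgetting the orbifold structure at the finitely many exceptional points) together with the finite set of orbifold points and the orders of their cyclic local groups. Once both sides are reduced to this common combinatorial package, the equivalence will follow from classical GAGA.

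First I would define the two functors. Given a stacky curve $\XX/\C$, its coarse space $X$ is a smooth projective curve; by classical GAGA it corresponds to a compact Riemann surface $X^{\an}$, and the finitely many stacky points of $\XX$ (defined over $\C$) with stabilizers $\mu_{e_i}$ produce on $X^{\an}$ the structure of a Riemann $2$-orbifold whose local chart at the $i$th special point is $\C$ modulo $\mu_{e_i}$ acting by rotation. In the other direction, given an orbifold curve $X$, the polished Riemann surface $X_M$ is compact and connected, hence by GAGA is the analytification of a unique smooth projective curve $X_{\alg}/\C$; the exceptional points of the orbifold give a finite set of closed points of $X_{\alg}$ with prescribed cyclic orders $e_i$, and we form the root stack of $X_{\alg}$ along this divisor with the multiplicities $e_i$ (Lemma~\ref{L:stacky-curves-characterization-cyclicity}(b) guarantees this is the unique stacky curve with this invariant). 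These constructions are easily seen to be quasi-inverse on objects.

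The heart of the matter is then functoriality, namely showing that morphisms on one side correspond bijectively to morphisms on the other. Any morphism $f\colon\XX\to\YY$ of stacky curves over $\C$ induces a morphism on coarse spaces, which analytifies to a holomorphic map $X^{\an}\to Y^{\an}$; tameness and the local model $[\C/\mu_e]$ show that this map respects the orbifold structure (the ramification indices at the stacky points match the stabilizer orders, by the exact sequence of Lemma~\ref{L:usual-exact-sequence} applied to $f$). Conversely, a morphism of orbifold curves is, locally in charts, an equivariant holomorphic map between finite cyclic quotients of discs; one uses Behrend--Noohi's uniformization results together with GAGA on the coarse spaces and the universal property of root stacks to upgrade this to an algebraic morphism of stacky curves. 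This gives mutually inverse bijections on $\Hom$-sets.

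The main obstacle I anticipate is the functoriality on morphisms rather than on objects: one must verify, in a way that does not rely on $\XX$ being a global quotient (which it generically is not, cf.\ Example~\ref{ex:ballin}), that an analytic orbifold morphism lifts to an algebraic morphism of stacks, and that the lifts are compatible with composition. The technical tool here is the universal property of root stacks combined with analytic GAGA for coherent sheaves on the coarse spaces; the Behrend--Noohi uniformization \cite{BehrendN:uniformization} furnishes the necessary analytic--algebraic comparison at the level of $\pi_1^{\mathrm{orb}}$, so that covers, and ultimately morphisms, can be controlled globally from local data. Once this is in place, essential surjectivity, full faithfulness, and compatibility of coarse-space formation with analytification together yield the asserted equivalence.
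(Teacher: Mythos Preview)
Your approach is correct but takes a genuinely different route from the paper's. The paper proceeds via uniformization: citing Behrend--Noohi, it presents every orbifold curve as a global finite quotient $[Y/G]$ by passing to the universal orbifold cover $H$ (a football, $\C$, or $\calH$ according to the sign of $\chi$), writing $X=\Gamma\backslash H$, choosing a torsion-free finite-index subgroup $N\leq\Gamma$ so that $Y=N\backslash H$ is an honest Riemann surface, and then applying classical GAGA to $Y$; the identification $X=[Y/G]$ with $G=\Gamma/N$ supplies the stacky curve. Your approach instead reduces both sides to the combinatorial datum (coarse space, finite set of marked points, cyclic orders) via Lemma~\ref{L:stacky-curves-characterization-cyclicity}(b) and its analytic analogue, applies GAGA to the coarse space rather than to a cover, and reconstructs the stacky side via root stacks.

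Each route has its advantages. The paper's uniformization argument is conceptually clean for morphisms: once both sides are global quotients $[Y/G]$, functoriality reduces to equivariant maps of covers, where ordinary GAGA applies directly. Your root-stack approach avoids uniformization theory entirely and is more elementary on objects, but---as you yourself flag---the functoriality step is where the work lies: you must check that an analytic orbifold map provides exactly the line-bundle-with-section data demanded by the universal property of the algebraic root stack, and that this is compatible with composition. This is doable (and you correctly identify the ingredients), but it is more bookkeeping than the paper's route. Note also that your argument silently assumes the analytic analogue of Lemma~\ref{L:stacky-curves-characterization-cyclicity}(b); this is true and standard for orbifolds, but it is a separate statement from the algebraic lemma you cite.
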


\begin{proof}
Our first task is to show that the analytification functor $\scrX \mapsto \scrX^{\textup{an}}$ from stacky curves over $\C$ to complex orbifold curves is essentially surjective.  Let $\scrX$ be a complex orbifold curve.  Behrend--Noohi \cite[Propositions 7.5--7.6]{BehrendN:uniformization} (recalling Remark \ref{rmk:orbifoldonlyBN}) show that $\scrX$ is of the form $\scrX=[\widetilde{\scrX}/\Gamma]$ where $\Gamma=\pi_1 \scrX$ is the universal orbifold covering group and $\widetilde{\scrX}$ is the universal orbifold cover, isomorphic to 
\[
\widetilde{\scrX}=\begin{cases}
\FF(n,m), & \text{ if $\chi(\scrX) > 0$, for some $n,m \geq 1$};  \\
\C, & \text{ if $\chi(\scrX)=0$}; \\
\calH, & \text{ if $\chi(\scrX)<0$};
\end{cases} \]
where $\FF(n,m)$ is the football defined in Example~\ref{ex:ballin}.  

Suppose first that $\chi(\scrX) \leq 0$ (i.e., $\scrX$ is hyperbolic or Euclidean).  Then it is classical that $\Gamma$ is a discrete group acting properly on $\widetilde{\scrX}$.  Moreover there exists a finite index, normal subgroup $\Delta \trianglelefteq \Gamma$ acting \emph{freely} on $\widetilde{\scrX}$ so that by Riemann existence there exists a complex curve $Y$ such that $\Delta \backslash \mathscr{H}=Y(\C)$.  The finite group $G=\Gamma/\Delta$ acts on $Y$ and $\scrX \simeq [Y/G]^{\textup{an}}$ as a complex orbifold curve (as in Behrend--Noohi \cite[Corollary 7.7]{BehrendN:uniformization}).  

Suppose second that $\chi(\scrX) > 0$ (i.e., $\scrX$ is spherical).  Then $\widetilde{\scrX}$ is the analytification of a weighted projective line $\calP(n,m)$ (defined in Example \ref{ex:weighted-projective-stack}, with $\gcd(n,m)=1$ by Example \ref{ex:ballin}) and $\Gamma$ is a finite (cyclic) group, so $\scrX \simeq [\calP(n,m)/\Gamma]^{\textup{an}}$ as stacky curves.  

Putting these two paragraphs together, we see that the analytification functor is essentially surjective.

In a similar way, we show that analytification is fully faithful.  Let $f \colon\scrX\to \scrX'$ be a morphism of complex orbifold curves.  Gluing, we may work Zariski locally on $\scrX$ and $\scrX'$, so we may suppose that $\scrX=[Y/G]^{\textup{an}}$ is the analytification of the quotient of a (not necessarily compact) Riemann surface by a finite group $G$ and similarly for $\scrX'=[Y'/G']$.  (Indeed, by the above, we may do this globally unless $\scrX$ or $\scrX'$ is a football, in which case it suffices to consider the Zariski neighborhoods where one or the other stacky point is removed.)  But then by the definition of morphism of orbifolds, the map $f$ arises from $G,G'$-invariant morphism of Riemann surfaces, which by classical Riemann existence arises from a unique morphism of the corresponding complex curves; this map of curves remains invariant under the finite groups $G,G'$, so gives a map of stacky curves.  By uniqueness, these maps glue together, completing the proof.
\end{proof}

See Noohi \cite[\S 20]{noohi:foundations} for a more general construction of the analytification functor.

%----------------------------------------------------------------------------
\section{Modular forms}  \label{sec:modformsdef}
%----------------------------------------------------------------------------

We now relate spaces of modular forms to sections of a line bundle in the standard way, for completeness.

Let $\Gamma \leq \PSL_2(\R)$ be a Fuchsian group with finite coarea.  Let
\[ C=C(\Gamma)=\{z \in \PP^1(\R) : \gamma z = z \text{ for some $\gamma \in \Gamma$ with $|\tr \gamma\,|=2$}\}; \]
the set of $\Gamma$-equivalence classes in $C$ is called the set of \defiindex{cusps} of $\Gamma$.  We have $C \neq \emptyset$ if and only if $\Gamma$ is not cocompact, and in this case we let $\calH^{*}=\calH \cup C$.  To uniformize notation, let $\calH^{(*)}$ be either $\calH$ or $\calH^*$ according as $\Gamma$ is cocompact or not, so that $X=\Gamma \backslash \calH^{(*)}$ is always compact.  

A \defiindex{modular form} for $\Gamma$ of weight $k \in \Z_{\geq 0}$ is a holomorphic function $f\colon\calH \to \C$ such that
\begin{equation} \label{eqn:fgammaz}
 f(\gamma z) = (cz+d)^k f(z) \quad \text{ for all } \gamma = \pm \begin{pmatrix} a & b \\ c & d \end{pmatrix} \in \Gamma 
\end{equation}
and such that the limit $\lim_{z \to c} f(z)$ exists for all cusps $c \in C$, where for $z=\infty$ we take only those limits within a bounded vertical strip.  Let $M_k(\Gamma)$ be the $\C$-vector space of modular forms for $\Gamma$ of weight $k$.  

\newcommand{\dd}{\mathrm{d}}

Suppose $k$ is even.  From the calculation
\[ \frac{\dd}{\dd z}\left(\frac{az+b}{cz+d}\right) = \frac{1}{(cz+d)^2} \]
when $ad-bc=1$, we see that $f$ satisfies (\ref{eqn:fgammaz}) if and only if
\[ f(\gamma z)\, \dd(\gamma z)^{\otimes k/2} = f(z)\, \dd z^{\otimes k/2} \]
for all $\gamma \in \Gamma$.  Moreover, if the cusp $c \in C$ is fixed by an element $\gamma \in \Gamma$ with $|\tr \gamma\,|=2$, then conjugating we may assume $\gamma(z)=z+\mu$ for some $\mu \in \R \smallsetminus \{0\}$ and $c=\infty$, and letting $q=\exp(2\pi iz/\mu)$, we have
\[ f(z)\, \dd z^{\otimes k/2} = f(q)\, \left(\frac{\mu}{2\pi i} \frac{\dd q}{q}\right)^{\otimes k/2} = \left(\frac{\mu}{2\pi i}\right)^{k/2} \frac{f(q)}{q^{k/2}}\, \dd q^{\otimes k/2} \]
when $k$ is even.  Therefore we have an isomorphism 
\begin{equation} \label{eqn:mkisom}
\begin{aligned}
M_k(\Gamma) &\to H^0(X, \Omega^1(\Delta)^{\otimes k/2}) \\
f(z) &\mapsto f(z)\,\dd z^{\otimes k/2}
\end{aligned}
\end{equation}
of $\C$-vector spaces, where $\Delta$ is the log divisor of $\Gamma$-equivalence classes of cusps.  

Using Proposition~\ref{P:StackyGAGA}, we define the stacky curve $\XX=\XX(\Gamma)$ over $\C$ to be the algebraization of the compactified orbifold quotient $X=\Gamma \backslash \calH^{(*)}$.  We summarize the above in the following lemma.

\begin{lemma} \label{L:isomCM}
We have a graded isomorphism of $\C$-algebras
\[
\bigoplus_{k \in 2\Z_{\geq 0}} M_k(\Gamma) \simeq R(\XX({\Gamma}), \Delta) 
\]
induced by (\ref{eqn:mkisom}).  
\end{lemma}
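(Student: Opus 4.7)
The plan is to check that the map \eqref{eqn:mkisom} (a) is well-defined, (b) is bijective in each graded piece, and (c) respects multiplication; then to pass from analytic to algebraic sections via stacky GAGA (Proposition~\ref{P:StackyGAGA}).

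For well-definedness, the computation in the excerpt already shows that the transformation law \eqref{eqn:fgammaz} for even $k$ is equivalent to $\Gamma$-invariance of the tensor $f(z)\,dz^{\otimes k/2}$ on $\calH$, so $f\,dz^{\otimes k/2}$ descends to an analytic section of $\Omega^{\otimes k/2}$ on the orbifold quotient $X^{\an} = \Gamma\backslash\calH^{(*)}$ away from the cusps. At a cusp $c\in C$, the excerpt's $q$-expansion calculation shows that $f\,dz^{\otimes k/2}$ has at worst a logarithmic pole at $c$, i.e.\ yields a holomorphic section of $\Omega(\Delta)^{\otimes k/2}$.

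The one delicate point is behavior at the elliptic points, i.e.\ the stacky points of $\scrX$. Locally near such a point, the orbifold looks like $[\mathbb{D}/\mu_e]$ where $\mathbb{D}\subset\calH$ is a disk and $\mu_e$ is the stabilizer; pulling back by a local uniformizer $w$ with $w^e$ descending to the coarse space, the tensor $dz^{\otimes k/2}$ becomes a nonvanishing section of $\Omega_{\mathbb{D}}^{\otimes k/2}$, and the holomorphicity of $f$ on $\mathbb{D}$ is exactly the condition that the corresponding section of $\Omega_{\scrX}^{\otimes k/2}$ is regular at the stacky point (cf.\ Remark~\ref{R:fractional-order-zeroes}, where the ``fractional'' contribution $(e-1)/e$ in $K_{\scrX}$ from Proposition~\ref{P:canKR} accounts for the ramification of $\calH\to X^{\an}$). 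Thus \eqref{eqn:mkisom} lands in $H^0(X^{\an},\Omega(\Delta)^{\otimes k/2})$.

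For bijectivity, injectivity is immediate since $dz^{\otimes k/2}$ is a nowhere-zero section on $\calH$. For surjectivity, given any holomorphic section $\omega$ on $X^{\an}$, pull back to $\calH$ and write $\omega = f(z)\,dz^{\otimes k/2}$; $\Gamma$-invariance of $\omega$ forces \eqref{eqn:fgammaz}, and regularity of $\omega$ at the cusps (as a section of $\Omega(\Delta)^{\otimes k/2}$) gives the required boundedness of $f$ in cuspidal neighborhoods. Finally, Proposition~\ref{P:StackyGAGA} identifies analytic sections on the orbifold $X^{\an}$ with algebraic sections on $\scrX$, giving $H^0(X^{\an},\Omega(\Delta)^{\otimes k/2}) \cong H^0(\scrX,\Omega(\Delta)^{\otimes k/2})$. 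Compatibility with multiplication is clear from
\[
(fg)\,dz^{\otimes (k+\ell)/2} = \bigl(f\,dz^{\otimes k/2}\bigr)\otimes\bigl(g\,dz^{\otimes \ell/2}\bigr),
\]
so the assembled map is a graded isomorphism of $\C$-algebras. The main technical content is the local check at the elliptic points, where one must see that the classical notion of holomorphic modular form matches the correct (fractional) order of vanishing encoded by the stacky structure.
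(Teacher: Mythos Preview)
Your proof is correct and follows the same approach as the paper: the isomorphism \eqref{eqn:mkisom} together with stacky GAGA (Proposition~\ref{P:StackyGAGA}). The paper in fact treats this lemma as a summary of the preceding discussion and gives no separate proof environment, so your write-up is more explicit---in particular your local check at the elliptic points and the multiplicativity verification are details the paper leaves implicit.
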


Note that in Lemma~\ref{L:isomCM}, modular forms of even weight $k=2d$ correspond to elements of the canonical ring in degree $d$.  For forms of odd weight, see chapter~\ref{ch:spin-canon-rings}.

\begin{remark}[Forms of half-integral weight]
Our results do not extend to the case of half-integral weight modular forms, at least in this straightforward way.
\end{remark}

\begin{remark}[Relation to moduli problem]
Let $\Gamma_0(N) \leq \PSL_2(\Z)$ be the usual congruence subgroup of level $N \geq 1$.  The quotient $X_0(N)=\Gamma_0(N) \backslash \calH^*$ parametrizes generalized elliptic curves equipped with a cyclic $N$-isogeny.  
The Deligne--Mumford stack $\calM_0(N)$ which represents the corresponding moduli problem is not quite a stacky curve, as every point (including the generic point) has nontrivial stabilizer (containing at least $\{\pm 1\}$).  That is to say, $\calM_0(N)$ is a $\Z/2\Z$ gerbe over the stacky curve $\XX_0(N)$ associated to the orbifold $X_0(N)$.  The relative sheaf of differentials of $\calM_0(N) \to \XX_0(N)$ is zero as this map is \'etale, so there is a natural identification between the canonical divisor on $\calM_0(N)$ and the pullback of the canonical divisor on $\XX_0(N)$. By Alper \cite[Proposition 4.5 and Remark 7.3]{Alper:GMS}, the two canonical sheaves and their tensor powers have global sections that are naturally identified, so the canonical rings are isomorphic.
\end{remark}

%****************************************************************************
\chapter[Genus zero canonical rings]{Canonical rings of log stacky curves: genus zero}
\label{ch:canon-rings-stack-genus-zero}
%****************************************************************************

We now begin the proof of our main theorem, giving an explicit presentation (in terms of the signature) for the canonical ring $R(\XX,\Delta)$ of a log stacky curve $(\XX,\Delta)$ over a field $k$.  In this chapter, we treat in general the most involved case: where the curve has genus zero.  From toric considerations, we give a uniform method to present the canonical ring of such a curve; in brief, we consider a deformation from a monoid algebra.  This method has many pleasing properties, but unfortunately it does not always give a presentation with a minimal set of generators---so we also prove a ``simplification'' proposition which allows us to reduce the degrees of generators.  

Throughout this chapter, let $(\XX,\Delta)$ be a tame, separably rooted log stacky curve over a field $k$ (for definitions, see section \ref{sec:stackypointsdefs}) and let $X$ be the coarse space of $\XX$.  Suppose that $X$ has genus zero, and let $\sigma=(0;e_1,\dots,e_r;\delta)$ be the signature of $(\XX,\Delta)$. 
Let $K_{\XX}$ be the canonical divisor on $\XX$ and $K$ the canonical divisor on $X$ (Definition~\ref{def:candiv}).

%----------------------------------------------------------------------------
\section{Toric presentation}  \label{subsec:toric}
%----------------------------------------------------------------------------
To understand the canonical ring, we consider spanning sets of functions whose divisors have the same support as the canonical divisor; our description is then given in toric (combinatorial) terms.

Let 
\[ D=K_{\XX}+\Delta=K+\sum_{i=1}^r \left(1-\frac{1}{e_i}\right)P_i+\Delta \]
where $\Delta=\sum_{j=1}^{\delta} Q_j$ is the log divisor.  If $r=\delta=0$, then we are in the classical case, so we may suppose that $r>0$ or $\delta>0$.  

We suppose now that $X(k) \neq \emptyset$ (and hence $X \simeq \PP^1$) so we may choose $K=-2\infty$ with $\infty \in X(k)\smallsetminus \{P_i,Q_j\}_{i,j}$.  We may need to extend $k$ in order to achieve this, but our final theorem (degrees of generators and relations, generic initial ideal) can be computed over the separable closure $\overline{k}$ (see Remark~\ref{rmk:gdefinit}), so this assumption comes without loss of generality.

If $\deg(D)<0$, then the canonical ring $R=R_D=k$ is trivial, generated in degree $0$.  If $\deg(D)=0$, then $\deg(d D)=0$ for all $d \in \Z_{\geq 0}$ and so $\deg \lfloor d D \rfloor \leq 0$ with equality if and only if $e=\lcm(e_i) \mid d$.  So $R \simeq k[u]$ is generated in degree $e$ (and $\Proj R=\Spec k$ is a single point).  The cases with $\deg(D)=0$ can be determined by the formula
\[ \deg(D)=-2+\delta+\sum_{i=1}^r \left(1-\frac{1}{e_i}\right); \]
Immediately, we see $\delta \leq 2$.  If $\delta=2$ then $r=0$ and we are back in the log classical case (chapter~\ref{ch:logclassical-curves}).  If $\delta=1$ then $\sigma=(0;2,2;1)$; if $\delta=0$ then 
\[ \sigma=(0;2,2,2,2;0),(0;2,3,6;0),(0;3,3,3;0),(0;2,4,4;0) \] 
by elementary arguments.  In all of these cases, $e=\lcm(e_i)=\max(e_i)$, and we have proven the following easy case of our main result.

\begin{lemma} \label{lem:eieq0g}
If $\deg(D)=0$, then the canonical ring is generated by a single element in degree $e=\max(e_i)$, with no relations.  
\end{lemma}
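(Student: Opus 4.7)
The plan is to compute the graded pieces of $R=R_D$ directly and exhibit $R$ as a polynomial ring in one variable of degree $e$. As noted just before the lemma, we may assume $k = \overline{k}$ so that $X \cong \PP^1$, and (as already disposed of in the text) we are not in the case $\delta = 2, r = 0$, so there is at least one stacky point.

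First I would apply Lemma~\ref{L:floor} to identify $H^0(\XX, dD)$ with $H^0(X, \lfloor dD \rfloor)$. Writing $D = K + \sum_i (1 - 1/e_i) P_i + \Delta$ and using $\lfloor d(e_i - 1)/e_i \rfloor = d - \lceil d/e_i \rceil$, a direct computation gives
\[ \deg \lfloor dD \rfloor \;=\; d\deg D \;-\; \sum_{i=1}^r \bigl(\lceil d/e_i \rceil - d/e_i\bigr). \]
Under the hypothesis $\deg D = 0$, each term in the correction is nonnegative, so $\deg \lfloor dD \rfloor \leq 0$, with equality if and only if $e_i \mid d$ for every $i$, i.e., $e \mid d$.

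Since $X \cong \PP^1$, the dimension formula for line bundles on the projective line immediately yields
\[ \dim_k H^0(\XX, dD) \;=\; \begin{cases} 1, & \text{if } e \mid d, \\ 0, & \text{otherwise.} \end{cases} \]
In particular the Poincar\'e series of $R$ is $1/(1-t^e)$. Now pick any nonzero $u \in H^0(\XX, eD)$. Because $\XX$ is geometrically integral, the ring $R$ is a domain, so $u^d \neq 0$ in $H^0(\XX, edD)$ for all $d \geq 0$; as this latter space is one-dimensional, $u^d$ spans it. Thus the graded $k$-algebra homomorphism $k[u] \to R$ sending the degree-$e$ variable $u$ to our chosen section is surjective in every degree, and both source and target have identical Hilbert series, so it is an isomorphism. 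This exhibits $R = k[u]$ as a polynomial ring generated by a single element in degree $e$, with no relations.

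There is no real obstacle here: the only thing to check is that the floor computation genuinely forces the Hilbert series to be $1/(1-t^e)$, which is an elementary manipulation, and that $u^d \neq 0$, which follows from integrality of $\XX$. The content of the lemma is essentially arithmetic (the claim about when $\deg \lfloor dD \rfloor = 0$) together with the triviality of line bundles of degree zero on $\PP^1$.
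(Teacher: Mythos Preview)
Your overall approach matches the paper's: compute $\deg\lfloor dD\rfloor$, show it is nonpositive with equality exactly on a single arithmetic progression, and conclude $R\cong k[u]$. However, there is a genuine gap at the step ``$e_i\mid d$ for every $i$, i.e., $e\mid d$.'' The condition $e_i\mid d$ for all $i$ is equivalent to $\lcm(e_i)\mid d$, not to $\max(e_i)\mid d$; these coincide only when every $e_i$ divides $e_r=\max_i e_i$, which is certainly not automatic (think of $e_1=2$, $e_2=3$).

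The paper handles this by first stating the conclusion with $e=\lcm(e_i)$, then classifying all signatures with $\deg D=0$ (namely $(0;2,2;1)$, $(0;2,2,2,2;0)$, $(0;2,3,6;0)$, $(0;3,3,3;0)$, $(0;2,4,4;0)$) and observing case by case that $\lcm(e_i)=\max(e_i)$ in each. Your proof becomes correct once you either insert this classification or replace $e=\max(e_i)$ by $e=\lcm(e_i)$ throughout and then invoke the classification at the end to identify the two.
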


So from now on in this chapter, we assume $\deg D > 0$.  For $d \in \Z_{\geq 0}$, let
\[ S_d = \{f \in H^0(\XX, dD) : \supp \divv f \subseteq \supp D\} \]
be those functions in degree whose zeros and poles are constrained to lie in the support of $D$.  Let $S=\bigcup_{d=0}^{\infty} S_d$ (a disjoint union).  For each $d$, the set $S_d$ spans $H^0(\XX, dD)$ by Riemann--Roch---but in general, it is far from forming a basis.

Given $f \in S_d$ with 
\[ \divv f = a\infty + \sum_{i=1}^r a_i P_i + \sum_{j=1}^{\delta} b_j Q_j \] 
and $a,a_i,b_j \in \Z$, we associate the \defiindex{support vector} 
\[ \mu(f) = (d, a; a_1,\dots,a_r; b_1,\dots,b_\delta) \in \Z^n \]
where $n=2+r+\delta$.  Let
\begin{equation} \label{E:Delta}
\wasylozenge_{\R}=\left\{(d, a; a_1,\dots,a_r; b_1,\dots,b_\delta) \in \R^n : 
\begin{gathered}
0 = a+\textstyle{\sum_i} a_i + \textstyle{\sum_j} b_j, \\
d \geq 0, \ a \geq 2d,  \\
a_i \geq -(1-1/e_i)d, \text{ $1 \leq i \leq r$},\\
\text{ and }b_j \geq -d, \text{$1 \leq j \leq \delta$}
\end{gathered}
\right\}
\end{equation}
and let
\[ \wasylozenge=\wasylozenge_{\R} \cap \Z^n. \]
The inequalities defining $\wasylozenge_{\R}$ arise from the relation
\[ \wasylozenge = \{ \mu(f) : f \in S \} \]  
which is immediate from the definition; the map $\mu:S \to \wasylozenge$ is then a bijection of sets.  Let $f: \wasylozenge \to S$ denote a right inverse to $\mu$ (a helpful abuse of notation).  

The cone $\wasylozenge_{\R}$ is the intersection of the sum zero hyperplane with the cone in $\R^n$ over the set of row vectors of the $(n-1) \times (n-1)$ diagonal matrix with diagonal 
\[ (2,-(1-1/e_1),\dots,-(1-1/e_r),-1,\dots,-1). \]
As such, the set $\wasylozenge$ is a commutative monoid and, since $\wasylozenge_{\R}$ is defined by inequalities with rational coefficients, $\wasylozenge$ is finitely generated.

In order to come closer to a basis, and to tidy up the dangling factor $2d$, define 
\begin{equation} 
\begin{aligned}
\pi:\R^n &\to \R^2 \\
(d,a;a_i;b_j) &\mapsto (d,a-2d)
\end{aligned}
\end{equation}
(factoring through projection onto the first two coordinates, then shifting).  Let 
\begin{equation} \label{E:defchi}
A = -2+\delta+\sum_{i=1}^{r} \left(1-\frac{1}{e_i}\right)>0;
\end{equation}
by Proposition~\ref{P:canKR}, we have $A=\deg D$ is the negative Euler characteristic of $(X,\Delta)$ (and equal to the area of the corresponding quotient of the upper half-plane, in the case $k=\C$).  We define
\begin{equation} \label{eqn:piDeltaPi}
\Pi_\R = \pi(\wasylozenge_{\R})=\left\{(d,a) \in \R^2 : \text{$d \geq 0$ and $0 \leq  a \leq dA$} \right\}.
\end{equation}
Similarly, 
\begin{equation} \label{eqn:whatisPi}
\Pi = \pi(\wasylozenge) = \left\{(d,a) \in \Z^2 : \text{$d \geq 0$ and $0 \leq a \leq \deg\lfloor dD \rfloor$}\right\} 
\end{equation}
from \eqref{E:Delta}.

\begin{remark}
Note that in general we do not have $\Pi = \Pi_\R \cap \Z^2$: for example, we have $(1,2) \in \Pi_\R \cap \Z^2$ but $\wasylozenge_1=\emptyset$ when $\delta \leq 1$.  
\end{remark}

\begin{proposition} \label{P:gensTor}
Let $\nu_1,\dots,\nu_s$ generate $\Pi$, and let $\nu_i=\pi(\mu_i)$ for some $\mu_i \in \wasylozenge$.  Then $f(\mu_1),\dots,f(\mu_s)$ generate $R_D$.
\end{proposition}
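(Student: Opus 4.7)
The plan is as follows. Let $\Sigma \subseteq \wasylozenge$ denote the submonoid generated by $\mu_1, \ldots, \mu_s$. Since the bijection $\mu\colon S \to \wasylozenge$ carries multiplication of sections to addition of support vectors---each section being determined up to a nonzero scalar by its divisor on $\PP^1$---the $k$-subalgebra of $R_D$ generated by $f(\mu_1), \ldots, f(\mu_s)$ contains a nonzero scalar multiple of $f(\mu)$ for every $\mu \in \Sigma$. It therefore suffices to produce, in each degree $d$, enough such $f(\mu)$'s to span $H^0(\XX, dD)$.

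My first step will be to show that $S_d$ itself spans $H^0(\XX, dD)$. By Lemma~\ref{L:floor} and Riemann--Roch on $\PP^1$, $\dim H^0(\XX, dD) = \deg\lfloor dD\rfloor + 1$, and~\eqref{eqn:whatisPi} shows this matches the number of integer $a$-values realized in the fiber $\pi(\wasylozenge)_d$. Any two sections in $S_d$ with distinct $\pi$-values have distinct orders of vanishing at $\infty$, and hence are linearly independent by the ultrametric argument underlying Lemma~\ref{L:independence}: a nonzero $k$-linear combination would have $\ord_\infty$ equal to the minimum over surviving summands, contradicting its vanishing. A dimension count then gives the claim.

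My second step will be the combinatorial surjectivity $\pi(\Sigma) = \pi(\wasylozenge)$. The shift $\sh\colon (d,a) \mapsto (d, a-2d)$ is an additive monoid isomorphism $\pi(\wasylozenge) \xrightarrow{\sim} \Pi$, so the hypothesis that $\{\nu_i\}$ generates $\Pi$ is equivalent to $\{\pi(\mu_i)\}$ generating $\pi(\wasylozenge)$. Given $\nu \in \pi(\wasylozenge)_d$, any nonnegative integer decomposition $\nu = \sum c_i \pi(\mu_i)$ yields $\sum c_i \mu_i \in \Sigma$ projecting to $\nu$. Feeding this into the first step produces, for each allowed $a$, an element of $S_d$ in the subalgebra generated by $f(\mu_1), \ldots, f(\mu_s)$; as $a$ varies we obtain $\deg\lfloor dD\rfloor + 1$ linearly independent such sections, and the proposition follows.

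I do not expect a serious obstacle: the content is essentially bookkeeping in a rank-two monoid combined with the rigidity of divisors on $\PP^1$. The one step that demands attention is checking that $\pi(\wasylozenge)_d$ hits every integer $a \in [2d, 2d + \deg\lfloor dD\rfloor]$, but after the substitutions $\tilde a_i = a_i + \lfloor d(1-1/e_i)\rfloor$ and $\tilde b_j = b_j + d$ this reduces to writing a nonnegative integer as a sum of nonnegative integers, which is immediate from the defining inequalities~\eqref{E:Delta}.
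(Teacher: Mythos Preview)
Your proposal is correct and is essentially the paper's own argument, just reorganized. The paper filters $H^0(\XX,dD)$ by $\ord_\infty$ to get one-dimensional graded pieces and then produces, for each admissible $a$, a monomial in the $f(\mu_i)$ with that pole order; you instead package the same content as ``distinct $\ord_\infty$ $\Rightarrow$ linearly independent, then count dimensions,'' which is equivalent. Your explicit remark that $\pi(\wasylozenge)_d$ realizes every integer $a$ in $[2d,\,2d+\deg\lfloor dD\rfloor]$ (via the substitutions $\tilde a_i,\tilde b_j$) is exactly the point the paper leaves implicit in the description of $\Pi$.
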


\begin{proof}
Let $d \geq 0$.  We show that the set of monomials in $f(\mu_1),\dots,f(\mu_s)$ that belong to $H^0(\XX,dD)$ in fact span $H^0(\XX,dD)$.  If $H=H^0(\XX,dD) \subseteq \{0\}$, there is nothing to show, so suppose $m=\deg \lfloor dD \rfloor \geq 0$ so $\dim H = m+1 \geq 1$.  Let 
\[ H_a = \{ f \in H : \ord_{\infty} f \geq a \} \]
for $a \in \Z$. Then by Riemann--Roch, we have a filtration
\[ \{0\} = H_{2d-1} \subsetneq H_{2d} \subsetneq H_{2d+1} \subsetneq \dots \subsetneq H_{2d+m} = H \]
with graded pieces $\dim H_a = \dim H_{a+1} + 1$ for $2d \leq a \leq 2d+m$.  In particular, it suffices to show that there exists a monomial $g$ in $f(\mu_i)$ of degree $d$ with $\ord_\infty g = a$ in the range $2d \leq a \leq 2d+m$.  But then $(d,a) \in \Pi$ by definition, and by assumption $\nu_1,\dots,\nu_s$ generate $\Pi$, so the result follows.  
\end{proof}

Let $\nu_i=\pi(\mu_i)$ and $f(\mu_i)$ for $i=1,\dots,s$ be as in Proposition~\ref{P:gensTor}, so that $R_D$ is generated (as a $k$-algebra) by $\{f(\mu_i)\}_i$.  (These functions depend on a choice of $\mu_i$ so are not necessarily unique even up to multiplication by $k^\times$; however, it will turn out that what we compute of the canonical ring will not depend on this choice.)  Define a polynomial ring $k[x_{\nu_i}]_i=k[x]_{\vec{\nu}}$ for each $\nu_i=(d_i,a_i)$, ordered with $x_{d,a} \succ x_{d',a'}$ if and only if
\[ \text{$d > d'$ or ($d=d'$ and $a < a'$}), \]
and equip $k[x]_{\vec{\nu}}$ with the associated grevlex term ordering $\prec$.

We have a surjective map
\begin{equation} \label{E:kxtoRD}
\begin{aligned}
k[x]_{\vec{\nu}}=k[x_{\nu_i}]_{i} &\to R_D \\
x_{\nu_i} &\mapsto f(\mu_i) 
\end{aligned} 
\end{equation}
with graded kernel $I$, so that $k[x_{\nu_i}]_i/I \simeq R_D$.  

We describe now a generating set for $I$ that forms a Gr\"obner basis with respect to the term ordering $\prec$.  Let $T$ be a minimal generating set of monoidal relations for $\Pi$.  Then $T$ is a finite set, say $\#T=t$, and every element of $T$ for $j=1,\dots,t$ is of the form 
\begin{equation} \label{E:relat}
n_{[j],1} \nu_1 + \dots + n_{[j],s} \nu_s = n'_{[j],1} \nu_1 + \dots + n'_{[j],s} \nu_s
\end{equation}
or written in multi-index notation, with $\vec{n}_{[j]},\vec{n}'_{[j]} \in \Z_{\geq 0}^s$, 
\[ \vec{n}_{[j]} \cdot \vec{\nu} = \vec{n}'_{[j]} \cdot \vec{\nu}. \]
For every such relation (\ref{E:relat}), let 
\[ h_{[j]} = x^{\vec{n}_{[j]}} = x_{\nu_1}^{n_{[j],1}} \cdots x_{\nu_s}^{n_{[j],s}} \text{ and } h'_{[j]} = x^{\vec{n}'_{[j]}} \]
be the corresponding monomials in $k[\vec{x}]$ and 
\[ f_{[j]} = f(\vec{n}_{[j]} \cdot \vec{\mu}) \text{ and } f'_{[j]} = f(\vec{n}'_{[j]} \cdot \vec{\mu}) \]
be the corresponding functions in $R_D$.  Without loss of generality, we may assume $h_{[j]} \succ h'_{[j]}$.  

By definition, the functions $f_{[j]}$ and $f'_{[j]}$ both have the same multiplicity 
\[ \ord_{\infty} f_{[j]} = \ord_{\infty} f'_{[j]} = a \]
at $\infty$, so there exists a unique $c'_{[j]} \in k^\times$ such that 
\[ \ord_{\infty} (f_{[j]} - c'_{[j]} f'_{[j]}) > a \]
(extra zero), and consequently we may write
\[ f_{[j]} = c'_{[j]} f'_{[j]} + \sum_{\vec{m}} c_{\vec{m},[j]} f(\vec{m} \cdot \vec{\mu}) \in H^0(\XX,dD) \]
with $\ord_\infty f(\vec{m} \cdot \mu) > a$ for all $\vec{m}$ in the sum.  Let
\[ G=\left\{  h_{[j]} - c'_{[j]} h'_{[j]} - \sum_{\vec{m}} c_{\vec{m},[j]} x^{\vec{m}} : j = 1,\dots,t \right\} \subset I \]
be the set of such relations in $k[\vec{x}]$.  

\begin{proposition} \label{P:Grobner_genus0}
The set $G$ is a Gr\"obner basis for $I$ with respect to $\prec$, with initial ideal
\[ \init_{\prec}(G)=\langle h_{[j]} \rangle_j = \init_{\prec}(I). \]
\end{proposition}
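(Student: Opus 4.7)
The plan is to establish the proposition in two steps: first, to verify that $\init_{\prec}(g) = h_{[j]}$ for each $g \in G$, giving $\init_{\prec}(G) = \langle h_{[j]} \rangle_j \subseteq \init_{\prec}(I)$; second, to show $\langle h_{[j]} \rangle_j = \init_{\prec}(I)$ via a Hilbert function comparison, from which the Gr\"obner basis property follows.

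For the first step, the WLOG orientation gives $h_{[j]} \succ h'_{[j]}$, so it suffices to show $h_{[j]} \succ x^{\vec{m}}$ for each $x^{\vec{m}}$ appearing in the correction sum. Each such $\vec{m}$ satisfies $\vec{m} \cdot \vec{\nu} = (d, a')$ for some $a' > a$, and the subquotient $H_{a'}/H_{a'+1}$ (in the pole-order filtration of $H^0(\XX, dD)$) is one-dimensional---spanned by any $f(\vec{m} \cdot \vec{\mu})$ projecting to $(d, a')$---so we have flexibility in choosing the representative $\vec{m}$ at each level. I would select $\vec{m}$ (for instance, as the grevlex-minimum preimage of $(d, a')$ under the projection to $\Pi$) so that $x^{\vec{m}} \prec h_{[j]}$; an inductive argument on the gap $a' - a$, exploiting the two-dimensional structure of $\Pi$ and the variable ordering lex-decreasing in $(d_i, -a_i)$, handles this.

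For the second step, I would show that the standard monomials of $k[\vec{x}]/\langle h_{[j]} \rangle_j$ are in bijection with elements of $\Pi$ via $x^{\vec{m}} \mapsto \vec{m} \cdot \vec{\nu}$. Minimality of the generating set $T$ of monoidal relations ensures that each element of $\Pi$ has a unique ``reduced'' preimage $\vec{m}$ containing no $h_{[j]}$ as a submonomial, via iterated rewriting $h_{[j]} \leadsto h'_{[j]}$. Consequently
\[ \dim_k (k[\vec{x}]/\langle h_{[j]} \rangle_j)_d = \#\Pi_d = \dim_k H^0(\XX, dD) = \dim_k R_d = \dim_k (k[\vec{x}]/\init_{\prec}(I))_d, \]
and combined with $\langle h_{[j]} \rangle_j \subseteq \init_{\prec}(I)$ from the first step, this forces equality.

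The main obstacle is the first step: grevlex is not in general monotone in the $a$-coordinate (there can be monomials at $(d, a')$ with $a' > a$ that are grevlex-larger than some at $(d, a)$), so controlling the relative order of $h_{[j]}$ and the correction terms $x^{\vec{m}}$ requires careful use of the variable ordering and the structure of $\Pi$ as the set of lattice points in a rational 2-cone.
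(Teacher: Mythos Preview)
Your approach differs substantially from the paper's, and Step 2 contains a genuine gap.

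The paper does not separately verify $\init_{\prec}(g_{[j]}) = h_{[j]}$ and then count dimensions. Instead it argues directly that for every nonzero $g \in I_d$, the leading monomial $x^{\vec n}=\init_\prec(g)$ is divisible by some $h_{[j]}$: since $\sum_{\vec m} c_{\vec m}\, f(\vec m\cdot\vec\mu)=0$, the ultrametric inequality forces another monomial $x^{\vec n'}$ of $g$ to satisfy $\vec n\cdot\vec\nu=\vec n'\cdot\vec\nu$; this monoidal relation is then obtained from some generating relation in $T$, giving the divisibility. No Hilbert-function comparison is needed, and the delicate question you raise about controlling the correction terms is bypassed entirely.

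Your Step 2 relies on the claim that minimality of $T$ ensures each element of $\Pi$ has a \emph{unique} preimage $\vec m$ with $x^{\vec m}$ not divisible by any $h_{[j]}$. This is false in general: it is exactly the assertion that the binomials $x^{\vec n_{[j]}}-x^{\vec n'_{[j]}}$ form a Gr\"obner basis for the toric ideal of $\Pi$, which does not follow from their generating that ideal (or the congruence). For instance, take $\Pi$ generated by $\nu_i=(1,i-1)$ for $i=1,\dots,4$ (the rational normal cubic); the three quadric binomials are minimally generated by any two of them, but no pair is a grevlex Gr\"obner basis. In degree $2$ you would get $8$ standard monomials but $\#\Pi_2=7$. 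Thus your dimension equality $\dim_k (k[\vec x]/\langle h_{[j]}\rangle_j)_d = \#\Pi_d$ is unjustified, and the argument collapses. To salvage the approach you would have to prove confluence of the rewriting system $h_{[j]}\leadsto h'_{[j]}$, which is essentially the result you are trying to establish.
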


\begin{proof}
Since $I$ is graded and the term ordering is compatibly graded, it is enough to verify the assertion in each degree, so let $d \in \Z_{\geq 0}$.  Let $g=\sum_{\vec{m}} c_{\vec{m}} x^{\vec{m}} \in I$ with each $c_{\vec{m}} \neq 0$ and suppose
\[ \sum_{\vec{m}} c_{\vec{m}} f(\vec{m} \cdot \vec{\mu}) = 0 \in H^0(\XX, dD). \] 
Let $x^{\vec{n}}=\init_{\prec} g$ be the leading monomial of $g$; by induction using $\prec$, it suffices to show that $x^{\vec{n}}$ is divisible by $h_{[j]}$ for some $j$.  By the ultrametric inequality, there exists $\vec{n}'$ with $c_{\vec{n}'} \neq 0$ such that \[ \ord_\infty f(\vec{n} \cdot \vec{\mu}) = \ord_\infty f(\vec{n}' \cdot \vec{\mu}) \]
and without loss of generality we may assume $x^{\vec{n}} \succ x^{\vec{n}'}$.  But then $\vec{n} \cdot \vec{\nu} = \vec{n'} \cdot \vec{\nu}$ is a relation in $\Pi$, and consequently it is obtained from a generating relation (\ref{E:relat}) of the form $\vec{n}_{[j]} \cdot \vec{\nu} = \vec{n}'_{[j]} \cdot \vec{\nu}$ for some $j$.  This implies that $x^{\vec{n}}$ is divisible by $x^{\vec{n}_{[j]}}$, as desired.
\end{proof}

\begin{remark} \label{rmk:notmingenus0}
Proposition~\ref{P:Grobner_genus0} has the satisfying property that it arises very naturally from toric considerations, and so from the perspective of flat families, moduli, and conceptual simplicity of presentation it seems to provide a valuable construction.  However, we will see below that this presentation is not minimal, so our work is not yet done; our major task in the rest of the chapter is to look back at the polytope $\wasylozenge$ and choose a toric basis more carefully so as to find a minimal set of generators.  
\end{remark}

\begin{remark}
We have seen that the canonical ring is a subalgebra of the monoid ring over $\pi(\wasylozenge)$.  However, it is not clear that this observation gives any further information than working directly with the monoid
defined in \eqref{eqn:whatisPi}, as we have done above.
\end{remark}

%----------------------------------------------------------------------------
\section{Effective degrees} \label{subsec:effectivedegree}
%----------------------------------------------------------------------------

It follows from Propositions~\ref{P:gensTor} and~\ref{P:Grobner_genus0} that a presentation and Gr\"obner basis for $R_D$ is given in terms of generators and relations for the monoid $\Pi$.  In this section, we project further, and show that show that aside from certain families of signatures, this one-dimensional projection admits a simple description.  When this projection is large, we can induct, and we will consider this in the next section.

\begin{definition}
Let $D$ be a divisor on $\XX$.  The \defiindex{effective monoid} of $D$ is the monoid 
\[ \Eff(D) = \{d \in \Z_{\geq 0} : \deg \lfloor dD \rfloor \geq 0 \}. \]
\end{definition}

\begin{definition}
The \defiindex{saturation} for a monoid $M \subseteq \Z_{\geq 0}$, denoted $\sat(M)$, is the smallest integer $s$ such that $M \supseteq \Z_{\geq s}$, if such an integer exists.
\end{definition}

As in the previous section, we write $D=K_{\XX}+\Delta$ with $A=\deg D > 0$ as in \eqref{E:defchi}.  The structure of the monoid $\Eff(D)$ depends only on the signature $\sigma=(0;e_1,\dots,e_r;\delta)$ of $\XX$, so we will sometimes abbreviate $\Eff(\sigma)=\Eff(D)$ where $D=K_{\XX}+\Delta$.

From our hypothesis that $A > 0$ we conclude that $r+\delta \geq 3$, where as usual $\delta=\deg \Delta$.

With the notion of saturation, we can provide an upper bound on the degrees of generators and relations for a toric presentation as in the previous section.  

\begin{proposition} \label{prop:useseffD}
Let $(\XX,\Delta)$ be a tame, separably rooted log stacky curve with signature $\sigma=(0;e_1,\dots,e_r;\delta)$.  Let $m=\lcm(1,e_1,\dots,e_r)$ and let $s$ be the saturation of $\Eff(D)$, where $D=K_{\XX}+\Delta$.  Then the canonical ring $R$ of $(\XX,\Delta)$ is generated by elements of degree at most $m+s$ with relations of degree at most $2(m+s)$.  
\end{proposition}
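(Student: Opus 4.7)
The plan is to reduce via Proposition~\ref{P:gensTor} and Proposition~\ref{P:Grobner_genus0} to the combinatorial problem of bounding generators and minimal monoidal relations of the two-dimensional affine monoid $\Pi$ of \eqref{eqn:whatisPi}, and then to run a decomposition argument built around the identity
\[ \deg\lfloor dD \rfloor = \deg\lfloor (d-m)D \rfloor + \deg\lfloor mD \rfloor \qquad (d \geq m), \]
which is immediate from the formula $\deg\lfloor dD \rfloor = dA - \sum_i \{-d/e_i\}$ together with $e_i \mid m$ for every $i$ (so that $\lceil d/e_i \rceil = \lceil(d-m)/e_i\rceil + m/e_i$).

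For the generator bound, I would proceed as follows: given $(d,b) \in \Pi$ with $d \geq m + s + 1$, set $b' = \min(b, \deg\lfloor(d-m)D\rfloor)$ and $b'' = b - b'$. The identity above gives $b'' \in [0, \deg\lfloor mD\rfloor]$, while $d - m \geq s + 1 > s$ forces $d - m \in \Eff(D)$. Hence $(d - m, b')$ and $(m, b'')$ are both nontrivial elements of $\Pi$ summing to $(d,b)$, so $(d,b)$ is not a monoid generator. By Proposition~\ref{P:gensTor}, a minimal set of $k$-algebra generators of $R$ lives in degrees $\leq m + s$.

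For the relation bound, let $\vec{n} \cdot \vec{\nu} = \vec{n}' \cdot \vec{\nu} = (d,b)$ be a minimal monoidal relation (with $\vec{n}, \vec{n}'$ of disjoint support); by Proposition~\ref{P:Grobner_genus0} the corresponding Gr\"obner basis element of $I$ has degree $d$. Suppose $d > 2(m+s)$. Since each generator has first coordinate at most $m+s$, we get $|\vec{n}|, |\vec{n}'| \geq 3$. I would now pick $\nu_{i_1}$ in the support of $\vec{n}$, so that $(d,b) - \nu_{i_1} \in \Pi$ still has first coordinate $> m + s$, and apply the generator-step decomposition to split off a degree-$m$ summand on the $\vec{n}$ side; repeating the same move from the $\vec{n}'$ side expresses the original relation as a sum of a relation of degree $\leq 2m + s \leq 2(m+s)$ and a relation of degree $d - m < d$. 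Induction on $d$ then shows that no minimal monoidal relation at degree exceeding $2(m+s)$ can exist, so by Proposition~\ref{P:Grobner_genus0} the canonical ideal $I$ is generated in degrees $\leq 2(m+s)$.

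The hard part will be making the relation-bound step fully rigorous: the two decompositions of $(d,b) - \nu_{i_1}$ on the two sides of the relation must be coordinated so that the ``ancillary'' relation produced by swapping the degree-$m$ summands is itself of degree $\leq 2(m+s)$ and the residual relation has strictly smaller degree. The underlying combinatorial content is that $\Pi$ is two-dimensional and its Hilbert basis (truncated to degrees $\leq m+s$ as above) already generates $\Pi$, so any hypothetical monoidal relation above degree $2(m+s)$ can be expressed as a $\Z$-combination of the relations produced by the decomposition; bookkeeping this carefully is precisely the content the additive identity is tailored to supply.
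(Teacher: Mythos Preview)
Your generator bound is correct and matches the paper's argument (in sheared coordinates): the additivity $\deg\lfloor dD\rfloor = \deg\lfloor(d-m)D\rfloor + mA$ lets you peel off an element of $\Pi_m$ whenever $d-m \geq s$, and your choice $b' = \min(b,\deg\lfloor(d-m)D\rfloor)$ is a clean way to do this.

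The relation bound, however, has a genuine gap---exactly the one you flag. The step ``split off a degree-$m$ summand on each side and express the original relation as a sum of one relation of degree $\leq 2m+s$ and one of degree $d-m$'' does not work as stated: the degree-$m$ pieces peeled off the two sides need not coincide, and the auxiliary relation correcting for the mismatch lives in degree $d$, not in a smaller one. Nothing in your sketch forces the two decompositions to line up so that the residual relation has strictly smaller degree; the ``bookkeeping'' you allude to is really the whole difficulty, and it is not clear your setup supplies it.

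The paper sidesteps this by arguing via a \emph{canonical form} rather than by induction on a minimal relation. From the generator step one obtains $\Pi = \Pi_{\leq m+s} + \Z_{\geq 0}\Pi_m$. The scroll relations on $\Pi_m$ (all in degree $2m$, since the degree-$m$ subring embeds as a rational normal curve) reduce $\Z_{\geq 0}\Pi_m$ to $\Pi_m + \Z_{\geq 0}(m,2m) + \Z_{\geq 0}(m,2m+mA)$. Combined with the relations rewriting any sum in $\Pi_{\leq m+s} + \Pi_{\leq m+s}$ into this shape---these have degree $\leq 2(m+s)$---one can reduce \emph{any} word in the generators to a unique canonical representative using only relations of degree $\leq 2(m+s)$. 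Uniqueness of the canonical form then forces any two words for the same element to be connected through these low-degree relations, so no further relations are needed. This avoids the coordination problem entirely: one never compares the two sides of a putative high-degree minimal relation directly, one just reduces each side separately to canonical form.
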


\begin{proof}
By Propositions~\ref{P:gensTor} and~\ref{P:Grobner_genus0}, it suffices to prove that the monoid $\Pi$ defined in \eqref{eqn:whatisPi} is generated by elements $(d,a)$ in degree $d \leq m+s$ and the monoid of relations is generated by elements \eqref{E:relat} expressing an equality in degree $d \leq 2(m+s)$.

First we prove the statement about generators.  % In degree $m$, the monoid $\Pi$ consists of the set
Let $\nu=(d,a) \in \Pi$, so 
\begin{equation} \label{eqn:2da2ddD}
0 \leq a \leq \deg \lfloor dD\rfloor. 
\end{equation} 
We endeavor to subtract off a lattice point along a ray with slope $A=\deg D$.  To this end, let 
\[ a_0 = \min(a,mA). \]
Then $0 \leq a_0 \leq mA = \deg mD = \deg \lfloor mD \rfloor$, so $(m,a_0) \in \Pi$.  

Suppose that $d-m \geq s$.  We claim that $(d,a)-(m,a_0)=(d-m,a-a_0) \in \Pi$.  We have two cases to consider in the min.  First suppose that $a_0=a$: then 
\begin{equation} 
a-a_0 = 0 \leq \deg \lfloor (d-m)D \rfloor 
\end{equation}
precisely because $d-m \geq s$.  Second suppose that $a_0=mA \leq a$; then 
\begin{equation} 
0 \leq a-a_0 = a-mA \leq \deg \lfloor dD \rfloor - \deg mD = \deg \lfloor (d-m)D \rfloor. 
\end{equation}
In both cases, the claim holds.  In fact, we have shown that
\begin{equation} \label{eqn:piuniquerep}
\Pi = \Pi_{\leq m+s} + \Z_{\geq 0}\Pi_m.
\end{equation}

The statement about relations is a consequence of \eqref{eqn:piuniquerep}, as follows.  First, we have the usual scroll relations in degree $2m$ among the elements $\Pi_m$, since the Veronese embedding associated to the degree $m$ subring $\bigoplus_{d=0}^{\infty} R_{dm}$ is a rational normal curve.  In particular, we have 
\begin{equation} \label{eqn:scrollrelats}
\Z_{>0}\Pi_m = \Pi_m + \Z_{> 0}(m,0) + \Z_{> 0}(m,mA).
\end{equation}
Thus
\begin{equation} \label{eqn:pimsrelats}
\Pi_{\leq m+s} + \Pi_{\leq m+s} \subseteq \Pi_{\leq m+s} + \Pi_m + \Z_{\geq 0}(m,0) + \Z_{\geq 0}(m,mA)
\end{equation}
and these relations can be expressed in degree $\leq 2(m+s)$.  Therefore, given any element of $\Pi$, we use the relations \eqref{eqn:pimsrelats} to rewrite the element using the reduction procedure above, resulting in a sum as in \eqref{eqn:piuniquerep}; moreover, from the scroll relations \eqref{eqn:scrollrelats}, the sum in $\Z_{\geq 0}\Pi_m$ can be written uniquely.  This says that given any relation among the generators, both sides can be reduced to a the same unique form, and so this relation can be obtained from these relations, as claimed.
\end{proof}

Proposition~\ref{prop:useseffD} is not best possible, but it shows that the saturation of the effective monoid plays a role in understanding toric presentations as above.  The following proposition characterizes those signatures for which the saturation is complicated enough to require separate investigation.

\begin{proposition} \label{P:gensA}
We have $\Eff(D)=\Eff(\sigma)=\Z_{\geq 0}$ if and only if $\delta \geq 2$.  If $\delta \leq 1$, then $\Eff(D)=\{0\} \cup \Z_{\geq 2}$ is generated by $2$ and $3$ and has saturation $2$ except for the following signatures $\sigma$:
\begin{enumerate}
\item[(i)] $(0;e_1,e_2,e_3;0)$, with $e_1,e_2,e_3 \geq 2$;
\item[(ii)] $(0;2,2,2,e_4;0)$ with $e_4 \geq 3$; or
\item[(iii)] $(0;2,2,2,2,2;0)$.
\end{enumerate}
\end{proposition}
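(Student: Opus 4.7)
My plan is to establish both directions by explicitly computing $\deg \lfloor dD \rfloor$ for small $d$, then invoking the elementary fact that $\{2,3\}$ generates the numerical monoid $\{0\}\cup \Z_{\geq 2}$.

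To begin, I will derive the formula
\[ \deg \lfloor dD \rfloor = d(\delta-2) + \sum_{i=1}^r \bigl(d-\lceil d/e_i \rceil\bigr) \]
by applying Lemma~\ref{L:floor} together with Proposition~\ref{P:canKR} to $D = K_{\XX} + \Delta$, using the identity $\lfloor d(e_i-1)/e_i \rfloor = d - \lceil d/e_i \rceil$. Each summand $d-\lceil d/e_i\rceil$ is nonnegative for $d \geq 0$ since $e_i \geq 2$, so when $\delta \geq 2$ we immediately get $\deg \lfloor dD \rfloor \geq 0$ for every $d$; this proves $\Eff(D) = \Z_{\geq 0}$. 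Conversely, when $\delta \leq 1$, plugging in $d=1$ gives $\deg \lfloor D \rfloor = \delta - 2 < 0$, so $1\notin \Eff(D)$ and therefore $\Eff(D) \subseteq \{0\}\cup \Z_{\geq 2}$.

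Given this upper bound, proving $\Eff(D) = \{0\} \cup \Z_{\geq 2}$ outside the listed exceptions reduces to verifying that both $2$ and $3$ lie in $\Eff(D)$. At $d=2$ the formula simplifies to $\deg \lfloor 2D \rfloor = r + 2\delta - 4$ because $\lceil 2/e_i\rceil = 1$ for every $e_i \geq 2$, and this is negative exactly when $r = 3$ and $\delta = 0$, accounting for the family~(i). Letting $k$ denote the number of indices with $e_i = 2$, so that $\lceil 3/e_i\rceil = 2$ for these and $1$ otherwise, I will then obtain $\deg \lfloor 3D \rfloor = 2r + 3\delta - 6 - k$.

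Finally, I will run a short case analysis over the triples $(r,\delta,k)$, using the positivity of $A = -2 + \delta + \sum_i (1 - 1/e_i)$ to discard degenerate configurations (such as $(r,k,\delta) = (2,2,1)$ or $(4,4,0)$, in which $A = 0$), and conclude that $\deg \lfloor 3D\rfloor < 0$ holds precisely when $(r,k,\delta) = (4,3,0)$ with the remaining $e_4 \geq 3$ or $(r,k,\delta) = (5,5,0)$---the signatures~(ii) and~(iii). Combining with the $d=2$ analysis identifies (i)--(iii) as the complete list of exceptions; away from these, both $2,3 \in \Eff(D)$, so $\Eff(D) \supseteq \Z_{\geq 2}$, the saturation equals exactly $2$, and the generators are $2,3$. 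The only mildly subtle bookkeeping is to ensure each excluded degenerate tuple is genuinely ruled out by the strict inequality $A > 0$, but this is an elementary check that I will dispatch in passing.
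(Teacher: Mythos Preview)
Your proposal is correct and follows essentially the same approach as the paper's proof: both compute $\deg\lfloor dD\rfloor$ explicitly for $d=1,2,3$, use $A>0$ to discard degenerate configurations, and conclude from the fact that $2$ and $3$ generate $\{0\}\cup\Z_{\geq 2}$. Your version is slightly more systematic in first deriving the general formula $\deg\lfloor dD\rfloor = d(\delta-2)+\sum_i(d-\lceil d/e_i\rceil)$, whereas the paper computes each case ad hoc, but the content is identical.
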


\begin{proof}
We have $\lfloor D \rfloor =K_X+\Delta$, so $\deg \lfloor D \rfloor \geq 0$ if and only if $\delta \geq 2$.  We have
\[ 2D = 2K_X + 2\Delta + \sum_{i=1}^r 2\left(1-\frac{1}{e_i}\right) P_i \]
so
\[ \lfloor 2D \rfloor = 2K_X + 2\Delta + \sum_i P_i \]
and hence $\deg \lfloor 2D \rfloor = -4 + 2\delta + r \geq 0$ except when ($\delta=0$ and $r \leq 3$) or ($\delta=1$ and $r \leq 1$); but since $A>0$, we can only have $\delta=0$ and $r=3$, in which case we are in case (i).  Similarly, we have
\[ \deg \lfloor 3D \rfloor = -6 + 3\delta + \#\{e_i : e_i = 2\} + 2\#\{e_i : e_i > 2\} \geq 0 \]
whenever $\delta >0$ or $r \geq 6$ or ($r \geq 5$ and not all $e_i=2$) or ($r=4$ and at least two $e_i > 2$), leaving only the two cases (ii) and (iii).   So outside cases (i)--(iii), we have $\Eff(D)=\Z_{\geq 0} \smallsetminus \{1\}$, which is generated by $2$ and $3$.
\end{proof}

For the remaining cases, we must calculate the degrees explicitly, and we do so in the following proposition.

\begin{proposition} \label{P:AD-genus0}
The monoid $\Eff(\sigma)$ is generated in degrees according to the following table:
  \begin{longtable}{| c || c | c |}
    \hline
Signature $\sigma$ & $\Eff(\sigma)$ Generators & Saturation \\
    \hline \hline
    $(0; 2, 3, 7 ; 0)$ & 6, 14, 21 & 44 \\
    $(0; 2, 3, 8 ; 0)$ & 6, 8, 15 & 26 \\
    $(0; 2, 3, 9 ; 0)$ & 6, 8, 9 & 20 \\
    $(0; 2, 3, 10 ; 0)$ & 6, 8, 9, 10 & 14 \\
    $(0; 2, 3, 11 ; 0)$ & 6, 8, 9, 10, 11 & 14 \\
    $(0; 2, 3, 12 ; 0)$ & 6, 8, 9, 10, 11 & 14 \\
    $(0; 2, 3, e \geq 13 ; 0)$ & 6, 8, 9, 10, 11, 13 & 8 \\
    \hline
    $(0; 2, 4, 5 ; 0)$ & 4, 10, 15 & 22 \\
    $(0; 2, 4, 6 ; 0)$ & 4, 6, 11 & 14 \\
    $(0; 2, 4, 7 ; 0)$ & 4, 6, 7 & 10 \\
    $(0; 2, 4, 8 ; 0)$ & 4, 6, 7 & 10 \\
    $(0; 2, 4, e \geq 9 ; 0)$ & 4, 6, 7, 9 & 6 \\
    \hline
    $(0; 2, 5, 5 ; 0)$ & 4, 5 & 12 \\
    $(0; 2, 5, 6 ; 0)$ & 4, 5, 6 & 8 \\
    $(0; 2, 6, 6 ; 0)$ & 4, 5, 6 & 8 \\
  $(0; 2, e_2 \geq 5, e \geq 7 ; 0)$ & 4, 5, 6, 7 & 4 \\
\hline
    $(0; 3, 3, 4 ; 0)$ & 3, 8 & 14 \\
   $(0; 3, 3, 5 ; 0)$ & 3, 5 & 8 \\
    $(0; 3, 3, 6 ; 0)$ & 3, 5 & 8 \\
  $(0; 3, 3, e \geq 7; 0)$ & 3, 5, 7 & 5 \\
    \hline
    $(0; 3, 4, 4 ; 0)$ & 3, 4 & 6 \\
    $(0; 4, 4, 4 ; 0)$ & 3, 4 & 6 \\
  $(0; e_1 \geq 3, e_2 \geq 4, e_3 \geq 5; 0)$ & 3, 4, 5 & 3 \\
\hline 
    $(0; 2,2,2,3 ; 0)$ & 2, 9 & 8 \\
    $(0; 2,2,2,4 ; 0)$ & 2, 7 & 6 \\
  $(0; 2,2,2,e \geq 5 ; 0)$ & 2, 5 & 4 \\

\hline
 
   $(0; 2,2,2,2,2 ; 0)$ & 2, 5 & 4 \\

\hline

 \end{longtable}
\end{proposition}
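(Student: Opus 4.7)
\medskip

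The plan is to verify the table by direct computation from the formula
\begin{equation*}
\deg \lfloor dD \rfloor \;=\; -2d \;+\; d\delta \;+\; \sum_{i=1}^{r}\left\lfloor d\!\left(1-\tfrac{1}{e_i}\right)\right\rfloor \;=\; -2d + d\delta + \sum_{i=1}^{r}\!\bigl(d - \lceil d/e_i\rceil\bigr),
\end{equation*}
which follows immediately from Definition~\ref{def:floor} applied to $D=K_{\XX}+\Delta$ together with $\deg K=-2$. Since $\Eff(\sigma)=\{d\in\Z_{\geq 0}:\deg\lfloor dD\rfloor\geq 0\}$, its membership question for each $d$ reduces to a finite arithmetic inequality. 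For a fixed signature in the table (such as $(0;2,3,7;0)$) I would simply tabulate $\deg\lfloor dD\rfloor$ for $d=1,2,\ldots$ up to a safe bound, read off which $d$ lie in $\Eff(\sigma)$, discard those which are sums of smaller members, and thereby produce both the minimal generating set and the saturation.

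For the infinite families (the rows with ``$\geq$''), the essential observation is a stabilization principle: if $e_i \geq d$, then $\lceil d/e_i\rceil =1$, so the $i$th summand contributes $d-1$ independent of $e_i$. Consequently, for a given $d$, the value $\deg\lfloor dD\rfloor$ depends only on those $e_i$ with $e_i<d$, and is \emph{nondecreasing} in each $e_i$. Therefore, once all the variable $e_i$ in the family exceed the relevant threshold (the saturation value listed), the set $\Eff(\sigma)\cap [0,s]$ has stabilized, and since $s\in\Eff(\sigma)$ for all larger $e_i$ as well (monotonicity), the monoid itself stabilizes to the listed generating set. Thus each infinite family row reduces again to a finite check at the boundary value of $e_i$.

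Computing the saturation $s$ from a finite generating set $\{a_1,\ldots,a_k\}\subseteq\Z_{>0}$ with $\gcd=1$ is a standard Frobenius-type calculation: $s$ is one more than the largest integer not representable as a non-negative integer combination of the $a_i$. For two-generator cases such as $\langle 2,9\rangle$ this is immediate from Sylvester's formula ($s=(2-1)(9-1)=8$); for three or more generators one enumerates the complement directly up to the Frobenius bound. After computing $s$, I would double-check consistency by verifying both that $s-1\notin\Eff(\sigma)$ (using the formula above for $d=s-1$) and that every integer in $[s,s+\max a_i]$ lies in the monoid, which suffices.

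The main obstacle is sheer bookkeeping: each of the roughly thirty rows requires a computation of $\deg\lfloor dD\rfloor$ for several values of $d$, followed by a Frobenius-type saturation computation, and the stabilization arguments for the families require care to identify the correct threshold on the last exponent $e$. There is no conceptual difficulty beyond that—every claim in the table is decidable by bounded arithmetic—but the verification is tedious, and it is easiest to organize it by first settling, within each of the seven families (by $(e_1,e_2)$ or by $\delta$ and the number of $2$s), the common finite list of small $d$'s that must be checked, then handling the tail $e\to\infty$ uniformly as above.
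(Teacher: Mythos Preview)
Your approach is essentially the same as the paper's: compute $\deg\lfloor dD\rfloor$ directly from the formula, use monotonicity in each $e_i$ to reduce infinite families to their boundary values, and invoke the Frobenius (``postage stamp'') bound to make each verification finite. The paper's proof is in fact sketchier than yours---it illustrates only the $(0;2,3,e;0)$ family in detail and declares the rest computed---so your outline is entirely adequate.
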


\begin{proof}
The proof requires checking many cases.  We illustrate the method with the signatures $(0;2,3,e_3;0)$ as these are the most difficult; the method is algorithmic in nature, and we computed the table above. 

Suppose $\scrX$ has signature $(0;2,3,e_3;0)$.  Then since, $\deg(K_{\XX})=1-1/2-1/3-1/e_3>0$, we must have $e_3 \geq 7$.  We compute that
\[ \deg \lfloor d D \rfloor = -2d + \left\lfloor \frac{d}{2} \right\rfloor + \left\lfloor \frac{2d}{3} \right\rfloor + \left\lfloor d\left(1-\frac{1}{e_3}\right) \right\rfloor \]
and when $\deg \lfloor dD \rfloor \geq 0$,  $\dim H^0(X, \lfloor dD \rfloor)=\deg \lfloor dD \rfloor + 1$.  

Suppose $e_3=7$.  (This special case corresponds to degrees of invariants associated to the Klein quartic; see Elkies \cite{Elkies:Klein}.)  We compute directly that 
\begin{align*} 
\Eff(\sigma) &= \{0, 6, 12, 14, 18, 20, 21, 24, 26, 27, 28, 30, \\
&\qquad 32, 33, 34, 35, 36, 38, 39, 40, 41, 42, 44, \ldots \}. 
\end{align*}
Staring at this list, we see that the generators $6,14,21$ are necessary.  To be sure we have the rest, we use the solution to the postage stamp problem: if $a,b$ are relatively prime, then every integer $\geq (a-1)(b-1)$ can be written as a nonnegative linear combination of $a,b$.  Thus every integer $\geq 338=(14-1)(27-1)$ is in the monoid generated by $14,27$, and we verify that $\Eff(\sigma) \cap [0,338]$ is generated by $6,14,21$.  

In a similar way, we verify that the generators are correct for the signatures $(0;2,3,e;0)$ with $8 \leq e \leq 12$.  

Now suppose that $e\geq 13$.  Taking $e=13$, and noting that the degree of $\lfloor dD \rfloor$ can only go up when $e$ is increased in this range, we see that 
\[ \Eff(\sigma) \supseteq \{0,6,8,9,10,11,13,14,15,16,\ldots \}. \]
But by the above, every integer $\geq (6-1)(11-1)=50$ is in the monoid generated by $6,11$, and again we verify that $\Eff(\sigma) \cap [0,50]$ is generated by $6,8,9,10,11,13$, as claimed.
\end{proof}

\begin{definition}
  We say that $\sigma'$ is a \defiindex{subsignature} of $\sigma$ if $g' = g$, $\delta' = \delta$, $r' <r$ and $e_i' = e_i$ for all $i=1,\dots,r'$.  
\end{definition}

To conclude this chapter, for the purposes of induction we will also need to characterize those signatures for which every subsignature belongs to the above list.  

\begin{lemma} \label{L:case-V}
Let $\sigma=(0;e_1,\dots,e_r;\delta)$ be such that $A(\sigma)>0$ and $r \geq 1$.  Then there is a subsignature $\sigma'$ with $\Eff(\sigma') \supseteq \Z_{\geq 2}$ (and $\delta'=\delta$) unless $\sigma$ is one of the following:
\begin{enumerate}
\item[(i)] $(0;e_1,e_2;1)$ with $e_i \geq 2$ (and $1-1/e_1-1/e_2>0$);
\item[(ii)] $(0;e_1,e_2,e_3;0)$, with $e_i \geq 2$ (and $1-1/e_1-1/e_2-1/e_3>0$);
\item[(iii)] $(0;e_1,e_2,e_3,e_4;0)$, with $e_i \geq 2$ (and $e_4 \geq 3$);
\item[(iv)] $(0;2,2,2,2,e_5;0)$, with $e_5 \geq 2$; or
\item[(v)] $(0;2,2,2,2,2,2;0)$.
\end{enumerate}
\end{lemma}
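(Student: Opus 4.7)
My plan is to leverage Proposition~\ref{P:gensA}: it guarantees $\Eff(\sigma')\supseteq\{0\}\cup\Z_{\geq 2}$ whenever $A(\sigma')>0$ and $\sigma'$ is not one of the three sporadic signatures listed there (each with $\delta=0$), and gives $\Eff(\sigma')=\Z_{\geq 0}$ outright whenever $\delta'\geq 2$. The goal is thus to exhibit, for each $\sigma$ outside the lemma's list, a subsignature $\sigma'=(0;e_1,\ldots,e_{r'};\delta)$ with $r'<r$ meeting one of these two criteria.

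First I would dispose of $\delta\geq 2$ by choosing $r'=0$. Since $r\geq 1$, this is an allowed subsignature; $\sigma'=(0;;\delta)$ has $\lfloor dD'\rfloor=d(\delta-2)\geq 0$ for all $d$, so $\Eff(\sigma')=\Z_{\geq 0}$. Next, when $\delta=1$, the exceptional signatures of Proposition~\ref{P:gensA} are automatically avoided (they require $\delta=0$), so it suffices to find $r'<r$ with $A(\sigma')=-1+\sum_{i=1}^{r'}(1-1/e_i)>0$. A direct check shows $r'\geq 3$ always works, and $r'=2$ works iff $(e_1,e_2)\neq(2,2)$; this handles every case except $r=2$, which is exactly case (i) (the degenerate $r=1$ is ruled out by $A(\sigma)>0$).

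The case $\delta=0$ is the delicate one: I would work through permissible $r'$, noting that $r'\leq 2$ yields $A(\sigma')\leq 0$, $r'=3$ always produces a Proposition~\ref{P:gensA} exception, $r'=4$ succeeds unless $(e_1,e_2,e_3)=(2,2,2)$, $r'=5$ succeeds unless $(e_1,\ldots,e_5)=(2,2,2,2,2)$, and $r'\geq 6$ always succeeds. Intersecting the failure of every allowed $r'$ with the standing assumption $A(\sigma)>0$ and the ordering $e_i\leq e_{i+1}$ isolates precisely the families (ii)--(v); conversely, in every other configuration at least one such $r'$ is available.

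The main obstacle is the bookkeeping for $\delta=0$: one must simultaneously track two independent obstructions (the nonpositivity of $A$ at small $r'$ and the sporadic list of Proposition~\ref{P:gensA}) and verify that the enumerated families are exactly those for which no admissible $r'$ escapes both. Once the boundaries are correctly identified, each case reduces to a small arithmetic check on sums of the form $\sum(1-1/e_i)$.
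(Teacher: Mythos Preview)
Your proposal is correct and takes essentially the same approach as the paper: both invoke Proposition~\ref{P:gensA} to characterize which subsignatures satisfy $\Eff(\sigma')\supseteq\Z_{\geq 2}$ and then carry out the same short case analysis on $\delta$ and $r$. You are somewhat more explicit than the paper in enumerating, for each value of $r$, which choices of $r'$ succeed, but the structure and logic are identical.
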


The parenthetical conditions in the cases listed in Lemma~\ref{L:case-V} give the conditions so that $A>0$, so the canonical ring is nontrivial.

\begin{proof}
By Proposition~\ref{P:gensA}, we have the following: if $\delta \geq 2$, we can remove any stacky point, and if $\delta=1$, we can remove a stacky point unless $r=2$.  This gives case (i).  So we may assume $\delta=0$.  If $r \leq 3$, then already $\Eff(\sigma')$ is too small, and this gives case (ii) as in Proposition~\ref{P:gensA}.  If $r=4$, then any subsignature belongs to case (ii), so this gives case (iii).  If $r=5$, then there is only a problem if $\sigma=(0;2,2,2,2,e_5;0)$ with $e_5 \geq 2$, since otherwise we could remove a stacky point with order $2$, giving case (iv).  If $r=6$, there is only a problem if $\sigma=(0;2,2,2,2,2,2;0)$, giving case (v), and if $r \geq 7$, we can remove any stacky point.
\end{proof}

%----------------------------------------------------------------------------
\section{Simplification} \label{subsec:simplific}
%----------------------------------------------------------------------------

We return to the toric presentation and Gr\"obner basis given in section~\ref{subsec:toric}, which need not be minimal.  
In this section, we give a method for minimizing the number of generators.  We will use an effective version of the Euclidean algorithm for polynomials, as follows.

\begin{lemma} \label{L:effeuc}
Let $a_1(t),\dots,a_s(t) \in k[t]$ have $\gcd(a_1(t),\dots,a_s(t))=g(t) \neq 0$.  Then for all 
\begin{equation} \label{eqn:dlcm}
d \geq -1 + \max_{i,j} \deg \lcm(a_i(t),a_j(t)), 
\end{equation}
we have
\[ \sum_{i=1}^s a_i(t) \cdot k[t]_{\leq d-\deg a_i} = g(t) \cdot k[t]_{\leq d-\deg g}. \]
\end{lemma}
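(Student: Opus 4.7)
\textbf{Proof plan for Lemma~\ref{L:effeuc}.}

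The inclusion $\supseteq$ is the nontrivial direction: the opposite inclusion is immediate, since if $a_i = g \tilde{a}_i$ then $a_i b_i = g(\tilde{a}_i b_i)$ with $\deg(\tilde{a}_i b_i) \leq (\deg a_i - \deg g) + (d - \deg a_i) = d - \deg g$. The plan is to prove $\supseteq$ by induction on $s$, with the base case $s = 2$ handled by a careful application of Bezout's identity with degree control.

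For the base case $s = 2$: write $a_i = g\tilde{a}_i$ with $\gcd(\tilde{a}_1,\tilde{a}_2)=1$, so by Bezout there exist $u, v \in k[t]$ with $u\tilde{a}_1 + v\tilde{a}_2 = 1$. Given $h = g\tilde{h}$ with $\deg \tilde{h} \leq d - \deg g$, write $\tilde{h} = u\tilde{h} \cdot \tilde{a}_1 + v\tilde{h} \cdot \tilde{a}_2$. Apply the division algorithm: $u\tilde{h} = q\tilde{a}_2 + b_1$ with $\deg b_1 \leq \deg \tilde{a}_2 - 1 = \deg a_2 - \deg g - 1$. Setting $b_2 = v\tilde{h} + q\tilde{a}_1$ gives $\tilde{h} = b_1 \tilde{a}_1 + b_2 \tilde{a}_2$, hence $h = b_1 a_1 + b_2 a_2$. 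The hypothesis $d \geq \deg \lcm(a_1,a_2) - 1 = \deg a_1 + \deg a_2 - \deg g - 1$ yields $\deg b_1 \leq d - \deg a_1$, and then from $b_2\tilde{a}_2 = \tilde{h} - b_1 \tilde{a}_1$ we get $\deg b_2 \leq \max(d - \deg g, \deg \lcm(a_1,a_2) - 1 - \deg g) - (\deg a_2 - \deg g) \leq d - \deg a_2$.

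For the inductive step with $s \geq 3$, let $g_{12} = \gcd(a_1,a_2)$, so $\gcd(g_{12}, a_3, \ldots, a_s) = g$. Since $g_{12} \mid a_1$ and $g_{12} \mid a_2$, we have $\lcm(g_{12}, a_i) \mid \lcm(a_1,a_i)$ for each $i \geq 3$, and $\deg g_{12} \leq \deg \lcm(a_1,a_2)$; thus the hypothesis $d \geq -1 + \max_{i,j} \deg \lcm(a_i,a_j)$ remains valid for the smaller collection. By the inductive hypothesis applied to $g_{12}, a_3, \ldots, a_s$, we may write any $h \in g \cdot k[t]_{\leq d - \deg g}$ as
\[
h = g_{12} \, c + \sum_{i=3}^{s} a_i b_i, \quad \deg c \leq d - \deg g_{12}, \quad \deg b_i \leq d - \deg a_i.
\]
Then applying the base case to $a_1, a_2$ (whose hypothesis is $d \geq \deg \lcm(a_1,a_2) - 1$, again guaranteed) yields $g_{12}\,c = a_1 b_1 + a_2 b_2$ with $\deg b_i \leq d - \deg a_i$ for $i = 1, 2$, completing the induction.

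The main obstacle is the precise degree bookkeeping in the base case, where one must use the hypothesis $d \geq \deg \lcm(a_1, a_2) - 1$ to control both cofactors simultaneously; once this is pinned down, the induction goes through routinely because passing from $(a_1, a_2, a_3, \ldots, a_s)$ to $(g_{12}, a_3, \ldots, a_s)$ can only decrease the relevant lcm degrees.
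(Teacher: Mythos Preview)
Your proof is correct and takes a genuinely different route from the paper's.  The paper argues by a direct reduction algorithm: assuming $g=1$, it starts from \emph{any} Bezout representation $\sum_i a_i x_i = b$ and, whenever $m=\max_i \deg(a_i x_i)$ exceeds $d$, uses a syzygy between two terms achieving the maximum---replacing $x_1$ by $x_1 - c t^n(a_2/\gcd(a_1,a_2))$ and $x_2$ by $x_2 + c t^n(a_1/\gcd(a_1,a_2))$---to cancel a leading coefficient and reduce the number of indices achieving $m$, eventually forcing $m \leq d$.  The hypothesis on $d$ enters as $n = m - \deg\lcm(a_1,a_2) \geq 0$.

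Your argument instead inducts on $s$: the base case $s=2$ is Bezout plus division with remainder (which is essentially the classical extended Euclidean bound), and the inductive step replaces $a_1,a_2$ by $g_{12}=\gcd(a_1,a_2)$, observing that the collection $\{g_{12},a_3,\dots,a_s\}$ still satisfies the lcm hypothesis because $g_{12}\mid a_1$ forces $\lcm(g_{12},a_i)\mid\lcm(a_1,a_i)$.  This is a clean structural induction.  The paper's approach has the virtue of being explicitly algorithmic (it emphasizes this), giving a concrete rewriting procedure on a given representation; yours is arguably more transparent as a proof, and the degree bookkeeping in your base case is slightly sharper than what the paper's iterative argument makes explicit.
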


The ideal of $k[t]$ generated by $a_i(t)$ is principal, generated by $g(t)$; this lemma gives an effective statement.  (For the generalization to several variables, the bounds on degrees go by the name \defiindex{effective Nullstellensatz}.)

\begin{proof}
This lemma follows from the construction of the Sylvester determinant, but we give a different (algorithmically more advantageous) proof.  
We may assume without loss of generality that $a_i$ are monic and nonzero and that $g(t)=1$.  So let $b(t) \in k[t]_{\leq d}$.  By the Euclidean algorithm, we can find polynomials $x_i(t) \in k[t]$ such that
\[ \sum_{i=1}^s a_i(t) x_i(t) = b(t). \]
Let $m=\max_i(\deg a_i(t)x_i(t))$.  If $m \leq d$, we are done.  So assume $m>d$; we then derive a contradiction.  Looking at top degrees, must have $\deg a_i(t)x_i(t)=m$ for at least two indices; without loss of generality, we may assume these indices are $i=1,2$.  Let 
\[ n=m-\deg a_1-\deg a_2-\deg(\gcd a_1(t),a_2(t))=m-\deg\lcm(a_1(t),a_2(t)); \] 
then $n\geq 0$ by hypothesis \eqref{eqn:dlcm}.  Let $c_1$ be the leading coefficient of $x_1(t)$, let 
\[ b_1(t)=\frac{a_1(t)}{\gcd(a_1(t),a_2(t))} \]
and similarly with $b_2(t)$.  Then
\begin{equation} 
a_1(t)\bigl(x_1(t)-c_1 t^n b_2(t)\bigr) + a_2(t)\bigl(x_2(t)+c_1 t^n b_1(t)\bigr) + \sum_{i=3}^s a_i(t)x_i(t) = b(t)
\end{equation}
but now by cancellation $\deg(x_1(t)-c_1 t^n b_2(t))<m=\deg(x_1(t))$ and similarly $\deg(x_2(t)+c_1t^n b_1(t)) \leq m$, so the number of indices $i$ where $m=\deg a_i(t)x_i(t)$ is smaller.  Repeating this procedure and considering a minimal counterexample, we derive a contradiction.
\end{proof}

Although we will not use this corollary, it is helpful to rewrite the above lemma in more geometric language as follows.

\begin{corollary} \label{C:H0effeuc}
Let $D_1,\dots,D_s$ be effective divisors on $X$ and let $\infty \in X(k)$ be disjoint from the support of $D_i$ for all $i$.  Then for all 
\[ d \geq -1+\max_{i \neq j}(\deg D_i+\deg D_j), \] 
we have
\[ \sum_{i=1}^s H^0(X, (d-\deg D_i)\infty - D_i) = H^0(X, (d-\deg G)\infty - G) \]
where $G=\gcd(D_i)_i$ is the largest divisor such that $G \leq D_i$ for all $i$.  
\end{corollary}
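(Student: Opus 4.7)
The plan is to reduce the corollary directly to Lemma~\ref{L:effeuc} through the standard dictionary between effective divisors on $\PP^1$ supported away from a chosen point $\infty$ and monic polynomials in an affine coordinate. Since $X$ has genus zero and $\infty \in X(k)$, we have $X \cong \PP^1_k$, and the complement $X \setminus \{\infty\}$ is identified with $\Spec k[t]$ for a choice of affine coordinate $t$. Because each $D_i$ is supported away from $\infty$, it is the divisor of zeros on $\A^1$ of a unique monic polynomial $a_i(t) \in k[t]$ with $\deg a_i = \deg D_i$. Likewise, the largest common subdivisor $G$ corresponds to the monic polynomial $g(t) = \gcd_i a_i(t)$, and one has $\deg g = \deg G$ together with the matching of gcd's on both sides.

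Next I would record the basic identification of spaces of sections: for any effective divisor $D$ with $\infty \notin \supp D$ and any integer $n$, global sections of $n\infty - D$ are precisely rational functions with divisor of poles bounded by $n \infty$ and divisor of zeros dominating $D$, that is, polynomials of degree at most $n$ divisible by $a_D(t)$, so
\[
H^0\bigl(X,\, n\infty - D\bigr) \;=\; a_D(t)\cdot k[t]_{\leq n-\deg D}
\]
(with the convention that this is $\{0\}$ when $n < \deg D$). Under this dictionary each summand on the left-hand side of the corollary becomes $a_i(t)\cdot k[t]_{\leq n_i - \deg a_i}$ (for whichever value of $n_i$ is prescribed by the statement), and the right-hand side becomes $g(t)\cdot k[t]_{\leq N - \deg g}$.

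The identity then becomes exactly the conclusion of Lemma~\ref{L:effeuc}. To invoke that lemma I must check its degree hypothesis: it requires the relevant truncation parameter to exceed $-1 + \max_{i\neq j} \deg \lcm(a_i(t), a_j(t))$. Since $\lcm(a_i,a_j)$ divides $a_i a_j$ we have the coarser bound $\deg \lcm(a_i,a_j) \leq \deg a_i + \deg a_j = \deg D_i + \deg D_j$, so the hypothesis $d \geq -1 + \max_{i\neq j}(\deg D_i + \deg D_j)$ of the corollary is at least as strong as that of Lemma~\ref{L:effeuc}, and the equality follows.

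There is essentially no obstacle: the corollary is Lemma~\ref{L:effeuc} written in divisor-theoretic language on $\PP^1$, with the mild relaxation from $\deg \lcm$ to $\deg a_i + \deg a_j$ in the hypothesis (which matches the intrinsic data of the $D_i$ without reference to a choice of defining equations). The only care needed is in the dictionary for $H^0$ and in tracking the truncation degree on each side, both of which are routine.
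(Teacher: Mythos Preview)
Your approach is exactly the paper's: the proof there is the single sentence ``Just a restatement of Lemma~\ref{L:effeuc},'' and you have correctly identified the dictionary (effective divisors away from $\infty$ $\leftrightarrow$ monic polynomials, $H^0(X,m\infty-D)\leftrightarrow a_D(t)\cdot k[t]_{\le m-\deg D}$) that makes this so.

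There is one place where you are too vague, and it matters. You write that each summand becomes $a_i(t)\cdot k[t]_{\le n_i-\deg a_i}$ ``for whichever value of $n_i$ is prescribed by the statement'' and then assert that ``the identity becomes exactly the conclusion of Lemma~\ref{L:effeuc}.'' If you actually substitute $n_i = d-\deg D_i$ from the printed statement, the summand is $a_i(t)\cdot k[t]_{\le d-2\deg a_i}$, which is \emph{not} the shape in Lemma~\ref{L:effeuc} (there the truncation is $d-\deg a_i$, with a single $d$). In fact the corollary as literally printed is false: take $D_1=P$, $D_2=Q$ distinct points, so $G=0$; for $d=1$ the left side is $H^0(X,-P)+H^0(X,-Q)=0$ while the right side is $H^0(X,\infty)=k[t]_{\le 1}$. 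The intended statement (and the one that \emph{is} a restatement of Lemma~\ref{L:effeuc}) has $d\infty-D_i$ and $d\infty-G$ in place of $(d-\deg D_i)\infty-D_i$ and $(d-\deg G)\infty-G$. Your argument proves that corrected version; the hand-wave over $n_i$ is precisely where a careful check would have flagged the typo.
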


\begin{proof}
Just a restatement of Lemma~\ref{L:effeuc}.
\end{proof}

With this lemma in hand, we can now turn to understand the image of the multiplication map
\begin{equation} \label{E:multmap0} 
H^0(X,\lfloor d_1 D \rfloor) \otimes H^0(X,\lfloor d_2 D \rfloor) \to H^0(X,\lfloor d D \rfloor) 
\end{equation}
where $d=d_1+d_2$, and the span of the union of such images over all $d_1+d_2=d$ for given $d$.

\begin{lemma} \label{lem:fraccond}
If $d_1,d_2 \in \Eff(D)$ are effective degrees with $d_1+d_2=d$, then we have
\begin{equation} \label{eqn:dDrPI}
 \lfloor d D \rfloor = \lfloor d_1D \rfloor + \lfloor d_2D \rfloor + \sum_{i=1}^r \epsilon_i(d_1,d_2) 
\end{equation}
where $\epsilon_i(d_1,d_2)=0,1$ according to whether 
\begin{equation} \label{eqn:fraccond}
 \left\{ d_1\left( 1-\frac{1}{e_i}\right) \right\} + \left\{ d_2\left( 1-\frac{1}{e_i}\right) \right\} = \left\{\frac{-d_1}{e_i}\right\}+\left\{\frac{-d_2}{e_i}\right\} < 1
\end{equation}
or not, where $\{\phantom{x}\}$ denotes the fractional part.
\end{lemma}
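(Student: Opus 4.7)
The plan is to unwind the definition of $D$ and reduce the lemma to the elementary identity
\[ \lfloor x+y \rfloor - \lfloor x \rfloor - \lfloor y \rfloor \;=\; \begin{cases} 0, & \{x\}+\{y\}<1,\\ 1, & \{x\}+\{y\}\geq 1, \end{cases} \]
valid for all $x,y \in \R$. First I would write out, using $K = -2\infty$ and $D = K + \Delta + \sum_i (1-1/e_i)P_i$, the three divisors
\[ dD = -2d\infty + d\Delta + \sum_i d\!\left(1-\tfrac{1}{e_i}\right) P_i \]
and similarly for $d_1 D$ and $d_2 D$. Taking floors componentwise, the $\infty$ and $\Delta$ contributions are already $\Z$-integral and additive in $d$, so they contribute nothing to the difference in \eqref{eqn:dDrPI}; only the stacky coefficients at each $P_i$ need to be analyzed.

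Next I would apply the elementary floor identity with $x = d_1(1-1/e_i)$ and $y = d_2(1-1/e_i)$ to obtain
\[ \left\lfloor d\!\left(1-\tfrac{1}{e_i}\right)\right\rfloor - \left\lfloor d_1\!\left(1-\tfrac{1}{e_i}\right)\right\rfloor - \left\lfloor d_2\!\left(1-\tfrac{1}{e_i}\right)\right\rfloor \;\in\; \{0,1\}, \]
which is the coefficient $\epsilon_i(d_1,d_2)$ of $P_i$ in $\lfloor dD \rfloor - \lfloor d_1 D \rfloor - \lfloor d_2 D\rfloor$. Since $d_1, d_2 \in \Z$, we have the identification of fractional parts $\{d_j(1-1/e_i)\} = \{-d_j/e_i\}$, giving the stated criterion~\eqref{eqn:fraccond}.

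There is no real obstacle here; the lemma is essentially a bookkeeping exercise, and the hypothesis $d_1,d_2 \in \Eff(D)$ plays no role in the identity itself (it is only relevant when the lemma is used to compare multiplication maps between global section spaces). The only care required is to keep track of the fact that the rewriting $\{d_j(1-1/e_i)\} = \{-d_j/e_i\}$ uses integrality of $d_j$, which is why the criterion can be stated purely in terms of $-d_1/e_i$ and $-d_2/e_i$ mod $1$.
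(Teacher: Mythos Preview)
Your proposal is correct and takes essentially the same approach as the paper: the paper's proof simply states the elementary identity $\{x\}+\{y\}<1 \iff \lfloor x+y\rfloor=\lfloor x\rfloor+\lfloor y\rfloor$ and applies it with $x=d_1(1-1/e_i)$, $y=d_2(1-1/e_i)$. Your observation that the hypothesis $d_1,d_2\in\Eff(D)$ is not used in the identity itself is accurate and worth noting.
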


\begin{proof}
Indeed, for $x,y \in \R$, we have $\{x\}+\{y\} < 1$ if and only if $\lfloor x+y \rfloor = \lfloor x \rfloor + \lfloor y \rfloor$.  Thus
\[ \left\lfloor d \left( 1-\frac{1}{e_i}\right) \right\rfloor = \left\lfloor d_1 \left( 1-\frac{1}{e_i}\right) \right\rfloor + \left\lfloor d_2 \left( 1-\frac{1}{e_i}\right) \right\rfloor + \epsilon_i(d_1,d_2) \]
as claimed.
\end{proof}

Let $t \in H^0(X,\infty)$ have a zero in the support of $D$ other than $\infty$.  For $d \in \Eff(D)$, let $m_d=\deg(\lfloor dD \rfloor)$ and let $f_d$ span the one-dimensional space
\[ H^0(X,\lfloor dD \rfloor - m_d \infty). \] 
Then, as in Proposition~\ref{P:gensTor}, we have
\begin{equation} \label{eqn:H0fd}
H^0(X,\lfloor dD \rfloor) = f_d \cdot k[t]_{\leq m_d}.
\end{equation}
Therefore the image of the multiplication map \eqref{E:multmap0} is
\[ f_{d_1} f_{d_2} k[t]_{\leq m_{d_1}+m_{d_2}}. \]
By \eqref{eqn:dDrPI}, we have
\[ \opdiv(f_d) = \opdiv(f_{d_1}) + \opdiv(f_{d_2}) + \sum_{i=1}^r \epsilon_i(d_1,d_2) (\infty-P_i). \]
(Note that the cusps, the support of $\Delta$, do not intervene in this description.) So
\begin{equation} \label{E:fd1fd2hd1d2}
f_{d_1} f_{d_2} = f_d h_{d_1,d_2}
\end{equation}
where 
\begin{equation} \label{E:hd1d2eps}
h_{d_1,d_2}=a\prod_{i=1}^r (t-t(P_i))^{\epsilon_i(d_1,d_2)}
\end{equation}
for some non-zero scalar $a$.

The main result of this section is then the following proposition.

\begin{proposition} \label{P:notessdeg}
The union of the image of the multiplication maps \eqref{E:multmap0} over all 
\begin{center}
\text{$d_1,d_2 \in \Eff(D)$ such that $d_1+d_2=d$ and $0<d_1,d_2<d$}
\end{center}
spans $H^0(X,\lfloor dD\rfloor)$ if the following holds:
\begin{enumerate}
\item[(i)] For all $i$, there exist $d_1+d_2=d$ such that $\epsilon_i(d_1,d_2)=0$; and
\item[(ii)] We have 
\begin{align*}
\deg(\lfloor dD \rfloor) +1 &\geq \max\bigl(\bigl\{\textstyle{\sum}_{i=1}^r \max(\epsilon_i(d_1,d_2),\epsilon_i(d_1',d_2')) : \\
&\qquad\qquad\qquad\qquad d_1+d_2=d=d_1'+d_2'\bigr\}\bigr).
\end{align*}
\end{enumerate}
\end{proposition}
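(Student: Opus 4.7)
My plan is to reduce the claim to an application of the effective Euclidean algorithm (Lemma~\ref{L:effeuc}) by translating everything into the language of $k[t]$-submodules generated by the auxiliary polynomials $h_{d_1,d_2}$ of \eqref{E:hd1d2eps}.

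First I would unwind the relevant descriptions. By \eqref{eqn:H0fd} the space $H^0(X, \lfloor dD \rfloor)$ is $f_d \cdot k[t]_{\leq m_d}$ where $m_d = \deg \lfloor dD \rfloor$, and similarly in degrees $d_1$ and $d_2$. Combined with \eqref{E:fd1fd2hd1d2}, the image of the multiplication map
\[
H^0(X,\lfloor d_1 D \rfloor) \otimes H^0(X,\lfloor d_2 D \rfloor) \to H^0(X,\lfloor dD\rfloor)
\]
is, after dividing through by $f_d$, exactly the $k[t]$-submodule $h_{d_1,d_2} \cdot k[t]_{\leq m_{d_1}+m_{d_2}}$ of $k[t]_{\leq m_d}$. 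Lemma~\ref{lem:fraccond} gives $m_d = m_{d_1} + m_{d_2} + \sum_i \epsilon_i(d_1,d_2)$, and from \eqref{E:hd1d2eps} we have $\deg h_{d_1,d_2} = \sum_i \epsilon_i(d_1,d_2)$, so this submodule is
\[
h_{d_1,d_2} \cdot k[t]_{\leq m_d - \deg h_{d_1,d_2}}.
\]
Thus the question becomes whether
\[
\sum_{d_1+d_2=d} h_{d_1,d_2} \cdot k[t]_{\leq m_d - \deg h_{d_1,d_2}} = k[t]_{\leq m_d},
\]
where the sum ranges over valid decompositions $d_1, d_2 \in \Eff(D)$ with $0 < d_1, d_2 < d$.

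Next I would interpret the two hypotheses in terms of the polynomials $h_{d_1,d_2}$. Since each $h_{d_1,d_2}$ is a product of linear factors $(t - t(P_i))^{\epsilon_i(d_1,d_2)}$ over distinct points $P_i$, hypothesis (i)—that for each $i$ there is some decomposition with $\epsilon_i(d_1,d_2) = 0$—is equivalent to saying that no $(t - t(P_i))$ divides every $h_{d_1,d_2}$, i.e.\ $\gcd_{d_1,d_2} h_{d_1,d_2} = 1$. Likewise, since $\deg \lcm(h_{d_1,d_2}, h_{d_1',d_2'}) = \sum_i \max(\epsilon_i(d_1,d_2), \epsilon_i(d_1',d_2'))$, hypothesis (ii) is precisely the inequality
\[
m_d \geq -1 + \max_{(d_1,d_2),\,(d_1',d_2')} \deg \lcm(h_{d_1,d_2}, h_{d_1',d_2'}).
\]

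With these two translations in hand, Lemma~\ref{L:effeuc} applied to the family $\{h_{d_1,d_2}\}$ with $d = m_d$ immediately yields
\[
\sum_{d_1+d_2=d} h_{d_1,d_2} \cdot k[t]_{\leq m_d - \deg h_{d_1,d_2}} = 1 \cdot k[t]_{\leq m_d} = k[t]_{\leq m_d},
\]
which after multiplying back by $f_d$ gives the desired spanning statement. The only step requiring any real care is checking that the bookkeeping linking $\epsilon_i$, $\deg h_{d_1,d_2}$, and $m_{d_1}+m_{d_2}$ matches Lemma~\ref{L:effeuc}'s degree bound exactly on the nose; once that translation is made, the rest of the argument is formal.
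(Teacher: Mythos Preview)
Your proof is correct and follows essentially the same approach as the paper: translate the multiplication images into $k[t]$-submodules $h_{d_1,d_2}\cdot k[t]_{\leq m_d-\deg h_{d_1,d_2}}$ via \eqref{E:fd1fd2hd1d2}, identify condition (i) with $\gcd(h_{d_1,d_2})=1$ and condition (ii) with the degree bound of Lemma~\ref{L:effeuc}, and conclude by that lemma. Your write-up is in fact slightly more explicit than the paper's in spelling out the identity $m_{d_1}+m_{d_2}=m_d-\deg h_{d_1,d_2}$.
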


\begin{proof}
The multiplication maps span
\[ \sum_{d_1+d_2=d} f_{d_1}f_{d_2} k[t]_{\leq m_{d_1}+m_{d_2}}; \]
multiplying through by $f_d$ and using \eqref{E:fd1fd2hd1d2}, for surjectivity we need
\[ \sum_{d_1+d_2=d} h_{d_1,d_2} k[t]_{\leq m_{d_1}+m_{d_2}} = k[t]_{\leq m_d} \]
where $\deg h_{d_1,d_2} = \sum_{i=1}^r \epsilon_i(d_1,d_2)$ by \eqref{E:hd1d2eps}.  Condition (i) is  equivalent to the condition that $\gcd(h_{d_1,d_2})=1$.  We have
\[ \deg \lcm(h_{d_1,d_2},h_{d_1',d_2'}) = 
\sum_{i=1}^r \max(\epsilon_i(d_1,d_2),\epsilon_i(d_1',d_2')) \]
so we conclude using condition (ii) and Lemma~\ref{L:effeuc} (the effective Euclidean algorithm).
\end{proof}

This covers large degrees.  For smaller degrees but large enough saturation, we have control over generators by the following proposition, in a similar spirit.

\begin{proposition} \label{P:earlynotessdeg}
Suppose that $\deg \lfloor dD \rfloor \geq r_{d}=\#\{i : e_i \geq d\}$.  Then the union of the image of the multiplication maps spans 
\[ H^0(X,\lfloor dD \rfloor - P_r - \cdots - P_{r-r_{d}+1}) \subseteq H^0(X,\lfloor dD \rfloor), \]
a space of codimension $r_{d}$.
\end{proposition}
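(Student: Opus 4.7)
\emph{Plan.}  I would follow the template of Proposition~\ref{P:notessdeg}, partitioning the stacky points according to whether $e_i \geq d$ or $e_i < d$.  First, for the containment step, suppose $e_i \geq d$ and $d_1 + d_2 = d$ with $0 < d_1, d_2 < d$; then $d_1, d_2 \in \{1,\ldots,e_i-1\}$, so
\[
\left\{\frac{-d_1}{e_i}\right\} + \left\{\frac{-d_2}{e_i}\right\} = \frac{2e_i - d}{e_i} \geq 1,
\]
and Lemma~\ref{lem:fraccond} forces $\epsilon_i(d_1,d_2) = 1$.  Consequently the polynomial $h_{d_1,d_2}$ in~\eqref{E:hd1d2eps} is divisible by $h := \prod_{i:\, e_i \geq d}(t - t(P_i))$, and by~\eqref{E:fd1fd2hd1d2} the image of every such multiplication map lies in $f_d h \cdot k[t]$, i.e., in $H^0(X, \lfloor dD \rfloor - P_r - \cdots - P_{r-r_d+1})$ (using the ordering $e_1 \leq \cdots \leq e_r$).

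For the reverse inclusion, I would factor out the common divisor $h$ and show that the quotient polynomials $h_{d_1,d_2}/h$, as $(d_1,d_2)$ ranges over admissible decompositions with $d_1,d_2 \in \Eff(D)$, generate the unit ideal of $k[t]$ and span $k[t]_{\leq m_d - r_d}$ via the effective Euclidean algorithm (Lemma~\ref{L:effeuc}).  The gcd-trivial hypothesis is verified in the style of condition (i) of Proposition~\ref{P:notessdeg}: for each remaining index $i$ with $e_i < d$, choose a decomposition in which $d_1$ is a suitable multiple of $e_i$ (so that $\{-d_1/e_i\} = 0$ and hence $\epsilon_i(d_1,d_2) = 0$), giving a quotient polynomial $h_{d_1,d_2}/h$ that does not vanish at $t(P_i)$.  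The degree slack required by the bound~\eqref{eqn:dlcm} in Lemma~\ref{L:effeuc} collapses, after accounting for the removal of $h$ of degree $r_d$, to exactly the hypothesis $\deg \lfloor dD \rfloor \geq r_d$, which supplies the analog of condition (ii) on the smaller space.

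The main obstacle will be packaging this effective-Euclidean argument carefully: one must ensure that admissible decompositions (both summands lying in $\Eff(D)$ and strictly between $0$ and $d$) are plentiful enough to realize the gcd condition for \emph{every} $i$ with $e_i < d$, and simultaneously to control the relevant $\lcm$ degrees in~\eqref{eqn:dlcm}.  By Proposition~\ref{P:gensA}, the monoid $\Eff(D)$ is very dense (equal to $\Z_{\geq 0}$ when $\delta \geq 2$ and to $\{0\}\cup \Z_{\geq 2}$ otherwise, barring a short explicit list), so this combinatorial flexibility is available; the residual case-checking is routine bookkeeping but constitutes the crux of the proof.
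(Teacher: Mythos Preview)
Your approach matches the paper's: its entire proof is the two-sentence sketch ``By the nature of floors, the image is contained in the given subspace; surjectivity onto this subspace follows by the same argument as in Proposition~\ref{P:notessdeg}'', and your plan unpacks exactly these two steps (your $\epsilon_i=1$ computation is the ``nature of floors'', and factoring out $h$ then invoking Lemma~\ref{L:effeuc} is the ``same argument''). Your final paragraph's worry about exhibiting enough decompositions inside $\Eff(D)$ goes beyond what the paper actually writes out---the paper treats the proposition as inheriting the hypotheses of the mechanism in Proposition~\ref{P:notessdeg} and does not perform that bookkeeping here.
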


\begin{proof}
By the nature of floors, the image is contained in the given subspace; surjectivity onto this subspace follows by the same argument as in Proposition~\ref{P:notessdeg}.
\end{proof}

%%****************************************************************************
\chapter[Inductive presentation]{Inductive presentation of the canonical ring}
\label{ch:inductive-root}
%%****************************************************************************

In this chapter we prove the inductive step of our main theorem. Given a birational morphism $\XX \dashrightarrow \XX'$ of (tame, separably rooted) stacky curves defined away from a single nonstacky point $Q$, we provide an explicit presentation for the canonical ring of $\XX$ in terms of the canonical ring of $\XX'$. In other words, we study how the canonical ring changes when one adds a single new stacky point or increases the order of a stacky point; this could be viewed as a way of presenting the ``relative'' canonical ring of $\scrX \to \scrX'$.

In the end, this still leaves a number of base cases, which for genus $1$ were treated in the examples in section~\ref{sec:cangen1ex} and for genus $0$ will be treated in chapter~\ref{ch:genus-0}.

%----------------------------------------------------------------------------
\section{The block term order}
\label{ss:concatenation}
%----------------------------------------------------------------------------
To begin, we introduce a term ordering that is well-suited for inductive arguments: the \emph{block} term order.  
In our inductive arguments, we will often have the following setup: an inclusion $R \supset R'$ of canonical rings such that $R'$ is generated by elements $x_{i,d}$ and $R$ is generated over $R$ by elements $y_j$.  It is natural, therefore, to consider term orders which treat these sets of variables separately.  More formally, we make the following definition.

Let $k[x]_{\vec{a}}$ and $k[y]_{\vec{b}}$ be weighted polynomial rings with term orders $\prec_x$ and $\prec_y$, respectively, and consider $k[y,x]_{\vec{b},\vec{a}} = k[y]_{\vec{b}} \otimes_k k[x]_{\vec{a}}$ the common weighted polynomial ring in these two sets of variables $y,x$.

\begin{definition}
The \defiindex{(graded) block} (or \defiindex{elimination}) term order on $k[y,x]_{\vec{b},\vec{a}}$ is defined as follows: we declare
  \[ y^{\vec{m}}x^{\vec{n}}  \succ y^{\vec{m}'}x^{\vec{n}'}\]  
  if and only if
  \begin{enumerate}
  \item[(i)] $\deg y^{\vec{m}} x^{\vec{n}}  > \deg y^{\vec{m}'} x^{\vec{n}'} $, or
  \item[(ii)] $\deg y^{\vec{m}}x^{\vec{n}} = \deg y^{\vec{m}'} x^{\vec{n}'} $ and 
    \begin{enumerate}
    \item $y^{\vec{m}}\succ_y y^{\vec{m}'}$ or
    \item $y^{\vec{m}} =  y^{\vec{m}'}$ and $x^{\vec{n}} \succ_x x^{\vec{n}'}$.
    \end{enumerate}
  \end{enumerate}
\end{definition}

The block ordering is indeed a term order: the displayed inequalities directly give that any two monomials are comparable, and the inequalities are visibly stable under multiplication by a monomial.  One can similarly define an \defiindex{iterated (graded) block} term ordering for any finite number of weighted polynomial rings.

The block ordering is the most suitable ordering for the structure of $R$ as an $R'$-algebra, as the following example indicates.

\begin{example}
For $k[y_1,y_2,x_1,x_2]$ under the block term order with $k[y_1,y_2]$ and $k[x_1,x_2]$ standard (variables of degree $1$) each under grevlex, we have 
\begin{align*} 
y_1^3 &\succ \dots \succ y_2^3 \succ y_1^2x_1 \succ y_1^2x_2 \succ y_1y_2x_1 \succ y_1y_2x_2 \succ  y_2^2x_1 \succ y_2^2 x_2 \\
&\succ y_1x_1^2 \succ y_1x_1x_2 \succ y_1 x_2^2 \succ y_2 x_1^2 \succ y_2 x_1x_2 \succ y_2x_2^2 \succ x_1^3 \succ \cdots \succ x_2^3.
\end{align*}
On the other hand, for $k[y_1,y_2,x_1,x_2]$ under (usual) grevlex, all variables of degree $1$, we have
\begin{align*} 
y_1^3 &\succ \dots \succ y_2^3 \succ y_1^2 x_1 \succ y_1y_2 x_1 \succ y_2^2 x_1 \succ y_1 x_1^2 \succ y_2 x_1^2 \succ x_1^3  \\
&\succ y_1^2 x_2 \succ y_1y_2 x_2 \succ y_2^2 x_2 \succ y_1x_1x_2 \succ y_2x_1x_2 \succ x_1^2x_2 \\
&\succ y_1x_2^2 \succ y_2 x_2^2 \succ x_1x_2^2 \succ x_2^3.
\end{align*}
So in grevlex, we have $x_1^2x_2 \succ y_1x_2^2$, whereas in block grevlex, we have $y_1x_2^2 \succ x_1^2x_2$.  
\end{example}

%----------------------------------------------------------------------------
\section{Block term order: examples}
\label{ss:block-examples}
%----------------------------------------------------------------------------

We show in two examples that the block grevlex term order has the desired utility in the context of canonical rings.  

First, we consider a case where we inductively add a stacky point.   Example~\ref{ex:1-2-0} exhibits the canonical ring of a stacky curve with signature $(1;2;0)$. The block order elucidates the inductive structure of the canonical ring of a stacky curve with signature $(1;2,2,\ldots,2;0)$.

\begin{example}[Signature $(1;2,\ldots,2;0)$]
\label{ex:12222222222222222222220}

Let $(\XX,\Delta)$ be a tame, separably rooted stacky curve with $r > 1$ stacky points and signature $\sigma=(1;\underbrace{2,\ldots,2}_r;0)$. We have a birational map $\XX \to \XX'$ of stacky curves where $\XX'$ has $r-1 \geq 1$  such stacky points, corresponding to an inclusion of canonical rings $R \supset R'$.  Indeed, $\XX$ is a root stack over $\XX'$ branched at the $r$th stacky point $Q_r$, and the map $\XX \to \XX'$ is ramified at $Q=Q_r$ over a single nonstacky point $P'$ on $\XX'$ with degree $2$.

We may suppose inductively that $R$ is isomorphic to $k[x_n,\ldots,x_1]/I$ where $\deg x_1 = 1$ and where $k[x_n,\ldots,x_1]$ admits an ordering such that $m_1 \prec m_2$ if $\deg m_1 = \deg m_2$ and $\ord_{x_1}(m_1) > \ord_{x_1}(m_2)$ (e.g.,~iterated block grevlex).  Since $K_{\XX} =K_{\XX'}  + \frac{1}{2}Q$,  we have
\[  \dim H^0(\XX,K_{\XX}) -\dim H^0(\XX',K_{\XX'}) =   \max \left\{\lfloor d/2 \rfloor, 1 \right\} = 0,0,1,1,2,2,3,\ldots \]
and if we let $y \in H^0(\XX,2K_{\XX})$ be any element with a pole at $Q$, then a dimension count gives that the elements $y^ix_1^j$ generate $R'$ over $R$.

Consider $k[y]$ with $\deg y = 2$ and the block ordering on $k[y,x_n,\ldots,x_1]$, so that $R' = k[y,x_n,\ldots,x_1]/I$.  With this setup, it is now easy to deduce the structure of the canonical ring. Since $R$ is spanned by monomials in the variables $x_i$ and by $y^ax_1^b$, and since $yx_j \in R$, we get relations $f_i$ which involve $yx_i$ for $i > 1$. We claim that the leading term of $f_i$ is $yx_i$: indeed, any other term is either a monomial in just the variables $x_i$ (and thus comes later under block grevlex), is of the form $y^ix_1^j$ with $j > 0$ (and comes later by the grevlex assumption on $k[x_n,\ldots,x_1]$), or is $y^k$ (which cannot occur via a comparison of poles at $Q$). We conclude that 
\begin{align*}
\gin_{\prec}(I')=\init_{\prec}(I') =&\, \init_{\prec}(I) k[y,x]
                                 + \langle yx_i : 1 \leq i \leq r-1 \rangle.
                               \end{align*}
\end{example}

\begin{remark}
\label{R:12220-comparison}

Examples~\ref{ex:1220} and~\ref{ex:12220} show that signatures $(1;2,2;0)$ and $(1;2,2,2;0)$ are minimally generated in degrees $1,2,4$ and $1,2$. On the other hand, Example~\ref{ex:12222222222222222222220} 
gives a presentation for signature $(1;2,2,2;0)$ with generators in degrees $1,2,4$, which is therefore \emph{not} minimal, so one must be careful to ensure that minimality is achieved.
\end{remark}

Second, we show that block grevlex is useful when considering canonical rings of (classical) log curves $(X,\Delta)$, as in chapter~\ref{ch:logclassical-curves}.  If we let $R$ be the canonical ring of $X$ and $R'$ the canonical ring of $(X,\Delta)$, then $R'$ is an $R$-algebra generated by elements with poles along $\Delta$, and by keeping track of the order of these poles, the relations make themselves evident.  As an illustration of the utility of the block ordering, we revisit the case of signature $(g;0;n)$ with $n \geq 4$. 

\begin{example}[Signature $(g;0;\delta)$]
Let $(X,\Delta)$ be a log curve of signature $(g;0;\delta)$ and $\delta \geq 4$.  For $\delta = 4$, the canonical ring is generated in degree 1 with only quadratic relations (see section~\ref{ss:general-case-g}). 

The block order facilitates an inductive analysis. Suppose $\delta > 4$, let $\Delta = \Delta' + P$ and suppose by induction that we already have a presentation $R' = k[x_1,\ldots,x_h]/I'=k[x]/I'$ for the canonical ring of $(X,\Delta')$, where the standard ring $k[x]$ is equipped with iterated block grevlex. By GMNT, the canonical ring $R$ of $(X,\Delta)$ is generated over $R'$ by a single additional element $y \in H^0(\scrX,K_\scrX)$ having with a simple pole at $P$. Equip the ring $k[y,x_1,\ldots,x_h]$ with the block term order. Then we claim that the initial ideal of $I$ is given by 
\begin{align*}
\init_{\prec}(I)=&\, \init_{\prec}(I') k[y,x]
                                 + \langle yx_i : 1 \leq i \leq h-1\rangle 
\end{align*}
Indeed, $R$ is spanned over $R'$ by elements of the form $y^ax_h^b$. Since $yx_i \in R$ for $1 \leq i < h$, there is a relation involving $yx_i$, terms of the form $y^ax_h^b$, and monomials of $k[x]_2$. We claim that in fact the leading term of this relation is $yx_i$: since we are using the block ordering, $yx_i$ \emph{automatically} dominates any monomial of $k[x]_2$, dominates $yx_h$, and $y^2$ cannot occur in the relation by consideration of poles at $P$. 

Note that the block ordering makes the comparison of $yx_i$ and $x_{i-1}^2$ immediate, whereas under grevlex  we have $x_{i-1}^2 \prec yx_i$ and more work would be required to argue that $x_{i-1}^2$ does not occur in the relation.
\end{example}

%----------------------------------------------------------------------------
\section[Inductive theorem]{Inductive theorem: large degree canonical divisor}
\label{sec:indtheorem_nonexceptional}
%----------------------------------------------------------------------------

Let $\XX$ be a tame, separably rooted stacky curve with unordered signature 
\[ \sigma=(g;e_1,\dots,e_{r-1},e_r;\delta) \] 
and $r \geq 1$.  Then there is a natural birational map $\XX \to \XX'$ of stacky curves where $\XX'$ has signature $\sigma'=(g;e_1,\dots,e_{r-1};\delta)$, and $\XX$ is a root stack over $\XX'$ branched at the $r$th stacky point $Q_r$ on $\XX$ over a nonstacky point $P'$ on $\XX'$ to degree $e=e_r \geq 2$.  This birational map corresponds to an inclusion of canonical rings $R' \subseteq R$, giving $R$ the natural structure of an $R'$-algebra.  For convenience, we identify $R'$ with its image under the inclusion $R' \hookrightarrow R$.  The structure of this inclusion is our first inductive theorem, one that applies when the canonical divisor has large enough degree.

\begin{theorem}
\label{T:particular-stacky-gin}
Suppose that one of the following conditions hold:
\begin{enumerate}
\item[(K-i)] $g \geq 2$;
\item[(K-ii)] $g = 1$ and $r+2\delta \geq 2$; or 
\item[(K-iii)] $g=0$ and $\delta\geq 2$.
\end{enumerate}
Then the following statements are true.
\begin{enumerate}
\item[(a)]
For $2 \leq i \leq e$, a general element
\[ y_i \in H^0(\XX, i(K_{\XX}+\Delta)) \]  
satisfies $-\ord_{P'}(y_i) = i-1$, and any such general choice of elements $y_2,\dots,y_e$ minimally generates $R$ as an $R'$-algebra.
\item[(b)]
  \label{item:general-stacky-relations}
  Let $R'=k[x]=k[x_1,\dots,x_m]/I'$ be a presentation.  Then $\dim_k R'_1 > 0$.  Suppose $\deg x_m = 1$ and that $R'$ is equipped with an ordering such that $x_m \prec x_i$ for $1 \leq i \leq m-1$.  
Equip the ring 
\[
k[y,x]=k[y_{e},\ldots,y_{2},x_{1},\dots,x_{m}]
\] 
with the block order and $k[y]$ with grevlex, so $R=k[y,x]/I$.  Then
\begin{align*}
\gin_{\prec}(I)&=\init_{\prec}(I) \\
&=\init_{\prec}(I') k[y,x]
                                 + \langle y_ix_j : 2 \leq i \leq e, 1 \leq j \leq m-1 \rangle \\
                                 &\qquad\qquad\qquad\quad + \langle y_iy_j : 2 \leq i \leq j \leq e-1 \rangle.
\end{align*}
\item[(c)] Let $S$ be any set of relations in $I$ with leading terms $\underline{y_ix_j}$ and $\underline{y_iy_j}$ as in (b).  Then a Gr\"obner basis for $I'$ together with $S$ yields a Gr\"obner basis for $I$.  
\item[(d)] Suppose that $R'=k[x]/I'$ has a presentation with a minimal set of generators and relations for $I'$ and suppose no relation in $S$ has a nonzero term with monomial $x_i$ for some $i=1,\dots,m$.  Then the generators $y_e,\dots,y_2,x_1,\dots,x_m$ minimally generate $R$, and the generators for $I'$ together with $S$ as in (c) minimally generate $I$.
\end{enumerate}
\end{theorem}

\begin{proof} 
By Proposition \ref{P:canKR}, the difference $K_{\XX'}-K_{X'}$ between the canonical divisors of the stacky curve and its coarse space is an effective weighted sum of the $r-1$ stacky points, and in particular $\deg \lfloor 2(K_{\XX'}-K_{X'}) \rfloor \geq r-1$.  Thus, for $i \geq 2$, we have
\begin{equation} \label{eqn:nonspec_induct}
\begin{aligned} 
\deg \lfloor i(K_{\scrX'}+\Delta) \rfloor &= \deg i(K_{X'}+\Delta) + \deg \lfloor i(K_{\XX'}-K_{X'}) \rfloor \\ 
&\geq \deg 2(K_{X'}+\Delta) + \deg \lfloor 2(K_{\XX'}-K_{X'}) \rfloor \\ 
&\geq 2(2g-2+\delta) + (r-1) \geq 2g-1 
\end{aligned}
\end{equation}
where the latter inequality holds by considering each of the cases (K-i)--(K-iii).  Therefore the divisors $\lfloor i(K_{\scrX'}+\Delta) \rfloor$ are \emph{nonspecial} on $X'$ for $i \geq 2$.  We have
\[ \lfloor i(K_{\XX}+\Delta) \rfloor = \left\lfloor i(K_{\XX'}+\Delta) \right\rfloor + \left\lfloor i\left(1-\frac{1}{e}\right)P' \right\rfloor \]
with $P' \in \XX'$ the stacky ramification point below $Q \in \XX$.  Then by Riemann--Roch and \eqref{eqn:nonspec_induct}, a general element
\[ y_i \in H^0(\XX, i(K_{\XX}+\Delta))=H^0(X,\lfloor i(K_{\XX}+\Delta)\rfloor) \]
satisfies $-\ord_{Q}(y_i) = i-1$ for $i=2,\dots,e$.

Next,
\[ \lfloor K_{\XX'}+\Delta \rfloor = K_{X'}+\Delta \]
so by Riemann--Roch in each of the cases (K-i)--(K-iii) we have 
\[ \dim R'_1 = \dim R_1 = \dim H^0(X,\lfloor K_{\XX'}+\Delta \rfloor) \geq 0. \]  
This proves the first conclusion of (b).  At least one of the generators $x_1,\dots,x_m$ for $R'$ must have degree $1$, so we suppose that $\deg x_m=1$.  

We claim that the elements 
\begin{equation} \label{eqn:yebsxma}
y_e^b y_s x_m^a,\quad \text{with $2 \leq s \leq e-1$ and $a,b\geq 0$},
\end{equation}
together with $R'$ span $R$ as a $k$-vector space.  Let 
\[ V_d = H^0(\XX, dK_{\XX})\text{ and }V_d' = H^0(\XX',dK_{\XX'}) \] 
for $d \geq 0$.  Then again by Riemann--Roch, the fact that 
\[ \lfloor (d+1)(1-1/e)\rfloor - \lfloor d(1-1/e)\rfloor \leq 1, \] 
and a comparison of poles at $Q$, we conclude that the codimension of $x_m V_{d-1} + V'_d$ in $V_d$ is at most $1$; moreover, if the codimension is $1$, then $e \nmid (d-1)$ and the quotient is spanned by $y_e^b y_s$ where $be+s=d$.  The claim now follows by induction.  This proves (a).

From this basis, we find relations.  For $y_i x_j \in V_d$ with $2 \leq i \leq e$ and $1 \leq j \leq m-1$, we can write 
\[ y_i x_j - \sum_{a,b,s} c_{abs} y_e^b y_s x_m^a \in R'; \]
but by the order of pole at $Q$ (with each monomial of a distinct pole order), we have  
\[ -\ord_{Q}(y_i x_j) = i-1 \geq b(e-1)+(s-1). \]
But $s \geq 2$ so $b=0$ for all such terms, and then $s \leq i$.  Since $y_ix_j \succ y_s x_m^a$ for $s \leq i$ (and $j\leq m-1$), the leading term of this relation in block order is $\underline{y_ix_j}$.  

A similar argument works for $y_i y_j$ with $2 \leq i \leq j \leq e-1$.  From the lower bound on the order of pole
\[ 2(e-1)-2 \geq i+j-2 \geq b(e-1)+(s-1) \] 
we have $b \leq 1$ and $s \leq i + j -1$.  If $b=0$, then any monomial $y_sx_m^a$ has $\deg y_s x_m^a = s+a=\deg y_iy_j = i+j$, so since $s \leq i+j-1$ we have $a>0$, whence $\deg(y_i y_j) > \deg(y_s)$ and thus $y_i y_j \succ y_s x_m^a$ in the block term order.  If $b=1$, then $y_i y_j \prec y_e y_s x_m^a$ since $s\leq i,j<e$.  The leading term is thus $\underline{y_iy_j}$.  

We claim that these two types of relations, together with a Gr\"obner basis for $I'$, comprise a Gr\"obner basis for $I$.  This is immediate by inspection: any leading term not divisible by one of the known leading terms is either one of the basis monomials or belongs to $R'$.  In particular, this theorem describes the generic initial ideal (relative to $R'$), since the general choice of $y_i$ has the desired order of pole, as in (a).  This proves (c). 

We now conclude (d).  Suppose that there is a superfluous generator, given by a relation containing either $x_i$ or $y_j$ as a nonzero term.  By minimality of the presentation for $I'$, we may assume that the relation does not belong to $I'$.  Since the relation is linear in a variable, it must be a $k$-linear combination of the generators in (c), and there must be a relation in $S$ that is linear in a variable, as claimed.  Finally, the set $S$ together with generators for $I'$ is a minimal set of generators for $I$ 
because the order of pole is encoded in the initial term and for a given degree these are distinct.  
\end{proof}

\begin{remark}
The condition (K-iii) in Theorem \ref{T:particular-stacky-gin} is also equivalent to $g=0$ and $\Eff(K_{\XX'}+\Delta)=\Z_{\geq 0}$, by Proposition \ref{P:gensA}.
\end{remark}

\begin{example} \label{exm:superfluous}
We saw in Example \ref{R:12220-comparison} that the conclusion of Theorem \ref{T:particular-stacky-gin}(d) is best possible: there may be superfluous generators, in spite of statement (c) for the Gr\"obner basis.  

We can also see this when we try to induct on the signature $(1;2,2;0)$ from $(1;2;0)$:
when the characteristic of $k$ is not $2$ or $3$, we may take
\[ R' \simeq k[x_1,x_2,x_3]/I' \]
with $\deg x_i=6,4,1$ for $i=1,2,3$, 
\[ I' = \langle \underline{x_1^2} - x_2^3 - ax_2x_3^8 - b^2x_3^{12} \rangle, \]
common stacky point $Q_1=(1:1:0)$, and branch point $P'=(b:0:1)$, with $a,b \in k$.  Then Theorem \ref{T:particular-stacky-gin} yields the inductive presentation
\[ R \simeq k[y_2,x_1,x_2,x_3]/I \]
where 
\[ I = \langle \underline{y_2x_1} - by_2x_3^6 - x_2^2 - ax_3^8, \underline{y_2x_2} - x_1 - bx_3^6 \rangle + I'; \]
the generator $y_2$ is a minimal generator for $R$ as an $R'$-algebra, and the indicated Gr\"obner basis with initial ideal $\langle y_2x_1,y_2x_2,x_1^2 \rangle$.  However, we see that the generator $x_1$ is superfluous.
\end{example}

\begin{remark}
From an algorithmic point of view, statements (c) and (d) in Theorem \ref{T:particular-stacky-gin} are sufficient (indeed, desirable): one can computationally identify and eliminate unnecessary generators via an elimination term order, if needed.  In Example \ref{exm:superfluous} above, eliminating $x_1$ gives the minimal presentation
\[ R \simeq k[y_2,x_2,x_3]/\langle y_2^2x_2 - 2by_2x_3^6 - x_2^2 - Ax_3^8 \rangle \]
with $R$ generated in degrees $2,4,1$ and with a relation in degree $8$.
\end{remark}

\begin{corollary}
\label{C:general-case}
Let $(\XX,\Delta)$ be a tame, separably rooted log stacky curve having stacky points $Q_1,\dots,Q_r$ and signature $\sigma=(g;e_1,\ldots,e_r;\delta)$.  Suppose that one of (K-i)--(K-iii) hold.

Let $R(\XX,\Delta)=k[x]/I(\XX,\Delta)$ be the canonical ring of the log coarse space $(X,\Delta)$.  Then the following statements are true.
\begin{enumerate}
\item [(a)] If $R'$ is generated in degree at most $e'$, then $R$ is generated in degree at most $\max(e',e_r)$ with relations in degree at most $2\max(e',e_r)$.  
\item [(b)] There exists $x_m \in R(X,\Delta)$ with $\deg x_m=1$.  Suppose that $x_m \prec x_i$ for $1 \leq i \leq m-1$.  For $1 \leq i \leq r$ and $2 \leq j \leq e_i$, let $y_{ij} \in H^0(\scrX, jK_{\scrX})$ be an element with a pole of order $d-1$ at $Q_i$ and no poles at $Q_j$ for $j>i$.  Equip $k[y^{(i)}]=k[y_{i,e_i},\dots,y_{i,2}]$ with grevlex, and the ring $k[y^{(r)},\dots,y^{(1)},x]=k[y,x]$ with an iterated block order, so $R(\XX,\Delta)=k[y,x]/I(\XX,\Delta)$.  Then 
\begin{align*}
 \gin_{\prec}(I(\XX,\Delta)) &= \init_{\prec}(I(X,\Delta))k[y,x] \\
 &\qquad + \langle y_{ij}x_s : 1 \leq i \leq r, 2 \leq j \leq e_i, 1 \leq s\leq m-1 \rangle \\ 
 &\qquad + \langle y_{ij}y_{st} : 1 \leq i,s \leq r, 2 \leq j \leq e_i-1, 2 \leq t \leq e_j-1 \rangle  \\
 &\qquad + \langle y_{ij}y_{s,e_s} : 1 \leq i<s \leq r, 2 \leq j \leq e_i\rangle.
 \end{align*}
\end{enumerate}
\end{corollary}

\begin{proof}
We apply Theorem~\ref{T:particular-stacky-gin} to induct.
\end{proof}

\begin{remark} \label{rmk:wellsuitedcomp}
Corollary \ref{C:general-case} is particularly well-suited for computational applications, such as to compute a basis of modular forms in every weight: the conditions on the generators are specified by conditions of vanishing or poles at the stacky points or along the log divisor.  
\end{remark}

%----------------------------------------------------------------------------
\section{Main theorem}
%----------------------------------------------------------------------------

Finally, we are ready to prove our main theorem for genus $g \geq 1$.  The main theorem for $g=0$ will be proven in Theorem~\ref{thm:genus0final}.  

\begin{theorem} \label{thm:maintheoremgengt1}
Let $(\XX,\Delta)$ be a tame, separably rooted log stacky curve over a field $k$ with signature $\sigma=(g;e_1,\dots,e_r;\delta)$ and suppose that $g\geq 1$.  Then the canonical ring $R(\XX,\Delta)$ is generated by elements of degree at most $3e$ with relations of degree at most $6e$, where $e=\max(e_1,\dots,e_r)$.  

Moreover, if $g+\delta \geq 2$ then $R(\XX,\Delta)$ is generated in degree at most $\max(3,e)$ with relations in degree at most $2\max(3,e)$.
\end{theorem}

\begin{proof}
We argue by induction, as follows.  We start by establishing the theorem in both clauses
for many base cases.  The theorem for classical curves ($r=\delta=0$, no stacky or log structure, so $e=1$) holds in both cases by Theorem \ref{thm:degrelatmax}, and we refer to Table (I) in the appendix for their explicit description.  Similarly, for log curves ($r=0$ and $\delta \geq 1$, so $e=1$) the statement follows from Theorem~\ref{thm:logcurverelat}, described explicitly in Table (II) in the appendix.  To finish out the base cases with $g=1$, we note that the theorem is also true for the signatures in Table (III) in the appendix, by the examples in section~\ref{sec:cangen1ex}.

Now consider an arbitrary signature $\sigma$ satisfying the hypotheses of the theorem.  Having established base cases in the previous paragraph, we may assume that $r \geq 1$; and having dealt with the base case signatures $(1;e;0)$, we may further assume that if $g=1$ then $(r,\delta) \neq (1,0)$, so $r+2\delta \geq r \geq 2$.  We then appeal to Theorem~\ref{T:particular-stacky-gin}, with the stacky curve $\scrX'$ having signature $(g;e_1,\dots,e_{r-1};\delta)$.  The hypotheses (K-i) or (K-ii) of Theorem~\ref{T:particular-stacky-gin} hold.  The conclusion of the theorem then holds by Corollary \ref{C:general-case}.  

The theorem in its stronger form in the ``moreover'' clause follows in the same way.
\end{proof}

The Poincar\'e generating polynomials $P(R_{\geq 1};t)$ and $P(I;t)$ of $R$ and $I$, and the generic initial ideal $\gin_{\prec}(I)$ of $I$ are provided by the tables in the appendix together with section~\ref{S:poincare-inductive}.

\begin{example} \label{exm:sharpbounds}
Let $\XX$ be a stacky curve of signature $\sigma=(g;e;-)$ whose coarse space is not exceptional of genus $g \geq 3$.  Then Theorem~\ref{thm:maintheoremgengt1} implies that the canonical ring $R$ is generated in degree at most $e$ with relations in degree at most $2e$, and Theorem \ref{T:particular-stacky-gin} from whence it arises shows that these degree bounds are sharp.  
\end{example}

%----------------------------------------------------------------------------
\section{Inductive theorems: genus zero, 2-saturated}
\label{sec:indtheorem_2sat}
%----------------------------------------------------------------------------

We now prove an inductive theorem to complement Theorem~\ref{T:particular-stacky-gin}, treating the case $g=0$ with a weaker hypothesis.  Recall that given a tame, separably rooted stacky curve $\XX$ with signature $\sigma=(g;e_1,\dots,e_{r-1},e_r;\delta)$ and $r \geq 1$, we have a birational map $\XX \to \XX'$ of stacky curves where $\XX'$ has unordered signature $\sigma'=(g;e_1,\dots,e_{r-1};\delta)$ ramified at a single nonstacky point $Q$ on $\XX'$ to degree $e_r \geq 2$ and corresponding to a containment of canonical rings $R \supset R'$.  

\begin{theorem}
\label{T:particular-stacky-gin-eff}
Let $\XX \to \XX'$ be a birational map of tame, separably rooted log stacky curves as above and let $R' \subseteq R$ the corresponding containment of canonical rings.  
Suppose that $e_r = 2$ and that 
\[ \text{$g=0$ and $\sat(\Eff(K_{\XX'}+\Delta))=2$}. \] 
Let $R'=k[x,w_3,v_2]/I'$ with generators $x_1,\dots,x_m,w_3,v_2$ satisfying $\deg v_2=2$ and $\deg w_3=3$, and equip $R'$ with grevlex so that 
\begin{equation} \label{eqn:v2w3yup}
\text{$v_2 \prec w_3$ and $v_2 \prec x_i$ for all $i$, and $w_3 \prec x_j$ whenever $\deg x_j \geq 3.$}
\end{equation}
Let $Q = Q_r$.  Then the following statements are true.
\begin{enumerate}

\item[(a)] General elements
\[ y_2 \in H^0(\XX,2 (K_{\XX} + \Delta)) \quad\text{and}\quad z_3 \in H^0(\XX,3 (K_{\XX} + \Delta) ) \]
satisfy $-\ord_Q(y_2) = -\ord_Q(z_3) = 1$, and any such choice of elements $y_2,z_3$ minimally generates $R$ over $R'$.  

\item[(b)] 
 Equip $k[z_3,y_2]$ with grevlex and  the ring 
\[k[z_3,y_2,x,w_3,v_2] = k[z_3,y_2]\otimes k[x,w_3,v_2] \]
with the block order, so that $R=k[z_3,y_2,x,w_3,v_2]/I$.  Then
\begin{align*}
 \gin_{\prec}(I) =\init_{\prec}(I') k[z_3,y_2,x,w_3,v_2]  &
+      \langle  y_2 x_i : 1 \leq i \leq m-2 \rangle \\
 & + \langle z_3 x_i : 1 \leq i \leq m     \rangle 
                + \langle z_3^2 \rangle.
\end{align*}
\item[(c)] Let $S$ be any set of relations in $I$ with leading terms as in (b).  Then a Gr\"obner basis for $I'$ together with $S$ yields a Gr\"obner basis for $I$.  
\item[(d)] Suppose that $R'$ has a minimal presentation and no relation in $S$ has a nonzero linear term in a generator.  Then the generators $z_3,y_2,x,w_3,v_2$ minimally generate $R$ and the generators for $I'$ together with $S$ as in (c) minimally generate $I$.
\end{enumerate}
\end{theorem}

\begin{proof}
Existence of the elements $y_2,z_3$ in statement (a) follows by Riemann--Roch and Lemma~\ref{L:floor}. They are clearly necessary; by GMNT (Theorem~\ref{T:surjectivity-master}) and the assumption that $\sat(\Eff(D'))=2$, the map
\[
H^0(\XX,iD)\otimes  H^0(\XX,jD)   \to H^0(\XX, (i + j)D),
\]
where $D = K_{\XX} + \Delta$, is surjective for $i = 2$ and $j \geq 2$, so $y_2,z_3$ indeed generate $R$ over $R'$.

To facilitate the calculation of relations, we first claim that the elements
\begin{equation*}
\begin{aligned}
Y = \{y_{2}^aw_3^{\epsilon}v_2^b : a >0, b \geq 0\text{ and }\epsilon = 0,1\} \cup \{y_2^az_{3} : a \geq 0 \} 
\end{aligned}
\end{equation*}
form a basis for $R$ as a $k$-vector space over $R'$.  Consider the map $\mu \colon Y \to \Z^2$ sending $m \in Y$ to the pair $(\deg m, -\ord_{Q}(m))$.  By Proposition~\ref{P:gensTor} (and Riemann--Roch) it suffices to prove that $\mu$ is injective with image
 \[\mu(Y) = \{(d,b) : d \geq 2\text{ and }0 \leq b \leq \lfloor d/2 \rfloor \}.\]
But $\mu(w_3^{\epsilon}v_2^b) = (2b+3\epsilon,0)$, so the images of $\mu$ are distinct as $\mu$ ranges over $\{y_{2},z_3\} \cup \{w_3^{\epsilon}v_2^b\}$, and multiplication by $y_2$ shifts the image of $\mu$ by $(2,1)$, filling out the rest of the monoid.  This completes the proof of the claim. This argument can be visualized as follows: 
\begin{center}
\includegraphics[scale=0.85]{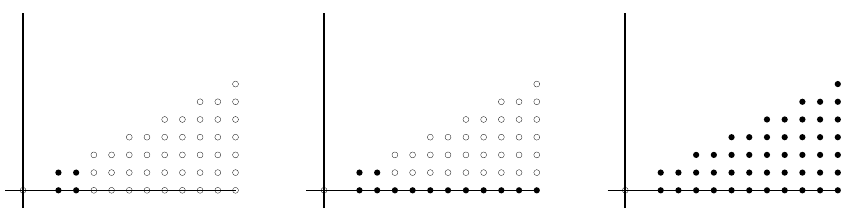}
\end{center}

Next, for $i \leq m-2$, $j \leq m$ there exist (by consideration of poles and Riemann--Roch) $A_i,B_j,C_1,C_2 \in k$ such that 
\begin{align*}
\underline{y_2x_i} &- A_iy_2w_3^{\epsilon_2}v_2^a, \\
\underline{z_3x_j} &- B_iy_2w_3^{\epsilon_3}v_2^b, \mbox{ and } \\
\underline{z^2_3} &- C_1 y_2^2v_2 - C_2 y_2x_2^2 
\end{align*}
lie in $R'$,  where $a,b,\epsilon_2,\epsilon_3$ are chosen so that 
\[ \deg w_3^{\epsilon_2}v_2^a = \deg {x_i}  \mbox{ and } \deg w_3^{\epsilon_3}v_2^b = \deg {x_j} +1.\]
These give rise to relations with underlined initial term; these are initial since they dominate any monomial of $R'$ by the block ordering and the remaining terms by inspection.
Since a monomial is not in the spanning set if and only if it is divisible by one of 
\begin{center}
$y_2x_i$ with $1 \leq i \leq m-2$, \quad $z_3x_j$ with $1 \leq j \leq m$, \quad or \quad $z_3^2$
\end{center}
this completes the proof of (b). 

For (c) and (d), the proof follows as in Theorem \ref{T:particular-stacky-gin}: consideration of initial terms gives that the new relations are minimal---each successive leading term is not in the linear span of the previous initial terms (and, since leading terms are quadratic (i.e.~products of exactly two generators), necessarily not in the ideal generated by the previous leading terms).  
\end{proof}

%----------------------------------------------------------------------------
\section{Inductive theorem: by order of stacky point} \label{sec:inductstackypoint}
%----------------------------------------------------------------------------

Even with the previous inductive lemmas, there are a number of cases left to consider. For instance, for the signatures $(0;e_1,e_2,e_3;0)$ with each $e_i$ large, we only have $\Eff D' = \Z_{\geq3}$ and the previous inductive theorems do not apply. So next we prove another inductive theorem: we increase the order of a collection of stacky points.

Let $\XX$ and $\XX'$ be tame, separably rooted log stacky curves with the same coarse space $X$, ramified over the same points $Q_1,\ldots, Q_r \in X(k)$. Let $J \subseteq \{1,\ldots,r\}$ be a subset.  Suppose that $\XX'$ has unordered signature $(g;e_1',\dots,e_r';\delta)$ and $\XX$ has signature $(0;e_1,\dots,e_r;\delta)$ with $e_i = e_i' + \chi_J(i)$, where $\chi_J$ is the indicator function of $J$, i.e.
\[ \chi_J(i)= 
\begin{cases}
1,& \text{ if $i \in J$}; \\
0, &\text{ otherwise}.
\end{cases} \]
Then there is a natural inclusion of canonical divisors $D \geq D'$ (viewed as $\Q$-divisors on $X$) and rings $R \supset R'$, and a birational map $\scrX \dashrightarrow \scrX'$, defined away from $\{Q_i : i \in J\}$.  

We would like to be able to argue inductively on the structure of the canonical ring $R \supseteq R'$.  The following definition provides hypotheses on $\XX'$ and the set $J$, allowing us to make an inductive argument comparing $R$ and $R'$.  

\begin{definition} \label{defn:admind}
The pair $(\XX',J)$ is \defiindex{admissible} if $R'$ admits a presentation 
\[ R' \simeq (k[x] \otimes k[y_{i,e_i'}]_{i \in J})/I' \]
such that each of the following conditions are satisfied:
\begin{enumerate}
\item[(Ad-i)] For all $i \in J$, we have
\[ \deg y_{i,e_i'} = e_i' \quad\text{ and}\quad -\ord_{Q_i}(y_{i,e_i'}) = e_i'-1; \]
\item[(Ad-ii)] For all $i \in J$ and any generator $z \neq y_{i,e_i'}$, we have 
\[ -\frac{\ord_{Q_i}(z)}{\deg z} < 1-\frac{1}{e_i'}; \]
and 
\item[(Ad-iii)] For all $i \in J$ and for all $d > 0$, we have
\[
\deg \left\lfloor (e_i' + d)(K_{\XX'} + \Delta)\right \rfloor \geq  2g + \chi_1(g) + \eta(i,d)
\]
where 
\[
\eta(i,d) = \#\{j \in J : j \neq i \mbox{ and } (e_j'+d-1) \mid (e_i' + d) \}.
\]
\end{enumerate}
\end{definition}

\begin{remark}
The conditions in Definition~\ref{defn:admind} can be understood as follows.  The condition for an element $f \in R$ to belong to the subring $R'$ is an inequality on the \emph{slope} of $f$ at each stacky point $Q_i$: specifically, if
\[ -\frac{\ord_{Q_i}(f)}{\deg f} \leq 1-\frac{1}{e_i'} \]
for all $i$ then $f \in R'$, and admissibility essentially demands the existence of a presentation with unique generators of maximal slopes at each $Q_j$ with $j \in J$.

For $y \in R$ and $z \in R'$ one would like produce relationships via memberships $yz \in R'$; generally the $Q_i$-slope of $y$ will be \emph{larger} than $(e_i'-1)/e_i'$, and to compensate we need a just slightly better restraint on the $Q_i$-slope of $z$ than this inequality, hence the strict inequality of (Ad-ii).  Condition (Ad-i) keeps track of specific generators of large slope, and fails to hold only when $\deg(K_{\XX'}+\Delta)$ is very small, precluding the existence of generators with largest possible slope.

Finally, condition (Ad-iii) is a kind of stability condition (satisfied ``in the large'') that ensures that certain Riemann--Roch spaces have large enough dimension to accommodate functions with poles of intermediate orders when the differences between the orders of the new stacky points is large, and in particular ensures that (Ad-ii) continues to hold after creating new elements $y_{i,e_i}$ and inducting. 
\end{remark}

\begin{lemma} \label{L:better-bound}
Condition (Ad-ii) implies the stronger inequality
\begin{equation*}
   -\frac{\ord_{Q_i}(z)}{\deg z} \leq 1-\frac{1}{e_i'} -\frac{1}{e_i'\deg z}.
\end{equation*}
\end{lemma}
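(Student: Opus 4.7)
The plan is to exploit integrality of $\ord_{Q_i}(z)$ to upgrade the strict inequality of (Ad-ii) into the non-strict inequality with a quantified slack term of $1/(e_i' \deg z)$. The point is essentially elementary: a strict inequality $a < b$ between two integers forces $a \leq b - 1$, and here the appropriate integer will come from clearing the denominator $e_i'$.

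First I would identify in what sense $\ord_{Q_i}(z)$ is an integer. Since $z$ is a homogeneous generator of $R'$ of degree $d = \deg z$, we have $z \in H^0(\XX', dD')$ where $D' = K_{\XX'} + \Delta$; under the coarse space identification of Lemma~\ref{L:floor}, this realizes $z$ as a section in $H^0(X', \lfloor dD' \rfloor)$, and the convention used throughout (as already in the genus-zero monoid discussion of chapter~\ref{ch:canon-rings-stack-genus-zero} and illustrated in the genus-1 examples of section~\ref{sec:cangen1ex}) is that $\ord_{Q_i}(z)$ denotes the order of vanishing on the coarse space, which is therefore an integer bounded below by $-\lfloor d(1 - 1/e_i') \rfloor$.

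Next I would translate (Ad-ii). The hypothesis reads $-\ord_{Q_i}(z) < d(1 - 1/e_i')$, i.e.\ $-e_i' \ord_{Q_i}(z) < de_i' - d$. Both sides are now integers, so the strict inequality is equivalent to
\[
-e_i' \ord_{Q_i}(z) \;\leq\; de_i' - d - 1.
\]
Dividing by $e_i' d$ gives exactly
\[
-\frac{\ord_{Q_i}(z)}{\deg z} \;\leq\; 1 - \frac{1}{e_i'} - \frac{1}{e_i' \deg z},
\]
as claimed. There is no real obstacle; the only subtlety worth flagging in the write-up is the justification that $\ord_{Q_i}(z)$ is an ordinary integer, for which one invokes Lemma~\ref{L:floor} to descend from $\XX'$ to the coarse space $X'$.
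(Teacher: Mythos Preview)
Your proof is correct and in fact cleaner than the paper's. Both arguments rest on the same key point---that $\ord_{Q_i}(z)$ is an integer (via the coarse space), so the strict inequality of (Ad-ii) can be upgraded to a non-strict one with explicit slack---but they execute it differently. The paper writes $\deg z = a e_i' + r$ with $0 \leq r < e_i'$, case-splits on whether $r = 0$ (where $\deg z(1 - 1/e_i')$ is an integer and strict inequality drops it by $1$) or $r \neq 0$ (where one takes a floor), obtains the intermediate bound $-\ord_{Q_i}(z) \leq \deg z - a - 1$, and then deduces the lemma. Your route---multiply (Ad-ii) through by $e_i'$ so that both sides become integers, then use that a strict inequality of integers forces a gap of at least $1$---bypasses the case split entirely and lands directly on the stated inequality. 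The paper's detour through division with remainder buys nothing extra here; your argument is the more transparent one.
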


\begin{proof}
Let $\deg z = ae_i' + r$ with $0 \leq r < e_i'$.  By (Ad-ii), we have
\begin{equation} \label{E:QIz}
-\ord_{Q_i}(z) < \deg z\left(1-\frac{1}{e_i'}\right).
\end{equation}
We claim that in fact
\[ -\ord_{Q_i}(z) \leq \deg z - a - 1. \]
Certainly, \eqref{E:QIz} implies
\[ -\ord_{Q_i}(z) \leq \left \lfloor \deg z\left(1-\frac{1}{e_i'}\right)\right\rfloor
= \left\lfloor (ae_i'+r)\left(1-\frac{1}{e_i'}\right)\right\rfloor = \deg z-a + \left\lfloor -\frac{r}{e_i'} \right\rfloor. \]
If $r \neq 0$, then $\lfloor -r/e_i' \rfloor = -1$ and the claim follows; otherwise, $e_i' \mid \deg z$, but then the inequality \eqref{E:QIz} becomes
\[ -\ord_{Q_i}(z) \leq \deg z \left(1-\frac{1}{e_i'}\right) - 1\]
and the result follows similarly.  The claim then implies
\[ -\frac{\ord_{Q_i}(z)}{\deg z} \leq 1 - \frac{1}{e_i'} - \frac{(r+1)}{e_i' \deg z} \]
and the result follows.
\end{proof}

\begin{lemma} \label{L:Jeq1}
Suppose that $\#\{e_i' : i \in J\}=1$ and one of the following conditions holds:
\begin{enumerate}
\item[(i)] $g \geq 2$;
\item[(ii)] $g=1$ and $\sigma' \neq (1;2;0), (1;3;0),$ or $(1;2,2;0)$; or
\item[(iii)] $g=0$ and $e_j' \geq \sat(\Eff(D'))-1$ for all $j \in J$.
\end{enumerate}
Then condition (Ad-iii) holds.
\end{lemma}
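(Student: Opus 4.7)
The plan is to first exploit the hypothesis $\#\{e_i' : i \in J\} = 1$ to collapse the $\eta$ term in (Ad-iii). Let $e'$ denote the common value of $e_j'$ for $j \in J$, and for $i \in J$ and $d > 0$ set $n = e_i' + d = e' + d \geq e' + 1 \geq 3$. Then
\[
\eta(i,d) = \#\{j \in J\setminus\{i\} : (e'+d-1) \mid (e'+d)\},
\]
and $(e'+d-1) \mid (e'+d)$ is equivalent to $(e'+d-1) \mid 1$, which forces $e'+d = 2$; this is impossible since $e'+d \geq 3$. Hence $\eta(i,d) = 0$, and (Ad-iii) reduces to showing
\[
\deg \lfloor nD' \rfloor \geq 2g + \chi_1(g) \qquad \text{for every } n \geq e' + 1.
\]

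Using Proposition~\ref{P:canKR} to write $D' \sim K_X + \sum_j (1 - 1/e_j') P_j + \Delta$ as a $\Q$-divisor on the coarse space, one computes
\[
\deg \lfloor nD' \rfloor = n(2g - 2) + n\delta + \sum_{j=1}^r \lfloor n(1 - 1/e_j') \rfloor,
\]
where each summand of the last term is nonnegative for $n \geq 2$. Case (i) is then immediate: for $g \geq 2$ and $n \geq 3$ one has $\deg \lfloor nD' \rfloor \geq n(2g-2) \geq 6g - 6 \geq 2g$. For case (iii), the hypothesis $e' \geq \sat(\Eff(D')) - 1$ gives $n \geq e' + 1 \geq \sat(\Eff(D'))$, so $n \in \Eff(D')$ by definition of saturation, which is exactly $\deg \lfloor nD' \rfloor \geq 0 = 2g + \chi_1(g)$.

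The substantive work is in case (ii), $g = 1$, where the target is $\deg \lfloor nD' \rfloor \geq 3$ for $n \geq e' + 1$. When $\delta \geq 1$ this follows from $n\delta \geq n \geq 3$. When $\delta = 0$ and $r \geq 3$, each summand $\lfloor n(1 - 1/e_j') \rfloor \geq \lfloor n/2 \rfloor \geq 1$ for $n \geq 3$, yielding a sum of at least $3$. The remaining configurations have $\delta = 0$ and $r \in \{1,2\}$, which I would dispatch by a short case analysis on $(e_1', \ldots, e_r')$ and the permissible choices of $J$ (the constraint $\#\{e_j' : j \in J\} = 1$ pins down $e'$ to an actual value appearing among the $e_j'$, and hence controls $n \geq e' + 1$): evaluating at the boundary value $n = e' + 1$ yields $\deg \lfloor nD' \rfloor = 1, 2, 2$ for the excluded signatures $(1;2;0), (1;3;0), (1;2,2;0)$ respectively, while in every other genus-one configuration --- $(1;e_1';0)$ with $e_1' \geq 4$, or $(1;e_1', e_2';0)$ with $(e_1',e_2') \neq (2,2)$ --- a direct lower bound on $\lfloor n(1 - 1/e_j') \rfloor$ at $n = e' + 1$ already meets the threshold.

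The plan has no conceptually deep obstacle: the decisive step is the clean vanishing of $\eta(i,d)$ under the common-value hypothesis, which reduces (Ad-iii) to a uniform lower bound on $\deg \lfloor nD' \rfloor$. The only mild nuisance is the finite combinatorial verification in case (ii) that isolates precisely the three exceptional genus-one signatures, and this is routine once organized by $\delta$ and $r$.
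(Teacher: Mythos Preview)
Your proof is correct and follows essentially the same approach as the paper: the key observation that $\eta(i,d)=0$ under the common-value hypothesis (since $(m-1)\nmid m$ for $m>1$) reduces (Ad-iii) to a degree bound, and cases (i) and (iii) are handled identically. In case (ii) the paper simply says ``it is easy to check,'' whereas you carry out the explicit organization by $\delta$ and $r$ and verify the boundary values that isolate the three exceptional signatures; this is helpful supplementary detail but not a different argument.
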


\begin{proof}
When $\#\{e_i' : i \in J\}=1$, we have $\eta(i,d)=0$ since $(m-1) \nmid m$ for all $m>1$; so (Ad-iii) reads
\[ \deg \lfloor (e_j'+d)(K_{\XX'}+\Delta)\rfloor \geq 2g+\chi_1(g). \]
The proof is now straightforward.  We have $e_j' \geq 2$ so $e_j'+d \geq 3$.  If $g \geq 2$, then $\deg \lfloor (e_j'+d)(K_{\XX'}+\Delta) \rfloor \geq 3(2g-2) \geq 2g$, giving (i).  If $g=1$, it is easy to check that the hypotheses of (ii) give that  $\deg \lfloor (e_j'+d)(K_{\XX'}+\Delta) \rfloor \geq 3 = 2g+\chi_1(g)$.  Finally, if $g=0$, then we need $e_j'+d \in \Eff(D')$, and we obtain (iii).
\end{proof}

\begin{lemma} \label{L:horizontal-J}
Suppose $g = 0$ and that the following conditions hold for some presentation and integer $e'$:
\begin{enumerate}
\item[(i)] $e_i' = e'$ for all $i\in J$;
\item[(ii)] $\#J \leq $ the number of generators in degree $e'$; 
\item[(iii)] $e' \geq \sat(\Eff(D'))-1$; and
\item[(iv)] all generators have degree $\leq e'$.
\end{enumerate}
Then $(\XX',J)$ is admissible.
\end{lemma}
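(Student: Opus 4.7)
The plan is to verify each of conditions (Ad-i), (Ad-ii), (Ad-iii) of Definition~\ref{defn:admind} in turn, with the main work concentrated on constructing a degree-$e'$ generating set adapted to the stacky points $\{Q_i : i \in J\}$.

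For (Ad-iii): since condition (i) forces $e_j'=e'$ for all $j\in J$, the divisibility $(e_j'+d-1)\mid(e_i'+d)$ in the definition of $\eta(i,d)$ becomes $(e'+d-1)\mid(e'+d)$, which forces $e'+d-1=1$; this never holds for $e'\ge 2$ and $d\ge 1$, so $\eta(i,d)=0$. With $g=0$ and $\chi_1(0)=0$, (Ad-iii) then reduces to $e'+d\in\Eff(D')$ for all $d\ge 1$, which is immediate from hypothesis (iii).

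For (Ad-i) and (Ad-ii), the heart of the argument is the following. For $i\in J$, let $\phi_i\colon R'_{e'}\to k$ extract the leading-pole coefficient at $Q_i$, so that $\phi_i(f)\ne 0$ iff $-\ord_{Q_i}(f)=e'-1$. Let
\[
M_{e'} := R'_{e'}\bigl/\bigl(\text{image of multiplication from strictly lower degrees}\bigr),
\]
so that $\dim M_{e'}$ equals the number of degree-$e'$ generators needed, which by condition (ii) is at least $\#J$. The first key step is to show each $\phi_i$ descends to $M_{e'}$: for any product $z_1\cdots z_k$ with $k\ge 2$, $\sum \deg z_j=e'$, and each $0<\deg z_j<e'$, the bound $-\ord_{Q_i}(z_j)\le \lfloor \deg(z_j)(1-1/e')\rfloor=\deg(z_j)-1$ gives
\[
-\ord_{Q_i}(z_1\cdots z_k)\;\le\;\sum_j (\deg z_j-1)\;=\;e'-k\;\le\;e'-2\;<\;e'-1,
\]
so $\phi_i$ vanishes as required. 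A Riemann--Roch computation on $\PP^1$ then identifies the kernel of $(\phi_i)_{i\in J}$ on $R'_{e'}$ with $H^0(X,\lfloor e'D'\rfloor-\sum_{i\in J}Q_i)$, of codimension exactly $\#J$ (the inequality $\dim R'_{e'}\ge\dim M_{e'}\ge\#J$ guarantees both dimensions are attained), so the induced map $\Psi\colon M_{e'}\to k^J$ is surjective.

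To conclude, I will take lifts $y_{i,e'}\in R'_{e'}$ of a preimage of the standard basis of $k^J$, together with lifts of a basis of $\ker\Psi$, as the degree-$e'$ part of the new presentation. The $y_{i,e'}$ satisfy $\phi_i(y_{i,e'})\ne 0$, hence $-\ord_{Q_i}(y_{i,e'})=e'-1$, verifying (Ad-i). The remaining degree-$e'$ generators lie in $\ker\Psi$ and hence have pole of order $\le e'-2$ at each $Q_i$ with $i\in J$, while any generator of degree $d<e'$ has pole at most $d-1$ at $Q_i$ and thus slope $\le (d-1)/d<(e'-1)/e'$; in both cases the strict inequality of (Ad-ii) holds. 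The main subtlety I expect is the floor-function bookkeeping showing $\phi_i$ kills all products of lower-degree generators, as this is precisely where the uniformity hypothesis $e_i'=e'$ enters essentially; the remainder of the argument is linear-algebra splitting combined with a straightforward Riemann--Roch dimension count on $\PP^1$.
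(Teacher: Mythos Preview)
Your proposal is correct and follows essentially the same approach as the paper: verify (Ad-iii) via the observation that $\eta(i,d)=0$ when all $e_i'$ coincide (the paper packages this as Lemma~\ref{L:Jeq1}), and verify (Ad-i)--(Ad-ii) by modifying the degree-$e'$ generators using Riemann--Roch. Your construction via the leading-pole functionals $\phi_i$ and the quotient $M_{e'}$ is a more explicit version of the paper's one-line ``one can modify the generators so that for each $i\in J$ there is a unique generator in degree $e'$ with maximal $Q_i$-slope''.

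One small imprecision: when you check (Ad-ii), the phrase ``the remaining degree-$e'$ generators lie in $\ker\Psi$'' omits the generators $y_{j,e'}$ for $j\in J$, $j\neq i$, which are \emph{not} in $\ker\Psi$. However, your choice to lift preimages of the \emph{standard basis} (rather than an arbitrary basis) of $k^J$ already guarantees $\phi_i(y_{j,e'})=0$ for $j\neq i$, so these too have pole order $\le e'-2$ at $Q_i$; you should state this case explicitly.
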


\begin{proof}
Condition (ii) and Riemann--Roch imply (Ad-i). By Riemann--Roch and condition (ii), one can modify the generators so that for each $i \in J$ there is a unique generator in degree $e'$ with maximal $Q_i$-slope; by condition (iv), all other generators have degree $< e'$ and necessarily satisfy (Ad-ii), so (Ad-ii) holds for all generators. Condition (iii) and Lemma~\ref{L:Jeq1} imply (Ad-iii).
\end{proof}

With this technical work out of the way, we are now ready to state our inductive theorem. 
(While our statement and proof are valid for arbitrary $g$, in the end we only end up applying Theorem \ref{thm:inductive-by-stacky-point} with $g = 0,1$.)

\begin{theorem}
\label{thm:inductive-by-stacky-point}
Suppose that $(\XX',J)$ is admissible, with generators $y_{i,e_i'} \in R'$ as in (Ad-i).  Then the following are true.
  \begin{enumerate}
  \item [(a)] There exist elements $y_{i,e_i}\in H^0(\XX,e_i(K_{\XX}+\Delta))$ such that 
  \[ -\ord_{Q_i}(y_{i,e_i}) = e_i-1 \]
and
\[ -\frac{\ord_{Q_j}(y_{i,e_i})}{ \deg (y_{i,e_i}) } \leq 1-\frac{1}{e_j'} - \frac{1}{e_j' \deg (y_{i,e_i})} \]
for $j \in J - \{i\}$.
\item [(b)] The elements 
  \begin{center}
  $y_{i,e_i'}^ay_{i,e_i}^b$, \quad with $i \in J$ and $a \geq 0,b > 0$,
  \end{center} 
  span $R$ over $R'$.  The elements $y_{i,e_i}$ minimally generate $R$ over $R'$.
  \item [(c)] Equip $k[y] = k[y_{i,e_i}]_{i \in J}$ and $k[x]$ with any graded monomial order and $k[y,x] = k[y]\otimes k[x]$ with the block order.  Let $R=k[y,x]/I$.  Then
\[
\init_{\prec}(I) =\, \init_{\prec}(I') k[y,x] 
                                 + \langle y_{i,e_i} g : i \in J \text{ and } g \neq y_{i,e_i}, y_{i,e_i'} \rangle
\]
where $g$ ranges over generators of $R$.
Any set of relations in $I$ with these leading terms together with a Gr\"obner basis for $I'$ yield a Gr\"obner basis for $I$.  
\item [(d)] Suppose $\init_{\prec}(I')$ is minimally generated by quadratics and that for all $i \in J$, we have $e_i > \deg z$ for any generator $z$ of $R'$.  Then any set of minimal generators for $I'$ together with any set of relations in $I$ with leading terms as in (c) minimally generate $I$.
\item[(e)] $(\XX,J)$ is admissible. 
\end{enumerate}
\end{theorem}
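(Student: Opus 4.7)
The plan is to mirror the structure of Theorems~\ref{T:particular-stacky-gin} and~\ref{T:particular-stacky-gin-eff}, adapted to the situation where the orders of several stacky points increase simultaneously, with the admissibility conditions (Ad-i)--(Ad-iii) playing the role of the technical hypotheses. First I would prove (a): the existence of elements $y_{i,e_i}\in H^0(\XX,e_i D)$, where $D=K_{\XX}+\Delta$, with a pole of order $e_i-1$ at $Q_i$ and controlled slope at the remaining $Q_j$ for $j\in J$, $j\neq i$. Since $K_{\XX}+\Delta \geq K_{\XX'}+\Delta$, condition (Ad-iii) applied with $d=1$ supplies, via Riemann--Roch on the coarse space, enough room in $H^0(\XX,e_i D)$ to impose simultaneously a pole of exact order $e_i-1$ at $Q_i$ and vanishing conditions at the (at most $\eta(i,1)$) points $Q_j$ at which a pure-power monomial in existing generators would otherwise prevent the desired strict slope bound. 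The strict bound $-\ord_{Q_j}(y_{i,e_i})/\deg y_{i,e_i}<1-1/e_j'$ then follows.

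For (b), I would argue by a pole-order filtration at each $Q_i$ in turn, as in the proof of Theorem~\ref{T:particular-stacky-gin}(a). Since $e_i=e_i'+1$ and $\gcd(e_i,e_i')=1$, the monomials $y_{i,e_i'}^a y_{i,e_i}^b$ of fixed degree $d$ realize at $Q_i$ exactly those pole orders missing from $R'$ in degree $d$; combining this across $i\in J$ (which is made independent by the slope bounds from (a)) shows that the listed monomials span $R$ over $R'$. Minimality of each $y_{i,e_i}$ is immediate from a slope comparison at $Q_i$: by (Ad-ii) and Lemma~\ref{L:better-bound} the $Q_i$-slope of any degree $e_i$ monomial in the generators of $R'$ is at most $1-1/e_i' < 1-1/e_i$.

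For (c), I proceed as in Theorem~\ref{T:particular-stacky-gin}(b). If $z$ is a generator of $R$ with $z\neq y_{i,e_i}, y_{i,e_i'}$, then (b) allows us to write $y_{i,e_i}z$ as an $R'$-linear combination of the spanning monomials $y_{i,e_i'}^a y_{i,e_i}^b$; a slope comparison at $Q_i$ (using (a) and Lemma~\ref{L:better-bound}) rules out any pure power $y_{i,e_i}^b$, so every remaining term in the relation lies either in $R'$ or is divisible by $y_{i,e_i'}$. Under the block ordering on $k[y,x]$ the monomial $y_{i,e_i}z$ dominates all such terms, yielding the claimed initial term. A dimension/Hilbert-function count using the spanning set of (b) then verifies that these together with a Gr\"obner basis of $I'$ form a Gr\"obner basis of $I$, giving $\init_\prec(I)$ and hence $\gin_\prec(I)$. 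Part (d) follows from the usual observation that each new leading term is a product of exactly two variables; the hypothesis $e_i>\deg z$ ensures no new quadratic leading term coincides in degree with a generator of $\init_\prec(I')$, so that none of the new relations can be obtained as syzygies of the old.

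The main obstacle is part (e), namely that $(\XX,J)$ is again admissible. Condition (Ad-i) is immediate from (a); condition (Ad-ii) for generators of $R$ other than $y_{i,e_i}$ follows from (Ad-ii) for $R'$ combined with (a) and Lemma~\ref{L:better-bound}. The delicate point is re-establishing (Ad-iii): one must show
\[
\deg\bigl\lfloor(e_i+d)(K_{\XX}+\Delta)\bigr\rfloor \geq 2g+\chi_1(g)+\eta_{\XX}(i,d)
\]
for all $d>0$ and $i\in J$, where now $\eta_\XX(i,d)$ counts $j\in J$ with $(e_j+d-1)\mid(e_i+d)$. Since $K_\XX+\Delta$ exceeds $K_{\XX'}+\Delta$ by a strictly positive divisor concentrated at $\{Q_i:i\in J\}$, the left-hand side can only grow compared with the bound (Ad-iii) for $\XX'$ at shifted parameter $d+1$; the bookkeeping involves verifying that this growth dominates any increase in $\eta_\XX(i,d)$ over $\eta_{\XX'}(i,d+1)$, which I expect reduces to a routine floor-function inequality exploiting $e_j=e_j'+1$. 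This is the only place where all three hypotheses for $\XX'$ are used together, and it is where I anticipate the bulk of the technical work.
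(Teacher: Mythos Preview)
Your plan for (a), (b), and (d) matches the paper. Where you diverge is in (c) and (e), and you have their relative difficulties backwards.

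For (c), the paper does not expand $y_{i,e_i}z$ in the spanning set of (b) and then argue about leading terms. It proves the sharper statement that $y_{i,e_i}z\in R'$ whenever $z\neq y_{i,e_i},y_{i,e_i'}$, by checking the slope inequality $-\ord_{Q_j}(y_{i,e_i}z)\leq \deg(y_{i,e_i}z)\bigl(1-1/e_j'\bigr)$ at \emph{every} $Q_j$, in three cases ($j=i$; $j\neq i$ with $e_j'\nmid e_i$; and $j\neq i$ with $e_j'\mid e_i$, where for $z=y_{j,e_j}$ one swaps the roles of $i$ and $j$). Once $y_{i,e_i}z\in R'$ is known, the block order gives the leading term for free. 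Your route has a gap: the spanning set from (b) contains $y_{j,e_j'}^a y_{j,e_j}^b$ for \emph{every} $j\in J$, and since the order on $k[y]$ is graded, the $y$-part $y_{j,e_j}^b$ dominates $y_{i,e_i}$ as soon as $be_j>e_i$. A slope comparison at $Q_i$ alone does not exclude such terms with $j\neq i$; to rule them out you would need slope checks at all $Q_j$, at which point you have recovered the paper's argument.

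For (e), you have overestimated the work. Since $e_j=e_j'+1$ for all $j\in J$, one has $e_j+d-1=e_j'+d$ and $e_i+d=e_i'+(d+1)$, so $\eta_{\XX}(i,d)=\eta_{\XX'}(i,d+1)$ \emph{exactly}; and $\deg\lfloor(e_i+d)(K_\XX+\Delta)\rfloor\geq\deg\lfloor(e_i'+d+1)(K_{\XX'}+\Delta)\rfloor$ since $K_\XX+\Delta\geq K_{\XX'}+\Delta$. Thus (Ad-iii) for $(\XX,J)$ at parameter $d$ follows immediately from (Ad-iii) for $(\XX',J)$ at $d+1$, with no ``increase in $\eta$'' to control. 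The paper dispatches (e) in one sentence; the genuine technical content of the theorem lies in the slope estimates of (a) and (c).
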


\begin{proof}
Let $D=K_{\XX}+\Delta$ and $D'=K_{\XX'}+\Delta$ be the canonical divisors of $(\XX,\Delta)$ and $(\XX',\Delta)$, respectively.  For $d \geq 0$, let
\[ S(i,d) = \{j \in J : j \neq i \mbox{ and } (e_j'+d-1) \mid (e_i' + d) \}. \]
Let
\[ E_i=\sum_{\substack{j \in S(i,1)}} Q_j = \sum_{\substack{j \in J,  j \neq i \\ e_j' \mid (e_i'+1)}} Q_j \in \Div(X)=\Div(X'). \]
Because $\XX,\XX'$ have a common coarse space $X=X'$ and
\[ \lfloor e_i D' \rfloor + Q_i \leq \lfloor e_i D \rfloor, \]
we have a natural inclusion
\[ H^0(\XX',e_iD'-E_i+Q_i) \hookrightarrow H^0(\XX, e_iD-E_i) \subseteq H^0(\XX, e_iD). \] 
Hypothesis (Ad-iii) implies that
\[ \deg(\lfloor e_i D' \rfloor - E_i) \geq 2g+\chi_1(g) \]
so by Riemann--Roch, a general element 
\[ y_{i,e_i} \in H^0(\XX',e_iD'-E_i+Q_i) \]
satisfies 
 \[   -\ord_{Q_i}(y_{i,e_i}) = \left\lfloor e_i\left(1-\frac{1}{e_i'}\right) \right\rfloor + 1 = (e_i'-1)+1 = e_i-1,  \]
so we obtain functions $y_{i,e_i} \in H^0(\XX,e_iD-E_i)$ satisfying the first part of claim (a).  For the second part of claim (a), if $j \in S(i,1)$ then (noting throughout that $\deg (y_{i,e_i}) = e_i$) the extra vanishing along $E_i$ implies that for $j \neq i$
\begin{equation*} 
-\ord_{Q_j}(y_{i,e_i})  \leq  e_i \left(1-\frac{1}{e_j'}\right) - 1 \leq  e_i  \left(1-\frac{1}{e_j'}\right) - \frac{1}{e_j'}.
\end{equation*}
If $j \not \in S(i,1)$ and $j \neq i$, then we can write $e_j' = ae_i + r$, with $0 < r < e_j'$ (where $r \neq 0$ since $j \not \in S(i,1)$), so extending the proof of Lemma~\ref{L:better-bound} a bit, we have
\begin{align*} 
-\ord_{Q_j}(y_{i,e_i}) &\leq \left\lfloor e_i\left( 1-\frac{1}{e_j'} \right) \right \rfloor  = e_i - a - \left\lceil\frac{r}{e_j'}\right\rceil \\
&\leq e_i - a -\frac{r}{e_j'} -\frac{1}{e_j'} = e_i \left(1-\frac{1}{e_j'}\right) - \frac{1}{e_j'}
\end{align*}
finishing the proof of Claim (a).

 Next, let $R_0 = R'$ and let $R_i  = R_{i-1}$ if $i \not \in J$ and $R_{i-1}[y_{i,e_i}]$ if $i \in J$. To prove claim (b) it suffices to show that the elements $y_{i,e_i'}^ay_{i,e_i}^b$ with  $b > 0$ are linearly independent and, together with $R_{i-1}$, span $R_i$ as a $k$-vector space.  Consideration of poles gives that $y_{i,e_i'}^ay_{i,e_i}^b \not \in R'$, independence follows from injectivity of the linear map 
\[(a,b) \mapsto \left(\deg \left(y_{i,e_i'}^ay_{i,e_i}^b\right), -\ord_Q\left(y_{i,e_i'}^ay_{i,e_i}^b \right)\right) = (a,b)  \begin{pmatrix} 
e_i-1& e_i \\ e_i-2 & e_i-1
\end{pmatrix},
\]
 and generation from the fact that their pole orders are distinct in each degree and that the cone over $(e_i-1,e_i-2)$ and $(e_i,e_i-1)$ is saturated, since the lattice it generates has determinant
\[ (e_i-1)(e_i-1) - e_i(e_i-2) = 1. \]
This proves claim (b).

For claim (c), we first show that $y_{i,e_i}z \in R'$ unless $z = y_{i,e_i}$ or $y_{i,e_i'}$. 
An element $f \in R$ is an element of $R'$ if and only if for all $j$ we have
\[
-\ord_{Q_j}(f) \leq \deg f \left(1-\frac{1}{e_j'}\right).
\]
To check this for $f = y_{i,e_i}z$ there are three cases. The first case is straightforward: if $j \not \in \{i\} \cup S(i,1)$, then $-\ord_{Q_j} D = -\ord_{Q_j} D'$ and it follows that
\[
-\ord_{Q_j}(y_{i,e_i}) - \ord_{Q_j} (z) \leq   e_i  \left(1-\frac{1}{e_j'}\right) + \deg z \left(1-\frac{1}{e_j'}\right) = \deg y_{i,e_i}z \left(1-\frac{1}{e_j'}\right).
\]
Second, if $i = j$, then by Claim (a), Hypothesis (Ad-ii), and Lemma~\ref{L:better-bound}, we have
\begin{align*}
  -\ord_{Q_i}(y_{i,e_i}) -\ord_{Q_i}(z) 
  \leq & \,e_i-1 +  \deg z\left(1-\frac{1}{e_i'}\right) - \frac{1}{e_i'} \\
= &\, \left(e_i+ \deg z\right) \left(1-\frac{1}{e_i'}\right) = \deg y_{i,e_i}z \left(1-\frac{1}{e_i'}\right). 
\end{align*}
Finally, (reversing the roles of the indices $i$ and $j$) if $j \in S(i,1)$ and $z = y_{j,e_j}$ then we are in the second case again (but now with $z = y_{i,e_i}$); if $z  \neq y_{j,e_j}$  then for the same three reasons, we have 
\[
  -\ord_{Q_j}(y_{i,e_i}) -\ord_{Q_j}(z) 
  \leq  e_i\left(1-\frac{1}{e_j'}\right)+\deg z\left(1 - \frac{1}{e_j'}\right) =\deg y_{i,e_i}z \left(1-\frac{1}{e_j'}\right). 
\]
This yields a relation whose leading term is $\underline{y_{i,e_i}z}$, because we have taken the block order. Since these leading terms exactly complement the new generators of $R$, they span the canonical ring, completing the proof of claim (c). 

For claim (d), the degree hypothesis ensures that the generators of $R'$ are still minimal in $R$, and the proof of (a) shows that the new generators of $R$ are all minimal. For relations, the leading term of each successive relation is quadratic and not in the linear span of the generators of  $\init_{\prec} I'$ and are thus all necessary.

Finally for part (e), admissibility of the pair $(\XX,J)$ follows from the presentation given in claim (d), noting that Hypothesis (Ad-ii) is monotonic in $e_i'$ and that, since Hypothesis (Ad-iii) holds for all $e \geq e_j'$, we have that (Ad-iii) continues to hold for the pair $(\XX,J)$. (Note that this is where we allow $d$ to vary in the definition of admissibility.)
\end{proof}

With Theorem~\ref{thm:inductive-by-stacky-point} in hand, we revisit the $g = 0$ and 2-saturated case of Theorem~\ref{T:particular-stacky-gin-eff} and arrive at a stronger conclusion, allowing the addition of a stacky point of arbitrary order.

\begin{corollary}
\label{C:add-a-stacky-point}
  
Let $r \geq 1$ and let $\XX$ and $\XX'$ be tame, separably rooted stacky curves with unordered signatures 
\begin{center}
$\sigma=(g;e_1,\dots,e_{r-1},e_r;\delta)$ and  $\sigma'=(g;e_1,\dots,e_{r-1};\delta)$
\end{center} 
and corresponding containment of canonical rings $R' \subseteq R$.  Suppose that 
\[ \text{$g=0$ and $\sat(\Eff(D'))=2$}, \] 
where $D'=K_{\XX'}+\Delta$.  Let $R'=k[x,w_3,v_2]/I'$ with generators $x_1,\dots,x_m,w_3,v_2$ satisfying $\deg v_2=2$ and $\deg w_3=3$, and equip $R'$ with grevlex subject to \eqref{eqn:v2w3yup}.
Let $Q = Q_r$.  Then the following statements are true.
\begin{enumerate}

\item[(a)] For $i=2,\dots,e_r$, general elements
\[ y_i \in H^0(\XX,i (K_{\XX} + \Delta)) \quad\text{and}\quad z_3 \in H^0(\XX,3 (K_{\XX} + \Delta) ) \]
satisfy $-\ord_Q(y_i) = i-1, -\ord_Q(z_3) = 1$ and minimally generate $R$ over $R'$.  

\item[(b)] 
 Equip $k[z_3,y_2]$ with grevlex, $k[y_{e_r},\ldots,y_3]$ with the lexicographic order, the ring 
\[k[z_3,y_2,x,w_3,v_2] = k[z_3,y_2]\otimes k[x,w_3,v_2] \]
with the block order, and the ring 
\[k[y,z,x,w_3,v_2] = k[y_{e_r},\ldots,y_3] \otimes k[z_3,y_2,x,w_3,v_2] \]
with the block order, so that $R=k[y,z,x,w_3,v_2]/I$.  Then
\begin{align*}
\init_{\prec}(I') k[z,y,x,w_3,v_2]  &
+      \langle  y_2 x_i : 1 \leq i \leq m-2 \rangle \\
 & + \langle z_3 x_i : 1 \leq i \leq m     \rangle 
                + \langle z_3^2 \rangle \\
 & + \langle y_{i} g : 3 \leq i \leq e_r  \text{ and } g \neq y_{i+1}, y_{i}, y_{i-1} \rangle
\end{align*}
where $g$ ranges over generators of $R$.
\item[(c)] Let $S$ be any set of relations in $I$ with leading terms as in (b).  Then a Gr\"obner basis for $I'$ together with $S$ yields a Gr\"obner basis for $I$.  
\item[(d)] Suppose that $R'$ has a minimal presentation and no relation in $S$ has a nonzero linear term in a generator.  Then the generators $z_3,y,x,w_3,v_2$ minimally generate $R$ and the generators for $I'$ together with $S$ as in (c) minimally generate $I$.
\end{enumerate}
\end{corollary}

\begin{proof}
  This follows from Theorems~\ref{T:particular-stacky-gin-eff} and~\ref{thm:inductive-by-stacky-point}, noting that the output of Theorem~\ref{T:particular-stacky-gin-eff} is admissible with $J = \{r\}$; the conditions
(Ad-i) and (Ad-ii) hold since $-\ord_Q y_2 = -\ord_Q z_3 = 1$ and $-\ord_Q z = 0$ for all other generators, and (Ad-iii) holds by Lemma~\ref{L:Jeq1}.
\end{proof}

\begin{corollary} \label{cor:annoyingg1sigs}
Let $(\XX,\Delta)$ be a tame, separably rooted log stacky curve over $k$ with signature $\sigma=(1;e_1,\dots,e_r;0)$ (so $g=1$ and $\delta=0$).  Then the canonical ring $R(\XX,\Delta)$ is generated in degree at most $\max(3,e)$ with relations in degree at most $2\max(3,e)$
unless
\[ \sigma \in \{(1;2;0),(1;3;0),(1;4;0),(1;2,2;0),(1;2,2,2;0)\}. \]
\end{corollary}

\begin{proof}
We establish base cases then induct as in the previous corollary, with the nontrivial condition (Ad-iii) implied by condition Lemma~\ref{L:Jeq1}(ii).

The corollary holds:
\begin{itemize}
\item if $r=0$, since $R$ is trivial by Example \ref{exm:lowgenus1}; 
\item if $r=1$, by Corollary \ref{cor:X1e5}, noting the exceptional signatures $(1;e;0)$ with $e=2,3,4$;
\item if $r=2$, by Examples \ref{ex:1220} and \ref{ex:1230} for signatures $(1;2,2;0)$ and $(1;2,3;0)$, and for the remaining signatures by inductively reducing the order of a stacky point using Lemma~\ref{L:Jeq1}(ii);
\item if $r=3$, by Example \ref{ex:12220} for signature $(1;2,2,2;0)$, adding a stacky point to $(1;2,3;0)$ to get $(1;2,2,3;0)$ inductively using Corollary \ref{C:general-case}, and for the remaining signatures by reducing the order of stacky point; and
\item if $r \geq 4$, inducting from the case $r=3$ and applying Corollary \ref{C:general-case}.
\end{itemize}
Together, these prove the result.
\end{proof}

%----------------------------------------------------------------------------
\section{Poincar\'e generating polynomials}
\label{S:poincare-inductive}
%----------------------------------------------------------------------------

Throughout this section we consider the inclusion of canonical rings $R \supset R'$ corresponding to the setup of Theorem~\ref{T:particular-stacky-gin},~\ref{T:particular-stacky-gin-eff}, or~\ref{thm:inductive-by-stacky-point} and the effect on the Poincar\'e polynomials of $R$ and $R'$.

Theorem~\ref{T:particular-stacky-gin} gives:
\begin{align*} 
P(R_{\geq 1},t) &= P(R'_{\geq 1},t) + t^2 + \cdots +  t^{e},\\
P(I,t)  &= P(I',t) + (P(R_{\geq 1}',t) - t)(t^2 + \cdots +  t^{e}) + \sum_{2 \leq i \leq j \leq e}t^{i + j}.
\end{align*}
Theorem~\ref{T:particular-stacky-gin-eff} gives:
\begin{align*} 
P(R_{\geq 1},t) &= P(R'_{\geq 1},t) +t^2 + t^3,\\
P(I,t)  &= P(I',t) + P(R'_{\geq 1},t)(t^2+ t^3) - t^4 - t^5 + t^6.
\end{align*}
Theorem~\ref{thm:inductive-by-stacky-point} gives:
\begin{align*} 
P(R_{\geq 1},t) &= P(R'_{\geq 1},t) + t^{e_i},\\
P(I,t)  &= P(I',t) + (P(R'_{\geq 1},t) - t^{e_i-1})t^{e_i}.
\end{align*}
The verification of these claims is immediate.

%%****************************************************************************
\chapter[Genus zero]{Log stacky base cases in genus 0}
\label{ch:genus-0}
%%****************************************************************************

In this chapter, we prove the main theorem for genus $g=0$; the main task is to understand the canonical ring for the (small) base cases of log stacky canonical rings, from which we may induct.

%----------------------------------------------------------------------------
\section{Beginning with small signatures}
%----------------------------------------------------------------------------

Our task is organized by signature; so we make the following definition.

\begin{definition}
We say the signature $\sigma=(0;e_1,\dots,e_r;\delta)$ \defiindex{dominates} $\sigma'=(0;e_1',\dots,e_{r'}';\delta')$ if $\sigma'\neq \sigma$ and $\delta \geq \delta'$ and $r \geq r'$ and $e_i \geq e_i'$ for all $i=1,\dots,r'$.  

We say  that $\sigma$ \defiindex{strongly dominates} $\sigma'$ above $J$ if $\sigma' \neq \sigma$ and $\delta=\delta'$ and $r=r'$ and $e_i > e_i'$ for all $i \in J$ and $e_i = e_i'$ for all $i \not\in J$. 
 We say  that $\sigma$ \defiindex{strongly dominates} $\sigma'$ if it strongly dominates above $J = \{1,\ldots,r\}$.

We say  that $\sigma$  \defiindex{root dominates} $\sigma'$ if  $r>r'$, $\delta = \delta'$, and $e_i = e_i'$ for all $i \leq r'$ (i.e.~if $\sigma'$ is a subsignature of $\sigma$).
\end{definition}

When $\Z_{\geq 1} \subseteq \Eff(\sigma')$, we may apply Theorem~\ref{T:particular-stacky-gin} inductively to any signature $\sigma$ that dominates $\sigma'$, and when $\Z_{\geq 2} \subseteq \Eff(\sigma')$, we may apply Corollary~\ref{C:add-a-stacky-point} inductively to any signature $\sigma$ that root dominates $\sigma'$.  Moreover, when $\sigma'$ admits a subset $J \subseteq \{1,\dots,r\}$ such that $(\XX',J)$ is admissible (Definition~\ref{defn:admind}), in which case we say $(\sigma',J)$ is \defi{admissible}, then we may apply Theorem~\ref{thm:inductive-by-stacky-point} inductively to any signature $\sigma$ that strongly dominates $\sigma'$ above $J$.  So to carry out this strategy, first we find those signatures for which neither of these apply.

\begin{lemma} \label{lem:genus0small-signatures}
Let $\sigma=(0;e_1,\dots,e_r;\delta)$ be a signature with $A(\sigma)>0$.  Suppose that the two following conditions hold.
\begin{enumerate}
\item[(G-i)] If $\sigma$ root dominates $\sigma'$, then $\Z_{\geq 2} \not\subseteq \sat(\Eff(\sigma'))$; and
\item[(G-ii)] For all $J \subseteq \{1,\dots,r\}$, the pair $(\sigma',J)$ is \emph{not} admissible, where $\sigma'=(0;e_1',\dots,e_r';\delta)$ with $e_i=e_i'+\chi_J(i)$ and $e_i' \geq 2$ for all $i$.
\end{enumerate}
Then $\sigma$ belongs to the following list:
\begin{enumerate}
\item[] $(0;2,3;1)$;

\item[] $(0;2,3,e_3;0)$, with $e_3 = 7,8,9$;
\item[] $(0;2,4,e_3;0)$, with $e_3 = 5,6,7$;
\item[] $(0;2,e_2,e_3;0)$, with $(e_2,e_3) = (5,5),(5,6),(6,6)$;
\item[] $(0;3,e_2,e_3;0)$, with $(e_2,e_3) = (3,4), (3,5),(3,6),(4,4),(4,5),(5,5)$;
\item[]$(0;4,4,4;0), (0;4,4,5;0), (0;4,5,5;0), (0;5,5,5;0)$;
\item[] $(0;2,2,e_3,e_4;0)$, with $(e_3,e_4) = (2,3),(2,4),(2,5),(3,3),(3,4)$, or $(4,4)$;
\item[] $(0;2,3,3,3;0), (0;2,4,4,4;0)$, $(0;3,3,3,3;0)$, or $(0;4,4,4,4;0)$;
\item[] $(0;2,2,2,2,2;0)$, $(0;2,2,2,2,3;0)$;
\item[] $(0;2,2,2,2,2,2;0)$.
\end{enumerate}

\end{lemma}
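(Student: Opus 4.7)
The plan is to proceed by a tight case analysis, first using (G-i) to reduce to a finite list of candidates and then applying (G-ii) to prune down to the stated list.

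First, I would use (G-i) to force $\delta \leq 1$: the subsignature $(0; \emptyset; \delta)$ obtained by discarding every stacky point has $\Eff = \Z_{\geq 0} \supseteq \Z_{\geq 2}$ as soon as $\delta \geq 2$, violating (G-i).  With $\delta \leq 1$ in hand, Proposition~\ref{P:gensA} shows that any root subsignature $\sigma'$ with $A(\sigma') > 0$ lying outside the three exceptional shapes listed there automatically satisfies $\Eff(\sigma') \supseteq \Z_{\geq 2}$.  Hence every root subsignature of $\sigma$ must either have $A \leq 0$ or fall into one of the shapes (i)--(iii) of Proposition~\ref{P:gensA}.  For $\delta = 1$ no exceptional shape exists, so every pair of orders $e_i,e_j$ appearing in $\sigma$ must satisfy $1/e_i + 1/e_j \geq 1$; combined with (G-ii), this leaves $(0;2,3;1)$ as the only surviving $\delta = 1$ candidate.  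For $\delta = 0$, the requirement that every four-point subsignature lies in case (ii) of Proposition~\ref{P:gensA} caps $r \leq 6$ and forces most of the $e_i$ to equal~$2$, and combining with the analogous restrictions on three-, five-, and six-point subsignatures yields a completely explicit finite list of candidates.

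Second, I would apply (G-ii) to each candidate to rule out those that are not on the stated list.  For each such $\sigma$ I would seek a nonempty $J \subseteq \{1,\ldots,r\}$ (with $e_i \geq 3$ for $i \in J$) such that, setting $e_i' = e_i - \chi_J(i)$, the pair $(\sigma',J)$ is admissible.  The primary tool is Lemma~\ref{L:horizontal-J}: when the $e_i'$ are constant on $J$, when $|J|$ does not exceed the number of generators of $R'$ in that common degree, when this degree is at least $\sat(\Eff(D')) - 1$, and when it dominates the degrees of all other generators, admissibility is automatic.  The structural data on $R'$ needed to check these hypotheses is supplied by Proposition~\ref{P:AD-genus0} combined with Theorems~\ref{T:particular-stacky-gin}, \ref{T:particular-stacky-gin-eff}, and~\ref{thm:inductive-by-stacky-point} applied inductively to signatures smaller than $\sigma$.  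For the few candidates where Lemma~\ref{L:horizontal-J} does not directly apply, I would fall back on Lemma~\ref{L:Jeq1} (the $|J|=1$ case) and verify (Ad-i) and (Ad-ii) by exhibiting, for each $i \in J$, a generator of $R'$ of degree $e_i'$ with maximal slope at $Q_i$ and checking the slope bound (sharpened by Lemma~\ref{L:better-bound}) for every other generator at $Q_i$.

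The hard part will be the slope bookkeeping in (G-ii): ensuring (Ad-ii) simultaneously at every $Q_i$ with $i \in J$ requires tracking, for each generator of $R'$, its order of pole at every stacky point, not merely at the single point being modified.  Lemma~\ref{L:better-bound} sharpens the required inequality still further, which means that a generator of barely subcritical slope at some $Q_j$ with $j \notin J$ can obstruct admissibility once $J$ is increased.  Precisely those signatures for which no $J$ can be chosen without running afoul of this bookkeeping end up on the stated list; the remaining signatures admit an explicit admissible $J$, and my proof would conclude by cataloging these $J$'s.  The canonical rings of the surviving base cases will then be computed directly by the toric and Gr\"obner-basis methods of chapter~\ref{ch:canon-rings-stack-genus-zero} in the remainder of chapter~\ref{ch:genus-0}.
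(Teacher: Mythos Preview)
Your proposal is correct in outline and follows the same two-step strategy as the paper: use (G-i) together with Proposition~\ref{P:gensA} (packaged in the paper as Lemma~\ref{L:case-V}) to cut down to finitely many candidate families, then use (G-ii) to prune by exhibiting an admissible pair $(\sigma',J)$ for each candidate not on the final list, with Lemmas~\ref{L:horizontal-J} and~\ref{L:Jeq1} as the workhorses.

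The one point that needs sharpening is your claim that the structural data on $R'$ needed for the admissibility checks is supplied by the inductive theorems applied to smaller signatures.  In practice the relevant $\sigma'$ are almost always themselves base cases on the list: for instance, to show that $(0;2,3,10;0)$ passes via $J=\{3\}$ you must know the generators of the canonical ring of $\sigma'=(0;2,3,9;0)$, and no inductive theorem produces that ring---it is one of the signatures you are trying to isolate.  The paper is explicit about this interdependence: it first computes the canonical rings of the listed signatures directly in the examples of section~\ref{sec:smallsig-genus0} (verifying the needed pole-order and degree information along the way), and only afterward returns to complete the proof of the lemma by citing those computations.  Your plan should reflect that ordering rather than deferring the base-case computations to ``the remainder of chapter~\ref{ch:genus-0}'' as though they were logically downstream.
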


To prove this lemma (in particular, to show admissibility), we actually need to know a bit more about the structure of canonical rings associated to signatures in the above list.  So we consider these signatures as examples, and we return to the proof of this lemma in the final section.

%----------------------------------------------------------------------------
\section{Canonical rings for small signatures} \label{sec:smallsig-genus0}
%----------------------------------------------------------------------------

In this section, we work out some explicit canonical rings with small signature as base cases for our inductive argument and verify that appropriate inductive hypotheses hold.  These include signatures for which the canonical ring is generated by $2$ or $3$ elements, which were classified by Wagreich \cite{Wagreich:fewgens}.   We start with the simplest signatures and work our way up in complexity.  The results of these cases are recorded in Table (IV).

We will use freely standard algorithms for computing generators and relations for cancellative commutative monoids: for more on this problem in a general context, see for example Sturmfels \cite{Sturmfels:GroebnerConvex}, Rosales--Garc\'ia-S\'anchez--Urbano-Blanco \cite{MR1719711}, and Chapman--Garc\'ia-S\'anchez--Llena--Rosales \cite{MR2254337}.

\begin{example}[Signature $(0;2,\dots,2;0)$] \label{g0ex:222220}
First, we present the canonical ring of a tame, separably rooted stacky curve $\XX$ with signature $\sigma=(0;\underbrace{2,\dots,2}_{r};0)$.  For $r \leq 3$, we have $A(\sigma)<0$ so the canonical ring is trivial.  The case $r=4$ is treated in Lemma~\ref{lem:eieq0g}: signature $\sigma=(0;2,2,2,2;0)$ has canonical ring $R=k[x_2]$, generated by a single element in degree $2$ with no relations.

Suppose that $r=5$.  We exhibit a (minimal) toric presentation, following section~\ref{subsec:toric}.  We have that $\Eff(\sigma)$ has saturation $s=4$ and $m=\lcm(1,2,\dots,2)=2$.  Therefore by Proposition~\ref{prop:useseffD}, as an upper bound, the canonical ring is generated in degree at most $2+4=6$ with relations of degree at most $12$.  We have 
\[ \deg \lfloor dD \rfloor = -2d + 5\lfloor d/2 \rfloor = \begin{cases} 
d/2, &\text{ if $d$ is even;} \\
(d-5)/2, &\text{ if $d$ is odd.}
\end{cases} \]
So for $d=0,1,2,\ldots$ we have
\[ \dim H^0(\XX,dK_{\XX}) = 1,0,2,0,3,1,4,2,5,3,6,\ldots. \]
so $\Pi$ is generated by
\[ (2,0),(2,1),(4,0),(4,1),(4,2),(5,0),(6,0),(6,1),(6,2),(6,3) \]
for which a minimal set of generators is given by
\[ (2,0),(2,1),(5,0). \]
Visibly, the only monoid relation is $2(5,0)=5(2,0)$.  Therefore, by Propositions~\ref{P:gensTor} and~\ref{P:Grobner_genus0}, the canonical ring has a presentation $R=k[y_5,x_1,x_2]/I$ with 
\[ \init_{\prec}(I)=\langle y_5^2 \rangle \]
under grevlex.  Thus the Poincar\'e polynomial of $R$ is $P(R_{\geq 1};t)=2t^2+t^5$ and the Poincar\'e polynomial of $I$ is $P(I;t)=t^{10}$.

Next consider $r=6$.  We now have $s=2$, and an analysis similar to the previous paragraph yields that $\Pi$ is minimally generated by 
\[ (2,0),(2,1),(2,2),(3,0). \]
A minimal set of relations among these generators is given by
\[ 2(2,1)=(2,2)+(2,0) \text{ and } 2(3,0)=3(2,0). \]
Indeed, the reduction algorithm explained in the proof of Proposition~\ref{prop:useseffD} allows us to write every element of $\Pi$ uniquely in the form
\[ \{(2,1),(3,0)\} + \Z_{\geq 0}\{(2,0),(2,2)\}. \]
It follows that the canonical ring has presentation $R=k[y_3,x_1,x_2,x_3]/I$ with
\[ \init_{\prec}(I)=\langle y_3^2, x_2^2 \rangle \]
under grevlex.  Now $P(R_{\geq 1};t)=3t^2+t^3$ and $P(I;t)=t^6+t^4$.

Finally, we complete the presentation by induction, using Theorem~\ref{T:particular-stacky-gin-eff}, with the base case $r=6$.  We conclude that
\[ P(R_{\geq 1};t)=3t^2+t^3 + (r-6)(t^2+t^3) = (r-3)t^2 + (r-5)t^3 \]
and if $I_r$ is the canonical ideal for some $r$, then for $r \geq 7$ we have
\begin{align*} 
P(I_r;t) &= P(I_{r-1};t) + (t^2+t^3)P(R_{r-2, \geq 1};t) \\
&= P(I_r;t) + (t^2+t^3)((r-5)t^2+(r-7)t^3) \\
&= P(I_r;t) + (r-7)t^6 + 2(r-6)t^5 + (r-5)t^4 \\
&= \frac{(r-7)(r-8)+1}{2}t^6 + (r-6)(r-7)t^5 + \frac{(r-5)(r-6)+1}{2}t^4. 
\end{align*}
In any case, we find that $R$ is minimally generated in degrees $2,3$ with minimal relations in degrees $4,5,6$.
\end{example}

\begin{example}[Signature $(0;2,2,2,2,e;0)$] \label{g0ex:222230}
Next, we consider the canonical ring of a tame, separably rooted stacky curve $\XX$ with signature $(0;2,2,2,2,e;0)$ and $e \geq 3$.  

We begin with the case $e=3$.  In a manner similar to Example~\ref{g0ex:222220}, we find the following.  Minimal generators for $\Pi$ are
\[ (2,0),(2,1),(3,0),(6,4) \]
with monoid relations $2(3,0)=3(2,0)$ and $4(2,1)=(6,4)+(2,0)$.  However, to simplify the presentation we appeal to Proposition~\ref{P:notessdeg}: the generator corresponding to $(6,4)$ is superfluous: we have $6=2+4=3+3$ and correspondingly $\epsilon=(0,0,0,0,1),(1,1,1,1,0)$ so (i) holds and (ii) follows from $\deg \lfloor 6D \rfloor = 4 \geq -1+5=4$.  Thus
\[ R \simeq k[y_3,x_2,x_1]/I \]
where $x_2,x_1$ in degree $2$ correspond to $(2,1),(2,0)$ and $y_3$ in degree $3$ to $(3,0)$ and $I$ is principal, generated by a polynomial of degree $8$.  If we take grevlex, we have leading term $\underline{y_3^2x_2}$; thus $P(R_{\geq 1};t)=t^3+2t^2$ and $P(I;t)=t^8$.  

We claim that the signature $((0;2,2,2,2,3;0),\{5\})$ is admissible.  From the above description, we have $-\ord_{Q_5}(x_i) \leq \lfloor 4/3\rfloor = 1$ and $-\ord_{Q_5}(y_3)=\lfloor 6/3\rfloor = 2$.  So for (Ad-i), we take the generator $y_3$; for (Ad-ii), we compute that $\lambda_5(x_i) \leq 1/2 < 1-1/3=2/3$; for (Ad-iii), we appeal to Lemma~\ref{L:Jeq1}(iii) which applies to the case $\#J=1$, and we need only to note that $3 \geq \sat(\sigma')-1 = 2$.  This proves the claim.

However, we will need a bit more to conclude minimality from Theorem~\ref{thm:inductive-by-stacky-point}(d): we require also that the canonical ideal is generated by quadratics.  For this, we compute the canonical ring for signature $e=4$: the minimal generators corresponding to the monoidal elements $(4,3),(3,0),(2,1),(2,0)$ yield two quadratic relations in degrees $6$ with terms $y_4x_2$ and $y_3^2$.  
\end{example}

Next, we consider the canonical rings for the special signature $(0;2,3,7;0)$.

\begin{example}[Signature $(0;2,3,7;0)$] \label{exm:237can}
The quantity $A=-\chi>0$ is minimal for the signature $(0;2,3,7;0)$ and $A=\deg D=1/42$, by the classical theorem of Hurwitz.  We have $\Pi_{<42}=\{(d,0) : d \in \Eff(D)\}$ since $\deg(dD) < 1$ in these cases, and so it follows from Proposition~\ref{P:AD-genus0} that $\Pi$ is generated by 
\[ \nu_1=(6,0),\nu_2=(14,0),\nu_3=(21,0),\nu_4=(42,1). \]
The monoid $\Pi$ and these generators looks as follows:  
\begin{center}
\includegraphics[scale=0.75]{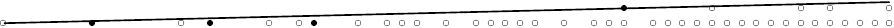}
\end{center}

A minimal set $T$ of relations among these generators is 
\[ 2(21,0)=3(14,0)=7(6,0). \]
Therefore, by Propositions~\ref{P:gensTor} and~\ref{P:Grobner_genus0}, the canonical ring has a presentation 
\[ R \simeq k[x_{42},x_{21},x_{14},x_6]/I \]
where
\[ I=\langle \underline{x_{21}^2} - c'_{[1]} x_{14}^3 - c_{[1]} x_{42},\ 
\underline{x_{14}^3} - c_{[2]}' x_{7}^6 - c_{[2]}x_{42} \rangle \]
and $\deg(x_d)=d$, and constants in $k$ with $c'_{[1]}c'_{[2]} \neq 0$.  With respect to a graded term order respecting the order of pole, say 
\[ x_{21}^2 \succ x_{14}^3 \succ x_{6}^7 \succ x_{42}, \]
we have $\init_{\prec}(I)=\langle x_{21}^2, x_{14}^3 \rangle$.  

However, there are at least two ways to see that the generator $x_{42}$ is redundant.  First, we have unique lifts 
\begin{equation} \label{eqn:mus}
\begin{gathered}
\mu_1=(6,-12;3,4,5), \quad \mu_2=(14,-28;7,9,12), \\
\mu_3=(21,-42;10,14,18), \quad \mu_4=(42,-85;21,28,36)
\end{gathered}
\end{equation} 
of the $\nu_i$ as in section~\ref{subsec:toric}; since $7\mu_1 \neq 3\mu_2$ and $3\mu_2 \neq 2\mu_3$ (which boils down to the fact that the three stacky points are distinct), we must have $c_{[1]}c_{[2]} \neq 0$.  (One obtains $c_{[1]}=c_{[2]}=0$ by a twist of the closed embedding $\PP^1 \hookrightarrow \PP(6,14,21,42)$ by $[s:t] \mapsto [t^6:t^{14}:t^{21}:s^{42}]$; the image requires a generator in degree $42$ but is not a canonical embedding.)  Second, we can appeal to Proposition~\ref{P:notessdeg}: we have 
\[ 42=21+21=14+28=6+36 \]
and correspondingly we have $\epsilon=(1,0,0),(0,1,0),(0,0,1)$ so (i) holds, and (ii) $\deg \lfloor 42D \rfloor = 1 \geq -1+2=1$.  (This also shows that in some sense Proposition~\ref{P:notessdeg} is sharp.)

Consequently, the generator $x_{42}$ is superfluous, and we have
\[ R \simeq k[x_{21},x_{14},x_6]/I \quad \text{ where } I=\langle \underline{x_{21}^2} + b_{14} x_{14}^3 + b_6 x_6^7 \rangle \]
with $b_{14},b_{21} \in k$, so $\init_{\prec}(I)=\langle x_{21}^2 \rangle$ under grevlex.  Thus $X$ is a curve in the weighted plane $\P(21,14,6)$, thus $P(R_{\geq 1};t)=t^6+t^{14}+t^{21}$ and $P(I;t)=t^{42}$.  

If $z$ is a coordinate on $\PP^1$, we can recover this via the generators $f_d$ as in \eqref{eqn:H0fd} directly: if $a_i=z(P_i) \neq \infty$ for $i=1,2,3$, then from \eqref{eqn:mus} we have
\begin{equation} \label{eqn:doitfs237}
\begin{gathered} 
f_6=\frac{1}{(z-a_1)^3(z-a_2)^4(z-a_3)^5}, \quad 
f_{14}=\frac{1}{(z-a_1)^7(z-a_2)^9(z-a_3)^{12}}, \\
f_{21}=\frac{1}{(z-a_1)^{10}(z-a_2)^{14}(z-a_3)^{18}}
\end{gathered} 
\end{equation}
and the map $k[x_6,x_{14},x_{21}] \to R$ by $x_d \mapsto f_d$ of graded $k$-algebras has kernel generated by
\[ (a_3-a_2)x_{21}^2 + (a_1-a_3)x_{14}^3 + (a_2-a_1)x_6^7. \]
Here we see the importance of the values $a_1,a_2,a_3$ being distinct.

For an alternative perspective on this example from the point of view of modular forms, see work of Ji \cite{Ji:Delta}.
\end{example}

\begin{example}[Signature $(0;2,3,e;0)$] \label{exm:sign23efull}
Next we present the canonical ring of a tame, separably rooted stacky curve $\XX$ with signature $(0;2,3,e;0)$ with $e \geq 8$.

First we treat the cases $e=8,9,10$ individually.  The argument is very similar as in Example~\ref{exm:237can}, so we only record the results.  

For $e=8$, we have saturation $s=26$ and $m=24$, with minimal generators for $\Pi$ as
\[ (6,0),(8,0),(15,0),(24,1) \]
and relations
\[ 2(15,0)=5(6,0) \text{ and } 3(8,0)=4(6,0). \]
The simplification proposition (Proposition~\ref{P:notessdeg}) applies with
\[ 24=6+18=8+16=12+12 \]
and correspondingly $\epsilon=(0,0,1),(0,1,0),(0,0,1)$, so the generator $(24,1)$ is superfluous and the corresponding relation in $R$ of degree $24$ is linear in this generator.  Thus it is enough to take generators for $R$ associated to the monoid elements $(6,0),(8,0),(15,0)$, and we have a presentation
\[ R_8 \simeq k[x_{15},x_8,x_6]/I_8 \]
with $\init_{\prec}(I_8)=\langle x_{15}^2 \rangle$.  

For $e=9$, we have saturation $s=20$ and $m=18$, with minimal generators for $\Pi$ as
\[ (6,0),(8,0),(9,0),(18,1) \]
and relations
\[ 2(9,0)=3(6,0) \text{ and } 3(8,0)=4(6,0). \]
the generator $(18,1)$ is superfluous, and we find
\[ R_9 \simeq k[y_9,x_8,x_6]/I_9 \]
with $\init_{\prec}(I_9)=\langle y_9^2x_6 \rangle$ under an order eliminating $y_9$ (or $\langle x_8^3$ under grevlex).  

We have an inclusion of canonical rings $R_8 \hookrightarrow R_9$ which sends $x_{15} \mapsto x_6 x_9$ (the pole orders uniquely define this function up to scaling), so in particular the generator in degree $15$ is redundant.  Moreover, $I_8 R_9 = x_6 I_9$, and in particular the relation in $R_8$ of degree $30$ is implied by the relation in $R_9$ of degree $24$.  

For $e=10$, we compute a minimal presentation in three ways.  First, we use the monoidal approach.  We compute that $\Pi$ is generated by
\[ (6,0),(8,0),(9,0),(10,0),(18,1),(20,1),(30,2) \]
with relations
\[ \begin{gathered} 
2(8,0)=(10,0)+(6,0), \ (10,0)+(8,0)=3(6,0),\\
2(9,0)=3(6,0),\ 2(10,0)=(8,0)+(6,0) 
\end{gathered} \]
plus relations involving the terms $(18,1),(20,1),(30,2)$; the simplification proposition applies to these latter three, so in particular the relations in degree $18$ and $20$ must be linear in the associated generators.  On the other hand, the $4$ remaining generators are minimal, as can be seen directly by their degree and pole orders. Therefore we simply have
\[ R_{10} \simeq k[y_{10},y_9,x_8,x_6]/I_{10} \]
with
\[ \init_{\prec}(I_{10}) = \langle y_{10}x_8, y_{10}x_6 \rangle \]
in grevlex.
Second, we work directly with the rational functions, as in \eqref{eqn:doitfs237}.  We have
\[ \begin{gathered} 
f_6=\frac{1}{(z-a_1)^3(z-a_2)^4(z-a_3)^5}, \quad 
f_{8}=\frac{1}{(z-a_1)^4(z-a_2)^5(z-a_3)^{7}}, \\
f_{9}=\frac{1}{(z-a_1)^4(z-a_2)^6(z-a_3)^8}, \quad
f_{10}=\frac{1}{(z-a_1)^5(z-a_2)^6(z-a_3)^9}.
\end{gathered} \]
and a Gr\"obner basis computation gives 
\[ I_{10}= \langle \underline{y_{10}x_6} - x_8^2, (a_3-a_1)\underline{y_{10}x_8} + (a_2-a_3)y_9^2 +  (a_1-a_2)x_6^3 \rangle. \]
Finally, we can argue with explicit bases as below, where we give a presentation under (vanilla) grevlex.  In any case, we conclude that $P(R_{10,\geq 1};t)=t^{10}+t^9+t^8+t^6$ and $P(I_{10};t)=t^{18}+t^{16}$. 

By lemma~\ref{L:horizontal-J}, $((0;2,3,9;0),\{3\})$ is admissible. Therefore, by Theorem~\ref{thm:inductive-by-stacky-point} we obtain a minimal presentation (in a block term order) for $e \geq 11$: we conclude that 
$P(R_{e,\geq 1};t)=t^e+t^{e-1}+\dots+t^{8}+t^6$ and
\[ R_e \simeq k[y_e,y_{e-1},\dots,y_{10},y_9,x_8,x_6]/I_e \]
with
\begin{align*} 
\init_{\prec}(I_e) &= \langle y_i x_j : 10 \leq i \leq e,\ j=6,8 \rangle \\
&\qquad + \langle y_iy_j : 9 \leq i < j \leq e,\ j \neq i+1 \rangle 
\end{align*}
so
\[ P(I_e;t) = P(I_{e-1};t) + t^e P(R_{e-2,\geq 1};t). \]
By induction, one can show
\[ P(I_e;t) = \sum_{16 \leq i \leq 2e-2} \min(\lfloor i/2 \rfloor - 7, e-1-\lceil i/2 \rceil) t^i. \]
In any case, $\deg P(R_{e,\geq 1};t) = e$ and $\deg P(I_e;t) = 2e-2<2e$.  This presentation is minimal.

We conclude this example with a complementary approach, which works with an explicit basis and gives the grevlex generic initial ideal.  Suppose $e \geq 10$, and let $Q$ denote the stacky point with order $e$.  We have $6,8,9,\dots,e \in \Eff(D)$, so for these degrees let $x_i \in H^0(\XX,iK_{\XX})$ be a general element.  We claim that the elements 
\begin{equation} \label{eq:generation-23e}
x_e^a x_i x_6^a \text{ and } x_e^a x_8x_{e-1} x_6^a
\end{equation}
with $a,b \geq 0$ and $i \neq 6,e$ are a basis for the canonical ring.  We argue inductively.  Let $V_d = H^0(\XX,dK_{\XX})$.  We have $\dim V_d = 1$ for $d = 6,8,9,\dots,e$, and $\dim V_d=0$ for $d \leq 5$ or $d = 7$, so we get generators in those degrees.  Next, we have $\dim V_{d+6} = 1 + \dim V_d$ for $n = 6$ or $8 \leq d \leq e-6$, and since the multiplication by $x_6$ map is injective, $V_{d+6}$ is generated over $x_6V_d$ by a minimal generator $x_d$, and the generation claim so far holds for $d \leq e$.  For $1 \leq i \leq 6$, $x_6V_{e+6-i} \subset V_{e+i}$ is an equality.  We have $x_6V_{e+1} \subset V_{e+7}$ with codimension one, and the monomial $x_8x_{e-1}$ spans the complement, since 
\[ -\ord_{Q}(f) \leq e + 4 < e+5 = -\ord_{Q}(x_8x_{e-1}) \text{ for all $f \in x_6V_{e+1}$}. \]  Finally, for $d \geq e+8$, comparing floors gives that $x_6V_{d-6} \subset V_d$ is always either an equality or of codimension one; in the first case the claim holds, and in the second case comparing poles at $Q$ gives that $V_d$ is generated over $x_6V_{d-6}$ by $x_e z$, where $z \in V_{d-e}$ is the monomial of the form \eqref{eq:generation-23e} of degree $d-e$ minimizing $\ord_Q(z)$.  This concludes the proof of the claim that \eqref{eq:generation-23e} is a basis for $R_e$.

We now equip the ring $k[x_e,\ldots,x_8,x_6]$ with grevlex, and can now directly deduce the relations in the following way.  The elements $x_ix_j$ with $6 < i\leq j < e$ are not in this spanning set, spawning a relation.  Since $x_6$ is last in the ordering, we have
\[
x_ix_j \succ x_6^a x_k x_e^b
\]
unless $a = 0$; but the term $x_k x_e^b$ cannot occur in any relation, since it is the unique monomial of degree $i + j$ with a pole at $Q$ of maximal order.  The leading term of this relation is thus $x_ix_j$.  Finally, any element not in this spanning set is divisible by such an $x_ix_j$, so the generic initial ideal is thus
\[ \gin_{\prec}(I_e) = \langle x_ix_j : 8 \leq i\leq j \leq e-1,\ (i,j) \neq (8,e-1)\rangle. \]
It is perhaps not immediately obvious, but it is nevertheless true, that these ideals have a common Poincar\'e generating polynomial $P(I_e;t)$.  
\end{example}

The next example, of signature $(0;2,4,e;0)$, is essentially the same as Examples~\ref{exm:237can} and~\ref{exm:sign23efull}, so we will be more brief.

\begin{example}[Signature $(0;2,4,e;0)$] \label{exm:24e0}
Now we consider tame, separably rooted stacky curves with signature $\sigma=(0;2,4,e;0)$ and $e \geq 5$.  

For $e=5$, we have saturation $s=22$ and $\Pi$ is generated by
\[ (4,0),(10,0),(15,0),(20,1); \]
the simplification proposition shows the generator associated to $(20,1)$ is superfluous, and the remaining monoidal relation $2(15,0)=(10,0)+5(4,0)$ gives a presentation
\[ R_5 \simeq k[y_{15},x_{10},x_4]/I_5 \]
with $\init_{\prec}(I_5)=\langle y_{15}^2 \rangle$.
For $e=6$, we similarly obtain 
\[ R_6 \simeq k[y_{11},x_{6},x_4]/I_6 \]
with $\init_{\prec}(I_6)=\langle y_{11}^2 \rangle$.
The case $e=7$ requires several further applications of the simplification proposition to show that monoidal generators in degrees $12,14,20,28$ are superfluous; nevertheless, we have
\[ R_7 \simeq k[y_7,x_6,x_4]/I_7 \]
with $\init_{\prec}(I_7)=\langle y_7^2x_4 \rangle$ in elimination order (and $\init_{\prec}(I_7)=\langle x_6^3 \rangle$ in grevlex). 
Finally, for $e=8$, we obtain
\[ R_8 \simeq k[y_8,y_7,x_6,x_4]/I_8 \]
with $\init_{\prec}(I_8)=\langle y_8x_6, y_8x_4 \rangle$ in elimination order.  

By Lemma~\ref{L:horizontal-J}, $((0;2,4,7;0),\{3\})$ is admissible. Thus, for $e \geq 9$, we obtain from Theorem~\ref{thm:inductive-by-stacky-point} a minimal presentation (in a block term order); we have $P(R_{e,\geq 1};t)=t^e+\dots+t^6+t^4$ and $P(I_e;t)=P(I_{e-1};t)+t^e P(R_{e-2,\geq 1};t)$.

We obtain in a similar way an explicit basis and the grevlex generic initial ideal.  Suppose $e \geq 9$ and let $Q$ be the stacky point with order $e$.  For $i=4,6,7,\dots,e$, let $x_i \in H^0(\XX,iK_{\XX})$ be a general element.  Then a basis for the canonical ring is given by
\begin{equation}
\label{eq:generation-24e}
x_e^a x_i x_4^b \text{ and } x_e^a x_6x_{e-1} x_4^b   
\end{equation}
where $a,b \geq 0$ and $i \neq 4,e$.  The argument is the same as in Example \eqref{exm:sign23efull}.   span the canonical ring. We argue inductively (where for brevity we set $V_n = H^0(\XX,nK_{\XX})$):
If we equip the ring $k[x_e,\ldots,x_6,x_4]$ with grevlex, then we obtain 
the generic initial ideal as
\[ \gin_{\prec}(I) = \langle x_ix_j : 4 < i\leq j < e,\ (i,j) \neq (6,e-1) \rangle. \]
\end{example}

\begin{example}[Signatures $(0;2,e_2,e_3;0)$] \label{exm:02e2e30}
To conclude the family of triangle groups with $e_1=2$, we consider signatures $\sigma=(0;2,e_2,e_3;0)$ with $e_2,e_3 \geq 5$.  

For $\sigma=(0;2,5,5;0)$, as above we obtain
\[ R_{5,5} \simeq k[y_{10},x_5,x_4]/I_{5,5} \]
with $\init_{\prec}(I_{5,5})=\langle y_{10}^2 \rangle$; for $\sigma=(0;2,5,6;0)$ we have
\[ R_{5,6} \simeq k[y_{6},x_5,x_4]/I_{5,6} \]
where $\init_{\prec}(I_{5,6})=\langle y_6^2y_4 \rangle$.

However, for $\sigma=(0;2,5,7;0)$, something interesting happens.  We compute after simplification that a minimal generating set corresponds to the monoidal elements 
\[ (4,0),(5,0),(6,0),(7,0). \]
We obtain rational functions
\[
\begin{gathered}
f_4=\frac{1}{(z-a_1)^2(z-a_2)^3(z-a_3)^3}, \quad f_5=\frac{1}{(z-a_1)^2(z-a_2)^4(z-a_3)^4}, \\
f_6=\frac{1}{(z-a_1)^3(z-a_2)^3(z-a_3)^3}, \quad f_7=\frac{1}{(z-a_1)^3(z-a_2)^5(z-a_3)^6}
\end{gathered} \]
and a presentation
\[ R_{5,7} \simeq k[y_7,y_6,x_5,x_4]/I_{5,7} \]
with
\begin{align*} 
I_{5,7} &= \langle (a_2-a_3)\underline{y_7x_5} +(a_3-a_1)y_6^2 + (a_1-a_2)x_4^3, \\
&\qquad \underline{y_7x_4} - y_6x_5, \\
&\qquad (a_1-a_3)\underline{y_6^2x_4} + (a_3-a_2)y_6x_5^2 + (a_2-a_1)x_4^4 \rangle.
\end{align*}
However, the generator with leading term $y_6^2x_4$ is not a minimal generator; it is obtained as an $S$-pair from the previous two relations as 
\[ x_4\underline{y_7x_5} - x_5\underline{y_7x_4}. \]
Nevertheless, the image is a weighted complete intersection in $\PP(7,6,5,4)$.  

By Lemma~\ref{L:horizontal-J}, $((0;2,5,6;0),\{3\})$ is admissible.  From here, we can induct using Theorem~\ref{thm:inductive-by-stacky-point} (though it appears that there is always an extra cubic relation in the Gr\"obner basis).

For $\sigma=(0;2,6,6;0)$, we have
\[ R_{6,6} \simeq k[y_{6,2},y_{6,1},x_5,x_4]/I_{6,6} \]
with $I$ generated by quadratic relations. By Lemma~\ref{L:horizontal-J}, $((0;2,6,6;0),J)$ is admissible with $J=\{3\},\{2,3\}$, and again, 
 we can induct using Theorem~\ref{thm:inductive-by-stacky-point}.
In a manner analogous to the previous examples, one could work out explicitly the structure of the canonical ring as well as the Poincar\'e generating polynomials.
\end{example}

\begin{example}[Large triangle groups] \label{exm:largetriangle}
We now conclude the remaining triangle group signatures $\sigma=(0;e_1,e_2,e_3;0)$, with 
$e_1,e_2 \geq 3$ and $e_3 \geq 4$.  

The cases $\sigma=(0;3,3,e;0)$ with $e=4,5,6$ are weighted plane curves of degrees $24,18,15$ in $\PP(12,8,3),\PP(9,5,3),\PP(6,5,3)$, respectively.  For $\sigma=(0;3,3,7;0)$ we have
\[ R_{3,3,7} \simeq k[y_7,y_6,x_5,x_3]/I_{3,3,7} \]
with $\init_{\prec}(I_{3,3,7})=\langle y_7x_5, y_7x_3 \rangle$.  We then induct from the admissibility of the pair $((0;3,3,6;0),\{3\})$.  Alternatively, we have generators general elements $x_i \in H^0(\XX, iK_{\XX})$ for $i=3,5,6,\dots,e$, and a basis 
\[ x_e^a x_i x_3^a \text{ and }x_e^a x_{e-1} x_5 x_3 \]
with $a,b \geq 0$ and $5 \leq i \leq e-1$; this gives in grevlex 
\begin{align*} 
\gin_{\prec}(I_{3,3,e}) &= \langle x_i x_j : 5 \leq i \leq j \leq e-1,\ (i,j) \neq (5,e-1) \rangle \\
&\subset k[x_e,x_{e-1},\dots,x_5,x_3].
\end{align*}

In a similar way, $\sigma=(0;3,4,e;0)$ with $e=4,5$ are weighted plane curves of degree $16,16$ in $\PP(8,4,3),\PP(5,4,3)$, respectively, and for $\sigma=(0;3,4,6;0)$ we have
\[ R_{3,4,6} \simeq k[y_6,y_5,x_4,x_3]/I_{3,4,6} \]
with $\init_{\prec}(I_{3,4,6})=\langle y_6x_4, y_6x_3 \rangle$.  The remaining cases follow from the admissibility of $((0;3,4,5),\{3\})$.

If $\sigma=(0;3,5,5;0)$ we have 
\[ R_{3,5,5} \simeq k[y_5,y_4,x_5,x_3]/I_{3,5,5} \]
with $\init_{\prec}(I_{3,5,5})=\langle y_5x_5, y_5x_3 \rangle$ and $((0;3,5,5;0),J)$ with $J=\{3\},\{2,3\}$ are admissible.  

The remaining cases with $e_1 \geq 4$ follow similarly.  For signature $(0;4,4,4;0)$ we have a weighted plane curve of degree $12$ in $\PP(4,4,3)$, and for $\sigma=(0;4,4,5;0)$ we have
\[ R_{4,4,5} \simeq k[y_5,y_4,x_4,x_3]/I_{4,4,5} \]
with $\init_{\prec}(I_{4,4,5}) =\langle y_5x_4, y_5x_3 \rangle$ of the expected shape.  The pair $((0;4,4,5;0),\{3\})$ is admissible.

For signature $(0;4,5,5;0)$ we have a curve in $\PP(5,5,4,4,3)$ and admissibility with $J \subseteq \{2,3\}$.  Finally, for $(0;5,5,5;0)$ we have a curve in $\PP(5,5,5,4,4,3)$ and admissibility with $J\subseteq\{1,2,3\}$.  
\end{example}

\begin{example}[Quadrilateral groups] \label{exm:quadrilateral-groups}
Next, we consider quadrilateral signatures $\sigma=(0;e_1,e_2,e_3,e_4;0)$ with $e_1,e_2,e_3 \geq 2$ and $e_4 \geq 3$.  For $\sigma=(0;2,2,2,e;0)$ with $e=3,4,5$ we have a weighted plane curve of degree $18,14,12$ respectively in $\PP(9,6,2),\PP(7,4,2),\PP(5,4,2)$, and for $e=6$ we have a weighted complete intersection in $\PP(6,5,4,2)$ of bidegree $(8,10)$ with the expected shape.  We claim that for $\sigma'=(0;2,2,2,5;0)$ and $J=\{4\}$ we have $(\sigma',J)$ admissible, and for $e=6$ we have quadratic relations, thus covering the remaining signatures.  We have a presentation $R \simeq k[y_5,x_4,x_2]/I$ with $-\ord_{Q_4}(y_5)=4$, so we take the generator $y_5$ for (Ad-i); we have  
\[ \lambda_4(x_4)=3/4,\ \lambda_4(x_2)=1/2 \]
with both $<1-1/6=4/5$ so (Ad-ii) holds; and finally (Ad-iii) holds, again by Lemma~\ref{L:Jeq1}(iii) as $5 \geq 4-1=3$.  

Second, we consider the case $(0;2,2,3,e;0)$ with $e \geq 3$.  The first case, with $\sigma=(0;2,2,3,3;0)$, requires some analysis.  The monoid $\Pi$ is generated by the elements $(2,0),(3,0),(6,1),(6,2)$ and looks like:
\begin{center}
\includegraphics{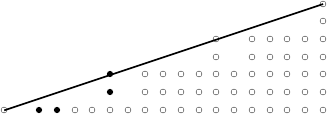}
\end{center}
A minimal set of relations is $2(3,0)=3(2,0)$ and $2(6,1)=(6,2)+2(3,0)$.  We now simplify this presentation for the corresponding ring and conclude that one of the generators $(6,1),(6,2)$ is redundant, as follows.  The elements of $\wasylozenge$ corresponding to $(2,0)$ and $(3,0)$ are $(2,-4;1,1,1,1)$ and $(3,-6;1,1,2,2)$, and so the span contains the linearly independent functions with support tuples $(6,-12;3,3,3,3)$ and $(6,-12;2,2,4,4)$.  More precisely, from Lemma~\ref{lem:fraccond} and equations \eqref{E:fd1fd2hd1d2}--\eqref{E:hd1d2eps}, we compute $\epsilon_i(2,4)=0,1$ and $\epsilon_i(3,3)=1,0$ for $i=1,2$ and $i=3,4$, respectively, so $h_{2,4}=(t-a_3)(t-a_4)$ and $h_{3,3}=(t-a_1)(t-a_2)$ where $a_i=z(P_i)$, and the image of the multiplication maps is spanned by $f_6\cdot\{h_{2,4},h_{3,3}\}$.  Taking linear combinations, we see that we can obtain a function with projected support tuple $(6,1)$ unless $a_1-a_3=a_2-a_4=0$ or $a_1-a_4=a_2-a_3=0$.  Since the stacky points are distinct, this cannot occur, so we need only one additional generator in degree $6$, and canceling this generator removes the first relation. Put another way, we compute directly with the functions
\[
\begin{gathered}
f_2 = \frac{1}{(z-a_1)(z-a_2)(z-a_3)(z-a_4)}, \\ 
f_4 =\frac{1}{(z-a_1)(z-a_2)(z-a_3)^2(z-a_4)^2}, \\
f_{6,1} = \frac{1}{(z-a_1)^3(z-a_2)^3(z-a_3)^4(z-a_4)^4}, f_{6,2} = z f_{6,1}.
\end{gathered} \]
We find the presentation
\[ R \simeq k[x_{6,1},x_{6,2},x_4,x_2]/I \]
where
\begin{align*}
I &= \langle (a_1+a_2-a_3-a_4)x_{6,1} + (a_3a_4-a_1a_2)x_{6,2} + x_3^2-x_2^3, \\
&\qquad x_{6,1}^2 - a_3a_4 x_{6,2}^2 - (a_3+a_4)x_{6,2}x_{6,1} - x_{6,2}x_2^3 \rangle.
\end{align*}
Again, we have $\langle a_1+a_2-a_3-a_4, a_1a_2-a_3a_4 \rangle = \langle a_1-a_3,a_2-a_4 \rangle \cap \langle a_1-a_4, a_2-a_3 \rangle$.  Since the stacky points are distinct, we conclude that $R=k[x_6,x_3,x_2]/I$ where $\init_{\prec}(I)=\langle x_6^2 \rangle$, and we obtain a weighted plane curve of degree $12$ in $\PP(6,3,2)$.  
In a like manner, for $(0;2,2,3,4;0)$ we have a weighted plane curve of degree $13$ in $\PP(4,3,2)$ and for $(0;2,2,3,5;0)$ we have a weighted complete intersection in $\PP(5,4,3,2)$ of bidegree $(7,8)$ with quadratic relations.  By now, it is routine to verify that for $\sigma'=(0;2,2,3,4;0)$ and $J=\{4\}$ we have $(\sigma',J)$ admissible.  

For $(0;2,2,e_3,e_4;0)$ and $e_3,e_4 \geq 4$: with $(0;2,2,4,4;0)$ we have a weighted complete intersection in $\PP(4,4,3,2)$ of bidegree $(6,8)$.  By Lemma~\ref{L:horizontal-J}, $\sigma'=(0;2,2,4,4;0)$ has $(\sigma',J)$ admissible for $J\subseteq \{3,4\}$; we claim that it admits an admissible presentation with quadratic relations.   The presentation 
\[ R \simeq k[y_{4,1},y_{4,2},x_3,x_2]/I \]
can be taken with 
\begin{center}
$-\ord_{Q_3}(y_{4,1})=-\ord_{Q_4}(y_{4,2})=3$ and $-\ord_{Q_4}(y_{4,1})=-\ord_{Q_3}(y_{4,2})=2$, 
\end{center}
and these imply (Ad-i) and (Ad-ii).  Condition (Ad-iii) when $J=\{3,4\}$ is automatically satisfied whenever $\deg \lfloor (4+d)D \rfloor \geq 1 \geq \eta(i,d)$, and this holds for $4+d \geq 6$.  Lemma~\ref{L:Jeq1}(iii) implies (Ad-iii) as it is enough to know that $4 \geq \sat(\Eff(\sigma'))-1 = 1$.  

For $(0;2,e_2,e_3,e_4;0)$ and $e_2,e_3,e_4 \geq 3$, for $\sigma=(0;2,3,3,3;0)$ we have a weighted plane curve of degree $9$ in $\PP(3,3,2)$ and $(\sigma',J)$ admissible for $J \subseteq \{3,4\}$; 
for $\sigma=(0;2,3,3,4;0)$ we have a weighted complete intersection in $\PP(4,3,3,2)$ of bidegree $(6,7)$ with quadratic relations, and we check that $\sigma'=(0;2,3,3,4;0)$ has $(\sigma',J)$ admissible for $J \subseteq \{2,3,4\}$;  
for $\sigma=(0;2,4,4,4;0)$ we have a curve in $\P(4,4,4,3,3,2)$ with quadratic relations, and we check that $\sigma'=(0;2,4,4,4;0)$ has $(\sigma',J)$ admissible for $J \subseteq \{2,3,4\}$.

Finally, for $(0;e_1,e_2,e_3,e_4;0)$ with $e_i \geq 3$, for $\sigma'=(0;3,3,3,3;0)$ we have a weighted complete intersection in $\PP(3,3,3,2)$ of bidegree $(6,6)$ and $(\sigma',J)$ admissible for $J \subseteq \{2,3,4\}$; and then finally  for $\sigma'=(0;4,4,4,4;0)$ we have a curve in $\PP(4,4,4,4,3,3,3,2)$ with quadratic relations, and $(\sigma',J)$ is admissible for $J \subseteq \{1,2,3,4\}$.
\end{example}

\begin{example}[Hecke groups] \label{exm:heckgroups}
A presentation for the Hecke groups with signature $(0;2,e;1)$ for $e \geq 3$ were worked out by Ogg \cite[\S 1]{Ogg:modforms} and Knopp \cite{Knopp:Hecke}.  (The canonical ring for $\sigma=(0;2,2;1)$ is $k[x_2]$ with a single generator in degree $2$.)

For $e=3$ we obtain $k[y_3,x_2]$, the polynomial ring in variables of degrees $3,2$; seen directly, we have $\Pi$ generated by $(2,0),(3,0),(6,1)$ and one relation $3(2,0)=2(3,0)$, and in the presentation
\[ I = \langle \underline{x_3^2} - c' x_2^3 - c x_6 \rangle \]
we have $c \neq 0$ for the same two reasons as in Example~\ref{exm:237can}, and a third reason that if $c=0$ then $R/I$ is has a singularity at $(0:0:1)$; in any event, the generator $x_6$ is superfluous, and $R \simeq k[x_2,x_3]$.  

We verify that $((0;2,3;1),\{2\})$ is admissible in a straightforward way.  

In general, for $e \geq 3$, we have that $\sh(\Pi \cap \Z^2)$ is minimally generated by 
\[ (2,0),(3,0),(4,1),(5,1),(6,2),\dots,(e,\lfloor e/2 \rfloor-1) \] 
together with $(2e,e-2)$ if $e$ is odd.  For $e=7$, this looks like:
\begin{center}
\includegraphics{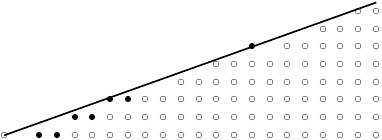}
\end{center}
The potential generator at $(2e,e-2)$ if $e$ is odd is superfluous.  Applying Proposition~\ref{P:notessdeg}: for (i) we have $\epsilon_1(2,2e-2)=0$ and $\epsilon_2(e,e)=0$, and for (ii) we have $m_{2e} \geq r=2$.  It follows that $P(R_{\geq 1};t)=t^2+t^3+\dots+t^e$.  Let $x_i = f(\mu_i)$ with $\nu_d=(d,-2d+1-\lfloor d/2 \rfloor)$ for $d=2,\dots,e$ be the corresponding generators.  (The corresponding generators in $\wasylozenge \cap \Z^5$ are
\[ \mu_d=(d,-2d+1-\lfloor d/2 \rfloor; \lfloor d/2 \rfloor, d-1; d) \]
for $d=2,\dots,e$.)

A minimal set $T$ of relations is given, for $3 \leq i \leq j \leq e-1$:
\[ \nu_i+\nu_j=
\begin{cases}
\nu_2 + \nu_{i+j-e-2} + \nu_e, & \text{ if $i+j \geq e+4$ and $i,j$ both odd;} \\
2\nu_2 + \nu_{i+j-4}, & \text{ if $i+j < e+4$ and $i,j$ both odd;} \\
\nu_{i+j-e} + \nu_e, & \text{ if $i+j \geq e+2$ and $i,j$ not both odd;} \\
\nu_2 + \nu_{i+j-2}, & \text{ if $i+j < e+2$ and $i,j$ not both odd.}
\end{cases} \]
The reason is that these relations are ``greedy'': they express any such sum $\nu_i + \nu_j$ by a sum containing the largest generator possible.  It follows from Proposition~\ref{P:Grobner_genus0} that the initial ideal for $I$, as well as the generic initial ideal since there is a unique generator in each degree, is
\[ \gin_{\prec}(I) = \init_{\prec}(I) = \langle x_3,\dots,x_{e-1} \rangle^2; \]
Therefore $X$ sits in $\P(2,3,\dots,e)$, we have
\[ \Phi(R;t)=\frac{1+t^3+\dots+t^{e-1}}{(1-t^2)(1-t^e)} = \frac{(1+t^3+\dots+t^{e-1})(1-t^3)\cdots(1-t^{e-1})}{(1-t^2)\cdots(1-t^e)} \]
and $P(I;t)=\displaystyle{\binom{e-3}{2}}t^2$. 
\end{example}

%----------------------------------------------------------------------------
\begin{example}[Generalized Hecke groups]
\label{Ex:generalized-hecke-I}
%----------------------------------------------------------------------------
Finally, we consider the signature $(0;e,e;1)$ with $e \geq 3$. See O'Dorney \cite[Theorem 6]{dorney:canonical} for a particular presentation of this ring; we may also induct from the admissible pair $((0;3,3;1),J)$ with $J \subseteq \{1,2\}$.  We give the generic presentation through a direct method.  By Remark~\ref{lem:canlinfunc}, we may assume in this calculation that the stacky points are $0,\infty$ and the log point is \emph{also} at $\infty$---this does not violate our definitions, since we are in this example computing the homogeneous coordinate ring of a divisor and then claiming that it computes the log canonical ring as a graded ring up to isomorphism.

Then taking $K_X=-2\cdot \infty$, we have $K_{\XX} + \Delta= ((e-1)/e) \cdot 0 + (-1/e) \cdot \infty$, and 
\begin{equation} \label{eqn:Vd_equihecke}
V_d = H^0(\XX,d(K_{\XX} + \Delta)) = \left\langle 
t^a: d/e \leq a \leq d(e-1)/e
\right\rangle.
\end{equation} 
For $2 \leq d \leq e$ (resp.~$3 \leq d \leq e$)  let $x_d \in V_d$ (resp.~$y_d \in V_d$) be a general element. We equip $k[y_{e},x_{e},y_{e-1},\dots,x_3,x_{2}]$ with the (weighted graded) reverse lexicographic order. 

We claim that the canonical ring is spanned by monomials of the form
\begin{equation} \label{eqn:gens_equihecke}
x_e^b x_s x_2^a, \, y_ex_e^b x_s x_2^a, \mbox{ and } y_t x_2^a, \quad \text{with $s=2,\dots,e$ and $t=3,\dots,e$}.
\end{equation}
Indeed, by the dimension formula, we see that the codimension of $x_2V_{d} \subseteq V_{d+2}$ is either 0 (if $d \equiv -1,0 \pmod{e}$) or $2$ (otherwise), and in the latter case $V_{d+2}$ is spanned by $x_2V_{d}$ and  $x_{d+2},y_{d+2}$ if $d+2 \leq e$ and by $x_e^b x_{s}$ and $y_ex_e^{b-1} x_s$ (where $s + be = d + 2$) otherwise; the claim follows by induction.  

We then claim that the generic initial ideal is
\begin{equation} \label{eqn:xij_equihecke}
\langle x_ix_j : 3 \leq i,j \leq e-1  \rangle
                                  + \langle x_iy_j : 3 \leq i \leq e, 3 \leq j \leq e-1  \rangle                      
                                  + \langle y_iy_j : 3 \leq i \leq j \leq e \rangle.
\end{equation}
Moreover, inspection of leading monomials gives that these are minimal generators.  First, we show that there exist relations with these as leading terms.  A monomial among \eqref{eqn:xij_equihecke} is not in the spanning set \eqref{eqn:gens_equihecke}, so there is a relation expressing this monomial in terms of monomials of the form \eqref{eqn:gens_equihecke}.  By the term order, the monomial dominates any term with $a > 0$ as well as any term with $a=0$ and $s$ or $t <i,j$.  By degree considerations, the only remaining possibilities are $d=i+j \leq e$ and the monomials $x_{d},y_{d}$.  But $x_d,y_d$ are required minimal generators, so they could not occur in any nontrivial relation.  

To conclude, we simply observe that any monomial not among \eqref{eqn:gens_equihecke} is divisible by a monomial in the linear span of \eqref{eqn:xij_equihecke}.  It follows in fact that \eqref{eqn:gens_equihecke} is a \emph{basis} for $R$ as a $k$-vector space.
%----------------------------------------------------------------------------
\end{example}
%----------------------------------------------------------------------------

\section{Conclusion}

To conclude, we prove our main theorem in genus 0.  We return to Lemma~\ref{lem:genus0small-signatures}, providing us a list of signatures from which we can induct.

\begin{proof}[Proof of Lemma~\ref{lem:genus0small-signatures}]
We address each signature each in turn.  

First, condition (G-i) allows us to discard those signatures with large effective monoids.  If $r=0$, then we are in the classical log case; if $r \geq 1$, then any signature not in Lemma~\ref{L:case-V}(i)--(v) root dominates a subsignature $\sigma'$ with $\Eff(\sigma') \supseteq \Z_{\geq 2}$, so (G-i) is violated.  So we need only consider the following signatures $\sigma$:
\begin{enumerate}
\item[(i)] $(0;e_1,e_2;1)$ with $e_i \geq 2$ (and $1-1/e_1-1/e_2>0$);
\item[(ii)] $(0;e_1,e_2,e_3;0)$, with $e_i \geq 2$ (and $1-1/e_1-1/e_2-1/e_3>0$);
\item[(iii)] $(0;e_1,e_2,e_3,e_4;0)$, with $e_i \geq 2$ (and $e_4 \geq 3$);
\item[(iv)] $(0;2,2,2,2,e_5;0)$, with $e_5 \geq 2$; or
\item[(v)] $(0;2,2,2,2,2,2;0)$.
\end{enumerate}

For the purposes of this proof, we say the signature $\sigma=(0;e_1,\dots,e_r;0)$ \defiindex{passes via $J$} if $(\sigma',J)$ violates (G-ii), and $\sigma$ \defiindex{passes} if it passes for some $J$, so in particular it is not on the list of exceptions in the lemma.  Following the examples in section~\ref{sec:smallsig-genus0}, organized by complexity, we consider this list in reverse order.

Case (v) was considered in Example~\ref{g0ex:222220}, and its canonical ring computed directly; it belongs on the list as (G-i) and (G-ii) both hold.  For a function $f$ and point $Q_i$ we write $\lambda_i(f)=-\ord_{Q_i}(f)/\deg f$.

Next in line is case (iv).  Also in Example~\ref{g0ex:222220}, the canonical ring for the signature $(0;2,2,2,2,2;0)$ was computed, and it belongs on the list.  In Example~\ref{g0ex:222230}, the canonical ring for the signature $(0;2,2,2,2,3;0)$ was computed and $(\sigma',\{5\})$ was shown to be admissible, whence we need only add the subcases $e_5=2,3$ of case (iv) to the list.  

The remaining cases follow in a similar way.  The case (iii) of quadrilateral groups had computations performed in Example~\ref{exm:quadrilateral-groups}, covering all possibilities.  For case (ii) of triangle groups: the case $(0;2,3,e;0)$ with $e \geq 7$ is discussed in Examples~\ref{exm:237can} and~\ref{exm:sign23efull}; the case $(0;2,4,e;0)$ with $e \geq 5$ is discussed in Example~\ref{exm:24e0}; and the remaining triangle groups are considered in Example~\ref{exm:largetriangle}.  

Finally, we consider case (i).  The case $\sigma'=(0;2,3;1)$ is considered in Example~\ref{exm:heckgroups}, with $(\sigma',\{3\})$ admissible.  In a similar way, we see that $((0;3,3;1),J)$ is admissible for $J \subseteq \{1,2\}$ with quadratic relations, completing the proof.
\end{proof}

\begin{theorem} \label{thm:genus0final}
Let $(\XX,\Delta)$ be a tame, separably rooted log stacky curve with signature $\sigma=(0;e_1,\dots,e_r;\delta)$.  Then the canonical ring $R$ of $(\XX,\Delta)$ is generated by elements of degree at most $3e$ with relations of degree at most $6e$, where $e=\max(e_1,\dots,e_r)$.  

In fact, $R$ is generated by elements of degree at most $e$ with relations of degree at most $2e$, except for the following signatures:
\begin{center}
\renewcommand{\arraystretch}{1.1}
  \begin{longtable}{| c || c | c | c | c |}
    \hline
Signature $\sigma$ & $\deg P(R_{\geq 1};t)$ & $\deg P(I;t)$ & $\deg P(R_{\geq 1};t)/e$ & $\deg P(I;t)/e$ \\
    \hline \hline
    $(0; 2, 3, 7 ; 0)$ & $21$ & $42$ & $3$ & $6$ \\
    $(0; 2, 3, 8 ; 0)$ & $15$ & $30$ & $15/8$ & $15/4$ \\
    $(0; 2, 3, 9 ; 0)$ & $9$ & $24$ & $1$ & $8/3$ \\
    $(0; 2, 4, 5 ; 0)$ & $10$ & $20$ & $2$ & $4$ \\
    $(0; 2, 5, 5 ; 0)$ & $6$ & $16$ & $6/5$ & $16/5$ \\
    $(0; 3, 3, 4 ; 0)$ & $12$ & $24$ & $3$ & $6$ \\
    $(0; 3, 3, 5 ; 0)$ & $9$ & $18$ & $9/5$ & $18/5$ \\
    $(0; 3, 3, 6 ; 0)$ & $6$ & $15$ & $1$ & $15/6$ \\
    $(0; 3, 4, 4 ; 0)$ & $8$ & $16$ & $2$ & $4$ \\
    $(0; 3, 4, 5 ; 0)$ & $5$ & $16$ & $1$ & $16/5$ \\
    $(0; 4, 4, 4 ; 0)$ & $4$ & $5$ & $1$ & $5/4$ \\
    \hline
    $(0; 2, 2, 2, 3 ; 0)$ & $9$ & $18$ & $3$ & $6$ \\
    $(0; 2, 2, 2, 4 ; 0)$ & $7$ & $14$ & $7/4$ & $7/2$ \\
    $(0; 2, 2, 2, 5 ; 0)$ & $5$ & $12$ & $1$ & $12/5$ \\
    $(0; 2, 2, 3, 3 ; 0)$ & $6$ & $12$ & $2$ & $4$ \\
    $(0; 2, 2, 3, 4 ; 0)$ & $4$ & $13$ & $1$ & $13/4$ \\
    $(0; 2, 3, 3, 3 ; 0)$ & $3$ & $9$ & $1$ & $3$ \\
    \hline
    $(0; 2, 2, 2, 2, 2 ; 0)$ & $5$ & $10$ & $5/2$ & $5$ \\
    $(0; 2, 2, 2, 2, 3 ; 0)$ & $3$ & $8$ & $1$ & $8/3$ \\
    \hline
    $(0; \underbrace{2,\dots,2}_{r \geq 6}; 0)$ & $3$ & $6$ & $3/2$ & $3$ \\
\hline
 \end{longtable}
 \end{center}
\end{theorem}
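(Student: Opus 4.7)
The plan is to proceed by strong induction on the complexity of the signature $\sigma$, using Lemma~\ref{lem:genus0small-signatures} to reduce the problem to a finite explicit list of base cases, and then appealing to the three inductive theorems (Theorem~\ref{T:particular-stacky-gin}, Corollary~\ref{C:add-a-stacky-point}, and Theorem~\ref{thm:inductive-by-stacky-point}) to handle everything else. First, I would discard the cases with $\deg D \leq 0$, which are trivial by Lemma~\ref{lem:eieq0g} and easy direct computation; in the remaining cases $A(\sigma) > 0$. I would then split into two regimes according to whether $\sigma$ appears on the exceptional list of Lemma~\ref{lem:genus0small-signatures}.

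If $\sigma$ is \emph{not} on the exceptional list, then by Lemma~\ref{lem:genus0small-signatures} at least one of conditions (G-i) or (G-ii) fails: either $\sigma$ root dominates some $\sigma'$ with $\Z_{\geq 2} \subseteq \Eff(\sigma')$, or there is a strict subsignature $\sigma'$ and $J \subseteq \{1,\dots,r\}$ such that $(\sigma',J)$ is admissible. In the first case, I would apply Corollary~\ref{C:add-a-stacky-point} to add the new stacky points of large order one at a time; the resulting presentation has new generators in degrees $2, 3, \dots, e$ and relations in degrees $\leq 2e$, matching the bound. In the second case, I would apply Theorem~\ref{thm:inductive-by-stacky-point} to increase the orders along $J$; the new generators appear in degrees $e_i \leq e$ and the new quadratic leading terms give relations in degree $\leq 2e$. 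In both cases, since the prior canonical ring $R'$ already satisfies the claimed bounds by induction (either from Theorem~\ref{thm:maintheorem}'s classical log base cases, from Theorem~\ref{thm:logcurverelat}, or from induction on $r$ and $\sum e_i$), the passage $R' \subset R$ preserves the bound $\deg P(R;t)/e \leq 1$, $\deg P(I;t)/e \leq 2$.

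If $\sigma$ \emph{is} on the exceptional list of Lemma~\ref{lem:genus0small-signatures}, then the explicit presentations computed in Examples~\ref{g0ex:222220}--\ref{Ex:generalized-hecke-I} of section~\ref{sec:smallsig-genus0} give, directly, the values of $\deg P(R;t)$ and $\deg P(I;t)$. From these computations one reads off the signatures for which the sharper bound $\max(\deg P(R;t), \deg P(I;t)/2) \leq e$ fails; these are precisely the signatures listed in the table of the theorem, and the ratios $\deg P(R;t)/e$ and $\deg P(I;t)/e$ can be verified case by case (e.g., signature $(0;2,3,7;0)$ gives $6, 14, 21$ with relation in degree $42$, so ratios $3$ and $6$). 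For every other signature on the exceptional list of Lemma~\ref{lem:genus0small-signatures}, the direct computation shows the bound $\max(3e, 6e)$ holds trivially and in fact the sharper bound $(e, 2e)$ holds, so these do not contribute exceptions to the theorem.

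The main obstacle will be ensuring the inductive bookkeeping is tight: when Theorem~\ref{thm:inductive-by-stacky-point} is applied, the new relations $y_{i,e_i} z$ have degree $e_i + \deg z$, and one must check $\deg z \leq e_i$ (or at worst $\deg z \leq e$) to keep the relation degree $\leq 2e$. This requires confirming in each inductive step that the ``old'' generators of $R'$ all have degree bounded by the new $e$. A secondary issue is making certain that the sporadic genus-$0$ signatures producing the exceptional list in the theorem are exactly those appearing in Table (IV), and that no additional exceptions creep in from the inductive step --- this amounts to verifying that once $\sigma'$ is admissible with bounds $(e', 2e')$, root-dominating or strongly-dominating $\sigma'$ to $\sigma$ with $e \geq e'$ never causes the bound to degrade beyond $(e, 2e)$. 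The explicit computations in Examples~\ref{exm:237can}--\ref{Ex:generalized-hecke-I} and the degree arithmetic in section~\ref{S:poincare-inductive} make this bookkeeping routine.
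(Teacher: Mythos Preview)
Your proposal is correct and follows essentially the same route as the paper's proof: invoke Lemma~\ref{lem:genus0small-signatures} to split into the finite list of base cases (handled by the explicit computations in Examples~\ref{g0ex:222220}--\ref{Ex:generalized-hecke-I}) versus everything else (handled by the inductive theorems), then read off the exceptional signatures from the tabulated data. Two small terminological points: when (G-ii) fails, the resulting $\sigma'$ is not a \emph{subsignature} but rather a signature strongly dominated by $\sigma$ (same $r$, smaller $e_i$ along $J$); and when (G-i) fails with $\delta \geq 2$, the paper invokes Theorem~\ref{T:particular-stacky-gin} directly rather than Corollary~\ref{C:add-a-stacky-point} (which requires $\sat(\Eff(D'))=2$ exactly), though you do list that theorem in your preamble.
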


\begin{proof}
We appeal to Lemma~\ref{lem:genus0small-signatures}: for any signature not on this list, either (G-i) is violated, and we may apply either Theorem~\ref{T:particular-stacky-gin} or Theorem~\ref{T:particular-stacky-gin-eff}; or (G-ii) is violated, and we may apply Theorem~\ref{thm:inductive-by-stacky-point} inductively, with further conditions on minimal quadratic relations obtained in each case.  It then follows that if a canonical ring $R'$ with signature $\sigma'$ is generated by elements of degree $e$ with relations in degree at most $2e$, then the same is true for $R$.

So to prove the proposition, we need only consider the signatures where these conditions do not hold, exhibited in Lemma~\ref{lem:genus0small-signatures}, and then consider the minimal signatures strongly dominating these such that the statement holds.  But we already did this in the examples of section~\ref{sec:smallsig-genus0}; the results are summarized in the statement of the proposition.  
\end{proof}

%****************************************************************************
\chapter{Spin canonical rings}
\label{ch:spin-canon-rings}
%****************************************************************************

In this chapter, we consider an extension of our results to half-canonical divisors, corresponding to modular forms of odd weight.  For background on half-canonical divisors on curves, see Mumford \cite{mumford1971theta} and Harris \cite{Harris:theta} and the references therein.  For a similar result for Drinfeld modular forms, see \cite{cornelissen:drinfeldWeightOne}.

%----------------------------------------------------------------------------
\section{Classical case}
%----------------------------------------------------------------------------

Let $X$ be a (smooth projective) curve of genus $g$ over a field $k$.  A \defiindex{half-canonical} divisor on $X$ is a divisor $L$ such that $2L = K$ is a canonical divisor.  A half-canonical divisor is also called a \defiindex{theta characteristic} because of a connection to the theory of Riemann theta functions \cite{MR733252,MR1217487}.  A curve equipped with a theta characteristic is called a \defiindex{spin curve}, following Atiyah \cite{MR0286136}.

The set of theta characteristics up to linear equivalence forms a principal homogeneous space for the group $J(X)[2]$ of $2$-torsion classes on the Jacobian of $X$ (classically called \defiindex{period characteristics}).  A theta characteristic $L$ is \defiindex{even} or \defiindex{odd} according to the parity of $H^0(X,L)$ (or according to the Arf invariant, identifying the set of theta characteristics as quadrics in the vector space $J(X)[2]$).  By Clifford's theorem, if $L$ is a theta characteristic and $\dim H^0(X,L) =r$ then $r\leq (g-1)/2 + 1$---and hyperelliptic curves have theta characteristics of all dimensions $r$ with $0 \leq r \leq (g-1)/2$.  

The \defiindex{canonical ring} of the spin curve $(X,L)$ is
\begin{equation} \label{eqn:spincanonicalringRL}
R= R(X,L)=\bigoplus_{a=0}^{\infty} H^0(X,aL)
\end{equation}
with the \defiindex{canonical ideal} analogously defined.  For emphasis, we will sometimes call $R(X,L)$ a \defi{spin canonical ring}.  For compatibility, we give $R(X,L)$ the grading with $H^0(X,aL)$ in degree $a/2$; thus we have a graded (degree-preserving) injection $R(X) \hookrightarrow R(X,L)$.  

The isomorphism class of a spin canonical ring depends in a significant way on the spin structure.  
In general, the locus of curves possessing a theta characteristic with specified dimension cuts out a substack of the moduli stack of curves.  Moreover, the existence of $k$-rational theta characteristics on $X$ is sensitive to the field $k$.  For example, if $g=0$, then there exists a theta characteristic $L$ over $k$ if and only if $X \simeq \PP^1_k$ is $k$-rational: for a spin divisor $L$ has $\deg L=-1$ hence the linear series on $-L$ gives an isomorphism $X \xrightarrow{\sim} \PP^1$, and conversely.  Rather than address these questions---subjects of their own---we will consider the situation where a theta characteristic is given and we address the structure of the spin canonical ring.  

So let $L$ be a theta characteristic on $X$, i.e.~let $(X,L)$ be a spin curve.  The Hilbert series of $R(X,L)$ is given by Riemann--Roch, as in the case of a full canonical ring: if $\dim H^0(X,L)=\ell$ then
\[ \phi_L(X;t) = \frac{1+(\ell-2)t^{1/2}+(g-2\ell+1)t + (\ell-2)t^{3/2} + t^2}{(1-t^{1/2})^2}. \]

If $g=0$, then $\deg L=-1$ so again $R=k$.  If $g=1$, then there are three classes of even characteristics each with $\dim H^0(X,L)=0$, so $R(X,L)=R(X)=k[u]$, and one class of odd characteristic with $\dim H^0(X,L)=1$, namely $L=0$, in which case $R(X)=k[u] \hookrightarrow R(X,L)=k[v]$ with $v^2=u$.

Now suppose $g=2$ and let $\iota$ be the hyperelliptic involution on $X$.  An odd theta characteristic corresponds to a point $L=P$ with $\iota(P)=P$, and $\dim H^0(X,P)=1$; and then with notation as in (\ref{eq:hyperell-sec1-eq2}) we have
\begin{align*} 
&R(X) \simeq k[x_0,x_1,y]/\langle y^2-f(x_0,x_1)\rangle \\
&\qquad \hookrightarrow R(X,L) \simeq R[u,x_0,x_1,y]/\langle y^2-f(x_0,x_1), u^2-x_0 \rangle
\end{align*}
and so the spin curve $(X,L)$ embeds into a projective space $\PP(1/2,1,1,3)$.  

When $g=3$, there is a relationship to the bitangents of a plane quartic.  See the discussion by Gross--Harris \cite{MR2058611}.

In general, we consider the multiplication map
\[ H^0(X,L) \otimes H^0(X,K) \to H^0(X, L+K). \]
We have $\dim H^0(X,L)=r$ for some $r \leq (g-1)/2 + 1$ and $\dim H^0(X,K)=g$.  By Riemann--Roch, when $g \geq 2$ we have $\dim H^0(X, L+K) = 2(g-1)$.  So if $r \leq 1$ then this map cannot be surjective.  So suppose $r \geq 2$; then we have a pencil so the basepoint-free pencil trick potentially applies.  The details are described in the thesis of Neves \cite[Chapter III]{Neves:Halfcan} and in some greater generality by Arbarello--Sernesi \cite{ArbarelloSernesi:Petri} (``semicanonical ideal of a canonical curve''), who give an explicit basis in a way analogous to Petri's approach.  

\begin{remark}
It would be interesting to compute the (pointed) generic initial ideal of the spin canonical ideal, building on the  work in sections~\ref{subsec:returntohyp}--\ref{sec:ginnon}.  In this monograph, we will be content to provide a bound on the degrees of generators and relations, as below.
\end{remark}

Examining spin canonical rings helps to clarify some aspects of the canonical ring.  

\begin{example}
Let $X \subset \PP^2$ be a smooth plane quintic.  Then the bundle $\scrO(1)=\scrO(L)$ has $L$ a theta characteristic with visibly $\dim H^0(X,L)=3$.  We have $\dim H^0(X,2L)=\dim H^0(X,K)=6$.  Since $L$ is basepoint free, the spin canonical ring is generated in degree $1$: it is the homogeneous coordinate ring of the plane quintic.  Conversely, suppose $X$ has genus $6$ and $L$ is a theta characteristic with $\dim H^0(X,L)=3$.  If $L$ is basepoint free, then the basepoint-free pencil trick (Lemma \ref{lem:bpfree-pencil}) shows that $R(X,L)$ is the homogeneous coordinate ring of a plane quintic.  If $L$ is not basepoint free, then since $K=2L$ is basepoint free, there is a quadratic relation among $x_0,x_1,x_2$ and a new generator $y \in H^0(X,K)$, so 
\[ R(X,L) \simeq k[x_0,x_1,x_2,y]/(f(x),g(x,y)) \]
where $x_0,x_1,x_2$ have degree $1/2$ and $y$ has degree $1$, and $f(x) \in k[x]$ has degree $1$ as an element of $R(X,L)$ (so a quadratic relation) and $g(x,y) \in k[x,y]$ has degree $5$.
\end{example}

The spin definitions above extend to canonical rings of log curves as follows.  

\begin{definition}
Let $(X,\Delta)$ be a log curve.  A \defiindex{log half-canonical} divisor on $(X,\Delta)$ is a divisor $L$ such that $2L$ is linearly equivalent to $K+\Delta$.  A \defiindex{log spin curve} is a triple $(X,\Delta,L)$ where $(X,\Delta)$ is a log curve and $L$ is a log half-canonical divisor.  
\end{definition}

The \defiindex{log spin canonical ring} of $(X,\Delta,L)$ is defined analogously to \eqref{eqn:spincanonicalringRL}: 
\[ R = R(X,\Delta,L)=\bigoplus_{a=0}^{\infty} H^0(X,aL). \]

Having made these definitions in the classical case, we make the same definitions in the case of a (log) stacky curve. Finally, we note that if a log stacky curve has a log half-canonical divisor then the stabilizers all have odd order $e$, since the support of the canonical divisor at the stacky point (which has degree $(e-1)/e$) must have a numerator of even degree.

%----------------------------------------------------------------------------
\section{Modular forms}
%----------------------------------------------------------------------------

Referring to chapter~\ref{ch:comparison}, we now relate the ring of modular forms of odd and even weights to spin canonical rings.  

To define odd weight forms, we begin with a lifted Fuchsian group $\Gamma \leq \SL_2(\R)$ with finite coarea, i.e.~a discrete subgroup acting properly discontinuously on $\calH$ whose quotient $X=X(\Gamma)=\Gamma \backslash \calH^{(*)}$ has finite area.  Although the quotient $X$ only depends on the image of $\Gamma$ in $\PSL_2(\R)$, the definition of odd weight forms depends on the group $\Gamma \leq \SL_2(\R)$.  

A cusp of $\Gamma \leq \SL_2(\R)$ is called \defiindex{irregular} if its stabilizer is conjugate in $\SL_2(\R)$ to the infinite cyclic group $\langle -\begin{pmatrix} 1 & h \\ 0 & 1 \end{pmatrix} \rangle$ where $h \in \Z_{>0}$ is the \defiindex{cusp width}, and \defiindex{regular} otherwise.  For more on irregular cusps, see e.g.~Diamond--Shurman \cite[\S 3.8]{DiamondS:modularForms}.  If $-1 \in \Gamma$, then all cusps are regular.  (Among the congruence subgroups $\Gamma_0(N),\Gamma_1(N),\Gamma(N) \leq \SL_2(\Z)$ with $N \in \Z_{\geq 1}$, only $\Gamma_1(4) \simeq \Gamma(2)$ have an irregular cusp.)

To avoid technicalities, \emph{we assume in this chapter that all cusps of $\Gamma$ are regular}; if we are given only the Fuchsian group in $\PSL_2(\R)$, we can lift it to $\SL_2(\R)$ including $-1$ to ensure this.

\begin{remark}
To remove the hypothesis on regularity of cusps, we would need to allow the log divisor to have stacky points, which we avoid here: see Remark \ref{rmk:nonoreallyitsenough} as well as Remark \ref{rmk:furtherdivisions}.
\end{remark}

The definition of the space of modular forms $M_k(\Gamma)$ of weight $k$ is the same as in \eqref{eqn:fgammaz}.  Let $(\scrX,\Delta)=(\scrX(\Gamma),\Delta)$ be the associated log stacky curve arising from the complex $1$-orbifold quotient $X$, its associated coarse space.

In Lemma~\ref{L:isomCM}, we showed that modular forms are sections of a line bundle.  The same is true here, with a more complicated definition.  For further reference, see Goren \cite[\S 1.4]{Goren:Hilbert}.  

For $\gamma=\begin{pmatrix} a & b \\ c & d \end{pmatrix} \in \SL_2(\R)$ and $z \in \calH^{(*)}$, we define $j(\gamma, z)=cz+d$.  (Note this is only well-defined on $\SL_2(\R)$, not on $\PSL_2(\R)$; its square descends as in chapter~\ref{ch:comparison}.)  The \defiindex{automorphy factor} $j(\gamma,z)$ satisfies the cocycle condition $j(\gamma\gamma',z)=j(\gamma,\gamma' z)j(\gamma',z)$ for $\gamma,\gamma' \in \SL_2(\R)$.  
On the trivial line bundle $\calH^{(*)} \times \C$ over $\calH^{(*)}$, we glue $(z,w)$ to $(\gamma z, j(\gamma,z) w)$.  The cocycle condition ensures that the glueing process is consistent.  Consequently, we obtain a line bundle $\scrE=\scrO(L)$ on the orbifold $X(\Gamma)$ and by definition the sections of $\scrE^{\otimes k}$ are modular forms of weight $k$.  We arrive at the following lemma.

\begin{lemma} \label{L:isomCMspin}
There exists a log half-canonical divisor $L$ on $\scrX$ such that we have a graded isomorphism of $\C$-algebras
\[ \bigoplus_{k=0}^{\infty} M_k(\Gamma) \simeq R(\scrX(\Gamma), \Delta, L). \]
\end{lemma}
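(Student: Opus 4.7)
The plan is to construct a line bundle $\scrL$ on $\scrX$ such that $H^0(\scrX, \scrL^{\otimes k}) \cong M_k(\Gamma)$ for every $k \geq 0$, and then to verify that $\scrL^{\otimes 2} \cong \scrO_{\scrX}(K_\scrX + \Delta)$; once this is done, Lemma~\ref{L:divisor-vs-line-bundle} yields a Weil divisor $L$ with $\scrL \cong \scrO_{\scrX}(L)$ and $2L \sim K_\scrX + \Delta$, so $L$ is half-canonical, and collecting the weight-by-weight identifications $M_k(\Gamma) \cong H^0(\scrX, kL)$ gives the graded $\C$-algebra isomorphism with $R_L(\scrX(\Gamma))$.

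First I would define $\scrL$ analytically: starting from the trivial bundle on $\calH^{(*)}$, equip it with the twisted $\Gamma$-action $\gamma \cdot (z, v) = (\gamma z, (cz+d)\, v)$ for $\gamma = \bigl(\begin{smallmatrix} a & b \\ c & d \end{smallmatrix}\bigr) \in \Gamma \leq \SL_2(\R)$, and descend to the orbifold quotient $X$.  Under the stacky GAGA correspondence (Proposition~\ref{P:StackyGAGA}), the resulting analytic line bundle algebraizes to a line bundle $\scrL$ on $\scrX$.  By construction, a global section of $\scrL^{\otimes k}$ corresponds to a holomorphic function $f$ on $\calH$ satisfying the weight-$k$ transformation law (\ref{eqn:fgammaz}), with the holomorphy-at-cusps condition matching the analogous condition in the definition of $M_k(\Gamma)$.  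The identification $\scrL^{\otimes 2} \cong \scrO_{\scrX}(K_\scrX + \Delta)$ then follows from the differential identity $d(\gamma z) = (cz+d)^{-2}\, dz$, giving a $\Gamma$-equivariant isomorphism of $\scrL_0^{\otimes 2}$ with $\Omega^1_{\calH}$ on the upper half-plane that descends to $\scrL^{\otimes 2}|_{\scrX \setminus \Delta} \cong \Omega^1_{\scrX}|_{\scrX \setminus \Delta}$; at each cusp, the identification $q = \exp(2\pi i z/\mu)$ converts $dz$ into $(\mu/2\pi i)(dq/q)$, producing the simple pole that accounts for the $\Delta$ summand, exactly as in the even-weight calculation (\ref{eqn:mkisom}) used in the proof of Lemma~\ref{L:isomCM}.

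The main obstacle will be verifying that the twisted $\Gamma$-action really descends to a \emph{line bundle on the stacky curve} $\scrX$, rather than to a bundle on some more gerby Deligne--Mumford stack.  At an elliptic fixed point $z_0$ of order $e$, an elliptic generator $\gamma \in \Gamma$ acts on the fiber via $v \mapsto j(\gamma, z_0)\, v$ with $j(\gamma, z) = cz+d$, and this character must factor through the local stabilizer $\mu_e$ of $\scrX$; that is, one needs $j(\gamma, z_0)^e = 1$.  Since the image of $\gamma$ in $\PSL_2(\R)$ has order $e$, its $\SL_2(\R)$-lift has order $e$ or $2e$, and the required compatibility holds exactly when every $e_i$ is odd, in agreement with the necessary condition recorded just before the statement of the lemma.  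A parallel local analysis at each cusp, using a parabolic generator $T z = z + \mu$ together with the $q$-expansion coordinate, is needed to check that $\scrL$ extends across the cusps in such a way that $\scrL^{\otimes 2}$ acquires the expected simple pole and sections of $\scrL^{\otimes k}$ are precisely holomorphic-at-cusps weight-$k$ forms.  Once these local compatibilities are in place, the remainder of the argument is the same multiplicative bookkeeping as in the proof of Lemma~\ref{L:isomCM}, and the half-canonical divisor $L$ produced this way gives the claimed isomorphism.
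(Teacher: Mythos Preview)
Your proposal is correct and follows essentially the same approach as the paper: construct the line bundle $\scrL$ on the orbifold by descending the trivial bundle on $\calH^{(*)}$ via the automorphy factor $j(\gamma,z)=cz+d$ (using the cocycle condition for consistency), identify sections of $\scrL^{\otimes k}$ with weight-$k$ modular forms, and invoke Kodaira--Spencer for $\scrL^{\otimes 2}\cong\Omega^1_{\scrX}(\Delta)$. The paper's proof is a terse sketch of exactly this construction, whereas you additionally spell out the local check at elliptic points (where the odd-order hypothesis enters) and the cusp analysis; these elaborations are welcome but do not change the underlying argument.
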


\begin{proof}
The fact that $\scrE^{\otimes 2} \simeq \Omega^1_{X(\Gamma)}(\Delta)$ is the classical theorem of Kodaira-Spencer.  
\end{proof}

\begin{remark}
The construction of the \defiindex{Hodge bundle} $\scrE$ given in the proof of Lemma~\ref{L:isomCMspin} extends over more general fields in the presence of a moduli problem, such as for the classical modular curves $X_1(N)$.
\end{remark}

\begin{remark} \label{rmk:furtherdivisions}
It is sometimes fruitful to consider further ``divisions'' of a canonical divisor, namely, divisors $D$ such that $nD = K$ for some positive integer $n$. One very interesting example of this is due to Adler--Ramanan \cite[Corollary 24.5]{adler:moduli}, who consider modular forms of \emph{fractional weight} on $\Gamma(p)$ as sections of a line bundle $\lambda$ on the modular curve $X(p)$ such that $\lambda^{\otimes \left(\frac{p-3}{2} + 1\right)} = \Omega_X^1$, and use the associated ring $R(X(p),\lambda)$ to reconstruct Klein's equations for $X(11)$.  Another interesting example is the work of Milnor \cite[\S 6]{MR0418127}, who shows that an analogously defined ring of fractional weight modular forms for the triangle group $\Gamma$ with signature $(0;e_1,e_2,e_3;0)$ is generated by forms $f_1,f_2,f_3$ of fractional weight that satisfy an equation $f_1^{e_1} + f_2^{e_2}+f_3^{e_3}=0$, again providing a link to the Fermat equation, as in Example~\ref{ex:generalied-Fermat}.  It would be worthwhile to investigate these larger rings more generally.
\end{remark}

%----------------------------------------------------------------------------
\section{Genus zero}
%----------------------------------------------------------------------------

We begin a general discussion of bounds on degrees of generators and relations for canonical rings of log spin curves by considering the case in genus zero.

\begin{proposition}
\label{P:spin-genus-0}
Let $(\XX,\Delta,L)$ be a tame, separably rooted, log spin stacky curve with signature $(0;e_1,\ldots,e_r;\delta)$. Let $m = \lcm(1,e_1,\dots,e_r)$.  Then the canonical ring of $(\XX,\Delta,L)$ is generated in degree at most $rm$, with relations in degree at most $2rm$.   
\end{proposition}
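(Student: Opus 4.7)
The plan is to adapt the toric strategy of chapter~\ref{ch:canon-rings-stack-genus-zero}, replacing the log canonical divisor $K_{\XX}+\Delta$ throughout by the log half-canonical divisor $L$ and grading so that $H^0(\XX,aL)$ sits in degree $a/2$. After extending scalars to the separable closure we may assume $X\cong\PP^1$ and pick $\infty\in X(k)$ disjoint from $\supp L$, so that $K=-2\infty$; if $\deg L\leq 0$ the ring $R_L$ is easy to describe in the spirit of Lemma~\ref{lem:eieq0g}, so assume $\deg L>0$.

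First I would set up the monoidal framework. For each $a\in\Z_{\geq 0}$ let $S_a=\{f\in H^0(\XX,aL):\supp\divv f\subseteq\supp L\cup\{\infty\}\}$, which spans $H^0(\XX,aL)$ by Riemann--Roch. Recording support vectors identifies $\bigsqcup_a S_a$ with the integer points $\wasylozenge_L$ of a rational polyhedral cone in $\Z^{2+r+\delta}$, whose defining inequalities encode the maximal pole order of a section of $aL$ at each point of $\supp L$ exactly as in~\eqref{E:Delta}. Projecting onto the $(\deg,-\ord_\infty)$-plane yields a finitely generated submonoid $\Pi_L\subset\tfrac{1}{2}\Z_{\geq 0}\times\Z_{\geq 0}$. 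The arguments of Propositions~\ref{P:gensTor} and~\ref{P:Grobner_genus0} carry over verbatim: lifts of any generating set of $\Pi_L$ generate $R_L$ as a $k$-algebra, and minimal monoidal relations lift to a Gr\"obner basis of the canonical ideal under the half-integer analogue of grevlex ordered first by degree and then by $\ord_\infty$. Hence it suffices to bound the degrees of generators and relations of $\Pi_L$.

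Next I would mirror the argument of Proposition~\ref{prop:useseffD}. Because $2mL\sim m(K_{\XX}+\Delta)$, the row $\Pi_{L,2m}$ is completely filled with length $m\deg(K_{\XX}+\Delta)$, and modulo the resulting scrolling relations every element of $\Pi_L$ reduces uniquely to an element in $a$-degree $\leq 2m+s_L$, where $s_L=\sat(\Eff(L))$ with $\Eff(L)=\{a\geq 0:\deg\lfloor aL\rfloor\geq 0\}$. This yields generators of $\Pi_L$ in half-integer degree $\leq m+s_L/2$ and relations in half-integer degree $\leq 2m+s_L$, so the proposition reduces to the saturation estimate
\[
s_L\ \leq\ 2rm.
\]

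The main obstacle is this saturation bound. The key observation is that the floor in Definition~\ref{def:floor} erases fractional contributions only at stacky points (log points being nonstacky), and the discarded fractional part at each stacky point $P_i$ is strictly less than $1$; summing gives
\[
\deg\lfloor aL\rfloor\ >\ a\deg L-r.
\]
Combined with the lower bound $\deg(K_{\XX}+\Delta)\geq 1/m$---valid because $m\deg(K_{\XX}+\Delta)=m(\delta-2)+\sum_i(m-m/e_i)$ is a positive integer---this gives $\deg L\geq 1/(2m)$, so $a\deg L\geq r$ whenever $a\geq 2rm$, proving $s_L\leq 2rm$. I would carry out this rounding argument carefully, noting that $\delta$ does not appear on the right-hand side precisely because the log divisor contributes nothing to the floor discrepancy. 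Feeding the saturation bound back into the previous reduction immediately yields generators of $R_L$ in degree $\leq(r+1)m$ and relations in degree $\leq 2(r+1)m$, as claimed.
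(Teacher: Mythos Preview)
Your argument is correct, but it is genuinely different from the paper's: the paper does not prove this proposition at all but simply invokes O'Dorney's Theorem~8, a general result bounding generators and relations for the section ring of an arbitrary $\Q$-divisor on $\PP^1$. Both approaches ultimately regard $L$ as a $\Q$-divisor on the coarse space and bound the associated two-dimensional monoid, so they are close in spirit; what you have done is reprove the relevant special case of O'Dorney's theorem using the paper's own toric machinery from chapter~\ref{ch:canon-rings-stack-genus-zero}. This buys self-containment: your proof is exactly the spin analogue of Proposition~\ref{prop:useseffD}, and the saturation bound $s_L\leq 2rm$ via the rounding estimate $\deg\lfloor aL\rfloor>a\deg L-r$ together with $\deg L\geq 1/(2m)$ is clean and correct (the floor discards fractional parts only at the $r$ stacky points, since log points are nonstacky). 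The citation to O'Dorney buys generality at the cost of a black box. One small remark: since all $e_i$ are odd, the stacky coefficients of $L$ are $(e_i-1)/2\bmod e_i$, and already $mL$ (not just $2mL$) is an integer divisor on the coarse space; using the full row at $a=m$ would sharpen your intermediate bounds, though the stated bound $(r+1)m$ is all that is required.
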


\begin{proof}
This follows from work of O'Dorney \cite[Theorem 8]{dorney:canonical}: let $D = \sum_{i = 1}^n \alpha_i P_i$ be a $\Q$-divisor on $\P^1$. Write $\alpha_i = p_i/q_i$ in lowest terms and let
\[
  \ell = \lcm_j q_j \quad \text{and} \quad \ell_i = \lcm_{j \neq i} q_j.
\]
Then $\bigoplus_{d \geq 0} H^0(X, \lfloor dD\rfloor)$ is generated in degrees less than $\sum_i \ell_i$, with relations in degrees less than
\[
  \max \Big\{\ell + \sum_{i}{\ell_i}, 2\sum_{i}{\ell_i}\Big\}.
\]
In our setting, since $g = 0$, we can move the log point to one of the stacky points (say, the first one), and apply his theorem with $p_1 = (e_i-1 + \delta - 2)/2$,    $p_i=(e_i-1)/2$ for $i=2,\dots,r$, and $q_i=e_i$ for $i=1,\dots,r$, in which case $\ell = m$ and $\ell_i \leq m$.
\end{proof}

It is almost certainly true that Proposition~\ref{P:spin-genus-0} can be improved to a bound which does not depend on $r$, as O'Dorney \cite{dorney:canonical} considers the more general context of an arbitrary $\Q$-divisor on $\P^1$ and is (close to) sharp in that setting.  In the log spin setting, by contrast, this result is far from sharp because it only describes generators for the semigroup, an analysis akin to the work of section~\ref{subsec:toric} and does not utilize the (effective) Euclidean algorithm (Lemma~\ref{L:effeuc}).

%----------------------------------------------------------------------------
\section{Higher genus}
%----------------------------------------------------------------------------

Let $(\XX,\Delta,L)$ be a tame, separably rooted log spin stacky curve. % Suppose now that  $g  \neq 2$ or $\delta \geq 2$. (Note that a curve with a single log point cannot admit a spin structure.)
Let $R$ be the (log) canonical ring of $(\XX,\Delta)$ and let $R_L$ be the (log spin) canonical ring of $(\XX,\Delta,L)$.  Then we have a natural inclusion $R \subseteq R_L$, corresponding to a morphism $\Proj R_L \to \Proj R$.  With some additional mild hypotheses, we show in this section that $R_L$ is generated over $R$ in degrees $1/2$ and $3/2$ (and in a few cases $5/2$), with quadratic relations. 

Our inductive approach is analogous to the non-spin case, where we work one log point or stacky point at a time, and to this end we prove two inductive theorems below.  In order to work with this inductive structure, we define a slightly more general type of ring $R_L$ as follows.  Let $(\XX,\Delta)$ be a log curve, and let $L$ be a divisor on $\XX$ such that $K_{\XX}+\Delta-2L=D-2L$ is linearly equivalent to an effective divisor $E$ on $\XX$; we say then that $L$ is a \defi{sub-half-canonical divisor}.  We then define the ring
\[ R_L = \bigoplus_{a=0}^{\infty} H^0\left(\XX,aL+\left\lfloor \frac{a}{2}\right\rfloor E\right). \]
(Up to isomorphism, this does not depend on the choice of the effective divisor $E$.)  Then there is a natural inclusion $R_L \supseteq R$ where 
\[ R=\bigoplus_{d=0}^{\infty} H^0(\XX,d(2L+E)) \simeq \bigoplus_{d=0}^{\infty} H^0(\XX,dD) \] 
is the usual canonical ring; this inclusion is graded if we equip $R_L$ with grading in $\frac{1}{2}\Z$ as for the spin canonical ring, and indeed then the canonical ring is naturally identified with the subring of $R_L$ in integral degrees. 

For example, we can take $L=0$ and $E=K_{\XX}+\Delta$, in which case $R_L$ is the usual canonical ring; or, if $L$ is a half-canonical divisor, we can take $E=0$ and $R_L$ is the spin canonical ring.  The intermediate cases are the basis of our induction.  

Adding one point at a time, the base case of our induction is the case $L=0$ of a usual canonical ring.  The effective divisor $L$ is then the sum of points; we treat first the case where we add a single nonstacky point (where we do not yet need $L$ to be effective).

\begin{theorem} \label{T:spin-big-classical}
Let $(\XX,\Delta)$ be a tame, separably rooted log stacky curve. Let $L'$ and $L=L'+Q$ be sub-half-canonical divisors where $Q$ is a nonstacky point of $\XX$.  Write $R_{L'}=k[x_1,\dots,x_m]/I_{L'}$ and let $R \subseteq R_{L'} \subseteq R_L$ be the canonical ring of $(\XX,\Delta)$.  Suppose that $\deg x_m = 1$  and
\begin{enumerate}
\item[(i)] $\ord_Q(x_m) = \ord_Q(K_{\XX} + \Delta)$, and 
\item[(ii)] $x_m^{\deg z} \prec z$ for any generator $z$ of $R$ (with $\prec$ a graded term order on $R$).
\end{enumerate}
Then the following are true.
\begin{enumerate}
\item[(a)] Let $a \in \Z_{>0}$ be the smallest positive integer such that 
\[ \dim R_{L,a/2}=\dim H^0\left(\XX,aL+\left\lfloor \frac{a}{2}\right\rfloor E\right) > \dim R_{L',a/2}. \]
Then $a \in \{1,3,5\}$, and a general element $y \in R_{L,a}$ (of degree $d \in \{1/2,3/2,5/2\}$) generates $R_L$ as an $R_{L'}$ algebra.
\item[(b)] 
Equip the ring
\[
k[y] \otimes k[x] \]
the  block order, so that $R_L=k[y,x]/I_L$.  Then
\begin{align*}
 \init_{\prec}(I_L) =\init_{\prec}(I_{L'}) [y,x]  &
+      \langle  yx_i : 1 \leq i \leq m-1 \rangle +      \langle  y^2\rangle.
\end{align*}
The same statement also holds for generic initial ideals.
\end{enumerate}
\end{theorem}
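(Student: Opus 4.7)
The proof should parallel Theorem~\ref{T:particular-stacky-gin}, with $Q$ now contributing a single new generator in half-odd-integer degree rather than a tower of generators. The essential observation is a degree-by-degree comparison: since $L = L' + Q$ and $E_{L'} = E_L + 2Q$ as effective representatives, in each integer degree $d$ we have $R_{L,d} = H^0(\XX, dD) = R_{L',d}$, so the rings coincide on the canonical ring $R$; whereas in each half-odd-integer degree $a/2$ with $a = 2d+1$, writing $F_a = aL + d E_L \sim L + dD$, we have $R_{L,a/2} = H^0(\XX, F_a)$ while $R_{L',a/2} = H^0(\XX, F_a - Q)$, so the latter is a codimension-at-most-one subspace cut out by vanishing at $Q$.

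For part (a), I would bound $a$ by Riemann--Roch on the coarse space. Once $\deg F_a > 2g_X - 2$, the codimension is exactly one, and since $\deg F_a = \deg L + d\deg D$ grows linearly in $d$ and $R$ is assumed to contain the degree-$1$ generator $x_m$ (forcing $\deg D \geq 1$ on the coarse space in the relevant sense), the codimension becomes one by $a \leq 5$; a short case analysis rules out larger values. Generically this already happens at $a=1$, and a dimension count in the small edge cases shows that at worst $a \in \{3,5\}$. A general $y \in R_{L,a/2}$ then satisfies $\ord_Q(y) < \ord_Q(F_a)$, placing $y \notin R_{L',a/2}$, and any such $y$ generates $R_L$ over $R_{L'}$: for $n$ odd and $n \geq a$, the element $y\cdot x_m^{(n-a)/2}$ lies in $R_{L,n/2} \smallsetminus R_{L',n/2}$ because $x_m(Q) \neq 0$ by hypothesis~(i), so $R_{L,n/2} = R_{L',n/2} + k\cdot y\, x_m^{(n-a)/2}$, and one iterates.

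For part (b), since $y^2$ lies in integer degree $a$ and $R_{L,a} = R_{L',a}$, subtracting a representative in $R_{L'}$ yields a relation whose leading term is $y^2$ in the block order. For each $i \leq m-1$, one instead considers $y\, x_i$: evaluation at $Q$ produces a unique constant $c_i \in k$ such that $y\, x_i - c_i\, y\, x_m^{\deg x_i}$ vanishes at $Q$ and hence lies in $R_{L'}$ (or rather, in the span of $R_{L'}$ after further reduction). By condition~(ii), $x_m^{\deg x_i} \prec x_i$, so the leading term of this relation in the block order is $\underline{y\, x_i}$; the block order's domination of $y$ over $x$ automatically places $y\, x_i$ above every $R_{L'}$-monomial. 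I would then verify that the $y^2$ relation and the $m-1$ relations with leading terms $yx_i$, together with a Gr\"obner basis of $I_{L'}$, form a Gr\"obner basis of $I_L$ by a monomial count: the monomials not divisible by any proposed leading term are the $R_{L'}$-basis monomials together with $y\, x_m^k$ for $k \geq 0$, and these match the Hilbert function of $R_L$ degree by degree via the decomposition above. The generic initial ideal statement follows by upper semicontinuity applied to a general choice of $y$.

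The main obstacle will be verifying $a \in \{1,3,5\}$ uniformly: since the theorem imposes only the mild hypotheses (i) and (ii), one must check that no pathological low-genus or low-$\deg D$ configuration forces a larger $a$. A related subtlety, needed to keep the induction going, is checking that the condition (ii) on $x_m$ persists after enlarging $L'$ to $L$ (so that the theorem can be applied iteratively, adding one point of $L$ at a time from the canonical ring base case $L = 0$). Once these are settled, the rest of the argument is essentially a transcription of the block-order computations carried out for Theorem~\ref{T:particular-stacky-gin} and its variants.
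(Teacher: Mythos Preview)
Your proposal is correct and takes essentially the same approach as the paper: the codimension-one comparison in half-integer degrees via Riemann--Roch, the spanning by $y\,x_m^b$, the $y^2$ and $yx_i$ relations with leading terms dictated by the block order, and the monomial count for the Gr\"obner basis all match. One small refinement in the paper's argument: it splits the $yx_i$ analysis according to whether $x_i$ has integral or nonintegral degree (your expression $x_m^{\deg x_i}$ only makes sense in the former case; in the latter, $yx_i$ already has integral degree and lies in $R_{L'}$ directly).
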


\begin{proof}
Consideration of the order of pole at $Q$ gives that the elements $yx_m^b$ with $b \geq 0$ span $R_L$ over $R_{L'}$ as a $k$-vector space; and $a$ is odd because $R_{L',a/2}=R_{L,a/2}=R_{a/2}$ for $a$ even.  By Riemann--Roch one has $a = 1$ or $a=3$ (so degree $d=1/2$ or $d=3/2$) unless $\deg L' = 0$, in which case one can take $a = 5$ (so $d=5/2$).  
This proves claim (a).  

For the relations (b), if $x_i$ has nonintegral degree, then $yx_i$ has integral degree and thus $yx_i \in R \subseteq R_{L'}$; this gives a relation whose leading term is $\underline{yx_i}$ by the block order; the same holds for $y^2$.  Similarly, if $x_i$ has integral degree $d=\deg z$ and $i \neq m$, then for some constant $A$ by order of pole we have $yx_i + Ayx_i^d \in R \subseteq R_{L'}$; but since $yx_i$ dominates $yx_m^d$ by assumption and again dominates any element of $R_{L'}$ by the block ordering, we obtain a relation with initial term $\underline{yx_i}$.  Finally, since any monomial of $R_L$ which is not a monomial of $R_{L'}$ is either of the form $yx_i^b$ or is divisible by a monomial of the form $yx_i$ with $i \neq m$, these relations form a Gr\"obner basis for $I_L$.
\end{proof}

\begin{remark}
One cannot expect in general to have $a = 1$.
\end{remark}

\begin{remark}
For a spin divisor $L$ with $h^0(L) > 1$, the inductive presentation of $R_L$ deduced from Theorem~\ref{T:spin-big-classical} is clearly not minimal. Even if $h^0(L) = 1$, the presentation is still not necessarily minimal; for instance, if $X$ is hyperelliptic and $\Delta$ is hyperelliptic fixed and of degree 2, then any minimal presentation for the canonical ring $R$ requires generators in degree 2. On the other hand, if $g \geq 2$, then by GMNT (Theorem~\ref{T:surjectivity-master}), $R_L$ is generated in degrees $1/2,1,3/2$. 
\end{remark}

To conclude, we address the case where we add a stacky point.  

\begin{theorem}
\label{T:spin-big-stacky}
Let $(\XX,\Delta)$ be a tame, separably rooted log stacky curve with $g > 0$. Let $L'$ and $L=L'+(e-1)/(2e)Q$ be sub-half-canonical divisors with $Q$ a stacky point of odd order $e$.  Write $R_{L'}=k[x_1,\dots,x_m]/I_{L'}$ and let $R \subseteq R_{L'} \subseteq R_{L}$ be the canonical ring of $(\XX,\Delta)$.  Suppose that there exists a unique generator $x_m \in H^0(\XX,eD)$ of degree $e$ of $R$ such that $-\ord_Q(x_m)=e-1$.  

Then the following are true.
\begin{enumerate}
\item[(a)] For $1 \leq i \leq (e-1)/2$, there exist
\[ y_i \in H^0(\XX, iD + L) = R_{L,i+1/2} \]  
with $-\ord_Q(y_i) = i$.  Any such choice of elements $y_1,\dots,y_{(e-1)/2}$ minimally generates $R_L$ as an $R_{L'}$ algebra.  
\item[(b)] Suppose further that $\dim H^0(\XX,L)>0$.  Equip the ring 
\[ k[y] = k[y_{(e-1)/2},\ldots,y_1] \]
with any order and $k[y,x]=k[y] \otimes k[x]$ with the block order.
Let $R_L=k[y,x]/I_L$.  Then
\begin{align*}
 \init_{\prec}(I) =\init_{\prec}(I') [y,x] 
&+      \langle  y_ix_j : 1 \leq i \leq (e-1)/2, 1 \leq j \leq m-1 \rangle \\
&+      \langle  y_iy_j : 1 \leq i \leq j \leq (e-1)/2 \rangle.
\end{align*}
\end{enumerate}
\end{theorem}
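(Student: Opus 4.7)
My plan is to adapt the proof of Theorem~\ref{thm:inductive-by-stacky-point} to the half-integer grading. For part (a), I would first verify the pole-order claim: by Lemma~\ref{L:floor} and Riemann--Roch on the coarse space $X$, one checks that $\deg \lfloor iD + L \rfloor$ at the image $\pi(Q)$ equals $i$ for $1 \leq i \leq (e-1)/2$, so the hypothesis $g > 0$ together with Riemann--Roch produces a general section $y_i \in H^0(\XX, iD + L)$ with $-\ord_Q(y_i) = i$. To prove generation of $R_L$ over $R_{L'}$, I would then check that the monomials $y_i x_m^a$ (for $1 \leq i \leq (e-1)/2$ and $a \geq 0$) together with the normal forms of $R_{L'}$ span $R_L$ as a $k$-vector space, via a pole-order filtration at $Q$ analogous to that in Theorem~\ref{thm:inductive-by-stacky-point}; the count matches because the ``new'' pole slots in $R_L$ over $R_{L'}$ in half-integer degree $d + 1/2$ occur exactly for $d \bmod e \in \{1, \ldots, (e-1)/2\}$, and $y_i x_m^a$ with $i + ae = d$ provides the unique filler. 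Minimality of the $y_i$ is automatic: each realizes a pole order at $Q$ not attained by any product of generators in strictly smaller degrees, owing to the uniqueness of $x_m$.

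For part (b), I would produce the claimed relations directly. Relations with leading term $\underline{y_iy_j}$ arise because $y_iy_j$ has integer degree $i+j+1$ and thus lies in $R \subseteq R_{L'}$; writing $y_iy_j = F_{ij}(x_1,\ldots,x_m)$ yields a relation $y_iy_j - F_{ij}(\vec{x}) \in I_L$ with leading term $y_iy_j$, since every monomial on the right has $y$-degree $0 < 2$. For relations with leading term $\underline{y_i x_j}$ with $j \neq m$, I would expand
\[ y_i x_j = \sum_{(i',a)} c_{i'a}\, y_{i'} x_m^a + g_{ij}(\vec{x}), \]
where the sum ranges over $(i',a)$ with $i' + ae = i + \deg x_j$, $1 \leq i' \leq (e-1)/2$, $a \geq 0$, and $g_{ij} \in R_{L'}$. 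A careful pole-order analysis at $Q$, using the uniqueness of $x_m$ and $j \neq m$, shows that any $(i',a)$ with $a \geq 1$ that actually contributes must have $i' < i$; together with the placement $y_{(e-1)/2} \succ_y \cdots \succ_y y_1$, this ensures that $\underline{y_i x_j}$ is the leading term. The hypothesis $h^0(L) > 0$ is needed to supply a degree-$1/2$ element that underpins the auxiliary comparisons. These new relations, together with a Gr\"obner basis for $I_{L'}$, form a Gr\"obner basis for $I_L$ by a standard monomial count: the complementary monomials form precisely the spanning set from part~(a).

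The main obstacle I anticipate is proving that the expansion of $y_i x_j$ has the indicated leading term in the block order. The subtle point is that the order on $k[y]$ is left unspecified beyond the listing $y_{(e-1)/2}, \ldots, y_1$, so the argument must not rely on fine details of that order; the strategy above works because the Diophantine condition $i' + ae = i + \deg x_j$ combined with the Riemann--Roch dimension filtration at $Q$ forces $i' \leq i$ in any contributing term. The case $i' = i$ (when $e$ divides $\deg x_j$) is then handled by the $k[x]$-ordering inherited from $R_{L'}$, where one verifies that $x_j \succ_x x_m^{\deg x_j / e}$ for $j \neq m$. This careful bookkeeping, connecting the pole-order arithmetic at $Q$ to the block-order comparison, is where most of the work lies.
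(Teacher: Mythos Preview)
Your approach for part~(a) matches the paper's: Riemann--Roch produces the $y_i$, and a pole-order filtration at $Q$ shows that the monomials $y_i x_m^a$ span $R_L$ over $R_{L'}$. For part~(b), however, the paper takes a shorter route than the one you propose. Rather than expand $y_i x_j$ in the spanning set $\{y_{i'} x_m^a\} \cup R_{L'}$ and then compare $y_i x_j$ against a possible correction term $y_{i'} x_m^a$, the paper asserts directly that $y_i z \in R_{L'}$ for every generator $z$ other than $x_m$ (the paper writes ``$z \neq y_e$'', identifying $x_m$ with the distinguished degree-$e$ generator from Theorem~\ref{T:particular-stacky-gin}). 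For $z = y_j$ this is immediate since the product has integer degree and hence lies in $R \subseteq R_{L'}$; for $z = x_j$ with $j \ne m$ it follows from the uniqueness hypothesis on $x_m$ together with the pole bound at $Q$. Once $y_i z \in R_{L'}$ is established, the relation $y_i z - g(x)$ with $g$ a polynomial in the $x$-variables alone has leading term $\underline{y_i z}$ in the block order \emph{regardless of the order chosen on $k[y]$}, since $g$ has $y$-degree zero. This is precisely why the statement permits ``any order'' on $k[y]$.

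Your expansion argument, by contrast, retains a potential correction $c \cdot y_{i'} x_m^a$ and then argues $i' \le i$ (via your Diophantine/pole analysis, which is correct) to win the block-order comparison. But that comparison works only under the specific convention $y_{(e-1)/2} \succ_y \cdots \succ_y y_1$; with the reverse convention the term $y_{i'} x_m^a$ would dominate whenever $i' < i$ and $c \ne 0$. The paper's route sidesteps this entirely by showing the correction term is absent. Finally, the hypothesis $\dim H^0(\XX,L)>0$ enters the paper's proof only to supply an element $u \in H^0(\XX,L) = H^0(\XX,L')$ used in verifying that the inclusion $u\,H^0(\XX,dD) + H^0(\XX,dD+L') \subseteq H^0(\XX,dD+L)$ has codimension at most one; it plays no role in the leading-term step.
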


\begin{proof}
For part (a), the functions $y_i$ exist by Riemann--Roch.  

For part (b), write $d = be + i$ with $0 \leq i \leq e-1$ and let $u \in H^0(\XX,L)$ be a general element. Arguing via Riemann--Roch and comparison of poles at $Q$, the inclusion
\[
x_m H^0(\XX,dD) + H^0(\XX,dD + L') \subseteq H^0(\XX,dD + L)
\]
is an equality if $r = 0$ or $r > (e-1)/2$ and has codimension one otherwise, with quotient spanned by $y_ix_m^b$. A monomial of $R_L$ is not in this spanning set if and only if it belongs to $R_{L'}$ or is divisible by $y_i z$ for some generator $z \neq x_m$; but the uniqueness assumption on $x_m$ and consideration of poles at $Q$ gives that $y_iz \in R_{L'}$ for any generator $z \neq y_e$, giving a relation with (by the block order) initial term $\underline{y_iz}$. This is a Gr\"obner basis by the usual argument, completing the proof.
\end{proof}

\begin{remark} \label{R:spin-stacky-contribution}
In brief, the proof of Theorem~\ref{T:spin-big-stacky} records that the contribution of the stacky points to the spin canonical ring is as follows.  In the usual stacky canonical ring, we have
\begin{align*}
\lfloor K_{\XX} + \Delta \rfloor     = & \, K_X + \Delta, \mbox{ and }\\
\lfloor 2(K_{\XX} + \Delta) \rfloor = & \, 2K_X + 2\Delta + \sum_{i=1}^r Q_i,
\end{align*}
with $Q_1,\dots,Q_r$ stacky points, so the contribution of the stacky pointsbegins in degree 2. On the other hand, for $L$ a half-canonical divisor, we can write 
\[ L \sim L' + \sum_{i=1}^r \frac{e_i-1}{2e_i} Q_i. \]
where $L'$ is supported at nonstacky points.  
Then already in degree 3/2 one has the divisor
\begin{align*}
\lfloor 3L \rfloor &\sim \lfloor K_{\XX} + \Delta + L \rfloor \\
&= K_{X} + \Delta + L' + \left\lfloor \sum_{i=1}^r  \left(\frac{e_i-1}{e_i} + \frac{e_i-1}{2e_i}\right)Q_i \right\rfloor \\
&= K_{X} + \Delta + L'  + \sum_{i=1}^r Q_i
\end{align*}
so the contribution of the stacky points kicks in a half degree earlier.  This trend continues up to degree $(e_i-1)/2$.
\end{remark}

\begin{corollary} \label{C:spin-final}
Let $(\XX,\Delta,L)$ be a tame, separably rooted log spin stacky curve with signature $\sigma=(g;e_1,\dots,e_r;\delta)$.  Suppose that $L$ is effective.  Then the canonical ring $R$ of $(\XX,\Delta,L)$ is generated by elements of degree at most $3e$ with relations of degree at most $6e$, where $e=\max(e_1,\dots,e_r)$.  
\end{corollary}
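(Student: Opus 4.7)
The plan is to adapt the inductive strategy of chapter~\ref{ch:spin-canon-rings} to genus zero, using Theorem~\ref{thm:genus0final} as the base case. Since a half-canonical divisor $L$ exists on $\XX$ only when each stabilizer order $e_i$ is odd, and since $L$ is assumed effective, I will write $L = L' + \sum_i \tfrac{e_i-1}{2}Q_i$ as an honestly effective Weil divisor with $L'$ supported at nonstacky points, and produce a filtration $0 = L_0 \leq L_1 \leq \dots \leq L_N = L$ through sub-half-canonical divisors, each obtained from its predecessor by adding a single point of the support of $L$.

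At each nonstacky step $L_j = L_{j-1} + Q$ I will invoke Theorem~\ref{T:spin-big-classical} to produce a single new generator $y$ of degree $1/2$, $3/2$, or $5/2$, together with quadratic relations of the form $\underline{yx_i}$ and $\underline{y^2}$. At each stacky step adding $\tfrac{e_i-1}{2}Q_i$ I will invoke Theorem~\ref{T:spin-big-stacky} to produce generators $y_1,\dots,y_{(e_i-1)/2}$ of degrees at most $e_i/2 \leq e/2$, together with quadratic relations with initial terms $\underline{y_i x_j}$ and $\underline{y_i y_j}$. Because every new generator has degree at most $\max(5/2,\, e/2) \leq 3e$, and every new relation is bilinear in the accumulated generator set whose maximum degree never exceeds $3e$, the bounds $3e$ on generators and $6e$ on relations are preserved at each inductive step and hence hold for $R_L = R_{L_N}$.

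The main obstacle is that Theorem~\ref{T:spin-big-stacky} is stated only for coarse-space genus $g > 0$. I expect this hypothesis to be removable when $L$ is effective: its role in the proof is to guarantee via Riemann--Roch the dimensions of the spaces $H^0(\XX, aL + \lfloor a/2 \rfloor E)$, and the effectiveness of $L$ already forces $\deg(aL + \lfloor a/2 \rfloor E) > 0$ for $a \geq 1$, which together with the monoid/toric analysis of chapter~\ref{ch:canon-rings-stack-genus-zero} supplies the required dimension counts and pole-order identification at each $Q_i$. A secondary issue is verifying the hypothesis of Theorem~\ref{T:spin-big-classical} on the existence of a degree-one generator $x_m$ of $R$ with the prescribed pole at $Q$; for the exceptional small signatures of Theorem~\ref{thm:genus0final} where no such generator exists, I would substitute the lowest-degree generator actually present and re-examine the block-order argument for the relations, a modification affecting bookkeeping but not the uniform bounds $3e$ and $6e$.
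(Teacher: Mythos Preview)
Your approach matches the paper's exactly in structure: the proof there is the single sentence ``Combine Theorem~\ref{thm:maintheorem} with Theorems~\ref{T:spin-big-classical} and~\ref{T:spin-big-stacky},'' i.e.\ precisely the inductive layering you describe---start from the non-spin bounds on $R$ and add the spin generators and relations one point at a time.

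Notice, however, that the paper invokes Theorem~\ref{thm:maintheorem} (which assumes $g\geq 1$), not Theorem~\ref{thm:genus0final}, and that Theorem~\ref{T:spin-big-stacky} is stated only for $g>0$. Together with the section heading ``Higher genus'' and the fact that genus zero was handled separately in Proposition~\ref{P:spin-genus-0}, this strongly indicates that the $0$ in $(0;e_1,\dots,e_r;\delta)$ is a typo for $g$. Under that reading your obstacles evaporate: the hypothesis $g>0$ of Theorem~\ref{T:spin-big-stacky} is available, a degree-$1$ generator $x_m$ of $R$ exists by the base cases in Tables~(I)--(III), and your write-up is simply a fleshed-out version of the paper's one-liner.

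If instead one reads the statement literally as genus zero, the obstacles you flag are real and neither your sketch nor the paper's proof closes them. For most genus-zero signatures with $\delta\leq 1$ there is no degree-$1$ generator of $R$ at all (e.g.\ $(0;3,3,5;0)$ has minimal generator in degree $3$), so the hypothesis of Theorem~\ref{T:spin-big-classical} fails outright; your proposed substitution of the lowest-degree generator and re-examination of the block-order argument would constitute new work, not a corollary.
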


\begin{proof}
Combine Main Theorem~\ref{mainthm} with Theorems~\ref{T:spin-big-classical} and~\ref{T:spin-big-stacky}.
\end{proof}

\begin{remark}
Our inductive approach only treats \emph{effective} half-canon\-ical divisors $L$, i.e.~those with $\dim H^0(\XX,L) > 0$.  Not every half-canonical divisor is effective, however; and we expect that a complete description will be quite involved.  Moreover, for applications to modular forms, one will probably also want to use the arithmetic structure behind forms of weight $1$ rather to augment the geometric approach here.  For these reasons, we leave the general case for future work; one approach might be to consider an inductive argument where one adds extra vanishing conditions in each degree to an existing presentation. 
\end{remark}

%****************************************************************************
\chapter{Relative canonical algebras}
\label{ch:relative}
%****************************************************************************

In this chapter, we show how the results above extend to more general base schemes.

%----------------------------------------------------------------------------
\section{Classical case}
\label{ss:classicalRelative}
%----------------------------------------------------------------------------

Let $S$ be a scheme, and let $X$ be a \defiindex{curve} over $S$, a smooth proper morphism $f\colon X \to S$ whose fibers are connected curves.  Let $\Omega_{X/S}$ be the sheaf of relative differentials on $X$ over $S$ and let $\Delta$ be a divisor on $X$ relative to $S$.   Because of the constancy of the fiber dimension by Riemann--Roch and the fact that $\Omega_{X/S}$ commutes with base change, we conclude that $f_*\left(\Omega^{\otimes d}_{X/S}\right)$ is a locally free sheaf for each $d$ (e.g.~of rank $(2d-1)(g-1)$ if $d \geq 2$ and $g \geq 1$).  We define the \defiindex{relative canonical algebra} of $(X,\Delta)$ to be the $\scrO_S$-algebra
\[
\scrR(X/S,\Delta) = \bigoplus_{d=0}^{\infty} f_*\left(\Omega_{X/S}(\Delta)^{\otimes d}\right).
\] 
The relative canonical algebra is quasicoherent, and so if $S = \Spec A$, it is obtained as the sheaf associated to the $A$-algebra
\[ 
R(X/A,\Delta) = \bigoplus_{d=0}^{\infty} H^0\left(\Spec A, f_*\bigl(\Omega^{\otimes d}_{X/S}\bigr)\right) = \bigoplus_{d=0}^{\infty} H^0\left(X,\Omega^{\otimes d}_{X/S}\right).
\]

There is some subtlety in relative canonical algebras; over a field, we saw that the structure of the canonical ring depends on geometric properties of the curve---for example, if the curve is hyperelliptic or not.  There are examples where these properties are not uniform over the fibers of the curve, as the following example illustrates.

\begin{example}[Plane quartic degenerating to a hyperelliptic curve]
 Let $R$ be a DVR with uniformizer $t$, residue field $k$, and fraction field $K$. Let
$S = \Spec R$, let $R[x_1,x_2,x_3,y]$ have $\deg x_i = 1$ and $\deg y = 2$, and let 
$$A = R[x_1,x_2,x_3,y]/(t y - Q_2(x_1,x_2,x_3), y^2 - Q_4(x_1,x_2,x_3))$$ 
where  $Q_i$ is homogenous of degree $i$. Then $\Proj A$ is a curve over $S$ and the given presentation of $A$ is minimal.  Note that $A \otimes_{R} K$ is isomorphic to 
$$K[x_1,x_2,x_3]/(Q_2(x_1,x_2,x_3)^2 - t^2Q_4(x_1,x_2,x_3))$$ 
so $\Proj A \otimes_{R} K$ is a curve of arithmetic genus 3 (smooth if $Q_2, Q_4$ are chosen appropriately), but that 
$$A \otimes_{R} k \simeq k[x_1,x_2,x_3,y]/(Q_2(x_1,x_2,x_3), y^2-Q_4(x_1,x_2,x_3)),$$ 
so $\Proj A \otimes_{R} k$ is a hyperelliptic curve, branched over the conic 
\[ \{Q_2(x_1,x_2,x_3) = 0\} \subset \P^2. \]
Therefore, $A$ is minimally generated by elements of degree 1 and 2 with relations in degree 2 and 4, even though $A \otimes_{R} K$ is generated in degree 1 with a single relation in degree 4. 
\end{example}

\begin{example}[Canonically embedded generic genus 5 curve degenerating to a trigonal curve]
Let $S \subset \P^4$ be the cubic scroll defined by $Q_1 = Q_2 = Q_3 = 0$ where 
\begin{align*}
  Q_1 &=   x_1x_3 - x_2^2  \\
  Q_2 &=   x_1x_4 - x_2x_5 \\
  Q_3 &=   x_2x_4 - x_3x_5 
\end{align*}
The surface $S$  is isomorphic to $\F(1,0) \simeq \text{Bl}_1 \P^2$, via the birational map 
\begin{align*}
\A^2 &\to \A^4 \\
(x,y) &\mapsto (x,xy,xy^2,y).
\end{align*}
Moreover, $S$ admits a pencil of linear syzygies
\[
L_1(x_1x_3 - x_2^2)  + L_2(x_1x_4 - x_2x_5) +   L_3(x_2x_4 - x_3x_5) = 0
\]
where 
\begin{align*}
  L_1 &=   Ax_4 + Bx_5 \\
  L_2 &=   -Ax_2 -2Bx_3\\
  L_3 &=   Ax_1 + 2Bx_2  
\end{align*}
Let $Q_1', Q_2', Q_3' \in R[x_1,x_2,x_3,x_4,x_5]_2$
be generic and  consider the smooth projective $R$ scheme given by 
\[
Q_1 + tQ_1' = Q_2 + tQ_2' = Q_3 + tQ_3' =   f =     0
\]
where $f = L_1Q_1'  + L_2Q_2' +   L_3Q_3'$.
Then its special fiber is the canonically embedded projective trigonal curve given by the reductions
\[
Q_1 \equiv Q_2 \equiv Q_3 \equiv f \equiv 0 \pmod{t}.
\]
and its generic fiber is isomorphic to the projective non-trigonal genus 5 curve given by 
\[
Q_1 + tQ_1' = Q_2 + tQ_2' = Q_3 + tQ_3' = 0.
\]
In other words, the relation $f = 0$ is in the ideal generated by the other 3 relations when $t$ is inverted, but not so integrally.

In sum, this gives an example of a relative canonical algebra of a canonically embedded family $C \hookrightarrow \P^5_{R} \to \Spec R$ of smooth curves with nonhyperelliptic non-trigonal generic fiber and trigonal special fiber, and in particular an $A$-algebra $B$ and a presentation 
\[
I \subset A[x] \to B
\]
such that $I$ has a minimal generator of degree larger than any minimal generator of $I \otimes_A \Frac A$. (Moreover,  in the above example, this happens because $I \subset I \otimes_{R} K$ is not $t$-saturated.)

\end{example}

Guided by the above examples, the following lemma allows one to deduce the structure of the relative canonical algebra from the structure of its fibers.

\begin{lemma}
\label{L:Integrality-bootstrap}
  Let $A$ be an integral noetherian ring with fraction field $K$ and let $B = \oplus_{d = 0}^{\infty} B_d$ be a finitely generated  graded $A$-algebra.
 Suppose that there exist integers $N$  and $M$ such that for each point $\mathfrak{p} \in \Spec A$, $B\otimes_A k(\mathfrak{p})$ is generated over $k(\mathfrak{p})$ by elements of degree at most $N$ with relations in degree at most $M$. Then $B$ is generated by its elements of degree at most $N$ with relations of degree at most $M$.
\end{lemma}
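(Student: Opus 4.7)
The plan is to establish the generation bound and the relations bound separately, each by a Nakayama-style argument propagating information from fibers over $\Spec A$ to the whole of $A$.

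For generation: let $B' \subseteq B$ denote the $A$-subalgebra generated by $B_{\leq N}$ and set $M = B/B'$, so each graded piece $M_d$ is a finitely generated $A$-module (since $B$ is a finitely generated $A$-algebra). Right-exactness of base change identifies $M_d \otimes_A k(\mathfrak{p})$ with the cokernel of $B'_d \otimes k(\mathfrak{p}) \to B_d \otimes k(\mathfrak{p})$; this vanishes by the fiberwise generation hypothesis. A finitely generated $A$-module that vanishes after base change to $k(\mathfrak{p})$ for every prime $\mathfrak{p}$ is itself zero, by the usual Nakayama argument applied at each prime in the support. Therefore $M = 0$ and $B = B'$.

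For relations: choose homogeneous generators $x_1, \dots, x_n$ of $B$ of degree $\leq N$, yielding a surjection $\varphi\colon A[x_1,\dots,x_n] \twoheadrightarrow B$ with kernel $I$, finitely generated because $A[x_1,\dots,x_n]$ is Noetherian. Let $I' \subseteq A[x_1,\dots,x_n]$ be the ideal generated by $I_{\leq M}$; the goal is to show $I = I'$. For each $\mathfrak{p}$, let $J_\mathfrak{p}$ denote the kernel of $k(\mathfrak{p})[x_1,\dots,x_n] \to B \otimes_A k(\mathfrak{p})$. The long exact sequence of Tor gives a degree-preserving surjection $I \otimes_A k(\mathfrak{p}) \twoheadrightarrow J_\mathfrak{p}$, so $I_{\leq M} \otimes k(\mathfrak{p})$ surjects onto $(J_\mathfrak{p})_{\leq M}$. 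The fiberwise hypothesis that $J_\mathfrak{p}$ is generated in degrees $\leq M$ then says the image of $I'$ in $k(\mathfrak{p})[x_1,\dots,x_n]$ is all of $J_\mathfrak{p}$, so the natural surjection $A[x_1,\dots,x_n]/I' \twoheadrightarrow B$ becomes an isomorphism after $\otimes_A k(\mathfrak{p})$ for every $\mathfrak{p}$.

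It remains to deduce $I = I'$. Each $(I/I')_d$ is a finitely generated $A$-module, so by the Nakayama principle used above it suffices to show $(I/I')_d \otimes_A k(\mathfrak{p}) = 0$ for every $\mathfrak{p}$. The long exact sequence of Tor associated to $0 \to I/I' \to A[x_1,\dots,x_n]/I' \to B \to 0$, combined with the fiberwise isomorphism just established, exhibits $(I/I')_d \otimes k(\mathfrak{p})$ as a quotient of $\mathrm{Tor}_1^A(B_d, k(\mathfrak{p}))$. In the main application of Section~\ref{ss:classicalRelative}, each push-forward $B_d = f_* \Omega^{\otimes d}_{X/S}$ is locally free over $S$ (by Riemann--Roch together with cohomology and base change), so these Tor groups vanish identically and the argument concludes. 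The main obstacle in the fully general case is controlling these Tor contributions; the natural remedy is to invoke generic flatness (yielding a dense open of $\Spec A$ on which $B$ is $A$-flat, and on which $I/I'$ therefore vanishes), then proceed by Noetherian induction on $\dim A$, using that each closed reduction $B \otimes_A A/\mathfrak{q}$ along a minimal prime $\mathfrak{q}$ of the non-flat locus still satisfies the fiber hypothesis.
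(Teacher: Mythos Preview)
Your approach---Nakayama for generators, then Nakayama applied degree-wise to $I/I'$---is exactly the paper's, though you are considerably more careful: the paper's proof simply says the relations claim ``follows similarly from Nakayama's lemma (applied to the kernel $I$),'' without isolating the $\mathrm{Tor}_1$ obstruction you correctly identify.

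Two points deserve comment. First, a small gap: the claim that $J_\mathfrak{p}$ is generated in degrees $\leq M$ does not follow directly from the fiberwise hypothesis, because the images $\bar{x}_1,\dots,\bar{x}_n$ in $B\otimes_A k(\mathfrak{p})$ may become linearly dependent, forcing extra relations of degree up to $N$; one must assume $M\geq N$ (always true in the applications here, and arguably implicit in the statement). Second, your Noetherian-induction sketch for the general case does not close: knowing that $B\otimes_A A/\mathfrak{q}$ has relations in degree $\leq M$ says nothing about the $A$-module $(I/I')_d$, which is what must vanish. In fact the lemma seems to need flatness: with $A=\Z_{(p)}$ and $B=A[x]/(px)$, $\deg x=1$, the fibers are $\Q_p$ and $\F_p[x]$, each a free polynomial ring, yet $B$ requires the relation $px$ in degree $1$. (This example also exhibits the first gap with $M=0<N=1$.) So your observation that each $B_d$ is locally free in the intended application is the essential point---it is what makes the Nakayama step for relations legitimate, and what the paper is tacitly relying on.
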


\begin{proof}
 Since $B$ is finitely generated, $\oplus_{d = 0}^N B_d$ is a finite $A$-module. Choose a basis $x_1,\ldots,x_r$ of homogenous elements for $\oplus_{d = 0}^N B_d$ as an $A$-module. The map 
\[
A[t_1,\ldots,t_n] \to B, \, t_i \mapsto x_i
\]
is surjective by Nakayama's lemma  (since by construction it is surjective after tensoring to every residue field), proving the claim about generators. The claim about relations follows similarly from Nakayama's lemma (applied to the kernel $I$ of the surjection $A[t_1,\ldots,t_n] \to B$).
\end{proof}

The following standard lemma will allow us to verify the initial finite generation hypothesis of the previous lemma.
  
\begin{lemma}
\label{L:veroneseGeneration}
  Let $A$ be a noetherian ring and let $B = \oplus_{d = 0}^{\infty} B_d$ be a graded $A$-algebra which is integral as a ring. Suppose that there exists an integer $d$ such that the Veronese subring $B^{(d)}$ is a finitely generated $A$-algebra. Then $B$ is finitely generated.
\end{lemma}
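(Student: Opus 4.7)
The plan is to reduce the finite generation of $B$ over $A$ to finite generation of $B$ as a module over $B^{(d)}$. First, since $A$ is noetherian and $B^{(d)}$ is a finitely generated $A$-algebra, by the Hilbert basis theorem $B^{(d)}$ is itself a noetherian ring. Thus any finitely generated $B^{(d)}$-module is noetherian, and it suffices to produce a finite set of generators for $B$ as a $B^{(d)}$-module: together with a finite set of $A$-algebra generators for $B^{(d)}$, these will generate $B$ as an $A$-algebra.

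Next I would decompose $B$ as a graded $B^{(d)}$-module in the evident way, writing
\[
B = \bigoplus_{i=0}^{d-1} B^{(d,i)}, \qquad B^{(d,i)} := \bigoplus_{n \geq 0} B_{nd+i},
\]
so that each $B^{(d,i)}$ is a $\Z$-graded $B^{(d)}$-submodule of $B$. It suffices to show each $B^{(d,i)}$ is finitely generated over $B^{(d)}$. If $B^{(d,i)} = 0$ there is nothing to do, so assume $B^{(d,i)} \neq 0$ and pick a nonzero homogeneous element $f \in B^{(d,i)}$, say of degree $m$ with $m \equiv i \pmod{d}$.

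The key step, and the only place the integrality hypothesis is used, is the following: the element $f^{d-1}$ is nonzero and lies in $B^{(d,\,d-i \bmod d)}$, so multiplication by $f^{d-1}$ defines a $B^{(d)}$-linear map
\[
\mu_{f^{d-1}} \colon B^{(d,i)} \longrightarrow B^{(d)}
\]
(of degree $(d-1)m$), and this map is injective because $B$ is an integral domain. Its image is a $B^{(d)}$-submodule of the noetherian ring $B^{(d)}$, hence is finitely generated; by injectivity of $\mu_{f^{d-1}}$, so is $B^{(d,i)}$. The main obstacle is simply isolating this integrality trick and keeping careful track of the grading shift; everything else is formal.

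Summing over $i = 0, 1, \ldots, d-1$ yields a finite $B^{(d)}$-module generating set for $B$. Together with a finite $A$-algebra generating set for $B^{(d)}$, this produces a finite $A$-algebra generating set for $B$, completing the proof.
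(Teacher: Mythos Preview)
Your proof is correct and follows essentially the same approach as the paper: decompose $B = \bigoplus_{i=0}^{d-1} B^{(d,i)}$, and use integrality to embed each nonzero $B^{(d,i)}$ into the noetherian ring $B^{(d)}$ via multiplication by a suitable element, concluding that each $B^{(d,i)}$ is a finitely generated $B^{(d)}$-module. The only minor difference is the choice of multiplier: the paper picks $\beta \in B_{d-i}$, whereas you take $f^{d-1}$ for some nonzero $f \in B^{(d,i)}$; your choice has the slight advantage of not tacitly assuming $B_{d-i} \neq 0$.
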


\begin{proof}
Take $B^{(d,i)} = \oplus_{n = 0}^{\infty} B_{dn + i}$ to be the $B^{(d)}$-submodule of elements in degrees congruent to $i \pmod d$. Let $\beta$ be  a nonzero element of $R_{d-i}$.  Then $\beta B^{(d,i)}$ is an $A$-submodule of $B^{(d)}$, and thus finitely generated, and since $B$ is integral, $\beta B^{(d,i)} \simeq  B^{(d,i)}$ as modules. Since each of the finitely many $B^{(d,i)'}s$ are finitely generated, $B$ is also finitely generated.
\end{proof}

\begin{remark}
\label{R:kodaira-surface}
  One of the original motivations to consider fractional divisors on curves is the following special case of the minimal model program. Recall that the \defiindex{Kodaira dimension} of a smooth variety $X$ is the dimension of the image of the pluricanonical map $\phi_{|nK|}$ for sufficiently divisible $n$. Kodaira proved that a surface $X$ has Kodaira dimension one if and only if $X$ is an elliptic surface, i.e.~there exists a smooth proper curve $C$ and a morphism $f\colon X \to C$ whose generic fiber is an elliptic curve. Kodaira also classified the possibilities for the singular fibers and moreover showed that the canonical ring of $X$ is isomorphic to $R(C, \Delta)$ for some fractional divisor $\Delta$ (which depends in a straightforward way on the singular fibers of $f$ and on the variation of the elliptic fiber), and moreover $\omega_X^{\otimes 12} \simeq f^*\scrL$ for some ample line bundle $\scrL$ on $C$ \cite[Chapter V, Theorem 12.1]{barthHPV:compactComplexSurfaces}.  (A priori, we knew that $\Proj$ of the canonical ring was isomorphic to $C$.) One of the first cases of finite generation of the canonical ring of a surfaces was thus proved via finite generation of the stacky canonical ring of a log curve.
\end{remark}

%----------------------------------------------------------------------------
\section{Relative stacky curves} 
%----------------------------------------------------------------------------

\begin{definition} \label{def:relstackcurve}
A \defiindex{relative stacky curve} (or a \defiindex{family of stacky curves}) over a scheme $S$ is a smooth proper morphism $\XX \to S$ whose (geometric) fibers are stacky curves. We say that a relative stacky curve $\XX \to S$ is \defiindex{hyperbolic} if each fiber is hyperbolic (i.e.~if $\chi < 0$ for ever fiber) and \defiindex{twisted} if the stacky locus of $\XX$ is given by non-intersecting $S$-gerbes banded by cyclic groups.
\end{definition}

Compare Definition~\ref{def:relstackcurve} with \cite[1.1]{Olsson:logTwistedCurves}.  Motivated by applications to Gromov-Witten theory, families of twisted stacky (and more general marked, nodal) curves are considered in Abramovich--Vistoli \cite{abramovichV:compactifyingStableMaps}, Abramovich--Graber--Vistoli \cite{AGV:GW}, Olsson \cite{Olsson:logTwistedCurves}, and Abramovich--Olsson--Vistoli \cite{AbramovichOV:twistedMaps} (which for instance studies the moduli stack of such curves and proves that it is smooth and proper).

\begin{example}[Variation of $\chi(\XX_b)$]
\label{ex:variationStackyFamilies}
  The following examples (which are not twisted) exhibit a mildly pathological behavior, demonstrating that  the Euler characteristics of the fibers of a family of stacky curves can both jump and drop, and that  the stacky locus can have codimension 2. In particular, it is not true that every family of stacky curves is given by a root construction (compare with Lemma~\ref{L:stacky-curves-characterization-cyclicity}), as the first of the following examples demonstrates.

  \begin{enumerate}
  \item[(a)] \label{item:2} Take $\XX_0 = [\A^2/\mu_p]$ over a field of characteristic different from $p$, with the action given by a direct sum of two non-trivial representations. The only fixed point of this action is the origin; a smooth compactification $\XX$ of either natural  projection morphism $\XX_0 \to \A^1$ is a family of stacky curves with a single stacky fiber and smooth coarse space.
  \item[(b)] \label{item:3} Take $\A^2 \to \A^1$, and root $\A^2$ at two different lines which intersect at a single point and which map bijectively to $\A^1$ (e.g.~at the lines $y = x$ and $y = -x$), with respect to coprime integers $n_1,n_2$. The generic fiber will have two different stacky points, but one fiber will have a single stacky point. Compactify to a family $C \to \A^1$; here the Euler characteristic drops. 
  \end{enumerate}
\end{example}

\begin{definition}
Let $f\colon \XX \to S$ be a relative log stacky curve. We define the 
  \defiindex{relative sheaf of differentials} $\Omega^{\otimes d}_{\XX/S}$ as in Definition~\ref{D:differentials} and, for a divisor $\Delta$ on $\XX$ define the \defiindex{relative canonical algebra} $R(f,\Delta)$  as in section~\ref{ss:classicalRelative}. 
\end{definition}

\begin{remark}
As in the case of  a stacky curve over a field, there exists a coarse moduli morphism
\[
\XX \xrightarrow{\pi} X \xrightarrow{g} S
\]
(since again $\XX \to S$ is proper and thus has finite diagonal). Without additional assumptions the relative canonical algebras $R(f)$ and $R(g)$ are not related in a sensible way.

For $\XX \to \A^1$ as in Example~\ref{ex:variationStackyFamilies}(a),  the coarse space map $\XX \to X$ is ramified over the single stacky point. Purity of the branch locus thus fails. Moreover, the relative canonical algebra is not affected by the single stacky point (i.e.~$R(f) = R(g)$) and formation of canonical sheaves does not commute with base change, even though $\XX$, $S$, and $f$ are all smooth. 

Moreover, for $f\colon \XX \to \A^1$ as in Example~\ref{ex:variationStackyFamilies}(b), the fiber of $R(f)$ over $0 \in \A^1$ is \emph{not} the canonical ring of $f^{-1}(0)$. Indeed,
\[
\Omega_{\XX/S} = \Omega_{X/S}((n_1 - 1)D_1 + (n_2 - 1)D_2), 
\] 
where $D_i$ are the stacky loci which lie over the lines $y = \pm x$.
The fiber $\XX_0$ of $\XX$ over 0 has a single stacky point $P$ with stabilizer of order $n_1n_2$; the restriction of  $((n_1 - 1)D_1 + (n_2 - 1)D_2)$ is $(n_1 - 1 + n_2 - 1)P$,  but the canonical sheaf of $\XX_0$ is 
\[
\Omega_{\XX_0/S} = \Omega_{X_0/S}((n_1n_2 - 1)P), 
\]
which has smaller degree.

We now restrict to the twisted case to get a nice relation between the relative canonical algebra of the coarse space, and for a twisted family $f$ the fibers of $R(f)$ are indeed the canonical rings of the fibers of $f$. Let $e$ be the lcm of the stabilizers. Then $R(f,\Delta)^{(e)}$ is the canonical ring of a classical divisor on the coarse space $X$ and is thus finitely generated. By Lemma~\ref{L:veroneseGeneration}, $R(f,\Delta)$ is also finitely generated.
\end{remark}

The following lemma is immediate from Lemma~\ref{L:Integrality-bootstrap} and the preceding remark.

\begin{lemma}
\label{L:stackyRelativeMasterComment}
  Let $f \colon \XX \to S$ be a twisted family of hyperbolic stacky curves over an affine base $S = \Spec A$ and let  $\Delta$ be a horizontal divisor on $\XX$ (i.e.~assume that every component of $\Delta$ maps surjectively to $S$). Then the maximal degrees of generators and relations of the relative canonical algebra $R(f,\Delta)$ are, respectively,  the maximum of the degrees of the generators and relations of the canonical ring of any fiber.
\end{lemma}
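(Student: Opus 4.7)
The plan is a direct application of Lemma~\ref{L:Integrality-bootstrap}, so the task reduces to verifying its two hypotheses for $B = R(f,\Delta)$: first, that $B$ is a finitely generated $A$-algebra, and second, that for every point $\mathfrak{p} \in \Spec A$ the base change $B \otimes_A k(\mathfrak{p})$ coincides, as a graded $k(\mathfrak{p})$-algebra, with the canonical ring of the fiber $(\XX_{\mathfrak{p}},\Delta_{\mathfrak{p}})$.

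For finite generation, let $e$ be the least common multiple of the orders of the stabilizers of the geometric points of $\XX$ (which is well-defined because $f$ is twisted and proper, so only finitely many gerbes appear). Passing to the coarse space $\pi\colon \XX \to X$ and using Lemma~\ref{L:floor} fiberwise, the $e$-th Veronese subring $R(f,\Delta)^{(e)}$ is identified with the relative canonical algebra of the honest divisor $\lfloor e(K_{\XX/S}+\Delta) \rfloor$ on the smooth curve $g\colon X \to S$ obtained by pushing forward via $\pi$. Classical finite generation of relative canonical algebras of schemes over $A$ then gives finite generation of $R(f,\Delta)^{(e)}$, and Lemma~\ref{L:veroneseGeneration} upgrades this to finite generation of $R(f,\Delta)$ itself (noting that $R(f,\Delta)$ is an integral domain because $\XX$ is smooth, proper and geometrically connected over $S$, hence integral).

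For the compatibility with base change, I would first observe that since $f$ is twisted, the relative differential sheaf $\Omega^1_{\XX/S}$ is a line bundle whose formation commutes with arbitrary base change on $S$ (this is the standard advantage of the twisted assumption: the stacky locus is a disjoint union of $S$-gerbes banded by cyclic groups, so \'etale locally on $S$ one reduces to a root stack, where commutation with base change is immediate). Since $\Delta$ is horizontal, its restriction to any fiber is again a log divisor, so $\Omega_{\XX/S}(\Delta)^{\otimes d}$ commutes with base change for every $d$. The hyperbolic hypothesis gives $\deg \Omega_{\XX_{\mathfrak{p}}}(\Delta_{\mathfrak{p}})^{\otimes d} > 2g(\XX_{\mathfrak{p}}) - 2$ for $d$ large, hence $R^1f_*(\Omega_{\XX/S}(\Delta)^{\otimes d}) = 0$ for $d$ large by cohomology and base change for Deligne-Mumford stacks (applied to the tame, proper $f$); then cohomology and base change yields that $f_*(\Omega_{\XX/S}(\Delta)^{\otimes d})$ is locally free and commutes with base change in every sufficiently large degree, and an inductive argument in small degrees (or working directly on the coarse space $X$ via $\pi_*$) handles the finitely many remaining degrees.

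With these two facts, Lemma~\ref{L:Integrality-bootstrap} produces integers $N$ and $M$ bounding the degrees of generators and relations of $R(f,\Delta)$ by the supremum over fibers, giving the inequality in one direction. The reverse inequality is automatic: any presentation of $R(f,\Delta)$ by generators in degree $\leq N$ and relations in degree $\leq M$ reduces modulo $\mathfrak{p}$ to a presentation of $R(f,\Delta)\otimes_A k(\mathfrak{p})$ with the same degree bounds. The main obstacle in this argument is the verification that $f_*$ commutes with base change for the relevant twists of $\Omega_{\XX/S}$; once one has convinced oneself that the twisted hypothesis is exactly what is needed to make $\Omega_{\XX/S}$ behave well (in contrast to the pathologies of Example~\ref{ex:variationStackyFamilies}), the remainder is a standard application of cohomology and base change together with the bootstrap lemma.
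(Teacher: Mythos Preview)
Your approach is correct and is essentially the same as the paper's: the paper declares the lemma ``immediate from Lemma~\ref{L:Integrality-bootstrap} and the preceding remark,'' where that remark already records exactly the two ingredients you spell out (finite generation via the $e$-th Veronese and Lemma~\ref{L:veroneseGeneration}, and the fact that for a twisted family the fibers of $R(f,\Delta)$ are the canonical rings of the fibers). You have simply unpacked the content of that remark and added the reverse inequality, which the paper leaves implicit.
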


%----------------------------------------------------------------------------
\section[Modular forms]{Modular forms and application to Rustom's conjecture}
%----------------------------------------------------------------------------

To conclude, we settle affirmatively a conjecture of Rustom \cite[Conjecture 2]{Rustom:Generators}.

\begin{proposition} 
\label{C:integralGeneration}  
Let $N \geq 1$, let $A = \Z\left[1/(6N)\right]$, and let $\Gamma = \Gamma_0(N)$. Then the $A$-algebra $M(\Gamma,A)$ is generated in weight at most 6 with relations in weight at most 12.
\end{proposition}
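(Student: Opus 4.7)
The plan is to recognize $M(\Gamma,A)$ as a graded algebra of sections of tensor powers of the Hodge bundle on a relative stacky curve, and then apply Lemma~\ref{L:Integrality-bootstrap} to reduce to a fiberwise question which follows from the main theorem. Concretely, I would take $\calM_0(N)_A \to \Spec A$ to be the moduli stack, with rigidification $\XX_0(N)_A = \calM_0(N)_A \thickslash \mu_2$. Since $A = \Z[1/(6N)]$ inverts all stabilizer orders appearing, $\XX_0(N)_A \to \Spec A$ is a twisted family of hyperbolic log stacky curves in the sense of Definition~\ref{def:relstackcurve}, with horizontal cuspidal divisor $\Delta$. The Hodge bundle $\scrE$ on $\calM_0(N)_A$ satisfies $\scrE^{\otimes 2} \cong \Omega_{\XX_0(N)_A/A}(\Delta)$ by Kodaira--Spencer, so $M_k(\Gamma,A) = H^0(\calM_0(N)_A, \scrE^{\otimes k})$.

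The next step is to obtain finite generation and then reduce to fibers. The even-weight Veronese subring of $M(\Gamma,A)$ is the relative canonical algebra of $(\XX_0(N)_A,\Delta)$; by Lemma~\ref{L:stackyRelativeMasterComment} it is a finitely generated $A$-algebra, and Lemma~\ref{L:veroneseGeneration} then yields that $M(\Gamma,A)$ itself is finitely generated. With finite generation in hand, Lemma~\ref{L:Integrality-bootstrap} reduces the problem to showing that, for every residue field $k = k(\mathfrak{p})$ of $\Spec A$, the fiber $M(\Gamma_0(N),k)$ is generated in weight at most $6$ with relations in weight at most $12$. Since $6N$ is invertible in $A$, the fiber $\XX_0(N)_k$ has signature $(g;2^{v_2},3^{v_3};v_\infty)$ where $g,v_2,v_3,v_\infty$ are given by the classical formulas (valid in any characteristic coprime to $6N$).

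For the even-weight part of the fiberwise statement, the even-weight subring is the log canonical ring of $(\XX_0(N)_k,\Delta)$, so by the main theorem applied with $e = \max(1,2,3) = 3$, it is generated in degree at most $\max(3,e) = 3$ (weight $\le 6$) with relations in degree at most $2\max(3,e) = 6$ (weight $\le 12$), whenever $2g-2+v_\infty \ge 0$; this holds for all $N \ge 2$ since then $v_\infty \ge 2$. The remaining case $N = 1$ is classical: $M(\Gamma_0(1),k) = k[E_4,E_6]$ is a free polynomial ring, satisfying the bound.

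The hard part will be extending the bound to odd weights. My plan is to apply a Hodge-bundle analogue of Theorems~\ref{T:spin-big-classical} and \ref{T:spin-big-stacky}, viewing $\scrE$ as a fractional half-canonical on $\XX_0(N)_k$ and building $M(\Gamma_0(N),k)$ inductively as an extension of its even-weight subring by adding one cusp, then one stacky point, at a time. The main obstacle is that the spin formalism of chapter~\ref{ch:spin-canon-rings} requires all stabilizers to have odd order, which fails at the order-$2$ elliptic points of $\XX_0(N)$; one must therefore verify that the inductive step goes through for the Hodge bundle even there. Following Remark~\ref{R:spin-stacky-contribution}, a stacky point of order $e_i \in \{2,3\}$ should contribute at most $\lfloor (e_i-1)/2\rfloor \in \{0,1\}$ new odd-weight generators, each in weight $\le e_i \le 3$; each cusp should contribute at most one odd-weight generator in weight $\le 3$ by the classical spin analysis; and together with GMNT (Theorem~\ref{T:surjectivity-master}) applied in each fiber, this will yield the claimed generation in weight $\le 6$ and relations in weight $\le 12$.
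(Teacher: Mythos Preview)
Your even-weight analysis is essentially the paper's argument, and it is correct. The real gap is that you are treating the odd-weight extension as ``the hard part'' when in fact there is nothing to do: since $-I \in \Gamma_0(N)$ for every $N$, the automorphy condition forces $M_k(\Gamma_0(N)) = 0$ for all odd $k$. Hence $M(\Gamma,A)$ coincides with its even-weight Veronese subring, which is precisely the relative log canonical algebra of $(\XX_0(N)_A,\Delta)$. This is why the paper opens by identifying $M(\Gamma,A)$ directly with that relative canonical algebra via the $\mu_2$-rigidification (Remark~\ref{R:rigidification-example} and Lemma~\ref{L:isomCM}): the generic $\mu_2$-stabilizer on $\calM_0(N)$ records the action of $-I$, and odd tensor powers of the Hodge bundle have no global sections. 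With this observation the spin-canonical machinery is unnecessary, and the obstacle you flag at the order-$2$ elliptic points simply does not arise.

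One smaller point: your claim that $\XX_0(N)_A \to \Spec A$ is twisted does not follow merely from inverting the stabilizer orders (that gives tameness). Twistedness requires the stacky loci to be disjoint $S$-gerbes, which the paper checks by observing that the only stacky points lie over $j=0$ and $j=12^3$, and these remain distinct in every residue characteristic $p \nmid 6N$. Once twistedness is established, Lemma~\ref{L:stackyRelativeMasterComment} together with the main theorem (and Theorem~\ref{thm:genus0final} for the genus-zero fibers; note every $X_0(N)$ has $\delta \geq 1$, so the exceptional signatures in that table do not occur) finishes the proof exactly as you outlined for the even-weight part.
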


\begin{proof}
The algebra $M(\Gamma,A)$ is isomorphic to the relative canonical algebra (with $\Delta$ the divisor of cusps) of the $\Z/2\Z$-rigidification $X(\Gamma)_{A} \to \Spec A$ of the stack $\XX(\Gamma)_{A}$ (using Remark~\ref{R:rigidification-example} to pass to the rigidification); the corollary will follow  directly from Lemma~\ref{L:stackyRelativeMasterComment} once we verify that $X(\Gamma)_{A} \to \Spec A$ is twisted.
By Deligne--Rapoport \cite[III Th\'eor\`eme 3.4]{DeligneRapoport}, $X(\Gamma)_{A} \to \Spec A$ is smooth, and since we have inverted $6N$ it is tame.
Moreover, the stacky loci are disjoint; indeed, the only stacky points correspond to elliptic curves with $j = 0$ or $12^3$, so for $p \neq 2,3$,  the reductions of the corresponding elliptic curves are disjoint, and the same true of the level structure since $p \mid N$. This completes the proof that $X(\Gamma)_{R}$ is twisted.

Finally, we verify that the canonical ring of $X_0(N)_k$ for $k = \Q$ or $k = \F_p$ with $p$ not dividing $6N$ is generated in degree at most 3 with relations in degree at most 6.  Modulo any prime $p \nmid 6N$, the stabilizers of $X_0(N)_{\F_p}$ have order 2 or 3 and the cuspidal divisor $\Delta$ has degree $\delta\geq 1$.  Therefore, by the main theorem of this monograph, the verification is complete when $2g-2+\delta \geq 0$, which holds unless $g=0$.  But then the genus $0$ case is handled by Theorem~\ref{thm:genus0final}, as the only exceptions in the table have $\delta=0$ (in any finitely many remaining cases, one can compute directly the signature of $X_0(N)$ and check directly, as in Example~\ref  {ex:X0N-Signature}).  

The proposition now follows from Lemma~\ref{L:stackyRelativeMasterComment}. 
\end{proof}

%----------------------------------------------------------------------------
\appendix
%----------------------------------------------------------------------------

%----------------------------------------------------------------------------
\chapter*{Tables of canonical rings}
%----------------------------------------------------------------------------

In this Appendix, we provide tables of canonical rings according to the cases considered in this monograph.  

The tables are organized as follows:
\begin{enumerate}
\item[(I)] Classical curves (chapter~\ref{ch:classical})
\begin{enumerate}
\item[(Ia)] Canonical rings of classical curves 
\item[(Ia)] Grevlex (pointed) generic initial ideals of classical curves
\end{enumerate}
\item[(II)] Log classical curves (chapter~\ref{ch:logclassical-curves})
\begin{enumerate}
\item[(IIa)] Canonical rings of log classical curves 
\item[(IIb)] Grevlex pointed generic initial ideals of log classical curves
\end{enumerate}
\item[(III)] Canonical rings and grevlex generic initial ideals of genus $1$ base case stacky curves (section~\ref{sec:cangen1ex})
\item[(IV)] Genus 0 base case (log) stacky curves (chapter~\ref{ch:genus-0})
\begin{enumerate}
\item[(IVa)] Canonical rings of small genus 0 stacky curves
\item[(IVb)] Initial ideals of small genus 0 stacky curves
\end{enumerate}
\end{enumerate}

\ 

For $e_1 \leq e_2 \leq \ldots \leq e_r$ and $e_i \in \Z_{\geq 0}$, we define the polynomial
\[ \Phi(e_1,e_2,\dots,e_r;t) = \sum_{1 \leq i \leq j \leq r} t^{e_i+e_j}. \]
In particular, by \eqref{eqn:tuvpol}, we have
\[ \Phi(0,1,\dots,k;t) = \sum_{0 \leq i \leq j \leq k} t^{i+j} = \sum_{0 \leq i \leq 2k} \min\left(\lfloor i/2 \rfloor+1, k+1-\lceil i/2 \rceil\right)t^i. \]

\newpage

\renewcommand*{\arraystretch}{1.25}
\begin{small}

\begin{landscape}
\begin{longtable}
     {| c | c || c | c | c |}
    \hline
$g$ & \parbox{15ex}{\centering Conditions} & Description & $P(R_{\geq 1};t)$ & $P(I;t)$ \\
\hline
\hline
$0$ & - & empty & 0 & 0 \\
\hline
$1$ & - & point (in $\PP^0$) & $t$ & $0$ \\
\hline
$2$ & - & 
\parbox[c][9ex]{25ex}{\centering\strut weighted plane curve of degree $6$ in $\P(3,1,1)$\strut}
& $2t+t^3$ & $t^6$ \\
\hline
$\geq 3$ & hyperelliptic &  
\parbox[c][12ex]{25ex}{\centering\strut double cover in $\PP(2^{g-2},1^g)$ of rational normal curve of degree $g-1$ (in $\PP^{g-1}$)\strut}
& 
$gt+(g-2)t^2$ & 
$\binom{g-1}{2}t^2 +(g-1)(g-3)t^3 +\binom{g-1}{2}t^4$ \\
\hline
$3$ & nonhyperelliptic & 
\parbox{25ex}{\centering\strut plane quartic in $\P^2$\strut}
& $3t$ &  $t^4$ \\
\hline
$\geq 4$ & trigonal & 
\parbox[c][6ex]{25ex}{\centering\strut curve on rational normal scroll in $\PP^{g-1}$\strut} &  & $\binom{g-2}{2}t^2+(g-3)t^3$ \\
\hhline{---~-}
$\geq 5$ & nonexceptional & 
\parbox{25ex}{\centering\strut canonical curve in $\PP^{g-1}$\strut} & $gt$ &  $\binom{g-2}{2}t^2$ \\
\hhline{---~-}
$6$ & plane quintic &
\parbox[c][6ex]{25ex}{\centering\strut image under Veronese embedding in $\PP^{5}$\strut} & 
& $\binom{g-2}{2}t^2+(g-3)t^3$ \\
\hline
\end{longtable}

\begin{center}
\textbf{Table (Ia)}: Canonical rings of classical curves
\end{center}
\end{landscape}

\newpage

\begin{landscape}
\begin{longtable}
     {| c | c || c | c | c |}
    \hline
$g$ & \parbox{15ex}{\centering Conditions} & Pointed generic initial ideal & Generic initial ideal \\
\hline
\hline
$0$ & - & - & - \\
\hline
$1$ & - & - & - \\
\hline
$2$ & - & $\langle y^2 \rangle \subset k[y,x_1,x_2]$ & $\langle y^2 \rangle \subset k[y,x_1,x_2]$ \\
\hline
$\geq 3$ & hyperelliptic &
\parbox[c][13ex]{48ex}{\centering
\strut $\langle x_i x_j : 1 \leq i < j \leq g-1 \rangle +$ \\
$\langle x_i y_j : 1 \leq i \leq g-1, 1 \leq j \leq g-3 \rangle+$ \\
$\langle y_i y_j : 1 \leq i,j \leq g-2 \rangle$ \\
$\subset k[y_1,\dots,y_{g-2},x_1,\dots,x_g]$\strut} & 
\parbox{48ex}{\centering
\strut $\langle x_i x_j : 1 \leq i \leq j \leq g-2 \rangle +$ \\
$\langle x_i y_j : 1 \leq i,j \leq g-2,(i,j) \neq (g-2,g-2) \rangle+$ \\
$\langle y_i y_j : 1 \leq i,j \leq g-2 \rangle$ \\
$\subset k[y_1,\dots,y_{g-2},x_1,\dots,x_g]$\strut} \\
\hline
$3$ & nonhyperelliptic & $\langle x_1^3x_2 \rangle \subset k[x_1,x_2,x_3]$ & $\langle x_1^4 \rangle \subset k[x_1,x_2,x_3]$ \\
\hline
$\geq 4$ & trigonal & 
\multirow{3}{*}{
\parbox{40ex}{\centering
\strut $\langle x_ix_j : 1 \leq i<j \leq g-2 \rangle +$ \\
$\langle x_i^2 x_{g-1} : 1 \leq i \leq g-3 \rangle + \langle x_{g-2}^3 x_{g-1} \rangle$ \\
$\subset k[x_1,\dots,x_g]$\strut}} & 
\multirow{3}{*}{
\parbox{33ex}{\centering
\strut $\langle x_ix_j : 1 \leq i \leq j \leq g-3 \rangle +$ \\
$\langle x_i x_{g-2}^2 : 1 \leq i \leq g-3 \rangle + \langle x_{g-2}^4 \rangle$ \\
$\subset k[x_1,\dots,x_g]$\strut}} \\
\hhline{--~~}
$\geq 5$ & nonexceptional &  &  \\
\hhline{--~~}
$6$ & plane quintic & & \\
\hline
\end{longtable}

\begin{center}
\textbf{Table (Ib)}: Grevlex (pointed) generic initial ideals of classical curves
\end{center}
\end{landscape}

\newpage

\begin{landscape}
\begin{longtable}{| c | c | c || c | c| c|}
 \hline 
   $g$     & $\delta$ & Conditions &  Description & $P(R_{\geq 1};t)$  &  $P(I;t)$ \\
  \hline
  \hline
%-------------------------------------------------------------------------------------------------
   \multirow{4}{*}{0}       & $1$         & - & empty     & 0        &  0 \\
\hhline{~-----}
           & $2$         & - & point (in $\P^0$)    & $t$    &  0 \\
\hhline{~-----}
           & $3$         & - & $\PP^1$  & $2t$  &  0 \\
\hhline{~-----}
           & $\geq 4$ & - & rational normal curve in $\P^{\delta-2}$ & $(\delta - 1)t$  
                                 &  \parbox[c][4ex]{10ex}{\centering\strut $\binom{\delta-3}{2}t^2$\strut} \\
  \hline
%-------------------------------------------------------------------------------------------------
  \multirow{4}{*}{1}          & 1 & - & Weierstrass curve in $\PP(3,2,1)$  &  $t+ t^2 + t^3$  &  $t^6$ \\
\hhline{~-----}
           & 2 & - & quartic in $\PP(2,1,1)$ & $2t+ t^2$ &  $t^4$ \\
\hhline{~-----}
           & 3 & - & cubic in $\PP^2$  & $3t$  &  $t^3$ \\
\hhline{~-----}
           & $\geq 4$ & - & elliptic normal curve in $\PP^{\delta-1}$
                                 & $\delta t$  & \parbox[c][4ex]{15ex}{\centering\strut $\frac{(\delta-1)(\delta-4)}{2}t^2$\strut} \\  \hline

%-------------------------------------------------------------------------------------------------
\multirow{8}{*}{$\geq 2$} & \multirow{4}{*}{1} &  hyperelliptic     & \parbox[c][9ex]{30ex}{\centering\strut curve in $\PP(3,2^g,1^g)$, \\ double cover of rational normal curve in $\PP^{g}$\strut} & $gt + gt^2 + t^3$ & \parbox{35ex}{\centering\strut $\binom{g-1}{2}t^2+(g-1)^2t^3+(g-1)(g+2)t^4$ \\ $+2(g-1)t^5+t^6$} \\
\hhline{~~----}

           &    &  exceptional             & \multirow{2}{*}{\parbox[c][8ex]{30ex}{\centering\strut curve in $\PP(3,2^2,1^{g})$, projects \\ to canonical curve in $\PP^g$\strut}} & \multirow{2}{*}{$gt + 2t^2 + t^3$} & $\binom{g -2}{2}t^2 + (3g-5)t^3 + (g+2)t^4 + t^5 + t^6$ \\
\hhline{~~-~~-}
           &    &  nonexceptional &  &  \parbox[c][4ex]{5ex}{\phantom{x}}
                                                  & \parbox[c][3ex]{45ex}{\centering\strut $\binom{g-2}{2}t^2 + (2g-2)t^3 + (g+2)t^4 + t^5 + t^6$\strut} \\
\hhline{~-----}
           & \multirow{3}{*}{2} & $\iota(\Delta) \sim \Delta$         & \parbox[c][9ex]{30ex}{\centering\strut curve in $\PP(2^{h-2},1^h)$, \\ double cover of rational normal curve in $\PP^{h-1}$\strut}  &  $ht+(h-2)t^2$  & \parbox[c][4ex]{45ex}{\centering\strut $\binom{h-2}{2}t^2 + (h-1)(h-3)t^3 + \binom{h-2}{2}t^4$\strut} \\
\hhline{~~----}
           &    & \parbox[c]{20ex}{\centering\strut $\iota(\Delta) \not \sim \Delta$ in a $g^1_3$}
            & \multirow{2}{*}{\parbox[c][8ex]{30ex}{\centering\strut curve in $\PP(2,1^h)$\strut}} & \multirow{2}{*}{\parbox[c][8ex]{15ex}{\centering\strut $ht+t^2$\strut}}  & \parbox[c][4ex]{40ex}{\centering\strut $\binom{h-2}{2}t^2 + 2(h-2)t^3$\strut} \\ 
\hhline{~~-~~-}
           &    & \parbox[c]{20ex}{\centering\strut $\iota(\Delta) \not \sim \Delta$ not in a $g^1_3$}
            & & & \parbox[c][4ex]{15ex}{\centering\strut $\binom{h-2}{2}t^2$\strut} \\ 
\hhline{~-----}
                           &3 & - & \multirow{2}{*}{\parbox{30ex}{\centering\strut curve on a minimal surface \\ in $\PP^{h-1}$}} & \multirow{2}{*}{\parbox[c][8ex]{15ex}{\centering\strut $ht$\strut}}  
                           & \parbox[c][4ex]{45ex}{\centering\strut $\bigl(\binom{h-2}{2} + (\delta-3)\bigr)t^2 + gt^3$\strut} \\
 \hhline{~--~~-}
             &$\geq$ 4 & - &  &  &   \parbox[c][4ex]{30ex}{\centering\strut $\bigl(\binom{h-2}{2} + (\delta-3)\bigr)t^2$\strut} \\
\hline
\end{longtable}
\begin{center}
\textbf{Table (IIa)}: Canonical rings of log classical curves (where $h=g+\delta-1$)
\end{center}
\end{landscape}

\newpage

\begin{landscape}
\begin{longtable}{| c | c | c || c |}
 \hline 
   $g$     & $\delta$ & Conditions &  Pointed generic initial ideal \\
  \hline
  \hline
%-------------------------------------------------------------------------------------------------
   \multirow{2}{*}{0}       & $\leq 3$         & - & - \\
\hhline{~---}
           & $\geq 4$ & - & 
\parbox[c][4ex]{50ex}{\centering\strut 
$\langle x_i x_j : 1 \leq i < j \leq \delta-2 \rangle \subset k[x_1,\dots,x_{\delta-2}]$\strut} \\
  \hline
%-------------------------------------------------------------------------------------------------
  \multirow{4}{*}{1}          & 1 & - & $\langle y^2 \rangle \subset k[y,x,u]$ \\
\hhline{~---}
           & 2 & - & $\langle y^2 \rangle \subset k[y,x_1,x_2]$ \\
\hhline{~---}
           & 3 & - & $\langle x_1^2x_2 \rangle \subset k[x_1,x_2,x_3]$ \\
\hhline{~---}
           & $\geq 4$ & - & 
           \parbox[c][7ex]{70ex}{\centering\strut $\langle x_ix_{j} :  1 \leq i < j \leq \delta-1,(i,j) \neq (\delta-2,\delta-1)  \rangle +  \langle x_{\delta-2}^2x_{\delta-1} \rangle$ \\
            $\subset k[x_1,\dots,x_{\delta}]$\strut} \\  \hline

%-------------------------------------------------------------------------------------------------
\multirow{8}{*}{$\geq 2$} & \multirow{4}{*}{1} & hyperelliptic & 
\parbox[c][13ex]{70ex}{\centering\strut 
$\langle x_i x_j :  1 \leq i < j \leq g-1 \rangle+ \langle y_ix_j : 1 \leq i,j \leq g-1 \rangle$ \\
$+\langle y_i y_j :  1 \leq i \leq j \leq g : (i,j) \neq (g,g) \rangle$ \\
$+ \langle zx_i : 1 \leq i \leq g-1 \rangle + \langle y_g^2 x_i, zy_i :  1 \leq i \leq g-1 \rangle + \langle  z^2\rangle$ \\
$ \subset k[z,y_1,y_2,x_1,\dots,x_g]$\strut} \\
\hhline{~~--}
           &    &  \parbox[c][5ex]{17ex}{\centering exceptional}     & 
\multirow{2}{*}{\parbox[c][9ex]{70ex}{\centering\strut
$\langle x_i x_j :  1 \leq i < j \leq g-2 \rangle+ \langle y_1x_i, y_2x_i : 1 \leq i \leq g-1 \rangle$ \\
$+ \langle x_i^2 x_{g-1} : 1 \leq i \leq g-3 \rangle+ \langle y_1^2,y_1y_2,x_{g-2}^3x_{g-1} \rangle$ \\
$+ \langle zx_i : 1 \leq i \leq g-1 \rangle + \langle zy_1, z^2 \rangle \subset k[z,y_1,y_2,x_1,\dots,x_g]$\strut}}          \\
\hhline{~~-~}
           &    &  \parbox[c][5ex]{17ex}{\centering nonexceptional} &  \\
\hhline{~---}
           & \multirow{2}{*}{2} & $\iota(\Delta) \sim \Delta$         & 
\parbox[c][7ex]{80ex}{\centering\strut
$\langle x_i x_j : 1 \leq i < j \leq h-1 \rangle +\langle x_iy_j : 1 \leq i,j \leq h-2,\ (i,j) \neq (h-2,h-2) \rangle$ \\
$+\langle y_i y_j : 1 \leq i,j \leq h-2 \rangle \subset k[y_1,\dots,y_{h-2},x_1,\dots,x_h]$\strut}           
           
           \\
\hhline{~~--}
           &    & $\iota(\Delta) \not \sim \Delta$ &
\parbox[c][7ex]{70ex}{\centering\strut
$\langle x_ix_j                          : 1 \leq i < j \leq h-2\rangle
+\langle x_i^2x_{h-1}                : 1 \leq i      \leq h-3\rangle$ \\
$
+ \langle yx_i : 1 \leq i \leq h-1 \rangle + \langle y^2,x_{h-2}^3x_{h-1}                                 \rangle \subset k[y,x_1,\dots,x_h]$\strut}           
           \\ 
\hhline{~---}
                           & $\geq 3$ & - & 
\parbox[c][7ex]{70ex}{\centering\strut 
$\langle x_ix_j : 1 \leq i < j \leq h-2 \rangle + 
\langle x_ix_{h-1} : 1 \leq i \leq \delta - 3 \rangle$ \\
$+ \langle x_i^2x_{h-1} : \delta - 2 \leq i \leq h-2 \rangle \subset k[x_1,\dots,x_h]$\strut} \\
\hline
\end{longtable}
\begin{center}
\textbf{Table (IIb)}: Grevlex pointed generic initial ideals for log classical curves (where $h=g+\delta-1$)
\end{center}

\newpage

%----------------------------------------------------------------------------
  \begin{longtable}
%----------------------------------------------------------------------------
     {| c || c | c | c | c|}
    \hline
Signature & Description & $P(R_{\geq 1};t)$ &$P(I;t)$& Generic initial ideal  \\ 
    \hline
\hline
   $(1;2;0)$          &  \parbox[c][8ex]{25ex}{\centering\strut weighted plane curve of degree 12 in $\P(6,4,1)$\strut} 
                          &$t + t^4 + t^6$ & $t^{12}$ & $\langle y^2 \rangle \subset k[y,x,u]$ \\
\hline
   $(1;3;0)$          & \parbox[c][8ex]{25ex}{\centering\strut weighted plane curve of degree 10 in $\P(5,3,1)$\strut} 
                           &$t + t^3 + t^5$  &$t^{10}$ & $\langle  y^2 \rangle \subset k[y,x,u]$\\
\hline
   $(1;4;0)$          & \parbox[c][8ex]{25ex}{\centering\strut weighted plane curve of degree 9 in $\P(4,3,1)$\strut} 
                           &$t + t^3 + t^4$  &$t^{9}$ & $\langle  x^3 \rangle \subset k[y,x,u]$\\
\hline
   $(1;e \geq 5;0)$   &  \parbox{25ex}{\centering\strut curve in $\P(e,e-1,\ldots,3,1)$\strut}
                         &$t + t^3 + \cdots + t^e $ &
                         $\Phi(3,\dots,e-1;t) - t^{e-1}$
                          &  
                         \parbox[c][8ex]{45ex}{\centering\strut 
                           $\langle x_i x_j : 3 \leq i \leq j \leq e-1, (i,j) \neq (3,e-2) \rangle$ \\ 
                           $\subset k[x_1,x_3,x_4,\ldots,x_e]$\strut} \\ \hline 

$(1;2,2;0)$       & \parbox[c][8ex]{25ex}{\centering\strut weighted plane curve of degree 8 in $\P(4,2,1)$\strut} &

$t + t^2 + t^4$ & $t^8$  & $\langle y^2  \rangle \subset k[y,x,u]$ \\ \hline

$(1;2,2,2;0)$    & \parbox[c][8ex]{25ex}{\centering\strut weighted plane curve of degree 6 in $\P(2,2,1)$\strut} & 
$t + 2t^2$ & $t^6$          & $\langle x_1^3   \rangle \subset k[x_1,x_2,u]$ \\
\hline
 \end{longtable}
\begin{center}
\textbf{Table (III)}: Canonical rings and grevlex generic initial ideals of genus $1$ base case stacky curves
\end{center}

\end{landscape}

\newpage

\begin{longtable}
     {| c | c || c | c | c |}
    \hline
$g$ & Description & $P(R_{\geq 1};t)$ & $P(I;t)$ \\
\hline
\hline

$(0; 2, 3, 7 ;0)$ & 
 \parbox{25ex}{\centering\strut weighted plane curve of degree 42 in $\P(21,14,6)$\strut} & 
 $t^{21} + t^{14} + t^6$ & $t^{42}$\\ \hline 

$(0; 2, 3, 8 ;0)$ & 
 \parbox{25ex}{\centering\strut weighted plane curve of degree 30 in $\P(15,8,6)$\strut} & 
 $t^{15} + t^8 + t^6$ & $t^{30}$\\ \hline 

$(0; 2, 3, 9 ;0)$ & 
 \parbox{25ex}{\centering\strut weighted plane curve of degree 24 in $\P(9,8,6)$\strut} & 
 $t^9 + t^8 + t^6$ & $t^{24}$\\ \hline 

$(0; 2, 3, 10 ;0)$ & 
 \parbox{25ex}{\centering\strut weighted complete intersection of bidegree (16,18) in $\P(10,9,8,6)$\strut} & 
 $t^{10} + t^9 + t^8 + t^6$ & $t^{18} + t^{16}$\\ \hline

$(0; 2, 4, 5 ;0)$ & 
  \parbox{25ex}{\centering\strut weighted plane curve of degree 30 in $\P(15,10,4)$\strut} & 
 $t^{15} + t^{10} + t^4$ & $t^{30}$\\ \hline 

$(0; 2, 4, 6 ;0)$ & 
 \parbox{25ex}{\centering\strut weighted plane curve of degree 22 in $\P(11,6,4)$\strut} & 
 $t^{11} + t^6 + t^4$ & $t^{22}$\\ \hline 

$(0; 2, 4, 7 ;0)$ & 
 \parbox{25ex}{\centering\strut weighted plane curve of degree 18 in $\P(7,6,4)$\strut} & 
 $t^7 + t^6 + t^4$ & $t^{18}$\\ \hline 

$(0; 2, 4, 8 ;0)$ & 
 \parbox{25ex}{\centering\strut weighted complete intersection of bidegree (12,14) in $\P(8,7,6,4)$\strut} & 
 $t^8 + t^7 + t^6 + t^4$ & $t^{14} + t^{12}$\\ \hline 

$(0; 2, 5, 5 ;0)$ & 
 \parbox{25ex}{\centering\strut weighted plane curve of degree 20 in $\P(10,5,4)$\strut} & 
 $t^{10} + t^5 + t^4$ & $t^{20}$\\ \hline 

$(0; 2, 5, 6 ;0)$ & 
 \parbox{25ex}{\centering\strut weighted plane curve of degree 16 in $\P(6,5,4)$\strut} & 
 $t^6 + t^5 + t^4$ & $t^{16}$\\ \hline 

$(0; 2, 5, 7 ;0)$ & 
 \parbox{25ex}{\centering\strut weighted complete intersection of bidegree (11,12) in $\P(7,6,5,4)$\strut} & 
 $t^7 + t^6 + t^5 + t^4$ & $t^{12} + t^{11}$\\ \hline 

$(0; 2, 6, 6 ;0)$ & 
 \parbox{25ex}{\centering\strut weighted complete intersection of bidegree (10,12) in $\P(6,6,5,4)$\strut} & 
 $2t^6 + t^5 + t^4$ & $t^{12} + t^{10}$\\ \hline 

\end{longtable}

\begin{center}
\textbf{Table (IVa-1)}: Small genus 0 stacky curves, part 1 of 3
\end{center}

\newpage

\begin{longtable}
     {| c | c || c | c | c |}
    \hline
$g$ & Description & $P(R_{\geq 1};t)$ & $P(I;t)$ \\
\hline
\hline

$(0; 3, 3, 4 ;0)$ & 
 \parbox{25ex}{\centering\strut weighted plane curve of degree 24 in $\P(12,8,3)$\strut} & 
 $t^{12} + t^8 + t^3$ & $t^{24}$\\ \hline 

$(0; 3, 3, 5 ;0)$ & 
 \parbox{25ex}{\centering\strut weighted plane curve of degree 18 in $\P(9,5,3)$\strut} & 
 $t^9 + t^5 + t^3$ & $t^{18}$\\ \hline 

$(0; 3, 3, 6 ;0)$ & 
 \parbox{25ex}{\centering\strut weighted plane curve of degree 15 in $\P(6,5,3)$\strut} & 
 $t^6 + t^5 + t^3$ & $t^{15}$\\ \hline 

$(0; 3, 3, 7 ;0)$ & 
 \parbox{25ex}{\centering\strut weighted complete intersection of bidegree (10,12) in $\P(7,6,5,3)$\strut} & 
 $t^7 + t^6 + t^5 + t^3$ & $t^{12} + t^{10}$\\ \hline 

$(0; 3, 4, 4 ;0)$ & 
 \parbox{25ex}{\centering\strut weighted plane curve of degree 16 in $\P(8,4,3)$\strut} & 
 $t^8 + t^4 + t^3$ & $t^{16}$\\ \hline 

$(0; 3, 4, 5 ;0)$ & 
 \parbox{25ex}{\centering\strut weighted plane curve of degree 13 in $\P(5,4,3)$\strut} & 
 $t^5 + t^4 + t^3$ & $t^{13}$\\ \hline 

$(0; 3, 4, 6 ;0)$ & 
 \parbox{25ex}{\centering\strut weighted complete intersection of bidegree (9,10) in $\P(6,5,4,3)$\strut} & 
 $t^6 + t^5 + t^4 + t^3$ & $t^{10} + t^9$\\ \hline 

$(0; 3, 5, 5 ;0)$ & 
 \parbox{25ex}{\centering\strut weighted complete intersection of bidegree (8,10) in $\P(5,5,4,3)$\strut} & 
 $2t^5 + t^4 + t^3$ & $t^{10} + t^8$\\ \hline 

$(0; 4, 4, 4 ;0)$ & 
 \parbox{25ex}{\centering\strut weighted plane curve of degree 12 in $\P(4,4,3)$\strut} & 
 $2t^4 + t^3$ & $t^{12}$\\ \hline 

$(0; 4, 4, 5 ;0)$ & 
 \parbox{25ex}{\centering\strut weighted complete intersection of bidegree (8,9) in $\P(5,4,4,3)$\strut} & 
 $t^5 + 2t^4 + t^3$ & $t^9 + t^8$\\ \hline 

$(0; 4, 5, 5 ;0)$ & 
 \parbox{25ex}{\centering\strut curve in $\P(5,5,4,4,3)$\strut} & 
 $2t^5 + 2t^4 + t^3$ & $t^{10} + 2t^9 + 2t^8$\\ \hline 

$(0; 5, 5, 5 ;0)$ & 
  \parbox{25ex}{\centering\strut curve in $\P(5,5,5,4,4,3)$\strut} & 
 $3t^5 + 2t^4 + t^3$ & $3t^{10} + 3t^9 + 3t^8$\\ \hline 

\end{longtable}

\begin{center}
\textbf{Table (IVa-2)}: Canonical rings of small genus 0 stacky curves, part 2 of 3
\end{center}

\newpage

\begin{longtable}
     {| c | c || c | c | c |}
    \hline
$g$ & Description & $P(R_{\geq 1};t)$ & $P(I;t)$ \\
\hline
\hline

$(0; 2, 2, 2, 3 ;0)$ & 
 \parbox{25ex}{\centering\strut weighted plane curve of degree 18 in $\P(9,6,2)$\strut} & 
 $t^9 + t^6 + t^2$ & $t^{18}$\\ \hline 

$(0; 2, 2, 2, 4 ;0)$ & 
 \parbox{25ex}{\centering\strut weighted plane curve of degree 14 in $\P(7,4,2)$\strut} & 
 $t^7 + t^4 + t^2$ & $t^{14}$\\ \hline 

$(0; 2, 2, 2, 5 ;0)$ & 
 \parbox{25ex}{\centering\strut weighted plane curve of degree 12 in $\P(5,4,2)$\strut} & 
 $t^5 + t^4 + t^2$ & $t^{12}$\\ \hline 

$(0; 2, 2, 2, 6 ;0)$ & 
 \parbox{25ex}{\centering\strut weighted complete intersection of bidegree (8,10) in $\P(6,5,4,2)$\strut} & 
 $t^6 + t^5 + t^4 + t^2$ & $t^{10} + t^8$\\ \hline 

$(0; 2, 2, 3, 3 ;0)$ & 
 \parbox{25ex}{\centering\strut weighted plane curve of degree 12 in $\P(6,3,2)$\strut} & 
 $t^6 + t^3 + t^2$ & $t^{12}$  \\ \hline 

$(0; 2, 2, 3, 4 ;0)$ & 
 \parbox{25ex}{\centering\strut weighted plane curve of degree 10 in $\P(4,3,2)$\strut} & 
 $t^4 + t^3 + t^2$ & $t^{10}$\\ \hline 

$(0; 2, 2, 4, 4 ;0)$ & 
 \parbox{25ex}{\centering\strut weighted complete intersection of bidegree (6,8) in $\P(4,4,3,2)$\strut} & 
 $2t^4 + t^3 + t^2$ & $t^8 + t^6$\\ \hline 

$(0; 2, 3, 3, 3 ;0)$ & 
 \parbox{25ex}{\centering\strut weighted plane curve of degree 9 in $\P(3,3,2)$\strut} & 
 $2t^3 + t^2$ & $t^9$\\ \hline 

$(0; 2, 3, 3, 4 ;0)$ & 
 \parbox{25ex}{\centering\strut weighted complete intersection of bidegree (6,7) in $\P(4,3,3,2)$\strut} & 
 $t^4 + 2t^3 + t^2$ & $t^7 + t^6$\\ \hline 

$(0; 2, 4, 4, 4 ;0)$ & 
 \parbox{25ex}{\centering \strut curve in $\P(4,4,4,3,3,2)$\strut} & 
 $3t^4 + 2t^3 + t^2$ & $3t^8 + 3t^7 + 3t^6$\\ \hline 

$(0; 3, 3, 3, 3 ;0)$ & 
 \parbox{25ex}{\centering\strut weighted complete intersection of bidegree (6,6) in $\P(3,3,3,2)$\strut} & 
 $3t^3 + t^2$ & $2t^6$\\ \hline 

$(0; 4, 4, 4, 4 ;0)$ & 
 \parbox{25ex}{\centering\strut curve in $\P(4,4,4,4,3,3,3,2)$\strut} & 
 $4t^4 + 3t^3 + t^2$ & $6t^8 + 8t^7 + 6t^6$\\ \hline 

\hline

$(0; 2, 2, 2, 2, 2 ;0)$ & 
 \parbox{25ex}{\centering\strut weighted plane curve of degree 10 in $\P(5,2,2)$\strut} & 
 $t^5 + 2t^2$ & $t^{10}$\\ \hline 

$(0; 2, 2, 2, 2, 3 ;0)$ & 
 \parbox{25ex}{\centering\strut weighted plane curve of degree 8 in $\P(3,2,2)$\strut} & 
 $t^3 + 2t^2$ & $t^8$\\ \hline 

$(0; 2, 2, 2, 2, 2, 2 ;0)$ & 
 \parbox{25ex}{\centering\strut weighted complete intersection of bidegree (4,6) in $\P(3,2,2,2)$\strut} & 
 $t^3 + 3t^2$ & $t^6 + t^4$\\ \hline 

\end{longtable}

\begin{center}
\textbf{Table (IVa-3)}: Small genus 0 stacky curves, part 3 of 3
\end{center}

\newpage

\begin{longtable}
     {| c || c |}
    \hline
$g$ & $\init_{\prec}(I)$ \\
\hline
\hline

$(0; 2, 3, 7 ;0)$ & 
$\langle
x_{21}^2
\rangle \subset k[x_{21},x_{14},x_{6}]$\\ \hline 

$(0; 2, 3, 8 ;0)$ & 
$\langle
x_{15}^2
\rangle \subset k[x_{15},x_{8},x_{6}]$\\ \hline 

$(0; 2, 3, 9 ;0)$ & 
$\langle
x_{9}^2 x_{6}
\rangle \subset k[x_{9},x_{8},x_{6}]$\\ \hline 

$(0; 2, 3, 10 ;0)$ & 
$\langle
x_{10} x_{6},
x_{10} x_{8},
x_{9}^2 x_{6}
\rangle \subset k[x_{10},x_{9},x_{8},x_{6}]$\\ \hline 

$(0; 2, 4, 5 ;0)$ & 
$\langle
x_{15}^2
\rangle \subset k[x_{15},x_{10},x_{4}]$\\ \hline 

$(0; 2, 4, 6 ;0)$ & 
$\langle
x_{11}^2
\rangle \subset k[x_{11},x_{6},x_{4}]$\\ \hline 

$(0; 2, 4, 7 ;0)$ & 
$\langle
x_{7}^2 x_{4}
\rangle \subset k[x_{7},x_{6},x_{4}]$\\ \hline 

$(0; 2, 4, 8 ;0)$ & 
$\langle
x_{8} x_{4},
x_{8} x_{6},
x_{7}^2 x_{4}
\rangle \subset k[x_{8},x_{7},x_{6},x_{4}]$\\ \hline 

$(0; 2, 5, 5 ;0)$ & 
$\langle
x_{10}^2
\rangle \subset k[x_{10},x_{5},x_{4}]$\\ \hline 

$(0; 2, 5, 6 ;0)$ & 
$\langle
x_{6}^2 x_{4}
\rangle \subset k[x_{6},x_{5},x_{4}]$\\ \hline 

$(0; 2, 5, 7 ;0)$ & 
$\langle
x_{7} x_{4},
x_{7} x_{5},
x_{6}^2 x_{4}
\rangle \subset k[x_{7},x_{6},x_{5},x_{4}]$\\ \hline 

$(0; 2, 6, 6 ;0)$ & 
$\langle
x_{6,2} x_{4},
x_{6,2}^2,
x_{6,2} x_{5}^2,
x_{6,1}^2 x_{4}^2
\rangle \subset k[x_{6,2},x_{6,1},x_{5},x_{4}]$\\ \hline 

$(0; 3, 3, 4 ;0)$ & 
$\langle
x_{12}^2
\rangle \subset k[x_{12},x_{8},x_{3}]$\\ \hline 

$(0; 3, 3, 5 ;0)$ & 
$\langle
x_{9}^2
\rangle \subset k[x_{9},x_{5},x_{3}]$\\ \hline 

$(0; 3, 3, 6 ;0)$ & 
$\langle
x_{6}^2 x_{3,0}
\rangle \subset k[x_{6},x_{5},x_{3}]$\\ \hline 

$(0; 3, 3, 7 ;0)$ & 
$\langle
x_{7} x_{3},
x_{7} x_{5},
x_{6}^2 x_{3}
\rangle \subset k[x_{7},x_{6},x_{5},x_{3}]$\\ \hline 

$(0; 3, 4, 4 ;0)$ & 
$\langle
x_{8}^2
\rangle \subset k[x_{8},x_{4},x_{3}]$\\ \hline 

$(0; 3, 4, 5 ;0)$ & 
$\langle
x_{5}^2 x_{3}
\rangle \subset k[x_{5},x_{4},x_{3}]$\\ \hline 

$(0; 3, 4, 6 ;0)$ & 
$\langle
x_{6} x_{3},
x_{6} x_{4},
x_{5}^2 x_{3}
\rangle \subset k[x_{6},x_{5},x_{4},x_{3}]$\\ \hline 

$(0; 3, 5, 5 ;0)$ & 
$\langle
x_{5,2} x_{3},
x_{5,2}^2,
x_{5,2} x_{4}^2,
x_{5,1}^2 x_{3}^2
\rangle \subset k[x_{5,2},x_{5,1},x_{4},x_{3}]$\\ \hline 

$(0; 4, 4, 4 ;0)$ & 
$\langle
x_{4,2}^3
\rangle \subset k[x_{4,2},x_{4,1},x_{3}]$\\ \hline 

$(0; 4, 4, 5 ;0)$ & 
$\langle
x_{5} x_{3},
x_{5} x_{4,2},
x_{4,2}^3
\rangle \subset k[x_{5},x_{4,2},x_{4,1},x_{3}]$\\ \hline 

$(0; 4, 5, 5 ;0)$ & 
\parbox{60ex}{\centering\strut 
$\langle 
x_{5,1} x_{3},
x_{5,2} x_{3},
x_{5,2} x_{4,1},
x_{5,2} x_{4,2},
x_{5,2}^2,
x_{4,2}^3,
x_{5,1} x_{4,2}^2 \rangle$ \\
$\subset k[x_{5,2},x_{5,1},x_{4,2},x_{4,1},x_{3}]$\strut} \\ \hline 

$(0; 5, 5, 5 ;0)$ & 
\parbox{60ex}{\centering\strut 
$\langle
x_{5,0} x_{3,0},
x_{5,1} x_{3,0},
x_{5,2} x_{3,0},
x_{5,1} x_{4,0},
x_{5,2} x_{4,0},
x_{5,2} x_{4,1},$ \\
$x_{5,2} x_{5,0},
x_{5,2} x_{5,1},
x_{5,2}^2,
x_{4,1}^3,
x_{5,1} x_{4,1}^2,
x_{5,1}^2 x_{4,1},
x_{5,1}^3 \rangle$ \\
$\subset k[x_{5,3},x_{5,2},x_{5,1},x_{4,2},x_{4,1},x_{3}]$\strut} \\ \hline 

\end{longtable}

\begin{center}
\textbf{Table (IVb-1)}: Initial ideals of small genus 0 stacky curves, part 1 of 2
\end{center}

\newpage

\begin{longtable}
     {| c || c |}
    \hline
$g$ & $\init_{\prec}(I)$ \\
\hline
\hline

$(0; 2, 2, 2, 3 ;0)$ & 
$\langle
x_{9}^2
\rangle \subset k[x_{9},x_{6},x_{2}]$\\ \hline 

$(0; 2, 2, 2, 4 ;0)$ & 
$\langle
x_{7}^2
\rangle \subset k[x_{7},x_{4},x_{2}]$\\ \hline 

$(0; 2, 2, 2, 5 ;0)$ & 
$\langle
x_{5}^2 x_{2}
\rangle \subset k[x_{5},x_{4},x_{2}]$\\ \hline 

$(0; 2, 2, 2, 6 ;0)$ & 
$\langle
x_{6} x_{2},
x_{5}^2
\rangle \subset k[x_{6},x_{5},x_{4},x_{2}]$\\ \hline 

$(0; 2, 2, 3, 3 ;0)$ & 
$\langle
x_{6,2},
x_{6,1}^2
\rangle \subset k[x_{6,2},x_{6,1},x_{3},x_{2}]$\\ \hline 

$(0; 2, 2, 3, 4 ;0)$ & 
$\langle
x_{4}^2 x_{2}
\rangle \subset k[x_{4},x_{3},x_{2}]$\\ \hline 

$(0; 2, 2, 4, 4 ;0)$ & 
$\langle
x_{4,2} x_{2},
x_{4,2}^2,
x_{4,2} x_{3}^2,
x_{4,1}^2 x_{2}^2
\rangle \subset k[x_{4,2},x_{4,1},x_{3},x_{2}]$\\ \hline 

$(0; 2, 3, 3, 3 ;0)$ & 
$\langle
x_{3,2}^3
\rangle \subset k[x_{3,2},x_{3,1},x_{2}]$\\ \hline 

$(0; 2, 3, 3, 4 ;0)$ & 
$\langle
x_{4} x_{2},
x_{4} x_{3,1},
x_{3,2}^3
\rangle \subset k[x_{4},x_{3,2},x_{3,1},x_{2}]$\\ \hline 

$(0; 2, 4, 4, 4 ;0)$ & 
\parbox{60ex}{\centering\strut 
$\langle 
x_{4,1}x_{2},
x_{4,2}x_{2},
x_{4,3}x_{2},
x_{4,2}x_{3,1},
x_{4,3}x_{3,1},
x_{4,3}x_{3,2},$\\
$x_{4,3}x_{4,1},
x_{4,3}x_{4,2},
x_{4,3}^2,
x_{3,2}^3,
x_{4,2}x_{3,2}^2,
x_{4,2}^2x_{3,2},
x_{4,2}^3 \rangle$ \\
$\subset k[x_{4,3},x_{4,2},x_{4,1},x_{3,2},x_{3,1},x_{2}]$\strut} \\ \hline 

$(0; 3, 3, 3, 3 ;0)$ & 
$\langle
x_{3,3} x_{3,1},
x_{3,3}^2,
x_{3,3} x_{3,2}^2,
x_{3,2}^4
\rangle \subset k[x_{3,3},x_{3,2},x_{3,1},x_{2}]$\\ \hline 

$(0; 4, 4, 4, 4 ;0)$ & 
\parbox{60ex}{\centering\strut 
$\langle
x_{3,3} x_{3,1},
x_{3,3}^2,
x_{4,1} x_{2},
x_{4,2} x_{2},
x_{4,3} x_{2},
x_{4,4} x_{2},$ \\
$x_{4,2} x_{3,1},
x_{4,2} x_{3,2},
x_{4,3} x_{3,1},
x_{4,3} x_{3,2},
x_{4,3} x_{3,3},$\\
$x_{4,4} x_{3,1},
x_{4,4} x_{3,2},
x_{4,4} x_{3,3},
x_{4,3} x_{4,1},
x_{4,3}^2,$\\
$x_{4,4} x_{4,1},
x_{4,4} x_{4,2},
x_{4,4} x_{4,3},
x_{4,4}^2,$ \\
$x_{3,3} x_{3,2}^2,
x_{4,2}^2 x_{3,3},
x_{3,2}^4,
x_{4,3} x_{4,2}^2,
x_{4,2}^4\rangle $ \\
$\subset k[x_{4,4},x_{4,3},x_{4,2},x_{4,1},x_{3,3},x_{3,2},x_{3,1},x_{2}]$
\strut}
\\ \hline 

$(0; 2, 2, 2, 2, 2 ;0)$ & 
$\langle
x_{5}^2
\rangle \subset k[x_{5},x_{2,2},x_{2,1}]$\\ \hline 

$(0; 2, 2, 2, 2, 3 ;0)$ & 
$\langle
x_{3}^2 x_{2,1}
\rangle \subset k[x_{3},x_{2,2},x_{2,1}]$\\ \hline 

$(0; 2, 2, 2, 2, 2, 2 ;0)$ & 
$\langle
x_{2,3} x_{2,1},
x_{3}^2
\rangle \subset k[x_{3},x_{2,3},x_{2,2},x_{2,1}]$\\ \hline 

\end{longtable}

\begin{center}
\textbf{Table (IVb-2)}: Initial ideals of small genus 0 stacky curves, part 2 of 2
\end{center}

\end{small}

\renewcommand*{\arraystretch}{1}

\backmatter

%****************************************************************************
% Bibliography
%****************************************************************************

\bibliographystyle{amsalpha2}
\bibliography{stacky-canonical-rings}

\printindex

\end{document}